\documentclass[a4paper]{article}
\usepackage[margin=3cm]{geometry}
\usepackage{amsmath,amssymb,amsthm, amsfonts}
\usepackage[english]{babel}
\usepackage{hyperref}
\usepackage{graphicx} 
\usepackage{tikz}
\usepackage{tikz-cd}
\usetikzlibrary{math} 
\usepackage{mathrsfs}
\usepackage{mathtools}
\usepackage{enumitem}
\usepackage{cite}
\usepackage{bbm}
\usepackage{bm}
\usepackage{stmaryrd}
\usepackage{mathrsfs}
\usepackage{subcaption}
\usetikzlibrary{decorations.pathmorphing, shapes, ext.transformations.mirror}
\usepackage{tabularx}
\usepackage{array}
\usepackage{xcolor}
\usepackage{caption}
\usepackage{float}

\usepackage{titlesec}

\titleformat{\part}[block]
  {\filcenter\LARGE\bfseries}
  {Part \Roman{part}:}
  {0.6em}
  {}

\newcommand{\unit}{{\mathrm{1}\mkern-4mu{\mathchoice{}{}{\mskip-0.5mu}{\mskip-1mu}}\mathrm{l}}}
\newcommand\Nakayama{\operatorname{N}^r}
\newcommand\Radford{\delta}
\newcommand\BRF{\mathfrak{B}}
\newcommand\XX{\operatorname{X}}
\newcommand\Cb{\operatorname{Cb}}
\newcommand\Cd{\operatorname{Cd}}
\newcommand\mt{\operatorname{t}}
\newcommand{\on}{\operatorname}
\newcommand\D{\mathbb{D}}

\newcommand\BB{\mathcal{B}}
\renewcommand\SS{\mathcal{S}}
\newcommand\EE{\mathcal{E}}
\newcommand\II{\mathcal{I}}
\newcommand\CC{\mathcal{C}}
\newcommand\ZZ{\mathcal{Z}}

\newcommand\Zz{\mathbb{Z}}

\newcommand\rev{\operatorname{ev}}
\newcommand\rcoev{\operatorname{coev}}
\newcommand\lev{\widetilde{\operatorname{ev}}}
\newcommand\lcoev{\widetilde{\operatorname{coev}}}
\newcommand\apiv{\mathsf{p}}

\DeclareMathOperator{\Bord}{Bord}
\DeclareMathOperator{\SN}{SN}
\DeclareMathOperator{\Proj}{Proj}

\DeclareMathOperator{\Vect}{Vect}

\DeclareMathOperator{\Bimod}{Bimod}

\DeclareMathOperator{\Id}{Id}
\DeclareMathOperator{\Hom}{Hom}
\DeclareMathOperator{\End}{End}
\DeclareMathOperator{\Rep}{Rep}

\newcommand{\orSN}{\mathcal{SN}}

\newcommand{\cusp}{
    \tikzmath{\gap=1.4;}
    \def\sadlines{2}
    \draw[smooth, gray!70] plot coordinates {(-2,1) (0,0) (1,-\gap) (0,-\gap-1) (-1,-\gap) (0,0)(2,1)};

    \foreach \y in {0,...,\sadlines}{
    \tikzmath{\step=\y/(\sadlines+1);}
    \draw[smooth, gray!70] plot coordinates {(-2,1.5-\step/2) (0,1.5-1.5*\step) (2,1.5-\step/2)}; 
    }

    \draw[gray!70] (-2,-2.6-\gap) -- (2,-2.6-\gap);
    \draw[smooth, gray!70] plot coordinates {(-2,-1.8-\gap)(0,-2.4-\gap) (2,-1.8-\gap)};
     \draw[smooth, gray!70] plot coordinates {(-2,-1-\gap) (0,-2.2-\gap) (2,-1-\gap)};
    \draw[smooth, gray!70] plot coordinates {(-2,-\gap) (0,-2-\gap) (2,-\gap)};

    \foreach \y in {1,...,\sadlines}{
    \tikzmath{\step=\gap/(\sadlines+1);}
    \tikzmath{\x=2*\y/(\sadlines+1);}
    \draw[smooth, gray!70] plot coordinates {(-2,1-\y*\step-0.5*\step) (-\x,-0.15*\gap*\x*\x) (-1-0.45*\x,-\gap) (0,-\gap-1 - 0.5*\x) (1+0.45*\x,-\gap) (\x,-0.15*\gap*\x*\x) (2,1-\y*\step-0.5*\step)}; 
    }
   
    \foreach \y in {1,...,\sadlines}{
    \tikzmath{\step=\y/(\sadlines+1);}
    \draw[gray!70] plot[smooth cycle] coordinates {(-1+\step,-\gap) (0,-\gap*\step) (1-\step,-\gap) (0,-\gap-1+\step)};
    
    \node[star,  star point ratio=2.25, fill=black, inner sep = 1pt] at (0,0) {};
    \node[star,  star point ratio=2.25, fill=black, inner sep = 1pt] at (0,-\gap) {};
    }
    }

\newtheorem{counter}{Counter}
\newtheorem{theorem}{Theorem}
\newtheorem{theoremA}{Theorem}

\newtheorem{lemma}[counter]{Lemma}
\newtheorem{corollary}[counter]{Corollary}
\newtheorem{proposition}[counter]{Proposition}
\newtheorem{conjecture}[counter]{Conjecture}
\theoremstyle{definition}
\newtheorem{definition}[counter]{Definition}
\newtheorem{example}[counter]{Example}
\newtheorem{notation}[counter]{Notation}
\newtheorem{remark}[counter]{Remark}
\newtheorem{warning}[counter]{Warning}

\numberwithin{counter}{section}

\title{\vskip-20pt String nets for twisted pivotal categories}
\author{Benjamin Ha\"ioun, William Stewart and Filippos Sytilidis}
\date{\today}
\begin{document}
\maketitle
\begin{abstract}
    We develop a graphical calculus for monoidal categories equipped with twisted pivotal structures, which are a generalization of pivotal structures originating from the study of orientation structures in the context of the Cobordism Hypothesis. 

    This graphical calculus depends on a possibly singular foliation, and we use it to construct twisted string net modules for surfaces equipped with a Morse function or a Morse foliation. We prove that, despite the apparent dependence on this Morse function, the twisted string net modules assemble in an oriented categorified 2-TQFT.

    We study when the twisted string net module of the 2-sphere vanishes, relate it to the distinguished invertible object for finite tensor categories and exhibit examples of non-unimodular finite tensor categories with non-vanishing twisted string net module on the 2-sphere. This vanishing is expected to be the main obstruction for extending our categorified 2-TQFT to a non-compact 3-TQFT.
\end{abstract}
\setcounter{tocdepth}{2} 
\tableofcontents
\section{Introduction}
Graphical calculus for rigid monoidal categories, as developed in \cite{JoyalStreetTensorCalculus}, see \cite{Selinger} for a survey, has been a very successful tool in constructing topological invariants. Under suitable hypotheses and structure on the input category, they yield string net modules of surfaces \cite{LevinWen, KirillovStringNetTV}, and Turaev--Viro 3-manifold invariants \cite{TuraevViroStateSum}.  

The extra structure which is typically used in these constructions is a \textit{pivotal structure}, i.e. an identification of the double dual functor $(-)^{**}$ with the identity functor as monoidal functors. The original hypothesis on the input category $\BB$ were semisimplicity for the string net constructions, and finiteness along with sphericality for the 3-manifold invariants. Recent developments allow for string net modules associated to non-semisimple pivotal categories \cite{CGPAdmissbleskein, MSWY23}, which we will refer to as \textit{admissible string net modules}. The input for such admissible string net modules is a monoidal pivotal category $\BB$ together with a two-sided tensor ideal $\II \subseteq \BB$, which the reader is welcome to think of as the ideal of projective objects in a non-semisimple category. The Turaev--Viro 3-manifold invariants have also been adapted to the non-semisimple setting \cite{GPTmodified6jsymbols, CGPVnc2plus1}, and yield partially-defined TQFTs, sometimes called non-compact TQFTs. This construction is based on the theory of modified traces \cite{GeerPatureauTuraevModifiedqdim, GKPmtrace} and chromatic morphisms \cite{CGPVchromatic}. Along with finiteness, one key assumption needed in this construction is \textit{unimodularity}.

String net modules and their admissible versions transport naturally under orientation-preserving diffeomorphisms of surfaces, and yield representations of Mapping Class Groups of surfaces. It is shown in \cite{MSWY23} that admissible string net modules of finite pivotal categories recover Lyubashenko's non-semisimple Mapping Class Group representations \cite{Lyub3mfldProjMCG} associated to the Drinfeld center of $\BB$.

Without a pivotal structure, the graphical calculus of \cite{JoyalStreetTensorCalculus} needs a notion of a progressive direction, and string net modules can only be defined for framed or foliated surfaces \cite{KST24}. The point-of-view we adopt is that pivotal structures give us a way to descend from foliated surfaces to oriented ones. In this paper, we will explore all the possible ways to do so, and show that there are indeed some that do not come from pivotal structures.

A very different approach to this topic is offered by the cobordism hypothesis \cite{BaezDolan, LurieCob}. It states that fully extended framed TQFTs are classified by their value on the point, and that the dimension of the TQFT, i.e. the integer $d$ so that our TQFT is a functor out of $\Bord_d^{\operatorname{fr}}$, is controlled by the dualizability of this object. In our context, a good model for the target category $3\Vect$ is the Morita 3-category of monoidal categories \cite{JFS, Haugseng}, and the value on the point is the input category $\BB$. 

Dualizability in this 3-category has been studied in \cite{DSPS}. They prove that finite semisimple monoidal categories are 3-dualizable and that rigid non-semisimple categories are 2-dualizable, which matches the constructions above. They also observe that finite non-semisimple categories are very close to being 3-dualizable, which matches the non-compact TQFT constructions of \cite{CGPVnc2plus1}. However, the unimodularity condition does not appear in their dualizability study, which suggests that there might be a non-unimodular analogue of these topological constructions. 

An important point to note here is that the cobordism hypothesis as stated above does not classify oriented TQFTs, but framed ones. In other words, the study of \cite{DSPS} only predicts framed analogues of \cite{CGPVnc2plus1}. There is an oriented version of the cobordism hypothesis, which classifies fully extended oriented $k$-TQFTs by $k$-dualizable objects equipped with an $SO(k)$-structure, which is an $SO(k)$-homotopy-fixed-point-structure for some action of $SO(k)$ defined using the framed cobordism hypothesis. In particular, it proposes a complete answer to the question above about all the ways to descend string net modules from framed to oriented surfaces; they should correspond to $SO(2)$-structures.
It is in general very difficult to describe $SO(k)$-structures, in large part because they use the cobordism hypothesis in their definition. However, for the special case of $SO(2)$ in $3\Vect$, there is a very good conjectural description, see \cite{SchomPriesDualLowdimHighCat, DSPS}. They observe that a pivotal structure indeed induces an $SO(2)$-structure, but that there are many more.

\subsection*{Results}
In this paper, we develop a graphical calculus for monoidal categories which are not necessarily pivotal, but equipped with a more general $SO(2)$-structure. For $\alpha$ an invertible element in a monoidal category $\BB$, we introduce the notion of an $\alpha$-twisted pivotal structure in Definition \ref{def:alphaTwistedPivStr}, which follows the conjectural description of $SO(2)$-structures from \cite{SchomPriesDualLowdimHighCat, DSPS} and relates to various notions in the literature \cite{HalbigZormanPivotalityTwisted, ShimizuPivotalOnDrinfeld, KST24}. 

We develop a graphical calculus for surfaces equipped with possibly singular foliations, giving rise to the notion of twisted string net module $\SN_\II^\alpha(\Sigma)$ for such a surface $\Sigma$, see Definition \ref{def:twistedStringNetModules}. In particular, a Morse function on an oriented surface $S$ induces such a singular foliation.
We show that this graphical calculus descends to oriented surfaces. Our main result is as follows:
\begin{theoremA}[See Theorem \ref{thm:main_thm}]
    Given a tensor ideal $\II$ in an $\alpha$-twisted-pivotal category $\BB$, we construct an oriented categorified 2-TQFT, i.e. a symmetric monoidal functor 
    $$\orSN_\II^\alpha:\Bord_{12\sim}^{\on{or}} \to \Bimod^{hop}$$
    such that $\orSN_\II^\alpha(S)$ is canonically isomorphic to $\SN_\II^\alpha(\Sigma)$ where $\Sigma$ is the surface $S$ equipped with a singular foliation arising from any choice of Morse function on $S$. 
\end{theoremA}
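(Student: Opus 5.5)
We construct $\orSN_\II^\alpha$ in two stages. First we build a functor on a bordism category of surfaces equipped with Morse data. Then we show that this functor descends along the forgetful map to oriented bordisms.

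\textbf{Stage one: the Morse-decorated functor.} Let $\Bord_{12\sim}^{\on{Morse}}$ have objects compact 1-manifolds with collared foliated neighbourhoods, and morphisms surfaces with a Morse function (or Morse foliation) that is standard near the boundary. The assignment $\Sigma \mapsto \SN_\II^\alpha(\Sigma)$ of Definition \ref{def:twistedStringNetModules} extends to a symmetric monoidal functor to $\Bimod^{hop}$:
\begin{itemize}
\item a 1-manifold goes to the linear category of $\II$-colored boundary points, which is a product of copies of $\II$ or $\II^{op}$ according to the transverse direction of the foliation;
\item a surface goes to the bimodule $\SN_\II^\alpha(\Sigma)$;
\item gluing is compatible because admissible string nets on a glued surface are the coend over boundary colorings of string nets on the pieces.
\end{itemize}
The coend statement is the usual excision argument. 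Since the foliation is standard near the boundary, the local graphical calculus matches across the cut, so excision goes through. Monoidality for disjoint unions is immediate.

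\textbf{Stage two: independence of the Morse data.} For each oriented surface $S$ and any two Morse functions $f_0,f_1$, we need a canonical isomorphism $\SN_\II^\alpha(S,f_0)\cong \SN_\II^\alpha(S,f_1)$. These isomorphisms must be coherent (cocycle condition) and compatible with gluing. We use Cerf theory. Any two Morse functions are joined by a generic path, which is an isotopy interrupted by finitely many elementary moves:
\begin{itemize}
\item births and deaths of canceling critical pairs (the cusp);
\item crossings of critical values;
\item handle slides.
\end{itemize}
To each move we assign an explicit isomorphism of string net modules, built from the local graphical calculus near the singular leaves. The birth/death move is where the $\alpha$-twisted pivotal structure enters. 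Near a cusp, a strand that winds around the canceling pair picks up a double dual. The twisted pivotal isomorphism $X^{**}\cong \alpha\otimes X\otimes \alpha^{-1}$, together with the insertion of the invertible $\alpha$-lines attached to the singular points, identifies the two local models. Isotopies act by transport. Crossing critical values and handle slides act by the identity on the underlying string nets, since they only reparametrise the foliation away from the singular points.

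\textbf{Main obstacle: coherence.} The hardest step is showing that the resulting isomorphism does not depend on the chosen path. By Cerf theory in the form of Gay--Wehrheim--Woodward or Kirby's codimension-two analysis, it suffices to check loops around codimension-two singularities: swallowtails, cusps passing critical values, and so on. Each check is a local identity in the graphical calculus. The essential one is the swallowtail/$2\pi$-rotation relation. A full rotation of a strand around a singular point must act trivially, and this is exactly the condition in Definition \ref{def:alphaTwistedPivStr} that the twisted pivotal structure is monoidal and compatible with the $\alpha$-twist, the $SO(2)$-fixed-point coherence. My first attempt would be to reduce every codimension-two check to the statement that the composite of two cusp isomorphisms around a closed loop equals the monodromy of the twisted pivotal structure, which is the identity by assumption.

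\textbf{Assembling the functor.} Once coherence holds, we define $\orSN_\II^\alpha(S)$ as the limit, equivalently any representative, of the diagram of canonical isomorphisms over the contractible-up-to-coherence space of Morse data. Collars are fixed, so gluing is compatible; functoriality and monoidality then follow from stage one. Diffeomorphism invariance follows by pulling back Morse functions. This gives the claimed canonical identification with $\SN_\II^\alpha(\Sigma)$ for every choice of Morse function.
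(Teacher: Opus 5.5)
Your overall architecture matches the paper's: a Morse-foliated TQFT built by excision, then descended along Cerf theory. The paper makes the descent precise with the presentation $\on{Sur}^{\on{or}}_{12\sim}$ of \cite{Filippos}, assigning maps to cusp-births, cusp-deaths and crossings and checking its $\mathcal{F}^0$, $\mathcal{F}^1$, $\mathcal{F}^2$ relations. However, stage one fails as stated. The category attached to a closed 1-manifold $\Gamma$ is not a product of copies of $\II$ or $\II^{op}$. It must be the string net category $\SN_\II(\Gamma)$ of Definition~\ref{def:SN_on_1-mflds}, whose morphisms are string nets on the foliated cylinder $\Gamma\times I$. These include the winding isomorphisms $y\otimes x\to x\otimes y$ of \eqref{eq:genMorphSNS1}, which have no counterpart in $\II$. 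With $\II$ in its place, your coend along a circle only computes gluing along an arc. For $S^2=\D_0\cup_{S^1}\D_2$ it returns $\SN_\II^\alpha(S^2\smallsetminus\ast)$ rather than $\SN_\II^\alpha(S^2)$, missing the relation \eqref{eq:extraRelationUnpuncturedS2}. By Corollary~\ref{cor:dimSNS2} (with $\II=\Proj$ in a finite tensor category), these two spaces differ whenever $\alpha^{\otimes 2}\simeq D$ but $\apiv^2\neq\Radford$, so your assignment would not respect composition. Excision is also not automatic from standard collars. Its surjectivity step drags an $\II$-colored strand onto the cut leaf along a path that is not progressive, which requires inserting evaluation and coevaluation coupons at the tangencies.

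In stage two, the substance lies in defining the local maps and proving they are well defined; your plan is partly wrong there and omits the key step. A crossing of two index-1 critical values is not a reparametrisation away from the singular points: the induced diffeomorphism fails to be foliation-preserving near the critical points. You must first use the saddle relation \eqref{eq:saddle-rel} and snake relations to put the net in crossing position, and only then transport. For the cusp, no twisted pivotal isomorphism is applied to a winding strand:
\begin{itemize}
\item the birth inserts an $\alpha$-strand joining the new index-0 and index-1 points;
\item the death clears every strand off the arc between them by sliding across the saddle (Proposition~\ref{prop:SNonD1}, via \eqref{eq:skein-slide-1}), then replaces the standard configuration $\Xi$ by a single $\alpha$-line;
\item that birth and death are mutually inverse uses only \eqref{eq:saddle-rel} and the eye and merge relations \eqref{eq:alpha-inv-rels}.
\end{itemize}
Your intuition that a full turn of the $\alpha$-strand around a singular point must be trivial, and that this is the twist relation \eqref{eq:twistrelation}, is correct. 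But that relation is needed to show the clearing procedure is independent of the representative (the move \eqref{eq:wrappingalphaaround} in Propositions~\ref{prop:SNonD0}--\ref{prop:SNonD2}, reused in the proof that the cusp-death map is well defined), not in a swallowtail monodromy computation. Once the generators are well defined, the swallowtail, beak and Reidemeister~III relations are essentially immediate. So the step you flag as the main obstacle is not where the difficulty lies, and the step that actually uses the hypothesis is missing from your proposal.
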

Here $\Bord_{12\sim}^{\on{or}}$ is the $(2,1)$-category of oriented 1-manifold, oriented 2-cobordisms and isotopy classes of diffeomorphisms, and $\Bimod$ is the bicategory of linear categories, profunctors and natural transformations.

The main ingredient in the construction of this functor is the presentation of the symmetric monoidal bordism $(2,1)$-category $\Bord_{12\sim}^{\on{or}}$ in terms of Morse and Cerf theory established in \cite{Filippos}, and recalled in Section \ref{subsec:Morse_data+sur}. We need to assign a value to each generating 0, 1 and 2-morphism, which we do as follows:
    \begin{description}
    \item[Objects:] To a connected oriented 1-manifold $\Gamma$ we associate the category $\SN_\II^\alpha(\Gamma)$ introduced in \cite{KST24}. We recall their construction and adapt it to the admissible setting in Section \ref{sec:rigidGraphicalCalculus}.
    \item[1-morphisms:] Every generating 1-morphism gives rise to a $2$-dimensional bordism $\Sigma \colon \Gamma_{in} \to \Gamma_{out}$ equipped with a Morse function, and in particular with the associated Morse singular foliation, to which we associate the twisted string net bimodule
    \[
    \SN_{\II}^{\alpha}(\Sigma) : \SN_\II^\alpha(\Gamma_{in})^{op} \otimes \SN_\II^\alpha(\Gamma_{out}) \to \Vect
    \]
    from $\SN_\II^\alpha(\Gamma_{in})$ to $\SN_\II^\alpha(\Gamma_{out})$. See Section \ref{sec:twistedGraphicalCalculus}.
    \item[2-morphisms:] Most generating 2-morphisms induce foliation-preserving diffeomorphisms which naturally act on twisted string net modules. There are 3 generators, coming from the creation, cancellation or crossing of two critical points, to which we assign specific maps described in Section \ref{subsec:generators}.
    \end{description}
We then need to check a certain number of relations, which we do in Section \ref{subsec:relations+result}.

\begin{remark}
The cobordism hypothesis is only used as guidance in the present work, and not in the technical construction of the TQFT. 
\end{remark}

We investigate examples for our construction, and provide examples satisfying the following.
\begin{theoremA}[Examples \ref{ex:noPivotalStructure} and \ref{ex:NonunimodTwSpherical}]
    There exists finite tensor categories $\BB$ which:
    \begin{enumerate}
        \item admit a twisted-pivotal structure but no pivotal structure
        \item are non-unimodular, but admit twisted pivotal structures for which $\orSN_\II^\alpha(S^2) \neq 0$.
    \end{enumerate}
\end{theoremA}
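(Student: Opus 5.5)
The plan is to produce explicit finite tensor categories and verify both properties by hand, rather than argue abstractly. The natural source is $\BB=\Rep(H)$ for a finite-dimensional Hopf algebra $H$, or the module category of a small quantum group, because every relevant structure is then explicit. The double dual functor is conjugation by $S^2$. An $\alpha$-twisted pivotal structure with $\alpha$ invertible, i.e. a one-dimensional module given by a character $\chi$, amounts to a monoidal isomorphism $(-)^{**}\cong \alpha\otimes-\otimes\alpha^{-1}$. Concretely, this is a grouplike $g\in H$ with $S^2(h)=g\,(\chi\rightharpoonup h\leftharpoonup\chi^{-1})\,g^{-1}$, up to the convention fixed in Definition~\ref{def:alphaTwistedPivStr}. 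By contrast, a pivotal structure is a grouplike $g$ implementing $S^2$ by inner conjugation. By Radford's formula $S^4(h)=a\,(\beta\rightharpoonup h\leftharpoonup\beta^{-1})\,a^{-1}$, where $a$ and $\beta$ are the distinguished grouplike and character. Hence whenever a square root of this data exists, we obtain a twisted pivotal structure with $\alpha$ a square root of the distinguished invertible object.

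For part 1, I would look for an $H$ in which $S^2$ is not inner by any grouplike, yet is twisted-inner in the above sense. A first candidate is a Taft-type or generalized Taft algebra: there $S^2$ is conjugation by a grouplike, so these are pivotal and must be modified. The natural modification is a Radford-type Hopf algebra, or a Nichols-algebra bosonization, whose grouplikes are too few (for example odd-order groups, or groups lacking the needed element) to implement $S^2$, while the character group supplies the twist. To show that no pivotal structure exists, I would list the grouplikes and check that none implements $S^2$. Alternatively, I could use that pivotal structures form a torsor over $\mathrm{Aut}_\otimes(\Id)$, which is the group of central grouplikes, and show the torsor is empty by a degree or weight argument on a skew-primitive generator.

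For part 2, the task is to compute $\orSN_\II^\alpha(S^2)$ with $\II=\Proj(\BB)$, using the Morse function with one minimum and one maximum. The sphere then decomposes as a cup composed with a cap. By Theorem~\ref{thm:main_thm} the module is the coend or composite of the two bimodules, and it should reduce to a Hom space of the form $\Hom_\BB(\unit, P)$ twisted by $\alpha$ against the distinguished invertible object $D$. I expect the computation to give non-vanishing exactly when $\alpha$ relates to $D$ correctly, for example when $\alpha\otimes\alpha\cong D$ or when a suitable twisted modified trace exists. The example is then a non-unimodular $H$, so $D\neq\unit$; a Taft algebra or $\overline{U}_q\mathfrak{b}$ is a natural choice. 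In it I would choose the square-root twisted pivotal structure coming from Radford's formula and exhibit a nonzero vector, namely a nonzero integral-type element or a nonzero twisted trace on the projective cover of $\unit$.

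I expect the main obstacle to be the sphere computation itself, namely identifying $\SN^\alpha_\II(S^2)$ with an explicit Hom space precisely enough to certify that it is nonzero rather than only bounding it. I would first reduce, through the cup and cap generators, to $\Hom_\BB(\unit,\alpha^{\pm1}\otimes D^{\pm1})$ paired with projective covers, and only afterwards match conventions with the chosen Hopf algebra.
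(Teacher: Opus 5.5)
You never impose the twist relation \eqref{eq:twistrelation}, and this is a genuine gap. A monoidal isomorphism $(-)^{**}\cong\alpha\otimes(-)\otimes\alpha^{-1}$, i.e.\ a pair in involution $(g,\chi)$, is only a twisted \emph{quasi}-pivotal structure. By Proposition~\ref{prop:piiistwistedpivotal} it is twisted pivotal exactly when the pair is modular, $\chi(g)=1$. So your claim that a square root of the Radford data automatically gives a twisted pivotal structure is false, and this is precisely what defeats your candidates for part~2.

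In the Taft algebra $H_N$, the unique $T_c$-twisted quasi-pivotal structure satisfies the twist relation iff $c(c+1)\equiv 0$. On the other hand, $\alpha^{\otimes 2}\cong D=T_{N-1}$ needs $2c\equiv -1$. For $N>1$ these are incompatible, so $\SN^\alpha_\II(S^2)=0$ for \emph{every} twisted pivotal structure; the paper proves this. Conceptually, suppose $\chi^2$ and $g^2$ are the distinguished character $\beta$ and group-like $a$ (up to inversion conventions). Then $\chi(g)^4=\beta(a)^{\pm1}$, so modularity forces $\beta(a)=1$. But in $H_N$, and likewise in the small quantum Borel with its single skew-primitive generator, $\beta(a)$ is a nontrivial root of unity.

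You need several skew-primitive generators whose contributions to $\beta(a)$ cancel. This is why the paper passes to generalized Taft algebras of type A1$\times$A1. There everything reduces to the congruences \eqref{eq:cdgivesquasipiv}, \eqref{eq:cdgivespiv}, \eqref{eq:NZ1}, \eqref{eq:NZ2}. These are solved for instance by $N=9$, $(a_1,a_2,b_1,b_2)=(1,3,5,3)$, $(c,d)=(6,6)$, with $D=T_3\neq\unit$ (Example~\ref{ex:NonunimodTwSpherical}).

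For part~1 you discard generalized Taft algebras because $S^2$ is conjugation by a group-like. That is true for $H_N$ but not in general: with two skew-primitive generators, no single group-like need implement $S^2$ on both. Example~\ref{ex:noPivotalStructure} ($N=3$, parameters $(1,1,1,2)$) is exactly such an algebra. There \eqref{eq:cdgivesquasipiv} has no solution with $c=0$, while $(c,d)=(2,0)$ satisfies it together with \eqref{eq:cdgivespiv}. As written, your part~1 names no concrete algebra and again skips the twist relation, so it is a search strategy rather than an existence proof.

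Finally, the sphere computation you flag as the main obstacle is already Corollary~\ref{cor:dimSNS2}: $\SN^\alpha_\II(S^2)\neq0$ iff $\alpha^{\otimes2}\cong D$ and $\apiv^2=\Radford$. The actual difficulty is finding an algebra where these two conditions are compatible with the twist relation.
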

Let us comment on the relevance of the second statement here. As discussed above, one limiting condition for building 3-TQFTs out of pivotal categories is unimodularity. For a finite tensor category $\BB$, with $\II = \Proj(\BB)$, the admissible string net module of the 2-sphere is dual to the space of modified traces which is non-zero if and only if $\BB$ is unimodular \cite{GKPmtrace}. If the vector space associated to $S^2$ is zero, there is no hope of extending string net modules to a 3-TQFT, even a non-compact one, as every 3-manifold factors through $S^2$. Therefore, untwisted admissible string net constructions cannot build 3-manifold invariants out of non-unimodular categories, but our construction could. 


\subsection*{A few words on how this project started}
As explained in \cite{WalkerNotes, HaiounWRTdelCY}, the Witten--Reshetikhin--Turaev 3-TQFTs are best understood as a boundary condition to their anomaly, the Crane--Yetter theory (or in the non-semisimple case, the \cite{CGHP} theory). In this setting, WRT can be understood as a fully extended theory, with values in the arrow category $\operatorname{BrTens}^\to$ of \cite{JFS, BJS}, and associates to the point the regular module $\CC_\CC$ over the input modular tensor category $\CC$, see \cite{FreedSlides, HaiounUnit}. 

In this description, the boundary condition, given by the regular module, is simply the inclusion of the empty skein in every skein module, and is very easy to describe. On the other hand, the anomaly is quite non-trivial to build, and in particular involves the Kirby color of \cite{TuraevBook} or its non-semisimple analogue \cite{Lyub3mfldProjMCG}. However, the resulting 4-TQFT is invertible, so it seems like the anomaly 4-TQFT should have a much simpler description.

When $\CC = \operatorname{Rep}_qG$ is the (non-semisimple) modular tensor category of representations of the small quantum group, this can be observed by the fact that this (quite non-trivial) braided category is Morita equivalent (so isomorphic in $\operatorname{BrTens}$) to the semisimple braided category $\Rep_qT$ of representations of the Cartan $T$. The Morita equivalence is given by the small quantum Borel $\BB=\Rep_qB$. So the 4-TQFT associated to $\Rep_qG$ can equivalently be described by the much simpler $\Rep_qT$ theory. The WRT boundary condition however becomes the composite $$\Rep_qG\underset{\Rep_qG}{\boxtimes}\Rep_qB \simeq \Rep_qB$$ as a boundary condition to $\Rep_qT$, which is a lot more interesting than the regular boundary condition. 

From this discussion, and applying the Cobordism Hypothesis quite liberally, we conclude that the WRT theory should have an equivalent description as $\Rep_qB$ string nets living at the boundary of a $\Rep_qT$-skein-theory bulk, in the spirit of \cite{BrownJordanParabolicSkein}. These categories respectively come with a pivotal and a ribbon structure, and we naively use them for our skein modules. A quick computation, well-known to non-semisimple quantum topologists, shows that this $\Rep_qB$ boundary condition cannot be extended to 3-manifolds. Indeed, $\Rep_qB$ is non-unimodular, and its value on $S^2$ (with bounding $B^3$ bulk) vanishes. 

We are stuck on some apparent contradiction: the WRT boundary condition is certainly well-defined on 3-manifolds (actually, on most 3-manifolds, in this non-semisimple setting) but this isomorphic description isn't. The only possible way where we could have made a mistake here is in the orientation structures, which we chose somewhat arbitrarily. And indeed, orientations structures will have some impact on the value on $S^2$, which doesn't admit any framing.

Our conclusion is that when trying to describe WRT as a $\Rep_qB$ boundary condition to $\Rep_qT$, one cannot use pivotal and ribbon structures as the $SO(2)$ and $SO(3)$ structures, and one needs to wander into the realm of general orientation structures.

In this project, we only study general $SO(2)$-structures on $\Rep_qB$, discarding the bulk $\Rep_qT$. We are very interested in understanding general $SO(3)$-structures on braided categories in the future, which is the first step to study general $SO(2)$-structures on their boundary conditions.


\subsection*{Future directions}
\paragraph{Mapping Class Group representations.} For a finite tensor twisted-pivotal category $\BB$, our construction provides representations of Mapping Class Groups of surfaces, which are expected to be finite dimensional. The theorem above points at such input categories $\BB$ which would yield representations of a possibly very different nature than those coming from untwisted string net modules. We are currently working on an explicit description of these representations.

\paragraph{3-manifold invariants.} For finite tensor categories $\BB$ with $\orSN_\II^\alpha(S^2) \neq 0$, we expect that the oriented categorified 2-TQFT $\orSN_\II^\alpha$ can be extended to a non-compact 3-TQFT. In particular, by the theorem above, we would obtain 3-manifold invariants from non-unimodular categories, which was not possible before. We will come back to the construction of these 3-TQFTs in the near future.

\paragraph{General orientation structures on braided categories.} There is an analogue story one dimension up, with skein theory for braided categories. The usual orientation data in this setting is a ribbon structure. We would be very interested in understanding what is a general $SO(3)$-structure on a braided category, and to have a graphical calculus for them. Note that there is announced work of Douglas, Schommer-Pries and Snyder which would give a description of these general $SO(3)$-structures.

\paragraph{Connection with the oriented cobordism hypothesis.} It is natural to expect that the oriented theory we construct (or an appropriate completion thereof) is the truncation of a corresponding fully extended theory determined by the oriented cobordism hypothesis. To prove this would require a detailed understanding of the oriented cobordism hypothesis, including a prescription of how diffeomorphisms of surfaces act in terms of duality data and $SO(2)$-fixed point data. A precise understanding of the $SO(2)$-action on the groupoid of tensor categories would also be necessary. An intermediate step would be to compare the assignments on surfaces.

\subsection*{Outline}
We begin by recalling the \cite{JoyalStreetTensorCalculus} progressive graphical calculus for monoidal categories in Section \ref{sec:rigidGraphicalCalculus}, which we view as a graphical calculus for foliated surfaces. We also incorporate the admissibility conditions of \cite{CGP}.

We define the notion of \textit{twisted pivotal structures} in Section \ref{sec:twistedPivStr} and give some of its elementary properties. This notion arises from the study of orientation structures for the cobordism hypothesis and we recall the main motivating ideas.

We introduce the main object of the paper, namely \textit{twisted string net modules}, in Section \ref{sec:twistedGraphicalCalculus}. This has been the most cumbersome definition to work out, and we prove various results showing its good behavior, in particular we compute the values on different foliated disks. We also prove an excision result which can be reformulated as saying that twisted string net modules arrange in a foliated TQFT.

We prove our main result in Section \ref{sec:TQFT}: twisted string net modules, which a priori depend on a foliation, or a Morse function, on a surface actually only depend on the underlying oriented surface. More precisely, we construct an oriented categorified 2-TQFT.

We study the resulting categorified 2-TQFT in Section \ref{sec:Properties_S2}. In particular we relate the value on the 2-sphere to the distinguished invertible object of \cite{ENOdistinguishedinvertible} and to the twisted modified traces of \cite{GKPmtrace}.

We study examples for our construction coming from finite-dimensional Hopf algebras in Section \ref{sec:Examples}. We study one family of Hopf algebras for which we can classify all twisted pivotal structures, and which provide examples with various interesting behaviors. 


\subsection*{Acknowledgments}
We would like to thank Sebastian Halbig for explaining to us the examples coming from generalized Taft algebras, Lukas M\"uller for enlightening conversations on distinguished invertible objects, and Noah Snyder, Chris Schommer-Pries, Kevin Walker, Bruce Bartlett, Nathan Geer and François Costantino for encouragements and helpful discussions.

BH and WS are funded by the Simons Foundation award 1013836 as part of the Simons Collaboration on Global Categorical Symmetry. WS also acknowledges funding by the Deutsche Forschungsgemeinschaft  (DFG, German Research Foundation) through the Collaborative Research Center SFB 1085 Higher invariants - 224262486. FS acknowledges funding by the Max Planck Institute for Mathematics in Bonn.


\section{Graphical calculus for rigid categories in foliated surfaces}\label{sec:rigidGraphicalCalculus}
Unless stated otherwise, every manifold we consider will be compact, oriented and smooth, possibly with boundary.
\begin{definition}
A \textit{framing} on an oriented surface $\Sigma$ is a trivialization of its tangent bundle, i.e. the choice of two vector fields $\vec{x}$ and $\vec{y}$ on $\Sigma$ which form an oriented basis of the tangent space at every point.
It is not very easy to draw framings, and we will use another representation. 

A \textit{foliation} on a surface $\Sigma$ is a partition of $\Sigma = \underset{\lambda\in\Lambda}\sqcup \Gamma_\lambda$ into a disjoint union of embedded oriented 1-manifolds which is locally diffeomorphic to the standard foliation of $\mathbb R^2 = \underset{\lambda\in\mathbb R}\sqcup \mathbb R \times \{\lambda\}$. A \textit{leaf} of the foliation is a connected component of one of the 1-manifolds $\Gamma_\lambda$. A framing induces a foliation by partitioning $\Sigma$ into flow lines of $\vec{x}$. A foliation determines a framing up to a contractible choice. 

An embedded oriented graph $T \hookrightarrow \Sigma$ in a framed surface is \textit{progressive} if for every point $p$ of an edge of $T$, its tangent vector in the direction of $T$ has a positive $\vec{y}$-coordinate. 
An embedded oriented graph $T \hookrightarrow \Sigma$ in a foliated surface is \textit{positively transverse} if it intersect every leaf $\Gamma_\lambda$ transversely and positively. We only ask this condition on edges of $T$ and not on vertices. 
The graph $T$ is progressive with respect to some framing if and only if it is positively transverse to the induced foliation.
\end{definition}
\begin{warning}
    A progressive graph cannot always be transported through an isotopy of the framing. Indeed, the circle $\{0\}\times S^1$ is progressive in the torus $S^1\times S^1$ with standard framing $(\partial_x,\partial_y)$. However, this framing is isotopic to $(\partial_y,-\partial_x)$ by a global rotation, and no curve isotopic to  $\{0\}\times S^1$ is progressive in this framing. We do not consider framings up to isotopy. This problem is fixed by assuming rigidity, though we will not need this here. To avoid any confusion with the usual notion of framing which is often considered up to isotopy relative to the boundary, we will only talk about foliated surfaces thereafter.
\end{warning}

\begin{warning}
    If $\gamma$ is a single curve on $\Sigma$, we will say that $T$ is transverse to $\gamma$ if every edge of $T$ intersects $\gamma$ transversely, and the vertices of $T$ do not intersect $\gamma$. This is a generic condition. However, if $(\Gamma_\lambda)_\lambda$ is a foliation, when we say that $T$ is transverse to it we do not ask this condition on vertices (as it would never be satisfied). In other words, if $T$ is transverse to a foliation, and $\gamma$ is a leaf of the foliation, it is not automatic that $T$ is transverse to $\gamma$ as we still need to ensure that no vertices of $T$ lie in $\gamma$.
\end{warning}

Note that any graph $T$ can be made progressive up to a small isotopy, introducing new vertices in some edges and changing the orientations, see Figure \ref{fig:makeCapProgressive}.
\begin{figure}
    \centering
\begin{tikzpicture}[xscale = 2, yscale = 1.5, baseline = 0.5cm]
    \def\nlines{10}
    \foreach \y in {0,...,\nlines} {
    \draw[gray!70] (0,\y/\nlines) -- ++(1,0);
    }
    \draw[gray, ->](1.1,0.5)--++(0,0.3) node[midway, right = -2pt]{\small $\vec{y}$};
    \draw[very thick] (0.2, 0) arc(180:0:0.3 and 0.7) node[pos = 0.3, sloped]{$>$};  
    \end{tikzpicture} \quad$\leadsto$\quad
    \begin{tikzpicture}[xscale = 2, yscale = 1.5, baseline = 0.5cm]
    \def\nlines{10}
    \foreach \y in {0,...,\nlines} {
    \draw[gray!50] (0,\y/\nlines) -- ++(1,0);
    }
    \draw[gray, ->](1.1,0.5)--++(0,0.3) node[midway, right = -2pt]{\small $\vec{y}$};
    \draw[very thick] (0.2, 0) -- (0.2,0.2)node[pos = 1, sloped]{$>$} to[out = 90, in = -135] (0.5,0.7) node{\small $\bullet$} to[out = -45, in = 90] (0.8,0.2) -- (0.8,0)node[pos = 0, sloped]{$<$};  
    \end{tikzpicture}    \caption{Left: A graph which is not progressive, the top of the cap is not transverse to the foliation (in gray) and half of the graph is negatively transverse to it. Right: A very similar progressive graph.}
    \label{fig:makeCapProgressive}
\end{figure}

\begin{definition}
Let $\BB$ be a monoidal category. A \textit{$\BB$-labeling} in an oriented 1-manifold $\Gamma$ is a finite set of points in $\Gamma$ each colored by an object of $\BB$. A \textit{$\BB$-string net} in a framed surface $\Sigma$ is a progressive embedded oriented graph $T \hookrightarrow \Sigma$ which intersect the boundary of $\Sigma$ transversely and only at 1-valent vertices, whose edges are colored by objects of $\BB$ and whose non-boundary vertices are colored by morphisms of $\BB$ with source and target prescribed by the adjacent edges. 
\begin{equation*}
    \begin{tikzpicture}[xscale = 2, yscale = 1.5, baseline = 0.5cm]
    \def\nlines{10}
    \foreach \y in {0,...,\nlines} {
    \draw[gray!50] (0,\y/\nlines) -- ++(1,0);
    }
    \draw[gray, ->](-0.2,0.5)--++(0,0.3) node[midway, right = -2pt]{\small $\vec{y}$};
    \draw[very thick] (0.2, 0) to[out = 90, in = -135] node[pos = 0.2, sloped]{$>$} node[pos = 0.2, left]{$x$} (0.5,0.5);
    \draw[very thick] (0.5, 0) to[out = 90, in = -90] node[pos = 0.2, sloped]{$>$} node[pos = 0.2, left]{$y$} (0.5,0.5);
    \draw[very thick] (0.8, 0) to[out = 90, in = -45] node[pos = 0.2, sloped]{$<$} node[pos = 0.2, left]{$z$} (0.5,0.5);
    \draw[very thick] (0.3, 1) to[out = -90, in = 125] node[pos = 0.2, sloped]{$<$} node[pos = 0.2, left]{$v$} (0.5,0.5)node{\small $\bullet$};
    \draw[very thick] (0.7, 1) to[out = -90, in = 55] node[pos = 0.2, sloped]{$>$} node[pos = 0.2, left]{$w$} (0.5,0.52)node[right=6pt]{$f \in \Hom_\BB(x\otimes y \otimes z, v \otimes w)$};
    \end{tikzpicture}
    \quad
    \text{also denoted}
    \quad
    \begin{tikzpicture}[xscale = 2, yscale = 1.5, baseline = 0.5cm]
    \def\nlines{10}
    \foreach \y in {0,...,\nlines} {
    \draw[gray!50] (0,\y/\nlines) -- ++(1,0);
    }
    \draw[gray, ->](-0.2,0.5)--++(0,0.3) node[midway, right = -2pt]{\small $\vec{y}$};
    \draw[very thick] (0.2, 0) to[out = 90, in = -135] node[pos = 0.2, sloped]{$>$} node[pos = 0.2, left]{$x$} (0.5,0.5);
    \draw[very thick] (0.5, 0) to[out = 90, in = -90] node[pos = 0.2, sloped]{$>$} node[pos = 0.2, left]{$y$} (0.5,0.5);
    \draw[very thick] (0.8, 0) to[out = 90, in = -45] node[pos = 0.2, sloped]{$<$} node[pos = 0.2, left]{$z$} (0.5,0.5);
    \draw[very thick] (0.3, 1) to[out = -90, in = 125] node[pos = 0.2, sloped]{$<$} node[pos = 0.2, left]{$v$} (0.5,0.5)node{\small $\bullet$};
    \draw[very thick] (0.7, 1) to[out = -90, in = 55] node[pos = 0.2, sloped]{$>$} node[pos = 0.2, left]{$w$} (0.5,0.5);
    \node[rectangle, draw, fill=white, thick, inner sep = 2pt, minimum width = 1cm]at (0.5,0.5) {$f$};
    \end{tikzpicture}
\end{equation*}
These are considered up to isotopy relative to the boundary in the space of $\BB$-string net, i.e. preserving the progressive property.
The boundary of a $\BB$-string net is naturally a $\BB$-labeling in the boundary of $\Sigma$ with color inherited from the only adjacent edge.

String nets can be transported along embeddings $\iota:\Sigma\to \Sigma'$ preserving the framings, and more generally along orientation-preserving embeddings preserving only the induced foliation, which will still preserve the progressive property. We call such embeddings \textit{progressive}. This laxer notion is handy because every point $p \in \Sigma$ has a neighborhood $\varphi :\D \hookrightarrow\Sigma$ which is progressive for the standard framing on the disk, though it does not necessarily admit one which preserves the framing without first changing the framing on $\Sigma$ through an isotopy.
\end{definition}
To a $\BB$-labeling on an interval one associates an object of $\BB$ by taking the tensor product of the colors in the order they appear. This object is well-defined up to unique isomorphism by the coherence of monoidal categories.

There is a well-defined evaluation map, denoted $Ev$ and referred to as the graphical calculus for monoidal categories, from $\BB$-string nets in $[0,1]^2$ with boundary in $[0,1]\times \{0,1\}$ to morphisms in $\BB$ from the object associated to the $\BB$-labeling in $[0,1]\times \{0\}$ to the object associated to the $\BB$-labeling in $[0,1]\times \{1\}$.
\begin{equation*}
Ev\left(
    \begin{tikzpicture}[xscale = 2, yscale = 1.5, baseline = 0.6cm]
    \def\nlines{10}
    \foreach \y in {0,...,\nlines} {
    \draw[gray!50] (0,\y/\nlines) -- ++(1,0);
    }
    \draw[very thick] (0.2, 0) to[out = 90, in = -135] node[pos = 0.2, sloped]{$>$} node[pos = 0.2, left]{$x$} (0.5,0.4);
    \draw[very thick] (0.5, 0) to[out = 90, in = -90] node[pos = 0.2, sloped]{$>$} node[pos = 0.2, left]{$y$} (0.5,0.4);
    \draw[very thick] (0.8, 0) to[out = 90, in = -45] node[pos = 0.2, sloped]{$<$} node[pos = 0.2, left]{$z$} (0.5,0.4);
    \draw[very thick] (0.3, 1) to[out = -90, in = 125] node[pos = 0.2, sloped]{$<$} node[pos = 0.2, left]{$v$} (0.5,0.4)node{\small $\bullet$};
    \draw[very thick] (0.7, 1) to[out = -90, in = 55] node[pos = 0.2, sloped]{$>$} node[pos = 0.2, left]{$w$}  node[pos = 0.8, sloped]{$>$} node[pos = 0.7, left]{$t$}  node[midway]{\small $\bullet$} node[midway, right]{$g$}(0.5,0.4)node[right]{$f$};
    \end{tikzpicture}\right)
     = (\Id_v \otimes g)\circ f \in \Hom_\BB(x\otimes y \otimes z, v \otimes w)
\end{equation*}
See \cite{JoyalStreetTensorCalculus, KST24} for details.

\begin{definition} Let $\Sigma$ be a foliated surface, $\BB$ a $\Bbbk$-linear monoidal category and $X$ a $\BB$-labeling on $\partial \Sigma$.
The \textit{relative string net module} $\SN_\BB(\Sigma; X)$ of $\Sigma$ with coefficient in $\BB$ and boundary condition $X$ is the $\Bbbk$-vector-space generated by $\BB$-string nets in $\Sigma$ with boundary $X$ modulo isotopy and the \textit{progressive skein relations}
\begin{equation}\label{eq:progressiveskeinrel}
\sum_i \lambda_i T_i \sim 0
\end{equation}
if the string nets $T_i$ coincide outside a progressive embedded square $\varphi:[0,1]^2\hookrightarrow \Sigma$, intersect it transversely only along $[0,1]\times \{0,1\}$, and such that its evaluation as a morphism in $\BB$ is zero, i.e. $\sum_i \lambda_i Ev(T_i\vert_{im\, \varphi}) = 0$.

A progressive embedding $\iota:\Sigma\to\Sigma'$ preserving the boundary labellings induces a linear map between relative string net modules by transporting string nets under $\iota$ as above. Two isotopic such embeddings (related by a path in the space of progressive embeddings preserving the boundary labellings) induce the same linear map.
\end{definition}

\subsection{Admissibility}
When $\BB$ is non-semisimple, string net modules are not quite the correct notion. In particular, even in the pivotal and unimodular cases, no extension to 3-manifolds is known (and none is expected to exist). This problem is fixed by considering admissible string net modules \cite{CGPAdmissbleskein, CGPVnc2plus1}. We now adapt their definition to our foliated context.

\begin{definition}
    Let $\II \subseteq\BB$ be a (two-sided) tensor ideal, i.e. a full subcategory closed under retracts and tensoring by objects of $\BB$ on either side. In particular, it is stable under isomorphisms and, if $\BB$ is rigid, under taking duals. The reader is encouraged to think of the ideal of projective objects in a tensor category.

    A $\BB$-labelling on a 1-manifold $\Gamma$ is \textit{admissible} if there is at least one point per connected component of $\Gamma$ whose label lies in $\II$. A $\BB$-string net in a foliated surface $\Sigma$ is \textit{admissible} if there is at least one $\II$-colored edge per connected component of $\Sigma$ (which does not imply that its boundary labelling is admissible). A progressive skein relation is \textit{admissible} if there is at least one $\II$-colored edge per connected component outside the square where the relation happens. In the case $\II= \BB$, then, up to adding some strands colored by the unit, every boundary label, string net, and skein relation is admissible.

    Let $\Sigma$ be a foliated surface and $X$ a $\BB$-labelling in its boundary (not necessarily admissible). The \textit{admissible relative string net module} $\SN_\II(\Sigma;X)$ is generated by admissible $\BB$-string nets in $\Sigma$ with boundary $X$ modulo isotopy and admissible progressive skein relations. If there is at least one $\II$-colored boundary label per connected component of $\Sigma$, admissibility of string nets and skein relations is automatic, and this agrees with the $\BB$-string net module.
\end{definition}

\begin{remark}
    It is not uncommon to ask that every edge of the $\II$-string net module $\SN_\II(\Sigma;X)$ is colored by an object of $\II$, instead of just one per connected component. If every boundary color lies in $\II$ and $\BB$ is rigid, this can be ensured using that $\II$ is a tensor ideal. However, it will be important for us to use strands that are not colored by objects of $\II$.
\end{remark}

\subsection{Functoriality}\label{sec:functoriality}
Let us now encode functoriality in the boundary condition $X$ in a convenient way. 
\begin{definition}\label{def:SN_on_1-mflds}
    Let $\Gamma$ be a closed oriented 1-manifold. The \textit{string net category} $\SN_\II(\Gamma)$ has objects admissible $\BB$-labelings $X$ on $\Gamma$ and morphisms the string net module $\SN_\II(\Gamma\times I; X\times \{0\} \sqcup Y\times \{1\})$ where $\Gamma\times I$ has cylindrical foliation with level sets $\Gamma\times \{t\}$. Composition is given by gluing string nets along their common boundary and identifying $(\Gamma\times I) \underset{\Gamma}{\cup}(\Gamma\times I)\simeq \Gamma\times I$. The identity is $X\times I$.
    \end{definition}
The string net category of the interval $\SN_\II(I)$ is equivalent to $\II$ by the evaluation functor. 

Every object of the string net category of the circle $\SN_\II(S^1)$ is isomorphic to a labeling with a single point $1\in S^1$ colored by some object $x\in \II$. Morphisms between two such objects are generated by morphisms of the form
\begin{equation}\label{eq:genMorphSNS1}
\begin{tikzpicture}[scale = 1.2, baseline = 0.5cm]
    \def\nlines{5}
    \foreach \y in {1,...,\nlines} {
    \draw[gray!50, dashed] (0,\y/\nlines) arc(180:0:1 and 0.24);
    \draw[gray!50] (0,\y/\nlines) arc(180:360:1 and 0.24);
    }
    \draw[dashed] (0,0) arc(180:0:1 and 0.24);
    \draw (0,0) arc(180:360:1 and 0.24);
    \draw (0,1) arc(180:540:1 and 0.24);
    \draw (0,0) -- ++(0,1);
    \draw (2,0) -- ++(0,1);
    \draw[gray, ->](-0.2,0.5)--++(0,0.3) node[midway, left = 0pt]{\small $\vec{y}$};
    \draw[very thick] (1,-0.24) -- ++(0,1) node[pos = 0]{\small $\bullet$} node[pos = 0.25, sloped]{\small $>$} node[pos = 0.25, left]{$x$}node[pos = 0.75, sloped]{\small $>$} node[pos = 0.75, left]{$y$} node[pos = 1]{\small $\bullet$} node[pos = 0.5]{\small $\bullet$} node[pos = 0.5, right]{$f$};  
    \end{tikzpicture} 
\text{ for } f \in \Hom_\BB(x,y) \text{ and } 
\begin{tikzpicture}[scale = 1.2, baseline = 0.5cm]
    \def\nlines{5}
    \foreach \y in {1,...,\nlines} {
    \draw[gray!50, dashed] (0,\y/\nlines) arc(180:0:1 and 0.24);
    \draw[gray!50] (0,\y/\nlines) arc(180:360:1 and 0.24);
    }
    \draw[dashed] (0,0) arc(180:0:1 and 0.24);
    \draw (0,0) arc(180:360:1 and 0.24);
    \draw (0,1) arc(180:540:1 and 0.24);
    \draw (0,0) -- ++(0,1);
    \draw (2,0) -- ++(0,1);
    \draw[gray, ->](-0.2,0.5)--++(0,0.3) node[midway, left = 0pt]{\small $\vec{y}$};
    \draw[very thick] (1.1,-0.24) -- ++(-0.2,1) node[pos = 0]{\small $\bullet$} node[pos = 0.5, sloped]{\small $<$} node[pos = 0.5, right]{$x$} node[pos = 1]{\small $\bullet$};  
    \draw[very thick] (0.9,-0.24) to[out = 110, in = -65] node[pos = 0]{\small $\bullet$} node[pos = 0.5, sloped]{\small $<$} node[pos = 0.5, above]{$y$} (0,0.25);
    \draw[very thick, dashed]   (0,0.25)  to[out = 40, in = 175] (2,0.65);
    \draw[very thick] (2,0.65) to[out = -160, in = -70] (1.1, 0.76) node{\small $\bullet$};
    \end{tikzpicture} 
\text{ from } y\otimes x \text{ to } x \otimes y
\end{equation}
and the inverses of the second ones.

    \begin{definition}\label{def:SNfunctor}
    Let $\Sigma$ be a foliated surface with foliated boundary $\Gamma\subseteq\partial \Sigma$, i.e. each connected component of $\Gamma$ lies in a single leaf of the foliation, and they all admit a foliated collar as in Definition \ref{def:Bord^Morse_fol}. We decompose $\Gamma= \Gamma_{in}\sqcup \Gamma_{out}$ depending on whether the progressive direction points inwards or outwards at the boundary. The string net modules of $\Sigma$ assemble into a functor
    \begin{equation}\label{eq:SNcobordismBimodule}
    \begin{array}{cccccc}
    \SN_\II(\Sigma;-): & \SN_\II(\Gamma_{in})^{op} &\otimes &\SN_\II(\Gamma_{out}) &\to &\Vect\\
    & X&\otimes& Y  &\mapsto & \SN_\II(\Sigma;X\sqcup Y) \ .
    \end{array}
    \end{equation}
    On morphism, it is given by gluing string nets along their common boundary and identifying $(\Gamma\times I) \underset{\Gamma}{\cup}\Sigma\simeq \Sigma$.
\end{definition}

\begin{remark}
    One can define a similar string net category associated to a 1-manifold equipped with any 2-framing \cite[Def. 6.1]{KST24}, and the definition above corresponds to the trivial, or cylindrical, 2-framing. One would need a slight adjustment and add the data of an orientation to $\BB$-labelings. This notion is useful as in general cutting a foliated surface along a curve, the 2-framing restricted to the curve is not cylindrical. However, in this paper we will be interested in foliations coming from Morse functions, and decompositions of surfaces along regular level sets of this Morse function, which all have cylindrical framing. Hence this more general definition will not be necessary and we opted to not overload the reader with definitions.
\end{remark}

\subsection{Rigidity}
\begin{definition}
    Let $x \in \BB$ be an object of a monoidal category, a \emph{right dual} for $x$ is an object $x^* \in \BB$ together with a right evaluation map $\rev_x : x^* \otimes x \to \unit$ and a right coevaluation map $\rcoev_x: \unit \to  x\otimes x^*$ satisfying the snake relations. A \emph{left dual} for $x$ is an object ${}^*x \in \BB$ together with a left evaluation map $\lev_x : x \otimes {}^*x \to \unit$ and $\lcoev_x: \unit \to  {}^*x\otimes x$ satisfying the snake relations. 
\end{definition}
\begin{remark}
    If $\BB$ is rigid, meaning that all of its object have right and left duals, then there is a natural choice for coloring the graph in Figure \ref{fig:makeCapProgressive}. However, not every graph obtained through this procedure has a natural coloring, see Figure \ref{fig:ColoringCapCircle}.
    \begin{figure}
    \centering
    \begin{tikzpicture}[xscale = 2, yscale = 1.5, baseline = 0.5cm]
    \def\nlines{10}
    \foreach \y in {0,...,\nlines} {
    \draw[gray!50] (0,\y/\nlines) -- ++(1,0);
    }
    \draw[gray, ->](1.1,0.5)--++(0,0.3) node[midway, right = -2pt]{\small $\vec{y}$};
    \draw[very thick] (0.2, 0) -- (0.2,0.2)node[pos = 1, sloped]{$>$}node[pos = 1, left]{$x^*$} to[out = 90, in = -125] (0.5,0.7) node[rectangle, draw, fill = white]{$\rev_x$} to[out = -55, in = 90] (0.8,0.2) -- (0.8,0)node[pos = 0, sloped]{$<$} node[pos = 0, right]{$x$};  
    \end{tikzpicture} \quad \quad \quad \quad \begin{tikzpicture}[xscale = 2, yscale = 1.5, baseline = 0cm]
    \def\nlines{10}
    \foreach \y in {-\nlines,...,\nlines} {
    \draw[gray!50] (0,\y/\nlines) -- ++(1,0);
    }
    \draw[gray, ->](1.1,0.5)--++(0,0.3) node[midway, right = -2pt]{\small $\vec{y}$};
    \draw[very thick] (0.2, 0) -- (0.2,0.2)node[pos = 0, sloped]{$>$}node[pos = 1, above left = -2pt]{$\bm{ x^*}$} to[out = 90, in = -125] (0.5,0.7) node[rectangle, draw, fill = white]{$\rev_x$} to[out = -55, in = 90] (0.8,0.2) -- (0.8,0)node[pos = 0, sloped]{$<$} node[pos = 0, right]{$x$};  
    \draw[very thick] (0.2, 0) -- (0.2,-0.2)node[pos = 1, left]{$\bm{ {}^*x}$}node[pos = 0, above, rotate = 90]{$\bm{\neq}$} to[out = -90, in = 125] (0.5,-0.7) node[rectangle, draw, fill = white]{$\lcoev_x$} to[out = 55, in = -90] (0.8,-0.2) -- (0.8,0);  
    \end{tikzpicture}
    \caption{Left: The favorite coloring for the progressive cap. Right: There is no favorite coloring for the circle.}
    \label{fig:ColoringCapCircle}
\end{figure}
    The usual fix for this problem, see e.g. \cite{Selinger}, is to equip $\BB$ with a pivotal structure, which identifies the left and right duals in the circle of Figure \ref{fig:ColoringCapCircle}. This is \textit{not} the solution that we will use.
\end{remark}

\begin{notation}
    To make the graphical calculus lighter, we will omit labels of 2-valent vertices when they are labeled by an evaluation or a coevaluation, i.e.
    \begin{equation*}
    \begin{tikzpicture}[xscale = 2, yscale = 1.5, baseline = 0.5cm]
    \def\nlines{10}
    \foreach \y in {0,...,\nlines} {
    \draw[gray!50] (0,\y/\nlines) -- ++(1,0);
    }
    \draw[gray, ->](1.1,0.5)--++(0,0.3) node[midway, right = -2pt]{\small $\vec{y}$};
    \draw[very thick] (0.2, 0) -- (0.2,0.2)node[pos = 1, sloped]{$>$}node[pos=1,left]{$x^*$} to[out = 90, in = -135] (0.5,0.7) node{\small $\bullet$} to[out = -45, in = 90] (0.8,0.2) -- (0.8,0)node[pos = 0, sloped]{$<$}node[pos = 0, right]{$x$};  
\end{tikzpicture}:=
\begin{tikzpicture}[xscale = 2, yscale = 1.5, baseline = 0.5cm]
    \def\nlines{10}
    \foreach \y in {0,...,\nlines} {
    \draw[gray!50] (0,\y/\nlines) -- ++(1,0);
    }
    \draw[gray, ->](1.1,0.5)--++(0,0.3) node[midway, right = -2pt]{\small $\vec{y}$};
    \draw[very thick] (0.2, 0) -- (0.2,0.2)node[pos = 1, sloped]{$>$}node[pos = 1, left]{$x^*$} to[out = 90, in = -125] (0.5,0.7) node[rectangle, draw, fill = white]{$\rev_x$} to[out = -55, in = 90] (0.8,0.2) -- (0.8,0)node[pos = 0, sloped]{$<$} node[pos = 0, right]{$x$};  
    \end{tikzpicture}\hspace{7mm} \text{and}\hspace{7mm}
\begin{tikzpicture}[xscale = 2, yscale = 1.5, baseline = 0.5cm]
    \def\nlines{10}
    \foreach \y in {0,...,\nlines} {
    \draw[gray!50] (0,\y/\nlines) -- ++(1,0);
    }
    \draw[gray, ->](1.1,0.5)--++(0,0.3) node[midway, right = -2pt]{\small $\vec{y}$};
    \draw[very thick] (0.2, 1) -- (0.2,0.8)node[pos = 1, sloped]{$<$}node[pos=1,left]{$x$} to[out = -90, in = 135] (0.5,0.3) node{\small $\bullet$} to[out = 45, in = -90] (0.8,0.8) -- (0.8,1)node[pos = 0, sloped]{$>$}node[pos = 0, right]{$x^*$};  
    \end{tikzpicture}:=
    \begin{tikzpicture}[xscale = 2, yscale = 1.5, baseline = 0.5cm]
    \def\nlines{10}
    \foreach \y in {0,...,\nlines} {
    \draw[gray!50] (0,\y/\nlines) -- ++(1,0);
    }
    \draw[gray, ->](1.1,0.5)--++(0,0.3) node[midway, right = -2pt]{\small $\vec{y}$};
    \draw[very thick] (0.2, 1) -- (0.2,0.8)node[pos = 1, sloped]{$<$}node[pos = 1, left]{$x$} to[out = -90, in = 135] (0.5,0.3) node[rectangle, draw, fill = white]{$\rcoev_x$} to[out = 45, in = -90] (0.8,0.8) -- (0.8,1)node[pos = 0, sloped]{$>$} node[pos = 0, right]{$x^*$};  
    \end{tikzpicture}\ .
\end{equation*}
\end{notation}

\section{Twisted pivotal structures}\label{sec:twistedPivStr}
\begin{definition}\label{def:alphaTwistedQuasiPivStr}
    Let $\alpha \in \BB^\times$ be an invertible object in a rigid monoidal category $\BB$. An \textit{$\alpha$-twisted quasi-pivotal structure} $\apiv$ on $\BB$ is the data of a natural isomorphism
    \begin{equation}
        \apiv:(-)^{\ast\ast}  \otimes \alpha \simeq \alpha \otimes (-)
    \end{equation}
    which is compatible with the monoidal structure, in the sense that $\apiv_\unit$ is induced by the canonical isomorphism $\unit^{**}\simeq \unit$ and unitors and the twisted pivotal structure on a tensor product $\apiv_{x\otimes y}$ is given by the composite
    $$(x\otimes y)^{**}\otimes \alpha \overset{can}{\simeq} x^{**}\otimes y^{**}\otimes \alpha \overset{\Id_{x^{**}}\otimes\apiv_y}{\simeq} x^{**}\otimes \alpha \otimes y \overset{\apiv_x\otimes \Id_y}{\simeq} \alpha \otimes x\otimes y\ .$$
\end{definition}
\begin{definition}\label{def:alphaTwistedPivStr}
    An {$\alpha$-twisted quasi-pivotal structure} $\apiv$ is called an \textit{$\alpha$-twisted pivotal structure} if it satisfies the \emph{twist relation}
    \begin{equation}\label{eq:twistrelation}
        \begin{tikzpicture}[baseline = 0.5cm, scale = 1]
            \draw (0,0) -- (0,2.5) node[pos = 0.5, sloped]{$>$} node[pos = 0.5, left]{$\alpha$};
            \draw (1,0)--(1,2) node[pos = 0.25, sloped]{\small $>$} node[pos = 0.25, right]{\small $\alpha^*$} node[pos = 0.75, sloped]{\small $>$} node[pos = 0.75, right]{\small $\alpha$};
            \draw (2,-0.5) -- (2,0) -- (2,2) node[pos = 0.1, sloped]{ $>$} node[pos = 0.1, right]{$\alpha$} node[pos = 0.75, sloped]{\small $>$} node[pos = 0.75, right]{\small ${}^*\alpha$};
            \node[rectangle, draw, fill=white, minimum width=1.5cm] at (0.5,0) {$\rcoev_\alpha$};
            \node[rectangle, draw, fill=white, minimum width=1.5cm, minimum height = 0.6cm] at (1.5,1) {$\apiv_{\,{}^*\alpha}$};
            \node[rectangle, draw, fill=white, minimum width=1.5cm] at (1.5,2) {$\lev_\alpha$};
            \draw (4,-0.5) -- (4,2.5) node[pos = 0.5, sloped]{$>$} node[pos = 0.5, right]{$\alpha$} node[pos = 0.5, left = 0.7cm]{$=$};
        \end{tikzpicture}
    \end{equation}
    A rigid monoidal category $\BB$ equipped with an $\alpha$-twisted pivotal structure for some $\alpha\in\BB^\times$ is called a \textit{twisted-pivotal category}.
\end{definition}

\begin{remark}
The notion of twisted quasi-pivotal structure appears in several places in the literature. It is called a \textit{quasi-pivotal structure} in \cite{HalbigZormanPivotalityTwisted} or a \textit{$(-)^{**}$-twisted half braiding} on $\alpha$ in \cite{DSPS}. The name \textit{$h$-pivotal structure} was also suggested by Douglas--Schommer-Pries--Snyder. Objects equipped with a $(-)^{**}$-twisted half braiding form a category called the \textit{trace category} in \cite[Definition 3.1.4]{DSPS}. It is shown in \cite{KST24} that, after an appropriate completion, this is the value that the framed $\BB$-string net theory assigns to the circle with product framing, recalled in Definition \ref{def:SNfunctor}. It is also shown in \cite{ShimizuPivotalOnDrinfeld} that twisted quasi-pivotal structures on a finite tensor category $\BB$ correspond to pivotal structures on its Drinfeld center.

In the context of Hopf algebras, twisted quasi-pivotal structures correspond to pairs in involutions \cite{HalbigGenTaft}. The twist relation also appeared in this context under the name \textit{modular}.
\end{remark}


\begin{notation}
Let $\alpha \in \BB^{\times}$ be an invertible object. To fix conventions for the inverse, we set $\alpha^{-1} = \alpha^*$. The evaluation and coevaluation maps $\rev_{\alpha}: \alpha^{\ast} \otimes \alpha \to \unit$ and $\rcoev_{\alpha}:\unit \to \alpha \otimes \alpha^{\ast}$ are invertible and thus witness the invertibility of $\alpha$. Moreover, their inverses $\rcoev_{\alpha}^{-1}: \alpha \otimes \alpha^{\ast} \to \unit$ and $\rev_{\alpha}^{-1}:\unit \to \alpha^{\ast} \otimes \alpha$ express $\alpha^{\ast}$ as a left dual to $\alpha$. Without loss of generality we may assume that ${}^{*}\alpha = \alpha^{-1}=\alpha^{*}$, with left evaluation and coevaluation maps given by $\rcoev_{\alpha}^{-1}$ and $\rev^{-1}_{\alpha}$ respectively. In our graphical calculus, we get a canonical interpretation for all types of caps and cups:
    \begin{equation*}
    \begin{tikzpicture}[xscale = 2, yscale = 1.5, baseline = 0.5cm]
    \def\nlines{10}
    \foreach \y in {0,...,\nlines} {
    \draw[gray!50] (0,\y/\nlines) -- ++(1,0);
    }
    \draw[gray, ->](1.1,0.5)--++(0,0.3) node[midway, right = -2pt]{\small $\vec{y}$};
    \draw[very thick] (0.2, 0) -- (0.2,0.2)node[pos = 1, sloped]{$>$}node[pos=1,left]{$\alpha$} to[out = 90, in = -135] (0.5,0.7) node{\small $\bullet$} to[out = -45, in = 90] (0.8,0.2) -- (0.8,0)node[pos = 0, sloped]{$<$}node[pos = 0, right]{$\alpha^{-1}$}; 
\end{tikzpicture}:=
\begin{tikzpicture}[xscale = 2, yscale = 1.5, baseline = 0.5cm]
    \def\nlines{10}
    \foreach \y in {0,...,\nlines} {
    \draw[gray!50] (0,\y/\nlines) -- ++(1,0);
    }
    \draw[gray, ->](1.1,0.5)--++(0,0.3) node[midway, right = -2pt]{\small $\vec{y}$};
    \draw[very thick] (0.2, 0) -- (0.2,0.2)node[pos = 1, sloped]{$>$}node[pos = 1, left]{$\alpha$} to[out = 90, in = -125] (0.5,0.7) node[rectangle, draw, fill = white]{$\rcoev_\alpha^{-1}$} to[out = -55, in = 90] (0.8,0.2) -- (0.8,0)node[pos = 0, sloped]{$<$} node[pos = 0, right]{$\alpha^{-1}$};  
    \end{tikzpicture}\hspace{10mm}
\begin{tikzpicture}[xscale = 2, yscale = 1.5, baseline = 0.5cm]
    \def\nlines{10}
    \foreach \y in {0,...,\nlines} {
    \draw[gray!50] (0,\y/\nlines) -- ++(1,0);
    }
    \draw[gray, ->](1.1,0.5)--++(0,0.3) node[midway, right = -2pt]{\small $\vec{y}$};
    \draw[very thick] (0.2, 1) -- (0.2,0.8)node[pos = 1, sloped]{$<$}node[pos=1,left]{$\alpha^{-1}$} to[out = -90, in = 135] (0.5,0.3) node{\small $\bullet$} to[out = 45, in = -90] (0.8,0.8) -- (0.8,1)node[pos = 0, sloped]{$>$}node[pos = 0, right]{$\alpha$};  
    \end{tikzpicture}:=
    \begin{tikzpicture}[xscale = 2, yscale = 1.5, baseline = 0.5cm]
    \def\nlines{10}
    \foreach \y in {0,...,\nlines} {
    \draw[gray!50] (0,\y/\nlines) -- ++(1,0);
    }
    \draw[gray, ->](1.1,0.5)--++(0,0.3) node[midway, right = -2pt]{\small $\vec{y}$};
    \draw[very thick] (0.2, 1) -- (0.2,0.8)node[pos = 1, sloped]{$<$}node[pos = 1, left]{$\alpha^{-1}$} to[out = -90, in = 135] (0.5,0.3) node[rectangle, draw, fill = white]{$\rev_\alpha^{-1}$} to[out = 45, in = -90] (0.8,0.8) -- (0.8,1)node[pos = 0, sloped]{$>$} node[pos = 0, right]{$\alpha$};  
    \end{tikzpicture} \ .
\end{equation*}
By definition, the following relations (and their left-right mirrors) hold:
\begin{equation}\label{eq:alpha-inv-rels}
    \begin{tikzpicture}[xscale = 1.25, yscale = 0.75, baseline = 0cm]
    \def\nlines{5}
    \foreach \y in {-\nlines,...,\nlines} {
    \draw[gray!50] (0,\y/\nlines) -- ++(1,0);
    }
    \draw[gray, ->](1.1,0.5)--++(0,0.3) node[midway, right = -2pt]{\small $\vec{y}$};
    \draw[very thick] (0.2, 0) -- (0.2,0.2)node[pos = 0, sloped]{$>$} node[pos = 0, left]{$\alpha^{-1}$} to[out = 90, in = -125] (0.5,0.7) node{$\bullet$} to[out = -55, in = 90] (0.8,0.2) -- (0.8,0)node[pos = 1, sloped]{$<$} node[pos = 1, right]{$\alpha$};  
    \draw[very thick] (0.2, 0) -- (0.2,-0.2) to[out = -90, in = 125] (0.5,-0.7) node{$\bullet$} to[out = 55, in = -90] (0.8,-0.2) -- (0.8,0);  
    \end{tikzpicture}
    =
    \begin{tikzpicture}[xscale = 1.25, yscale = 0.75, baseline = 0cm]
    \def\nlines{5}
    \foreach \y in {-\nlines,...,\nlines} {
    \draw[gray!50] (0,\y/\nlines) -- ++(1,0);
    }
    \draw[gray, ->](1.1,0.5)--++(0,0.3) node[midway, right = -2pt]{\small $\vec{y}$};
    \end{tikzpicture}
    \hspace{10mm}
    \begin{tikzpicture}[xscale = 1.25, yscale = 0.75, baseline = 0cm]
    \def\nlines{5}
    \foreach \y in {-\nlines,...,\nlines} {
    \draw[gray!50] (0,\y/\nlines) -- ++(1,0);
    }
    \draw[gray, ->](1.1,0)--++(0,0.3) node[midway, right = -2pt]{\small $\vec{y}$};
    \draw[very thick] (0.2, -1) -- (0.2,-0.8)node[pos = 1, sloped]{$>$} node[pos = 1, left]{$\alpha^{-1}$} to[out = 90, in = -125] (0.5,-0.3) node{$\bullet$} to[out = -55, in = 90] (0.8,-0.8) -- (0.8,0-1)node[pos = 0, sloped]{$<$} node[pos = 0, right]{$\alpha$};  
    \draw[very thick] (0.2, 1) -- (0.2,0.8) node[pos = 1, sloped]{$<$} node[pos = 1, left]{$\alpha^{-1}$} to[out = -90, in = 125] (0.5,0.3) node{$\bullet$} to[out = 55, in = -90] (0.8,0.8) -- (0.8,1) node[pos = 0, sloped]{$>$} node[pos = 0, right]{$\alpha$};
    \end{tikzpicture}
    =
    \begin{tikzpicture}[xscale = 1.25, yscale = 0.75, baseline = 0cm]
    \def\nlines{5}
    \foreach \y in {-\nlines,...,\nlines} {
    \draw[gray!50] (0,\y/\nlines) -- ++(1,0);
    }
    \draw[gray, ->](1.1,0.5)--++(0,0.3) node[midway, right = -2pt]{\small $\vec{y}$};
    \draw[very thick] (0.2, -1) -- (0.2,1) node[pos=0.5,sloped]{$>$}node[pos=0.5,left]{$\alpha^{-1}$};
    \draw[very thick] (0.8, -1) -- (0.8,1) node[pos=0.5,sloped]{$>$}node[pos=0.5,right]{$\alpha$};
    \end{tikzpicture}
\end{equation}
We refer to the relations of \eqref{eq:alpha-inv-rels} as the \emph{eye relation} and \emph{merge relation} for the invertibility of $\alpha$ respectively.
\end{notation}

\begin{notation}
We will denote graphically the twisted pivotal structure $\apiv$ as follows, and use the same notation for its mate:
\begin{gather*}
\begin{tikzpicture}[xscale = 2, yscale = 1.5, baseline = 0.5cm]
    \def\nlines{10}
    \foreach \y in {0,...,\nlines} {
    \draw[gray!50] (0,\y/\nlines) -- ++(1,0);
    }
    \draw[gray, ->](1.1,0.5)--++(0,0.3) node[midway, right = -2pt]{\small $\vec{y}$};
    \draw[very thick] (0.2,0) to[out = 90, in=-90]node[pos = 0.15, sloped]{$>$}node[pos=0.15,left]{$x^{**}$}node[pos = 0.85, sloped]{$>$}node[pos=0.85,left]{$x$} (0.8,1);
    \draw[very thick] (0.8,0) to[out = 90, in=-90]node[pos = 0.15, sloped]{$<$}node[pos=0.15,left]{$\alpha$} node[pos = 0.85, sloped]{$<$}node[pos=0.85,left]{$\alpha$} (0.2,1);
    \node[rectangle, draw, thick, fill = white, rotate = 45, inner sep = 4pt] at (0.5, 0.5){};
\end{tikzpicture}
:=
    \begin{tikzpicture}[xscale = 2, yscale = 1.5, baseline = 0.5cm]
    \def\nlines{10}
    \foreach \y in {0,...,\nlines} {
    \draw[gray!50] (0,\y/\nlines) -- ++(1,0);
    }
    \draw[gray, ->](1.1,0.5)--++(0,0.3) node[midway, right = -2pt]{\small $\vec{y}$};
    \draw[very thick] (0.2,0) to[out = 90, in=-90]node[pos = 0.15, sloped]{$>$}node[pos=0.15,left]{$x^{**}$}node[pos = 0.85, sloped]{$>$}node[pos=0.85,left]{$x$} (0.8,1);
    \draw[very thick] (0.8,0) to[out = 90, in=-90]node[pos = 0.15, sloped]{$<$}node[pos=0.15,left]{$\alpha$} node[pos = 0.85, sloped]{$<$}node[pos=0.85,left]{$\alpha$} (0.2,1);
    \node[rectangle, draw, very thick, fill = white, minimum width = 10mm] at (0.5, 0.5){$\apiv_x$};
\end{tikzpicture}
\hspace{2cm}
\begin{tikzpicture}[xscale = 2, yscale = 1.5, baseline = 0.5cm]
    \def\nlines{10}
    \foreach \y in {0,...,\nlines} {
    \draw[gray!50] (0,\y/\nlines) -- ++(1,0);
    }
    \draw[gray, ->](1.1,0.5)--++(0,0.3) node[midway, right = -2pt]{\small $\vec{y}$};
    \draw[very thick] (0.2,0) to[out = 90, in=-90]node[pos = 0.15, sloped]{$>$}node[pos=0.15,left]{$\alpha^{-1}$}  node[pos = 0.85, sloped]{$>$}node[pos=0.85,right]{$\alpha^{-1}$} (0.8,1);
    \draw[very thick] (0.8,0) to[out = 90, in=-90]node[pos = 0.15, sloped]{$<$}node[pos=0.15,left]{$x^{**}$} node[pos = 0.85, sloped]{$<$}node[pos=0.85,left]{$x$} (0.2,1);
    \node[rectangle, draw, thick, fill = white, rotate = 45, inner sep = 4pt] at (0.5, 0.5){};
\end{tikzpicture}
:=
\begin{tikzpicture}[xscale = 2, yscale = 1.5, baseline = 0.5cm]
    \def\nlines{10}
    \foreach \y in {0,...,\nlines} {
    \draw[gray!50] (0,\y/\nlines) -- ++(1,0);
    }
    \draw[gray, ->](1.1,0.5)--++(0,0.3) node[midway, right = -2pt]{\small $\vec{y}$};
    \draw[very thick] (0.2,0) to[out = 90, in=-90]node[pos = 0.15, sloped]{$<$}node[pos=0.15,left]{$\alpha^{-1}$} (0.1,0.8)node{\small $\bullet$} to[out = -35, in = 145] (0.9,0.2) node{\small $\bullet$} to[out = 90, in = -90] node[pos = 0.85, sloped]{$<$}node[pos=0.85,right]{$\alpha^{-1}$} (0.8,1);
    \draw[very thick] (0.8,0) to[out = 110, in=-135, looseness = 2]node[pos = 0.15, sloped]{$<$}node[pos=0.05,left]{$x^{**}$} (0.5,0.5) to [out = 45, in = -70, looseness = 2] node[pos = 0.75, sloped]{$<$}node[pos=0.75,above right]{$x$} (0.2,1);
    \node[rectangle, draw, thick, fill = white, rotate = 45, inner sep = 4pt] at (0.5, 0.5){};
\end{tikzpicture}
\end{gather*}
This a-priori leads to some ambiguity when $x = \alpha$ or $x = \alpha^{-1}$, but the different interpretations agree by \eqref{eq:otherTwist}.\\
The naturality of $\apiv$ with respect to $f:x \to y$ now reads as:
\begin{equation}\label{eq:pivNatGraphically}
\begin{tikzpicture}[xscale = 1.2, yscale = 0.8, baseline = 1cm]
    \def\nlines{10}
    \foreach \y in {0,...,\nlines} {
    \draw[gray!50] (-0.7,3*\y/\nlines) -- ++(2,0);
    }
    \draw[very thick] (0,0) to[out = 135, in=-135, in distance = 10mm]node[pos = 0.15, sloped]{\small $<$}node[pos=0.15,left]{$x^{**}$}
    (0,2.5) node{\small $\bullet$} to[out = -45, in = 45] node[pos = 0.35, sloped]{\small $<$}node[pos=0.35,right=-1pt]{$x^{*}$}
    (0,0.8) node{\small $\bullet$} to[out = 110, in=-70] node[midway, rectangle, draw, fill=white]{$f$} (0,2.2) node{\small $\bullet$}
    to[out = -135, in = 135]
    (0,0.5)node{\small $\bullet$} to[out = 45, in = -90]node[pos = 0.15, sloped]{\small $>$}node[pos=0.15,right]{$y^{**}$} node[pos = 0.85, sloped]{\small $>$}node[pos=0.85,right]{$y$} (1,3);
    \draw[very thick] (1,0) to[out = 90, in=-60, out distance = 20mm, in distance = 9mm]node[pos = 0.15, sloped]{\small $<$}node[pos=0.15,right]{$\alpha$} node[pos = 0.8, sloped]{\small $<$}node[pos=0.8,right]{$\alpha$} node[pos = 0.43, rectangle, draw, thick, fill = white, rotate = 45, inner sep = 4pt]{} (0.2,3);
\end{tikzpicture} 
\ =\ 
\begin{tikzpicture}[xscale = 1.2, yscale = 0.8, baseline = 1cm]
    \def\nlines{10}
    \foreach \y in {0,...,\nlines} {
    \draw[gray!50] (-0.7,3*\y/\nlines) -- ++(2,0);
    }
    \draw[very thick] (0,0) to[out = 90, in=-90, in distance = 20mm]node[pos = 0.15, sloped]{\small $>$}node[pos=0.15,left]{$x^{**}$}node[pos = 0.8, rectangle, draw, fill=white]{$f$} node[pos = 0.63, sloped]{\small $>$}node[pos=0.6,right]{$x$} node[pos = 0.95, sloped]{\small $>$}node[pos=0.95,right]{$y$} (1,3);
    \draw[very thick] (1,0) to[out = 120, in=-90]node[pos = 0.15, sloped]{\small $<$}node[pos=0.15,right]{$\alpha$} node[pos = 0.8, sloped]{\small $<$}node[pos=0.8,right]{$\alpha$} node[pos = 0.4, rectangle, draw, thick, fill = white, sloped, inner sep = 4pt]{} (0.2,3);
\end{tikzpicture}
\end{equation}
The compatibility of $\apiv$ with the monoidal structure reads as:
\begin{equation}\label{eq:pivMonoidalGraphically}
\begin{tikzpicture}[xscale = 2, yscale = 1.5, baseline = 0.75cm]
    \foreach \y in {0,...,5} {
    \draw[gray!50] (0,\y*0.3) -- ++(1.3,0);
    }
    \draw[very thick] (0.8,0) to[out = 90, in=-90]node[pos = 0.15, sloped]{$<$}node[pos=0.15,right]{$\alpha$} node[pos = 0.85, sloped]{$<$}node[pos=0.85,left]{$\alpha$} (0.2,1) -- ++(0,0.5);
    \draw[very thick] (0.2,0) to[out = 90, in=-90]node[pos = 0.15, sloped]{$>$}node[pos=0.15,left]{$x^{**}$}node[pos = 0.85, sloped]{$>$}node[pos=0.85,left]{$x$} node[pos = 0.55, rectangle, draw, thick, fill = white, rotate = 45, inner sep = 3pt]{} (0.6,1);
    \draw[very thick] (0.4,0) to[out = 90, in=-90]node[pos = 0.15, sloped]{$>$}node[pos=0.15,right]{$y^{**}$}node[pos = 0.85, sloped]{$>$}node[pos=0.85,right]{$y$} node[pos = 0.45, rectangle, draw, thick, fill = white, rotate = 45, inner sep = 3pt]{} (0.8,1);
    \draw[very thick] (0.7, 1.1) node[rectangle, draw, thick, fill = white]{\footnotesize $\ \Id\ $} -- (0.7,1.5)node[pos = 0.7, sloped]{$>$}node[pos=0.7,right]{$x\otimes y$};
\end{tikzpicture}
\ = \
\begin{tikzpicture}[xscale = 2, yscale = 1.5, baseline = 0.75cm]
    \foreach \y in {0,...,5} {
    \draw[gray!50] (-0.4,\y*0.3) -- ++(1.6,0);
    }
    \draw[very thick] (0.8,0) -- (0.8, 0.5) to[out = 90, in=-90]node[pos = 0.15, sloped]{$<$}node[pos=0.15,right]{$\alpha$} node[pos = 0.85, sloped]{$<$}node[pos=0.85,left]{$\alpha$} (0.2,1.5);
    \draw[very thick] (0.2,0) to[out = 90, in=-90]node[pos = 0.3, sloped]{$>$}node[pos=0.15,left]{$x^{**}$}(0.2,0.5);
    \draw[very thick] (0.4,0) to[out = 90, in=-90]node[pos = 0.3, sloped]{$>$}node[pos=0.15,right]{$y^{**}$}(0.4,0.5);
    \draw[very thick] (0.3,0.5) to[out = 90, in=-90]node[pos = 0.25, sloped]{$>$}node[pos=0.25,left]{$(x\otimes y)^{**}$}node[pos = 0.85, sloped]{$>$}node[pos=0.85,right]{$x\otimes y$} node[pos = 0.5, rectangle, draw, thick, fill = white, rotate = 45, inner sep = 4pt]{} node[pos = 0, rectangle, draw, thick, fill = white]{\footnotesize $\ can\ $} (0.7,1.5);
\end{tikzpicture}
\quad \text{and} \quad 
\begin{tikzpicture}[xscale = 2, yscale = 1.5, baseline = 0.5cm]
    \foreach \y in {0,...,5} {
    \draw[gray!50] (0,\y*0.2) -- ++(1,0);
    }
    \draw[very thick] (0.2,0) to[out = 90, in=-90]node[pos = 0.15, sloped]{$>$}node[pos=0.15,left]{$\unit$}node[pos = 0.85, sloped]{$>$}node[pos=0.85,left]{$\unit$} (0.8,1);
    \draw[very thick] (0.8,0) to[out = 90, in=-90]node[pos = 0.15, sloped]{$<$}node[pos=0.15,left]{$\alpha$} node[pos = 0.85, sloped]{$<$}node[pos=0.85,left]{$\alpha$} (0.2,1);
    \node[rectangle, draw, thick, fill = white, rotate = 45, inner sep = 4pt] at (0.5, 0.5){};
\end{tikzpicture}
\ = \
\begin{tikzpicture}[xscale = 2, yscale = 1.5, baseline = 0.5cm]
    \foreach \y in {0,...,5} {
    \draw[gray!50] (0,\y*0.2) -- ++(1,0);
    }
    \draw[very thick] (0.8,0) to[out = 90, in=-90]node[pos = 0.5, sloped]{$<$}node[pos=0.5,left]{$\alpha$} (0.2,1);
\end{tikzpicture}
\end{equation}
Together, they imply that one can pass an evaluation or a coevaluation through $\alpha$:
\begin{equation}\label{eq:pivNatEval}
\begin{tikzpicture}[xscale = 2, yscale = 1.5, baseline = 0.75cm]
    \foreach \y in {0,...,5} {
    \draw[gray!50] (0,\y*0.2) -- ++(1,0);
    }
    \draw[very thick] (0.8,0) to[out = 90, in=-90]node[pos = 0.15, sloped]{$<$}node[pos=0.15,right]{$\alpha$} node[pos = 0.85, sloped]{$<$}node[pos=0.85,left]{$\alpha$} (0.2,1);
    \draw[very thick] (0.2,0) to[out = 90, in=-155]node[pos = 0.15, sloped]{$>$}node[pos=0.15,left]{$x^{***}$}node[pos = 0.85, sloped]{$>$}node[pos=0.85,above]{$x^*$} node[pos = 0.6, rectangle, draw, thick, fill = white, rotate = 45, inner sep = 3pt]{} (0.7,0.9) node{\small $\bullet$};
    \draw[very thick] (0.4,0) to[out = 90, in=-75]node[pos = 0.15, sloped]{$>$}node[pos=0.15,right]{$x^{**}$}node[pos = 0.85, sloped]{$>$}node[pos=0.85,right]{$x$} node[pos = 0.47, rectangle, draw, thick, fill = white, rotate = 45, inner sep = 3pt]{} (0.7,0.9);
\end{tikzpicture}
\ = \
\begin{tikzpicture}[xscale = 2, yscale = 1.5, baseline = 0.75cm]
    \foreach \y in {0,...,5} {
    \draw[gray!50] (0,\y*0.2) -- ++(1,0);
    }
    \draw[very thick] (0.8,0) to[out = 90, in=-90]node[pos = 0.5, sloped]{$<$}node[pos=0.5,right]{$\alpha$} (0.2,1);
    \draw[very thick] (0.2,0) to[out = 90, in=-135]node[pos = 0.5, sloped]{$>$}(0.35,0.4) node{\small $\bullet$};
    \draw[very thick] (0.4,0) to[out = 90, in=-55]node[pos = 0.5, sloped]{$<$}(0.35,0.4);
\end{tikzpicture}\ .
\end{equation}
\end{notation}
\begin{lemma}
Let $\apiv$ be an $\alpha$-twisted quasi-pivotal structure. The following are equivalent:
\begin{equation}\label{eq:otherTwist}
\left[
\begin{tikzpicture}[xscale = 1, yscale = 0.6, baseline = 0.3cm]
    \foreach \y in {-3,...,10} {
    \draw[gray!50] (-0.4,\y/5) -- ++(1.4,0);
    }
    \draw[very thick] (-0.2,2) to[out = -90, in = 125]node[pos = 0.35, sloped]{$<$}node[pos=0.35,left]{$\alpha$} (0.2,0)node{\small $\bullet$} to[out = 55, in=-90] (0.7,1) to[out = 90, in = -45] (0.5,1.5) node{\small $\bullet$};
    \draw[very thick] (0.8,-0.6) to[out = 90, in=-90, out distance = 10mm] (0.3,1) to[out = 90, in = -135] (0.5,1.5);
    \node[rectangle, draw,  thick, fill = white, rotate = 45, inner sep = 4pt] at (0.5, 0.4){};
\end{tikzpicture} = \begin{tikzpicture}[xscale = 1, yscale = 0.6, baseline = 0.3cm]
    \foreach \y in {-3,...,10} {
    \draw[gray!50] (-0.4,\y/5) -- ++(0.8,0);
    }
    \draw[very thick] (0,2)-- (0,-0.6)node[pos = 0.35, sloped]{$<$}node[pos=0.35,left]{$\alpha$};
\end{tikzpicture} 
\right]
\ \Leftrightarrow \
\left[
\begin{tikzpicture}[xscale = 1.5, yscale = 1.5, baseline = 0.6cm]
    \def\nlines{10}
    \foreach \y in {0,...,\nlines} {
    \draw[gray!50] (0,\y/\nlines) -- ++(1,0);
    }
    \draw[very thick] (0.2,0) to[out = 90, in=-90]node[pos = 0.15, sloped]{$>$}node[pos=0.25,above left = -5pt]{\small $\alpha^{-1}$}node[pos = 0.85, sloped]{$>$}node[pos=0.85,above left = -5pt]{\small $\alpha^{-1}$} (0.8,1);
    \draw[very thick] (0.8,0) to[out = 90, in=-90]node[pos = 0.15, sloped]{$<$}node[pos=0.15,left]{\small $\alpha$} node[pos = 0.85, sloped]{$<$}node[pos=0.85,left]{\small $\alpha$} (0.2,1);
    \node[rectangle, draw, thick, fill = white, rotate = 45, inner sep = 4pt] at (0.5, 0.5){};
\end{tikzpicture} = 
\begin{tikzpicture}[xscale = 1.5, yscale = 1.5, baseline = 0.6cm]
    \def\nlines{10}
    \foreach \y in {0,...,\nlines} {
    \draw[gray!50] (0,\y/\nlines) -- ++(1,0);
    }
    \draw[very thick] (0.2,0) to[out = 90, in=-135] (0.5,0.4) node{\small $\bullet$} to[out = -45, in = 90]node[pos = 0.6, sloped]{$<$}node[pos=0.6,right]{\small $\alpha$} (0.8,0);
    \draw[very thick] (0.2,1) to[out = -90, in=135]node[pos = 0.4, sloped]{$<$}node[pos=0.4,left]{\small $\alpha$} (0.5,0.6) 
    node{\small $\bullet$} to[out = 45, in = -90] (0.8,1);
\end{tikzpicture}
\right]
\ \Leftrightarrow \
\left[
\begin{tikzpicture}[xscale = 1.5, yscale = 1.5, baseline = 0.6cm]
    \def\nlines{10}
    \foreach \y in {0,...,\nlines} {
    \draw[gray!50] (0,\y/\nlines) -- ++(1,0);
    }
    \draw[very thick] (0.2,0) to[out = 90, in=-90]node[pos = 0.15, sloped]{$>$}node[pos=0.25,left]{\small $\alpha$}node[pos = 0.85, sloped]{$>$}node[pos=0.85,left]{\small $\alpha$} (0.8,1);
    \draw[very thick] (0.8,0) to[out = 90, in=-90]node[pos = 0.15, sloped]{$<$}node[pos=0.15,left]{\small $\alpha$} node[pos = 0.85, sloped]{$<$}node[pos=0.85,left]{\small $\alpha$} (0.2,1);
    \node[rectangle, draw, thick, fill = white, rotate = 45, inner sep = 4pt] at (0.5, 0.5){};
\end{tikzpicture} = 
\begin{tikzpicture}[xscale = 1.5, yscale = 1.5, baseline = 0.6cm]
    \def\nlines{10}
    \foreach \y in {0,...,\nlines} {
    \draw[gray!50] (0,\y/\nlines) -- ++(1,0);
    }
    \draw[very thick] (0.2,0) to[out = 90, in=-90] node[pos = 0.5, sloped]{$>$}node[pos=0.5,right]{\small $\alpha$} (0.2,1);
    \draw[very thick] (0.8,0) to[out = 90, in=-90] node[pos = 0.5, sloped]{$>$}node[pos=0.5,right]{\small $\alpha$} (0.8,1);
\end{tikzpicture}
\right]
\end{equation}
the first being the graphical description of the twist relation \eqref{eq:twistrelation}.
\end{lemma}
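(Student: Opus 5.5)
All three conditions are equalities of morphisms built from $\apiv$ evaluated on $\alpha$ or $\alpha^{-1}$. Since every cap and cup on $\alpha^{\pm1}$ is invertible by the eye and merge relations \eqref{eq:alpha-inv-rels}, I will show that each condition is obtained from the next by composing or precomposing with invertible cups and caps. The key observation is that, for an invertible object, every morphism $\alpha^{\pm1}\otimes\alpha^{\pm1}\to\alpha^{\pm1}\otimes\alpha^{\pm1}$ is determined by any of its mates. So the three diagrams are mates of one another, up to applying \eqref{eq:pivNatEval}.

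\textbf{First equivalence.} Start from the twist relation, which in the first picture is a bent $\alpha$-strand with a crossing $\apiv_{\alpha^{-1}}$ (in mate form). Tensor both sides with an $\alpha^{-1}$ strand and bend the two outer $\alpha$ ends using the coevaluation $\rev_\alpha^{-1}$ and the evaluation $\rcoev_\alpha^{-1}$. By the snake relations this turns the left side into the crossing box with $\alpha^{-1}$ and $\alpha$ strands, which is the left side of the second condition. The same operation turns the straight $\alpha$-strand into the cap--cup picture on the right of the second condition.

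Conversely, closing off the second condition with a cap on the left and a cup on the right recovers the first condition, again by the snake relations and the eye relation. Both operations are invertible (bending along an invertible object), so the two equalities are equivalent. One subtlety is the ambiguity in the notation when $x=\alpha^{-1}$, namely $\apiv_{\alpha^{-1}}$ versus the mate of $\apiv_\alpha$. I will read each box literally as in the Notation, and track mates via the definition of the mate picture, which is itself built from the cups and caps on $\alpha^{-1}$.

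\textbf{Second equivalence.} Relating the $\alpha^{-1}$-crossing to the $\alpha$-crossing $\apiv_\alpha$ uses \eqref{eq:pivNatEval}. Apply it with $x=\alpha$: the evaluation $x^*\otimes x\to\unit$ can be passed through the $\alpha$-strand. This expresses the crossing of $\alpha$ with $\alpha^*\otimes\alpha$, via the monoidal compatibility \eqref{eq:pivMonoidalGraphically}, as $\apiv_{\alpha^*}$ followed by $\apiv_\alpha$. Composing with a cup on $\alpha^{*}\otimes\alpha$ gives the identity on $\alpha$ after the eye relation.

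So $\apiv_\alpha$ (the third box) and the $\alpha^{-1}$ box are mutually inverse after bending. Concretely, the third box equals the inverse of the mate of the second box. Hence the second box equals the cap--cup if and only if the third box equals the identity. To see this, note that the inverse of the cap--cup, under the merge relation, corresponds to the identity on $\alpha\otimes\alpha$ after the relevant bend.

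\textbf{Main obstacle.} The hard step is bookkeeping. I must identify $\alpha^{**}$ with $\alpha$ and $(\alpha^{-1})^{**}$ with $\alpha^{-1}$ consistently, using the chosen convention ${}^*\alpha=\alpha^*$. I must also verify that the canonical isomorphism $(\alpha^*\otimes\alpha)^{**}\cong\unit$ matches the $\apiv_\unit$ normalization in \eqref{eq:pivMonoidalGraphically}. I would first write everything as explicit composites of $\rev_\alpha$, $\rcoev_\alpha$, their inverses, and $\apiv$, and check the second equivalence algebraically there before redrawing it.
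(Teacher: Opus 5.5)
Your proposal is correct and follows the same route as the paper. The paper gets the first equivalence from duality and composing with isomorphisms (your bending along the invertible cups and caps on $\alpha^{\pm1}$), and the second from \eqref{eq:pivNatEval} at $x=\alpha$, which, as you say, determines $\apiv_\alpha$ as the inverse of a bend of $\apiv_{\alpha^{-1}}$. One correction: the box in the first two pictures should be read as $\apiv_{\alpha^{-1}}=\apiv_{{}^*\alpha}$ itself, as in \eqref{eq:twistrelation}, not ``in mate form''; this is the reading your second step already uses, and it is the one under which \eqref{eq:pivNatEval} is actually needed.
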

\begin{proof}
    One can pass from the first equality to the second by using some duality and composing with an isomorphism. The third follows by using \eqref{eq:pivNatEval}.
\end{proof}

\subsection{Twisted pivotal structures are SO(2)-structures}
Following the Cobordism Hypothesis, we expect that a rigid monoidal category $\BB$, which is 2-dualizable in the Morita 3-category $\operatorname{Alg}_1(\Pr)$ of monoidal categories by \cite{DSPS}, induces a once-categorified framed 2-TQFT. In particular, it associates vector spaces to framed surfaces. This indeed matches what we have observed above (with the caveat that isotopies of framings do not yield foliation-preserving diffeomorphims). 

To obtain a theory defined on oriented surfaces, one should equip $\BB$ with a structure of an $SO(2)$-homotopy-fixed-point. The exact data that such an orientation structure represents is conjecturally understood, and is expected to correspond precisely to an $\alpha$-twisted pivotal structure, see \cite{SchomPriesDualLowdimHighCat, DSPS}:
\begin{conjecture}
    An $SO(2)$-homotopy-fixed-point on a rigid monoidal category $\BB \in \operatorname{Alg}_1(\Pr)$ is the data of an isomorphism of $\BB$-$\BB$-bimodule categories
    $${}_{(-)}\BB_{(-)^{**}} \simeq {}_{(-)}\BB_{(-)}$$
    such that the 3-cell described in \cite[Figure 6]{SchomPriesDualLowdimHighCat} is trivial.
\end{conjecture}
In particular, a pivotal structure, which is a monoidal trivialization of the double dual functor 
$$(-)^{**}\simeq (-)$$
does induce a $SO(2)$-structure, but there are many more. In general, a trivialization of the double dual bimodule is determined by an invertible object $\alpha \in \BB^\times$
$$\begin{aligned}
    {}_{(-)}\BB_{(-)^{**}} & \to & {}_{(-)}\BB_{(-)}\\
    X &\mapsto& X\otimes \alpha
\end{aligned}$$
together with a $(-)^{**}$-twisted half braiding 
$$\apiv:(-)^{\ast\ast}  \otimes \alpha \simeq \alpha \otimes (-)$$
which exhibits $(-) \otimes \alpha$ as a bimodule functor. The triviality of the 3-cell is expected to correspond to the twist relation \eqref{eq:twistrelation}.

\begin{remark}
    In order to keep the main idea clear, we did not make explicit how to see $\BB$ as a presentable category, nor the role that the tensor ideal $\II\subseteq \BB$ plays here. Starting from $\II\subseteq \BB$, the appropriate object to consider is the cp-rigid presentable category $\EE = \operatorname{Fun}(\II^{op},\Vect) \in \operatorname{Alg}_1(\Pr)$. If $\BB$ is a tensor category and $\II$ the ideal of projectives, one can also write $\EE = \operatorname{Ind(\BB)}$.
    
    Reciprocally, starting from any cp-rigid presentable category $\EE\in \operatorname{Alg}_1(\Pr)$, we can take $\II = \EE^{cp}$ its subcategory of compact-projective objects and $\BB = \EE^{dzb}$ its subcategory of dualizable objects. A bimodule isomorphism ${}_{(-)}\EE_{(-)^{**}} \to  {}_{(-)}\EE_{(-)}$ has to be of the form $(-)\otimes \alpha$ for some tensor-invertible $\alpha \in \EE$. In particular $\alpha\in \BB$ is dualizable, and we recover the description above.
\end{remark}

\section{Graphical calculus for twisted pivotal categories}\label{sec:twistedGraphicalCalculus}
\begin{definition}
Let $\Sigma$ be a compact oriented surface and let $S \subseteq \Sigma$ be a finite set.
A \emph{foliation with isolated singularities $S$} on $\Sigma$ is a foliation
$\mathcal{F}$ on the punctured surface $\Sigma \setminus S$. The points of $S$ are
called the \emph{singularities} of $\mathcal{F}$.
\end{definition}

An important special case comes from Morse functions. Let
$f \colon \Sigma \to \mathbb{R}$ be a Morse function, and set
$S = \operatorname{crit}(f)$. On $\Sigma \setminus S$, the connected components of the
regular level sets $f^{-1}(\lambda)$ (for $\lambda \in \mathbb{R}$) form the leaves of a foliation, with positive transverse direction given by the gradient of $f$. The local triviality of the foliation follows from the submersion theorem, which gives a local product neighborhood at each regular point. 
By the Morse lemma, each critical point has one of three standard local models, determined by its index; the corresponding foliations are shown in
Figure \ref{fig:MorseSingFoliations}.
\begin{figure}
    \centering $\D_0 =\ $ 
\begin{tikzpicture}[baseline = 0pt]
    \foreach \y in {1,...,8}{
    \draw[gray!70] (0,0) circle(\y/8);
    }
    \draw[gray, ->](0,1.1)--++(0,0.3) node[midway, right = -2pt]{\small $\vec{y}$};
    \draw[gray, ->](0,-1.1)--++(0,-0.3) node[midway, right = -2pt]{\small $\vec{y}$};
    \node[star,  star point ratio=2.25, fill=black, inner sep = 1pt] at (0,0) {};
\end{tikzpicture}   
\hspace{1.6cm}
$\D_1 =\ $ 
\begin{tikzpicture}[baseline = 0pt]
        \begin{scope}
        \clip (0,0) circle(1.2);
    \draw[gray!70] (1,1)--(-1,-1);
    \draw[gray!70] (1,-1)--(-1,1);
    \def\sadlines{5}
    \foreach \y in {3,...,5}{
    \draw[gray!70] ({-\y/(\sadlines+1)},1) .. controls (0,{1.6-1.6*\y/(\sadlines+1)}) .. ({\y/(\sadlines+1)},1);
    \draw[gray!70] ({-\y/(\sadlines+1)},-1) .. controls (0,{-1.6+1.6*\y/(\sadlines+1)}) .. ({\y/(\sadlines+1)},-1);
    \draw[gray!70] (-1,{-\y/(\sadlines+1)}) .. controls ({-1.6+1.6*\y/(\sadlines+1)},0) .. (-1,{\y/(\sadlines+1)});
    \draw[gray!70] (1,{-\y/(\sadlines+1)}) .. controls ({1.6-1.6*\y/(\sadlines+1)},0) .. (1,{\y/(\sadlines+1)});
    }   \end{scope}
    \draw[gray, ->](-0.9,1)--++(-0.3,-0.3) node[midway, above left = -2pt]{\small $\vec{y}$};
    \draw[gray, ->](0.9,1)--++(0.3,-0.3) node[midway, above right = -2pt]{\small $\vec{y}$};
    \draw[gray, ->](-0.9,-1)--++(-0.3,0.3) node[midway, below left = -2pt]{\small $\vec{y}$};
    \draw[gray, ->](0.9,-1)--++(0.3,0.3) node[midway, below right = -2pt]{\small $\vec{y}$};
    \node[star,  star point ratio=2.25, fill=black, inner sep = 1pt] at (0,0) {};
\end{tikzpicture}
\hspace{1.6cm}
$\D_2 =\ $ 
\begin{tikzpicture}[baseline = 0pt]
    \foreach \y in {1,...,8}{
    \draw[gray!70] (0,0) circle(\y/8);
    }
    \draw[gray, <-](0,1.1)--++(0,0.3) node[midway, right = -2pt]{\small $\vec{y}$};
    \draw[gray, <-](0,-1.1)--++(0,-0.3) node[midway, right = -2pt]{\small $\vec{y}$};
    \node[star,  star point ratio=2.25, fill=black, inner sep = 1pt] at (0,0) {};
\end{tikzpicture}     
\caption{The three types of behavior near the singularity for foliations induced by a Morse function, respectively called the cup (index 0), the saddle (index 1) and the cap (index 2).}
    \label{fig:MorseSingFoliations}
\end{figure}
\begin{definition}
    A \emph{Morse singular foliation} on $\Sigma$ is a foliation with isolated singularities that locally arises from a Morse function as above. From now on, we will work exclusively with Morse singular foliations.
\end{definition}
\subsection{Twisted String Net Modules}
\begin{definition}\label{def:twistedStringNetModules}
    Let $\Sigma$ be a surface equipped with a Morse singular foliation. An embedded oriented graph $T\hookrightarrow\Sigma$ is \emph{progressive} if the interior of the edges avoid the singularities $S$ of $\Sigma$ and are progressive in the usual sense in $\Sigma\smallsetminus S$, and each singularity $s\in S$ has a vertex of $T$ mapping to it. By isotopy of progressive graph we mean isotopy preserving these properties. An edge of $T$ can never slide above a singular point.

    For $\BB$ a monoidal category, a $\BB$ string net is a progressive embedded oriented graphs $T\hookrightarrow\Sigma$ which intersects the boundary and the singular points of $\Sigma$ only at 1-valent vertices, and whose edges and other vertices are labeled by objects and morphisms of $\BB$ as usual. 
\end{definition}
\begin{definition}\label{def:twsitSNfunctor}
    Let $\Sigma$ be a surface equipped with a Morse singular foliation, $\BB$ a rigid $\Bbbk$-linear monoidal category equipped with an $\alpha$-twisted pivotal structure, and $X$ a $\BB$-labeling on $\partial \Sigma$. 
    The \emph{relative $\alpha$-twisted string net module} 
    $\SN^\alpha_\BB(\Sigma;X)$
    is the vector space generated by $\BB$-string nets in $\Sigma$ which locally near singularities have a single strand colored by $\alpha$ or $\alpha^{-1}$ depending on the index of the singularity as in the figures below. More precisely, for singular points of index 0 or 2, we ask that the strand is colored by $\alpha$ if it is outgoing and $\alpha^{-1}$ if it is incoming, and the opposite for index 1.
    
    We consider these string nets with singularities modulo isotopy, progressive skein relations \eqref{eq:progressiveskeinrel}, sliding over a singular point relations depicted in Equations (\ref{eq:skein-slide-0}, \ref{eq:skein-slide-1}, \ref{eq:skein-slide-2}) and changing the incoming direction at a saddle singularity depicted in Equation \eqref{eq:saddle-rel}. 
\\
Sliding over a critical point of index 0:
    \begin{equation}    \label{eq:skein-slide-0}
\begin{tikzpicture}[scale = 0.71, baseline = 0cm]
    \tikzmath{\a = 15;} 
    \foreach \y in {0,...,8}{
    \draw[gray!70] (0,0) circle(\y/4);
    }
    \draw[gray, ->] (45:2.2) -- (45:2.8) node[midway,below right = -2pt]{\small $\vec{y}$};
    \draw[sloped,black,thick] (0,0) to[out=90,in=-90] node[pos=0.5]{$>$} (0,2) node[below left]{$\alpha$};
    \node[star,  star point ratio=2.25, fill=black, inner sep = 1pt] at (0,0) {};
    \draw[sloped,black, thick] (-120+\a:{2}) node[above left]{$x^*$} to[out=60+\a, in = -75+\a] node[pos=0.5]{$<$} (-15+\a:{0.4}) node{\small $\bullet$} to[out=15+\a, in = -150+\a] node[pos = 0.5]{$<$} (60+\a:{2}) node[below right]{$x$};
\end{tikzpicture}
    \hspace{2mm} 
    \sim
    \hspace{2mm}
\begin{tikzpicture}[scale = 0.71, baseline = 0cm]
    \tikzmath{\a = 15;} 
    \foreach \y in {0,...,8}{
    \draw[gray!70] (0,0) circle(\y/4);
    }
    \draw[gray, ->] (45:2.2) -- (45:2.8) node[midway,below right = -2pt]{\small $\vec{y}$};
    \draw[sloped,black,thick] (0,0) -- node[pos=0.4]{$>$}node[pos=0.4,rotate=-90,right]{$\alpha$} (0,1.15)  -- node[pos=0.6]{$>$} (0,2) node[below left]{$\alpha$};
    \draw[sloped,black, thick] (-120+\a:{2}) node[above left]{$x^*$} to[out=60+\a,in=195+\a] node[pos=0.4]{\small $>$}  (150+\a:{0.4}) node{\small $\bullet$} to[out=105+\a,in=-120+\a] node[pos=0.2]{\small $>$}node[pos=0.9]{\small $>$} (60+\a:{2}) node[below right]{$x$};
    \node at (138:{1}) { $x^{**}$};
    \node[rectangle,rotate = 25,draw, thick, fill=white,inner sep = 4pt] at (0,1.15) {};
    \node[star,  star point ratio=2.25, fill=black, inner sep = 1pt] at (0,0) {};
\end{tikzpicture}
    \end{equation}
    Sliding over a critical point of index 1:
\begin{equation}    \label{eq:skein-slide-1}
\begin{tikzpicture}[scale = 0.71, baseline = 0cm]
    \tikzmath{\s = 1;} 
\begin{scope}[scale = 2]
        \clip (0,0) circle(1.2);
    \draw[gray!70] (1,1)--(-1,-1);
    \draw[gray!70] (1,-1)--(-1,1);
    \def\sadlines{5}
    \foreach \y in {3,...,5}{
    \draw[gray!70] ({-\y/(\sadlines+1)},1) .. controls (0,{1.6-1.6*\y/(\sadlines+1)}) .. ({\y/(\sadlines+1)},1);
    \draw[gray!70] ({-\y/(\sadlines+1)},-1) .. controls (0,{-1.6+1.6*\y/(\sadlines+1)}) .. ({\y/(\sadlines+1)},-1);
    \draw[gray!70] (-1,{-\y/(\sadlines+1)}) .. controls ({-1.6+1.6*\y/(\sadlines+1)},0) .. (-1,{\y/(\sadlines+1)});
    \draw[gray!70] (1,{-\y/(\sadlines+1)}) .. controls ({1.6-1.6*\y/(\sadlines+1)},0) .. (1,{\y/(\sadlines+1)});
    }   \end{scope}
    \node[star,  star point ratio=2.25, fill=black, inner sep = 1pt] at (0,0) {};
    
    \draw[sloped,black,thick] (0,0) to[out=-90,in=90] node[pos=0.5]{\footnotesize $<$} (0,-1-\s) node[above right]{$\alpha$};
    \draw[sloped,black,thick] (-1-\s,0) node[left]{$x^{*}$} to[out=0,in=-135] node[pos=0.5]{\footnotesize $<$} (0,1) node{\small $\bullet$} to[out=-45,in=180] node[pos=0.5]{\footnotesize $>$} (1+\s,0) node[right]{$x$};
\end{tikzpicture}
    \hspace{2mm} 
    \sim
    \hspace{2mm} 
\begin{tikzpicture}[scale = 0.71, baseline = 0cm]
    \tikzmath{\s = 1;} 
\begin{scope}[scale = 2]
        \clip (0,0) circle(1.2);
    \draw[gray!70] (1,1)--(-1,-1);
    \draw[gray!70] (1,-1)--(-1,1);
    \def\sadlines{5}
    \foreach \y in {3,...,5}{
    \draw[gray!70] ({-\y/(\sadlines+1)},1) .. controls (0,{1.6-1.6*\y/(\sadlines+1)}) .. ({\y/(\sadlines+1)},1);
    \draw[gray!70] ({-\y/(\sadlines+1)},-1) .. controls (0,{-1.6+1.6*\y/(\sadlines+1)}) .. ({\y/(\sadlines+1)},-1);
    \draw[gray!70] (-1,{-\y/(\sadlines+1)}) .. controls ({-1.6+1.6*\y/(\sadlines+1)},0) .. (-1,{\y/(\sadlines+1)});
    \draw[gray!70] (1,{-\y/(\sadlines+1)}) .. controls ({1.6-1.6*\y/(\sadlines+1)},0) .. (1,{\y/(\sadlines+1)});
    }   \end{scope}
    \node[star,  star point ratio=2.25, fill=black, inner sep = 1pt] at (0,0) {};
    
    \draw[sloped,black,thick] (0,0) to[out=-90,in=90] node[pos=0.15]{\footnotesize $<$} node[pos=0.8]{\footnotesize $<$} (0,-1-\s) node[above right]{$\alpha$};
    \draw[sloped,black,thick] (-1-\s,0) node[left]{$x^*$} to[out=0,in=140] node[pos=0.3]{\footnotesize $<$} (-0.9,-1.6) node{\small $\bullet$} to[out=50,in=180] node[pos=0.15]{\footnotesize $>$}node[pos=0.7]{\footnotesize $>$} (1+\s,0) node[right]{$x$};
    \node at (-0.68,-1) {$x^{**}$};
    \node at (-0.3,-0.3) {$\alpha$};
    \node[rectangle,rotate=25,draw, thick, fill=white,inner sep = 4pt] at (0,-0.75) {};
\end{tikzpicture}
\end{equation}
Sliding over a critical point of index 2:
\begin{equation}    \label{eq:skein-slide-2}
\begin{tikzpicture}[scale = 0.71, baseline = 0cm]
    \tikzmath{\a = 45;} 
    \foreach \y in {0,...,8}{
    \draw[gray!70] (0,0) circle(\y/4);
    }
    \draw[gray, <-] (45:2.2) -- (45:2.8) node[midway,below right = -2pt]{\small $\vec{y}$};
    \draw[black,thick] (0,0) to[out=-90,in=90] node[pos=0.5, sloped]{$<$}  node[pos = 0.5, left]{$\alpha^{-1}$}(0,-2);
    \node[star,  star point ratio=2.25, fill=black, inner sep = 1pt] at (0,0) {};
    \draw[black, thick] (-120+\a:{2})  to[out=60+\a, in = -75+\a] node[pos=0.5, sloped]{$>$}node[pos = 0.5, right]{$x^{**}$} (-15+\a:{0.4}) node{\small $\bullet$} to[out=15+\a, in = -150+\a] node[pos = 0.5, sloped]{$>$}node[pos = 0.5, above right]{$x^{*}$} (60+\a:{2});
\end{tikzpicture}
    \hspace{2mm}
    \sim
    \hspace{2mm}
\begin{tikzpicture}[scale = 0.71, baseline = 0cm]
    \tikzmath{\a = 45;} 
    \foreach \y in {0,...,8}{
    \draw[gray!70] (0,0) circle(\y/4);
    }
    \draw[gray, <-] (45:2.2) -- (45:2.8) node[midway,below right = -2pt]{\small $\vec{y}$};
    \draw[black, thick] (-120+\a:{2}) to[out=60+\a,in=195+\a] node[pos=0.15, sloped]{\small $<$} node[pos = 0.15, right]{$x^{**}$}node[pos=0.8, sloped]{\small $<$} node[pos = 0.8, left]{$x$}  (120+\a:{0.4}) node{\small $\bullet$} to[out=105+\a,in=-120+\a] node[pos=0.5, sloped]{\small $<$} node[pos = 0.6, right]{$x^{*}$} (60+\a:{2});
    \draw[thick] (-90:2) to [out = 90, in = -90]node[pos = 0.1, sloped, rotate = 180]{\small $<$} node[pos = 0.2,left]{$\alpha^{-1}$} node[pos = 0.47,rectangle,rotate =-25,draw, thick, fill=white,inner sep = 4pt]{} (0,0);
    \node[star,  star point ratio=2.25, fill=black, inner sep = 1pt] at (0,0) {};
    \end{tikzpicture}
\end{equation}
Changing the incoming direction at a saddle singularity:
\begin{equation}    \label{eq:saddle-rel}
\begin{tikzpicture}[scale = 0.61, baseline = 0cm]
    \tikzmath{\s = 0.8;} 
\begin{scope}[scale = 2]
        \clip (0,0) circle(1.2);
    \draw[gray!70] (1,1)--(-1,-1);
    \draw[gray!70] (1,-1)--(-1,1);
    \def\sadlines{5}
    \foreach \y in {3,...,5}{
    \draw[gray!70] ({-\y/(\sadlines+1)},1) .. controls (0,{1.6-1.6*\y/(\sadlines+1)}) .. ({\y/(\sadlines+1)},1);
    \draw[gray!70] ({-\y/(\sadlines+1)},-1) .. controls (0,{-1.6+1.6*\y/(\sadlines+1)}) .. ({\y/(\sadlines+1)},-1);
    \draw[gray!70] (-1,{-\y/(\sadlines+1)}) .. controls ({-1.6+1.6*\y/(\sadlines+1)},0) .. (-1,{\y/(\sadlines+1)});
    \draw[gray!70] (1,{-\y/(\sadlines+1)}) .. controls ({1.6-1.6*\y/(\sadlines+1)},0) .. (1,{\y/(\sadlines+1)});
    }   \end{scope}
    \node[star,  star point ratio=2.25, fill=black, inner sep = 1pt] at (0,0) {};
    
    \draw[sloped,black,thick] (0,0) to[out=90,in=-90] node[pos=0.5]{\footnotesize $<$} (0,1.2) node{\small $\bullet$} to[out=-60,in=180] node[pos=0.5]{\footnotesize $>$} (1+\s,0) node[right]{$\alpha^{-1}$};
    \node at (-0.3,0.75) {$\alpha$};
\end{tikzpicture}
    \hspace{2mm}
    \sim
    \hspace{2mm}
\begin{tikzpicture}[scale = 0.61, baseline = 0cm]
    \tikzmath{\s = 0.8;} 
\begin{scope}[scale = 2]
        \clip (0,0) circle(1.2);
    \draw[gray!70] (1,1)--(-1,-1);
    \draw[gray!70] (1,-1)--(-1,1);
    \def\sadlines{5}
    \foreach \y in {3,...,5}{
    \draw[gray!70] ({-\y/(\sadlines+1)},1) .. controls (0,{1.6-1.6*\y/(\sadlines+1)}) .. ({\y/(\sadlines+1)},1);
    \draw[gray!70] ({-\y/(\sadlines+1)},-1) .. controls (0,{-1.6+1.6*\y/(\sadlines+1)}) .. ({\y/(\sadlines+1)},-1);
    \draw[gray!70] (-1,{-\y/(\sadlines+1)}) .. controls ({-1.6+1.6*\y/(\sadlines+1)},0) .. (-1,{\y/(\sadlines+1)});
    \draw[gray!70] (1,{-\y/(\sadlines+1)}) .. controls ({1.6-1.6*\y/(\sadlines+1)},0) .. (1,{\y/(\sadlines+1)});
    }   \end{scope}
    \node[star,  star point ratio=2.25, fill=black, inner sep = 1pt] at (0,0) {};
    
    \draw[sloped,black,thick] (0,0) to[out=0,in=180] node[pos=0.5]{\footnotesize $>$} (1+\s,0) node[right]{$\alpha^{-1}$};
\end{tikzpicture}
    \hspace{2mm}
    \sim
    \hspace{2mm}
\begin{tikzpicture}[scale = 0.61, baseline = 0cm]
    \tikzmath{\s = 0.8;} 
\begin{scope}[scale = 2]
        \clip (0,0) circle(1.2);
    \draw[gray!70] (1,1)--(-1,-1);
    \draw[gray!70] (1,-1)--(-1,1);
    \def\sadlines{5}
    \foreach \y in {3,...,5}{
    \draw[gray!70] ({-\y/(\sadlines+1)},1) .. controls (0,{1.6-1.6*\y/(\sadlines+1)}) .. ({\y/(\sadlines+1)},1);
    \draw[gray!70] ({-\y/(\sadlines+1)},-1) .. controls (0,{-1.6+1.6*\y/(\sadlines+1)}) .. ({\y/(\sadlines+1)},-1);
    \draw[gray!70] (-1,{-\y/(\sadlines+1)}) .. controls ({-1.6+1.6*\y/(\sadlines+1)},0) .. (-1,{\y/(\sadlines+1)});
    \draw[gray!70] (1,{-\y/(\sadlines+1)}) .. controls ({1.6-1.6*\y/(\sadlines+1)},0) .. (1,{\y/(\sadlines+1)});
    }   \end{scope}
    \node[star,  star point ratio=2.25, fill=black, inner sep = 1pt] at (0,0) {};
    
    \draw[sloped,black,thick] (0,0) to[out=-90,in=90] node[pos=0.5]{\footnotesize $<$} (0,-1.2) node{\small $\bullet$} to[out=60,in=180] node[pos=0.5]{\footnotesize $>$} (1+\s,0) node[right]{$\alpha^{-1}$};
    \node at (-0.3,-0.75) {$\alpha$};
\end{tikzpicture}
\end{equation}
\end{definition}

As in Definition \ref{def:SNfunctor}, the $\alpha$-twisted string net modules of $\Sigma$ assemble into a functor
    \begin{equation}\label{eq:twistedSNfunctor}
    \begin{array}{cccccc}
    \SN^\alpha_\BB(\Sigma;-): & \SN^\alpha_\BB(\Gamma_{in})^{op} &\otimes &\SN^\alpha_B(\Gamma_{out}) &\to &\Vect\\
    & X&\otimes& Y  &\mapsto & \SN^\alpha_\BB(\Sigma;X\sqcup Y) \ .
    \end{array}
    \end{equation}

\begin{remark}
In the case $\alpha = \unit$, a $\unit$-twisted pivotal structure is simply a pivotal structure $\apiv$. The twist relation is implied by unitality of $\apiv$. The $\unit$-colored strands in the string nets with singularities can be omitted, and the sliding relations simply state that strands are allowed to pass over singularity points, i.e. that these can be omitted too. We recover the usual graphical calculus for pivotal categories \cite{Selinger} and the usual associated string net modules of surfaces \cite{LevinWen, KirillovStringNetTV}. Note that these string net modules are well-known to be defined on oriented surfaces, and do not depend on a chosen foliation or framing as above. We will devote Section \ref{sec:TQFT} to proving this result for $\alpha$-twisted string nets.
\end{remark}



\subsection{Values on disks with singularities}\label{sec:SNonD012}
We now compute some small examples of twisted string net modules, in particular to convince the reader that our choice of relations do not make these modules identically zero.
\begin{proposition}\label{prop:SNonD0}
For a foliated disk $\mathbb D_0$ with a single singularity of index 0, and for any boundary label $x \in \II$, there is an isomorphism of vector spaces
    \begin{equation*}
\SN_\II^\alpha\left( \mathbb D_0 :=
    \begin{tikzpicture}[scale = 0.5, baseline = 0pt]
    \foreach \y in {0,...,6}{
    \draw[gray!70] (0,0) circle(\y/3);
    }
    \draw[gray, ->] (45:2.2) -- (45:2.8) node[midway,below right = -2pt]{\small $\vec{y}$};
    \node[circle, fill=black, inner sep = 1pt] at (0,2) {};
    \node at (0,2.3) {$x$};
    \node[star,  star point ratio=2.25, fill=black, inner sep = 1pt] at (0,0) {};
    \end{tikzpicture} \right) \simeq \Hom_\BB(\alpha, x)
    \quad\text{    given by    }\quad
    \begin{tikzpicture}[scale = 0.5, baseline = 0pt]
    \foreach \y in {0,...,6}{
    \draw[gray!70] (0,0) circle(\y/3);
    }
    \node[circle, fill=black, inner sep = 1pt] (X) at (0,2) {};
    \node at (0,2.3) {$x$};
    \node[rectangle, draw=black, fill=white, minimum width = 20pt, inner sep = 3pt] (F) at (0,1) {$m$};
    \draw (0,0)--(F) node[midway, sloped]{\tiny $>$} node[pos = 0.3, left]{\small$\alpha$} --(X) node[midway, sloped]{\tiny $>$};
    \node[star,  star point ratio=2.25, fill=black, inner sep = 1pt] at (0,0) {};
    \end{tikzpicture}  \mapsfrom  m \ .
    \end{equation*}
\end{proposition}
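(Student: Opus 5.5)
We construct mutually inverse maps. The map $\Phi:\Hom_\BB(\alpha,x)\to\SN_\II^\alpha(\D_0;x)$ is the one in the statement: a radial $\alpha$-strand leaves the singularity and is capped by $m$ before reaching the boundary point $x$. Since $x\in\II$, every string net and skein relation is automatically admissible, so admissibility plays no further role.

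\emph{Surjectivity of $\Phi$.} Take an arbitrary string net $T$ in $\D_0$. Near the singularity it consists of a single outgoing $\alpha$-strand. The complement of a small round disk around the singularity is a foliated annulus $S^1\times I$ with cylindrical foliation, so $T$ there is a morphism in $\SN_\II(S^1)$ from the labeling $\alpha$ on an inner circle to $x$ on the outer circle. By the description of morphisms in $\SN_\II(S^1)$ given in \eqref{eq:genMorphSNS1}, this morphism is a composite of vertical morphisms $f$ and of the ``winding'' generators (and their inverses), where a strand wraps once around the annulus. The winding generators are exactly the configurations removed by the sliding relation \eqref{eq:skein-slide-0}. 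Applying it (or its inverse) replaces a strand passing once around the singularity by a strand passing on the other side, at the cost of inserting the box $\apiv$ on the $\alpha$-strand. Inducting on the number of windings, we rewrite $T$ as a linear combination of nets with no winding. Such a net is a progressive net in a square with bottom $\alpha$ and top $x$, and so it equals $\Phi(Ev(\cdot))$. Hence $\Phi$ is surjective. Doing this carefully, with the annulus cut open into a square along a ray, is the same normal-form argument as computing $\Hom$ in $\SN_\II(S^1)$.

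\emph{Injectivity of $\Phi$.} We build an inverse $\Psi$ on generators as follows. Cut $T$ along a ray $\rho$ from the singularity to the boundary, chosen transverse to $T$. We get a net on a square whose bottom is $\alpha$ and whose left and right edges carry matching labels $y_1,\dots,y_k$. Closing these up using the duality morphisms, together with a correction by $\apiv$ for each crossing of $\rho$ (the correction dictated by \eqref{eq:skein-slide-0}), gives an element of $\Hom_\BB(\alpha,x)$. The value is simply $Ev$ when the net does not cross $\rho$.

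We must then check that $\Psi$ is well defined, i.e. invariant under three moves. Progressive skein relations away from $\rho$ are clear. Isotopies moving strands across $\rho$ follow from naturality \eqref{eq:pivNatGraphically}, monoidality \eqref{eq:pivMonoidalGraphically} and \eqref{eq:pivNatEval}. The sliding relation \eqref{eq:skein-slide-0} holds by construction.

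The main obstacle is consistency: a strand wrapping the singularity once, then unwound by isotopy around the full circle. This is where the twist relation \eqref{eq:twistrelation}, in its equivalent forms \eqref{eq:otherTwist}, is needed, specifically for the $\alpha$-strand crossing $\rho$ itself. I would first try to treat that case by rotating the ray $\rho$ through a full turn and tracking the accumulated $\apiv$'s.

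Given well-definedness, $\Psi\circ\Phi=\Id$ holds trivially, since the image of $\Phi$ does not cross $\rho$. Combined with surjectivity of $\Phi$, this gives the isomorphism.
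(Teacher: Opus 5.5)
Your proposal is correct and is essentially the paper's proof. Both cut along a progressive arc from the singularity to the boundary, remove crossings with it by a snake followed by the slide relation \eqref{eq:skein-slide-0}, and check invariance under generic isotopy moves across the arc (via \eqref{eq:pivNatGraphically}, \eqref{eq:pivMonoidalGraphically}, \eqref{eq:pivNatEval}), under sliding, and under the $\alpha$-strand becoming tangent to the arc at the singularity, which is where the twist relation enters. For that last move you do not need to rotate $\rho$ through a full turn: the $\alpha$-strand acquires exactly one crossing with $\rho$, and closing it up gives literally the left-hand side of \eqref{eq:otherTwist}, exactly as in \eqref{eq:wrappingalphaaround}.
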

\begin{proof} As $x\in\II$, the admissibility conditions on string nets with boundary $x$ are automatic, and $\SN^\alpha_\II(\D_0;x)=\SN^\alpha_\BB(\D_0;x)$.

    Consider a progressive arc $\gamma$ from the critical point $c$ to the boundary as below. The complement of $\gamma$ is a disk with standard foliation, on which one can apply a progressive skein relation.    
    \begin{equation*}
    \begin{tikzpicture}[scale = 1, baseline = 4mm]
    \fill[gray!10] (0,0) rectangle (1,1);
    \foreach \y in {0,...,5} {
    \draw[gray!50] (0,\y/5) -- ++(1,0);
    }
    \draw[thick] (0,0) rectangle (1,1);
    \end{tikzpicture}
    \hookrightarrow
    \begin{tikzpicture}[scale = 0.5, baseline = 0pt]
    \fill[gray!10] (-60:0.4) -- (-60:1.7) arc(-60:240:1.7) -- (240:0.4) arc(240:-60:0.4);
    \foreach \y in {0,...,6}{
    \draw[gray!70] (0,0) circle(\y/3);
    }
    \draw[thick, red!50!black] (0,0) -- (0,-2) node[pos = 0.7, sloped]{\small $>$} node[pos = 0.7, right=-1pt]{$\gamma$};
    \draw[thick] (-60:0.4) -- (-60:1.7) arc(-60:240:1.7) -- (240:0.4) arc(240:-60:0.4);
    \node[star,  star point ratio=2.25, fill=black, inner sep = 1pt] at (0,0) {};
    \end{tikzpicture}
    \end{equation*} By our rule for string nets with singularities, a string net in $\mathbb D_0 \smallsetminus \gamma$ must have incoming boundary $\alpha$ and outgoing boundary $x$, and will evaluate to a well-defined morphism $m \in \Hom_\BB(\alpha, x)$ as claimed. 

    It remains to be proven that any string net in $\D_0$ can be made disjoint from $\gamma$ in a canonical way. In other words, if we denote $\SN_\II^\alpha(\D_0,\gamma;x)$ the space of string nets which are disjoint from $\gamma$ except at the critical point, where it is not tangent to $\gamma$, modulo isotopies preserving this property and skein relations away from $\gamma$, then we want to prove that the inclusion
    \begin{equation}
        \SN_\II^\alpha(\D_0,\gamma;x) \to \SN_\II^\alpha(\D_0;x)
    \end{equation}
    is an isomorphism.
    
    Consider a string net in $\SN_\BB^\alpha(\D_0;x)$ and choose a representative $T$, not considered up to isotopy. Generically, we can ensure that $T$ intersects $\gamma$ transversely. In particular, we ensure that the strand of $T$ ending at the critical point $c$ is not tangent to $\gamma$ at $c$. 

    Consider an intersection point $p\in T \cap \gamma$, which we will assume is the closest to $c$ along $\gamma$. Then in a neighborhood of $\gamma$ we are in one of the situations below, depending on whether the algebraic intersection of $\gamma$ and $T$ is $+1$ or $-1$. 
    \begin{equation*}
        \begin{tikzpicture}[scale = 1, baseline = -5pt]
        \clip (-0.5,-1.85) rectangle (0.5,1);
    \foreach \y in {0,...,6}{
    \draw[gray!70] (0,0) circle(\y/3);
    }
    \node[circle, fill=black, inner sep = 1pt] at (0,2) {};
    \node[star,  star point ratio=2.25, fill=black, inner sep = 1pt] at (0,0) {};
    \draw (0,0) -- (0,2) node[pos = 0.3, sloped]{\small $>$} node[pos = 0.3, left]{$\alpha$};
    \draw[thick, red!50!black] (0,0) -- (0,-2) node[pos = 0.7, sloped]{\small $>$} node[pos = 0.7, left]{$\gamma$};
    \draw (-0.5,-0.6)-- (0.5,-1.5)node[pos = 0.7, sloped]{\small $>$} node[pos = 0.7, above right]{$x$};
    \end{tikzpicture} \quad\quad \text{or} \quad\quad
            \begin{tikzpicture}[scale = 1, baseline = -5pt]
        \clip (-0.5,-1.85) rectangle (0.5,1);
    \foreach \y in {0,...,6}{
    \draw[gray!70] (0,0) circle(\y/3);
    }
    \node[circle, fill=black, inner sep = 1pt] at (0,2) {};
    \node[star,  star point ratio=2.25, fill=black, inner sep = 1pt] at (0,0) {};
    \draw (0,0) -- (0,2) node[pos = 0.3, sloped]{\small $>$} node[pos = 0.3, left]{$\alpha$};
    \draw[thick, red!50!black] (0,0) -- (0,-2) node[pos = 0.7, sloped]{\small $>$} node[pos = 0.7, right]{$\gamma$};
    \draw (0.5,-0.6)-- (-0.5,-1.5)node[pos = 0.7, sloped]{\small $<$} node[pos = 0.7, above left]{$x$};
    \end{tikzpicture}
    \end{equation*}
In either of these cases, we will apply the sliding relation \eqref{eq:skein-slide-0} to eliminate the intersection point $p$. The relation can only be applied in the presence of an appropriate coevaluation, which we can introduce by a snake relation. We get:
\begin{equation*}
    \begin{tikzpicture}[xscale = 2,yscale = 1, baseline = -5pt]
        \clip (-0.5,-1.85) rectangle (0.5,1);
    \foreach \y in {0,...,6}{
    \draw[gray!70] (0,0) circle(\y/3);
    }
    \node[circle, fill=black, inner sep = 1pt] at (0,2) {};
    \node[star,  star point ratio=2.25, fill=black, inner sep = 1pt] at (0,0) {};
    \draw (0,0) -- (0,2) node[pos = 0.3, sloped]{\footnotesize $>$} node[pos = 0.3, left]{$\alpha$};
    \draw[thick, red!50!black] (0,0) -- (0,-2) node[pos = 0.7, sloped]{\small $>$} node[pos = 0.7, left]{$\gamma$};
    \draw (-0.5,-0.6)-- (0.5,-1.5)node[pos = 0.7, sloped]{\small $>$} node[pos = 0.7, above right]{$x$};
    \end{tikzpicture}
    \ \overset{\text{snake}}=\ 
    \begin{tikzpicture}[xscale = 2,yscale = 1, baseline = -5pt]
        \clip (-0.5,-1.85) rectangle (0.5,1);
    \foreach \y in {0,...,6}{
    \draw[gray!70] (0,0) circle(\y/3);
    }
    \node[circle, fill=black, inner sep = 1pt] at (0,2) {};
    \node[star,  star point ratio=2.25, fill=black, inner sep = 1pt] at (0,0) {};
    \draw (0,0) -- (0,2) node[pos = 0.3, sloped]{\footnotesize $>$} node[pos = 0.3, left]{$\alpha$};
    \draw[thick, red!50!black] (0,0) -- (0,-2) node[pos = 0.7, sloped]{\footnotesize $>$} node[pos = 0.7, left]{$\gamma$};
    \draw (-0.5,-0.6) to[out = -45, in = 125]node[pos = 0.3, sloped]{\footnotesize $>$} node[pos = 0.3, above right]{$x$} (0.2, -1.2) node{\small $\bullet$} to [out = 80, in=-80]node[pos = 0.7, sloped]{\footnotesize $>$} node[pos = 0.7, left]{$x^*$} (0.2,-0.1)node{\small $\bullet$} to[out = 30, in = 125]node[pos = 0.9, sloped]{\footnotesize $>$} node[pos = 0.9, below]{$x$} (0.5,-1.5);
    \end{tikzpicture} 
    \ \overset{\eqref{eq:skein-slide-0}}=\ 
    \begin{tikzpicture}[xscale = 2,yscale = 1, baseline = -5pt]
        \clip (-0.5,-1.85) rectangle (0.5,1);
    \foreach \y in {0,...,6}{
    \draw[gray!70] (0,0) circle(\y/3);
    }
    \node[circle, fill=black, inner sep = 1pt] at (0,2) {};
    \node[star,  star point ratio=2.25, fill=black, inner sep = 1pt] at (0,0) {};
    \draw (0,0) -- (0,2) node[pos = 0.4, sloped]{\footnotesize $>$} node[pos = 0.4, left]{$\alpha$};
    \draw[thick, red!50!black] (0,0) -- (0,-2) node[pos = 0.7, sloped]{\footnotesize $>$} node[pos = 0.7, left]{$\gamma$};
    \draw (-0.5,-0.6) to[out = -45, in = 125]node[pos = 0.3, sloped]{\footnotesize $>$} node[pos = 0.3, below right]{$x$} 
    (0.2, -1.2) node{\small $\bullet$} to [out = 100, in=-160]node[pos = 0.5, sloped]{\footnotesize $>$} node[pos = 0.5, right]{$x^*$} 
    (-0.2,0.1)node{\small $\bullet$} to[out = 90, in = 95, out distance = 5mm, in distance = 4mm]node[pos = 0.2, sloped]{\footnotesize $>$} node[pos = 0.2, left]{$x^{**}$}
    (0.45,0) to[out= -85, in = 125]node[pos = 0.9, sloped]{\footnotesize $>$} node[pos = 0.9, below]{$x$} (0.5,-1.5);
    \node[rectangle, draw, fill=white, inner sep = 3pt] at (0,0.4){};
    \end{tikzpicture} 
    \ \overset{\text{isotopy}}=\ 
    \begin{tikzpicture}[xscale = 2,yscale = 1, baseline = -5pt]
        \clip (-0.5,-1.85) rectangle (0.5,1);
    \foreach \y in {0,...,6}{
    \draw[gray!70] (0,0) circle(\y/3);
    }
    \node[circle, fill=black, inner sep = 1pt] at (0,2) {};
    \node[star,  star point ratio=2.25, fill=black, inner sep = 1pt] at (0,0) {};
    \draw (0,0) -- (0,2) node[pos = 0.4, sloped]{\footnotesize $>$} node[pos = 0.4, left]{$\alpha$};
    \draw[thick, red!50!black] (0,0) -- (0,-2) node[pos = 0.7, sloped]{\footnotesize $>$} node[pos = 0.7, left]{$\gamma$};
    \draw (-0.5,-0.6) to[out = -45, in = 125]node[pos = 0.3, sloped]{\footnotesize $>$} node[pos = 0.3, below]{$x$} 
    (-0.2, -1) node{\small $\bullet$} to [out = 100, in=-160]node[pos = 0.5, sloped]{\footnotesize $>$} node[pos = 0.5, right]{$x^*$} 
    (-0.2,0.1)node{\small $\bullet$} to[out = 90, in = 95, out distance = 5mm, in distance = 4mm]node[pos = 0.2, sloped]{\footnotesize $>$} node[pos = 0.2, left]{$x^{**}$}
    (0.45,0) to[out= -85, in = 125]node[pos = 0.9, sloped]{\footnotesize $>$} node[pos = 0.9, below]{$x$} (0.5,-1.5);
    \node[rectangle, draw, fill=white, inner sep = 3pt] at (0,0.4){};
    \end{tikzpicture} 
\end{equation*}
or 
\begin{equation*}
    \begin{tikzpicture}[xscale = 2,yscale = 1, baseline = -5pt]
        \clip (-0.5,-1.85) rectangle (0.5,1);
    \foreach \y in {0,...,6}{
    \draw[gray!70] (0,0) circle(\y/3);
    }
    \node[circle, fill=black, inner sep = 1pt] at (0,2) {};
    \node[star,  star point ratio=2.25, fill=black, inner sep = 1pt] at (0,0) {};
    \draw (0,0) -- (0,2) node[pos = 0.3, sloped]{\footnotesize $>$} node[pos = 0.3, left]{$\alpha$};
    \draw[thick, red!50!black] (0,0) -- (0,-2) node[pos = 0.7, sloped]{\small $>$} node[pos = 0.7, right]{$\gamma$};
    \draw (0.5,-0.6)-- (-0.5,-1.5)node[pos = 0.7, sloped]{\footnotesize $<$} node[pos = 0.7, above left]{$x$};
    \end{tikzpicture}
    \ \overset{\text{snake}}=\ 
    \begin{tikzpicture}[xscale = 2,yscale = 1, baseline = -5pt]
        \clip (-0.5,-1.85) rectangle (0.5,1);
    \foreach \y in {0,...,6}{
    \draw[gray!70] (0,0) circle(\y/3);
    }
    \node[circle, fill=black, inner sep = 1pt] at (0,2) {};
    \node[star,  star point ratio=2.25, fill=black, inner sep = 1pt] at (0,0) {};
    \draw (0,0) -- (0,2) node[pos = 0.3, sloped]{\footnotesize $>$} node[pos = 0.3, left]{$\alpha$};
    \draw[thick, red!50!black] (0,0) -- (0,-2) node[pos = 0.7, sloped]{\footnotesize $>$} node[pos = 0.7, right]{$\gamma$};
    \draw (0.5,-0.6) to[out = -135, in = 45]node[pos = 0.3, sloped]{\footnotesize $<$} node[pos = 0.3, below]{$x$}
    (0.2, -0.9) node{\small $\bullet$} to [out = 80, in=45, in distance = 4mm]node[pos = 0.6, sloped]{\footnotesize $<$} node[pos = 0.6, above]{${}^*x$} 
    (0.2,-0.1) node{\small $\bullet$} to[out = -80, in = 45]node[pos = 0.8, sloped]{\footnotesize $<$} node[pos = 0.8, below]{$x$} (-0.5,-1.5);
    \end{tikzpicture} 
    \ \overset{\eqref{eq:skein-slide-0}}=\ 
    \begin{tikzpicture}[xscale = 2,yscale = 1, baseline = -5pt]
        \clip (-0.5,-1.85) rectangle (0.5,1);
    \foreach \y in {0,...,6}{
    \draw[gray!70] (0,0) circle(\y/3);
    }
    \node[circle, fill=black, inner sep = 1pt] at (0,2) {};
    \node[star,  star point ratio=2.25, fill=black, inner sep = 1pt] at (0,0) {};
    \draw (0,0) -- (0,2) node[pos = 0.4, sloped]{\footnotesize $>$} node[pos = 0.4, left]{$\alpha$};
    \draw[thick, red!50!black] (0,0) -- (0,-2) node[pos = 0.7, sloped]{\footnotesize $>$} node[pos = 0.7, right]{$\gamma$};
    \draw (0.5,-0.6) to[out = -135, in = 45]node[pos = 0.3, sloped]{\footnotesize $<$} node[pos = 0.3, below]{$x$}
    (0.2, -0.9) node{\small $\bullet$} to [out = 80, in=-90, in distance = 4mm]node[pos = 0.6, sloped]{\footnotesize $<$} node[pos = 0.6, left]{${}^*x$}
    (0.45,0) to[out = 95, in=90, in distance = 5mm, out distance = 4mm] node[pos = 0.8, sloped]{\footnotesize $>$} node[pos = 0.8, left]{$x^*$}
    (-0.2,0.1)node{\small $\bullet$} to [out = -160, in = 45]node[pos = 0.8, sloped]{\footnotesize $<$} node[pos = 0.8, below]{$x$}(-0.5,-1.5);
    \node[rectangle, draw, fill=white, inner sep = 3pt] at (0,0.4){};
    \end{tikzpicture} 
\end{equation*}
Doing this procedure for every intersection point of $T$ and $\gamma$ produces a graph which is disjoint from $\gamma$, hence a morphism in $\Hom_\BB(\alpha, x)$ as desired. We need to check that this procedure is well-defined and does not depend on the choice of generic representative for our string net.

Two representatives $T$ and $T'$ for the same string net will differ by isotopies of progressive graphs, sliding over the critical point relations and progressive skein relations. Up to isotopy, any progressive skein relation can be pushed to happen away from $\gamma$, so we only have to check invariance under the first two.

A generic isotopy between $T$ and $T'$ can be obtained as a sequence of isotopies of graphs intersecting $\gamma$ transversely, and the following moves:
\begin{itemize}
    \item two intersection points of $\gamma$ and $T$ cancel:
    \begin{tikzpicture}[scale = 1, baseline = -30pt]
        \clip (-0.5,-1.85) rectangle (0.5,-0.1);
    \foreach \y in {0,...,6}{
    \draw[gray!70] (0,0) circle(\y/3);
    }
    \node[circle, fill=black, inner sep = 1pt] at (0,2) {};
    \node[star,  star point ratio=2.25, fill=black, inner sep = 1pt] at (0,0) {};
    \draw (0,0) -- (0,2) node[pos = 0.3, sloped]{\small $>$} node[pos = 0.3, left]{$\alpha$};
    \draw[thick, red!50!black] (0,0) -- (0,-2) node[pos = 0.7, sloped]{\small $>$} node[pos = 0.7, right]{$\gamma$};
    \draw (-0.5,-0.3)to[out = -45, in = 25, looseness=2.2] (-0.5,-1.5)node[pos = 0.7, sloped]{\small $>$};
    \end{tikzpicture} 
    $\leadsto$
    \begin{tikzpicture}[scale = 1, baseline = -30pt]
        \clip (-0.5,-1.85) rectangle (0.5,-0.1);
    \foreach \y in {0,...,6}{
    \draw[gray!70] (0,0) circle(\y/3);
    }
    \node[circle, fill=black, inner sep = 1pt] at (0,2) {};
    \node[star,  star point ratio=2.25, fill=black, inner sep = 1pt] at (0,0) {};
    \draw (0,0) -- (0,2) node[pos = 0.3, sloped]{\small $>$} node[pos = 0.3, left]{$\alpha$};
    \draw[thick, red!50!black] (0,0) -- (0,-2) node[pos = 0.7, sloped]{\small $>$} node[pos = 0.7, right]{$\gamma$};
    \draw (-0.5,-0.3)to[out = -45, in = 25, looseness=1.2] (-0.5,-1.5)node[pos = 0.7, sloped]{\small $>$};
    \end{tikzpicture} 
    and its mirror. Invariance under this move is easily checked by applying the procedure above to both intersection points and using \eqref{eq:pivNatEval} and snake relations.

    \item a vertex of $T$ passes through $\gamma$: \begin{tikzpicture}[scale = 1, baseline = -30pt]
        \clip (-0.5,-1.85) rectangle (0.5,-0.1);
    \foreach \y in {0,...,6}{
    \draw[gray!70] (0,0) circle(\y/3);
    }
    \draw[thick, red!50!black] (0,0) -- (0,-2) node[pos = 0.7, sloped]{\small $>$} node[pos = 0.7, left]{$\gamma$};
    \draw (-0.5,-0.6)-- (0.5,-1.5)node[pos = 0.7, sloped]{\tiny $>$}node[pos = 0.1, sloped]{\tiny $>$} coordinate[pos=0.3] (P);
    \draw (-0.5,-0.3) -- (P)node[pos = 0.5, sloped]{\tiny $>$}node{\small $\bullet$};
    \draw (0.5,-0.3) -- (P)node[pos = 0.5, sloped]{\tiny $<$};
    \draw (-0.5,-1.5) -- (P)node[pos = 0.5, sloped]{\tiny $<$};
    \end{tikzpicture} 
    $\leadsto$ 
    \begin{tikzpicture}[scale = 1, baseline = -30pt]
        \clip (-0.5,-1.85) rectangle (0.5,-0.1);
    \foreach \y in {0,...,6}{
    \draw[gray!70] (0,0) circle(\y/3);
    }
    \node[circle, fill=black, inner sep = 1pt] at (0,2) {};
    \node[star,  star point ratio=2.25, fill=black, inner sep = 1pt] at (0,0) {};
    \draw (0,0) -- (0,2) node[pos = 0.3, sloped]{\small $>$} node[pos = 0.3, left]{$\alpha$};
    \draw[thick, red!50!black] (0,0) -- (0,-2) node[pos = 0.8, sloped]{\small $>$} node[pos = 0.7, below right]{$\gamma$};
    \draw (-0.5,-0.6)-- (0.5,-1.5)node[pos = 0.85, sloped]{\tiny $>$}node[pos = 0.2, sloped]{\tiny $>$} coordinate[pos=0.7] (P);
    \draw (-0.5,-0.3) -- (P)node[pos = 0.5, sloped]{\tiny $>$}node{\small $\bullet$};
    \draw (0.5,-0.3) -- (P)node[pos = 0.5, sloped]{\tiny $<$};
    \draw (-0.5,-1.5) -- (P)node[pos = 0.5, sloped]{\tiny $<$};
    \end{tikzpicture} . We will decompose this into smaller pieces.
    
    First, if the vertex is adjacent to only two edges coming from opposite directions, then invariance under \begin{tikzpicture}[scale = 1, baseline = -30pt]
        \clip (-0.5,-1.85) rectangle (0.5,-0.1);
    \foreach \y in {0,...,6}{
    \draw[gray!70] (0,0) circle(\y/3);
    }
    \node[circle, fill=black, inner sep = 1pt] at (0,2) {};
    \node[star,  star point ratio=2.25, fill=black, inner sep = 1pt] at (0,0) {};
    \draw (0,0) -- (0,2) node[pos = 0.3, sloped]{\small $>$} node[pos = 0.3, left]{$\alpha$};
    \draw[thick, red!50!black] (0,0) -- (0,-2) node[pos = 0.7, sloped]{\small $>$} node[pos = 0.7, left]{$\gamma$};
    \draw (-0.5,-0.6)-- (0.5,-1.5)node[pos = 0.7, sloped]{\small $>$} coordinate[pos=0.3] (P)node[pos=0.3]{\small $\bullet$};
    \end{tikzpicture} 
    $\leadsto$ 
    \begin{tikzpicture}[scale = 1, baseline = -30pt]
        \clip (-0.5,-1.85) rectangle (0.5,-0.1);
    \foreach \y in {0,...,6}{
    \draw[gray!70] (0,0) circle(\y/3);
    }
    \node[circle, fill=black, inner sep = 1pt] at (0,2) {};
    \node[star,  star point ratio=2.25, fill=black, inner sep = 1pt] at (0,0) {};
    \draw (0,0) -- (0,2) node[pos = 0.3, sloped]{\small $>$} node[pos = 0.3, left]{$\alpha$};
    \draw[thick, red!50!black] (0,0) -- (0,-2) node[pos = 0.7, sloped]{\small $>$} node[pos = 0.7, left]{$\gamma$};
    \draw (-0.5,-0.6)-- (0.5,-1.5)node[pos = 0.85, sloped]{\small $>$} coordinate[pos=0.7] (P) node[pos=0.7]{\small $\bullet$};
    \end{tikzpicture} follows from naturality \eqref{eq:pivNatGraphically}. 

    Second, if the vertex is 3-valent and colored by an identity coupon (i.e. the fusion of two lines) all of which intersect $\gamma$ with the same sign during the move \begin{tikzpicture}[scale = 1, baseline = -30pt]
        \clip (-0.5,-1.85) rectangle (0.5,-0.1);
    \foreach \y in {0,...,6}{
    \draw[gray!70] (0,0) circle(\y/3);
    }
    \node[circle, fill=black, inner sep = 1pt] at (0,2) {};
    \node[star,  star point ratio=2.25, fill=black, inner sep = 1pt] at (0,0) {};
    \draw (0,0) -- (0,2) node[pos = 0.3, sloped]{\small $>$} node[pos = 0.3, left]{$\alpha$};
    \draw[thick, red!50!black] (0,0) -- (0,-2) node[pos = 0.7, sloped]{\small $>$} node[pos = 0.7, left]{$\gamma$};
    \draw (-0.5,-0.6)-- (0.5,-1.5)node[pos = 0.7, sloped]{\small $>$} coordinate[pos=0.3] (P);
    \draw (-0.5,-0.3)to[out = -55,in = 125] (P)node{\tiny $\bullet$};
    \end{tikzpicture} 
    $\leadsto$ 
    \begin{tikzpicture}[scale = 1, baseline = -30pt]
        \clip (-0.5,-1.85) rectangle (0.5,-0.1);
    \foreach \y in {0,...,6}{
    \draw[gray!70] (0,0) circle(\y/3);
    }
    \node[circle, fill=black, inner sep = 1pt] at (0,2) {};
    \node[star,  star point ratio=2.25, fill=black, inner sep = 1pt] at (0,0) {};
    \draw (0,0) -- (0,2) node[pos = 0.3, sloped]{\small $>$} node[pos = 0.3, left]{$\alpha$};
    \draw[thick, red!50!black] (0,0) -- (0,-2) node[pos = 0.7, sloped]{\small $>$} node[pos = 0.7, left]{$\gamma$};
    \draw (-0.5,-0.6)-- (0.5,-1.5)node[pos = 0.85, sloped]{\small $>$} coordinate[pos=0.7] (P);
    \draw (-0.5,-0.3)to[out = -55,in = 125] (P)node{\tiny $\bullet$};
    \end{tikzpicture}, invariance follows from  \eqref{eq:pivMonoidalGraphically}.
    
Finally, we can decompose the general case as:
\begin{equation*}
\begin{tikzpicture}[scale = 1.5, baseline = -30pt]
        \clip (-0.5,-1.85) rectangle (0.5,-0.1);
    \foreach \y in {0,...,6}{
    \draw[gray!70] (0,0) circle(\y/3);
    }
    \draw[thick, red!50!black] (0,0) -- (0,-2) node[pos = 0.8, sloped]{\small $>$} node[pos = 0.7, below right]{$\gamma$};
    \draw (-0.5,-0.6)-- (0.5,-1.5)node[pos = 0.7, sloped]{\tiny $>$}node[pos = 0.1, sloped]{\tiny $>$} coordinate[pos=0.3] (P);
    \draw (-0.5,-0.3) -- (P)node[pos = 0.5, sloped]{\tiny $>$}node{\small $\bullet$};
    \draw (0.5,-0.3) -- (P)node[pos = 0.5, sloped]{\tiny $<$};
    \draw (-0.5,-1.5) -- (P)node[pos = 0.5, sloped]{\tiny $<$};
    \end{tikzpicture} 
\overset{\eqref{eq:pivNatEval}}\leadsto
    \begin{tikzpicture}[scale = 1.5, baseline = -30pt]
        \clip (-0.5,-1.85) rectangle (0.5,-0.1);
    \foreach \y in {0,...,6}{
    \draw[gray!70] (0,0) circle(\y/3);
    }
    \draw[thick, red!50!black] (0,0) -- (0,-2) node[pos = 0.8, sloped]{\small $>$} node[pos = 0.7, below right]{$\gamma$};
    \draw (-0.5,-0.6)-- (0.5,-1.5)node[pos = 0.7, sloped]{\tiny $>$}node[pos = 0.1, sloped]{\tiny $>$} coordinate[pos=0.3] (P);
    \draw (-0.5,-0.3) to[out = -70,in = 120] node[pos = 0.5, sloped]{\tiny $>$}(P)node{\small $\bullet$};
    \draw (0.5,-0.3) -- (P)node[pos = 0.5, sloped]{\tiny $<$};
    \draw (-0.5,-1.6) to[in = -50, out = 25, in distance = 30pt] node[pos = 0.3, sloped]{\tiny $<$}(P);
    \end{tikzpicture} 
\overset{\eqref{eq:pivMonoidalGraphically}}\leadsto
    \begin{tikzpicture}[scale = 1.5, baseline = -30pt]
        \clip (-0.5,-1.85) rectangle (0.5,-0.1);
    \foreach \y in {0,...,6}{
    \draw[gray!70] (0,0) circle(\y/3);
    }
    \draw[thick, red!50!black] (0,0) -- (0,-2) node[pos = 0.8, sloped]{\small $>$} node[pos = 0.7, below right]{$\gamma$};
    \draw (-0.5,-0.6)-- (0.5,-1.5)node[pos = 0.85, sloped]{\tiny $>$}node[pos = 0.1, sloped]{\tiny $>$} coordinate[pos=0.3] (P)coordinate[pos=0.7] (Pprime);
    \draw (-0.5,-0.3) to[out = -70,in = 120] node[pos = 0.5, sloped]{\tiny $>$}(P)node{\small $\bullet$};
    \draw (0.5,-0.3) -- (P)node[pos = 0.5, sloped]{\tiny $<$};
    \draw (-0.5,-1.6) to[in = -55, out = 25] node[pos = 0.5, sloped]{\tiny $<$}(Pprime) node{\tiny $\bullet$};
    \end{tikzpicture} 
\overset{\eqref{eq:pivNatGraphically}}\leadsto
    \begin{tikzpicture}[scale = 1.5, baseline = -30pt]
        \clip (-0.5,-1.85) rectangle (0.5,-0.1);
    \foreach \y in {0,...,6}{
    \draw[gray!70] (0,0) circle(\y/3);
    }
    \draw[thick, red!50!black] (0,0) -- (0,-2) node[pos = 0.8, sloped]{\small $>$} node[pos = 0.7, below right]{$\gamma$};
    \draw (-0.5,-0.6)-- (0.5,-1.5)node[pos = 0.85, sloped]{\tiny $>$}node[pos = 0.1, sloped]{\tiny $>$} coordinate[pos=0.7] (P)coordinate[pos=0.3] (Pprime);
    \draw (-0.5,-0.3) to[out = -70,in = 120] node[pos = 0.5, sloped]{\tiny $>$}(Pprime)node{\tiny $\bullet$};
    \draw (0.5,-0.3) to[out = -130, in = 110, in distance = 15pt] node[pos = 0.5, sloped]{\tiny $<$} (Pprime);
    \draw (-0.5,-1.6) -- (P) node[pos = 0.5, sloped]{\tiny $<$} node{\small $\bullet$};
    \end{tikzpicture} 
\overset{\eqref{eq:pivMonoidalGraphically}}\leadsto
    \begin{tikzpicture}[scale = 1.5, baseline = -30pt]
        \clip (-0.5,-1.85) rectangle (0.5,-0.1);
    \foreach \y in {0,...,6}{
    \draw[gray!70] (0,0) circle(\y/3);
    }
    \draw[thick, red!50!black] (0,0) -- (0,-2) node[pos = 0.8, sloped]{\small $>$} node[pos = 0.7, below right]{$\gamma$};
    \draw (-0.5,-0.6)-- (0.5,-1.5)node[pos = 0.85, sloped]{\tiny $>$}node[pos = 0.1, sloped]{\tiny $>$} coordinate[pos=0.7] (P)coordinate[pos=0.3] (Pprime);
    \draw (-0.5,-0.3) to[out = -70,in = 130] node[pos = 0.5, sloped]{\tiny $>$}(P);
    \draw (0.5,-0.3) to[out = -130, in = 125, in distance = 35pt] node[pos = 0.5, sloped]{\tiny $<$} (P);
    \draw (-0.5,-1.6) -- (P) node[pos = 0.5, sloped]{\tiny $<$} node{\small $\bullet$};
    \end{tikzpicture} 
\overset{\eqref{eq:pivNatEval}}\leadsto
    \begin{tikzpicture}[scale = 1.5, baseline = -30pt]
        \clip (-0.5,-1.85) rectangle (0.5,-0.1);
    \foreach \y in {0,...,6}{
    \draw[gray!70] (0,0) circle(\y/3);
    }
    \node[circle, fill=black, inner sep = 1pt] at (0,2) {};
    \node[star,  star point ratio=2.25, fill=black, inner sep = 1pt] at (0,0) {};
    \draw (0,0) -- (0,2) node[pos = 0.3, sloped]{\small $>$} node[pos = 0.3, left]{$\alpha$};
    \draw[thick, red!50!black] (0,0) -- (0,-2) node[pos = 0.8, sloped]{\small $>$} node[pos = 0.7, below right]{$\gamma$};
    \draw (-0.5,-0.6)-- (0.5,-1.5)node[pos = 0.85, sloped]{\tiny $>$}node[pos = 0.2, sloped]{\tiny $>$} coordinate[pos=0.7] (P);
    \draw (-0.5,-0.3) -- (P)node[pos = 0.5, sloped]{\tiny $>$}node{\small $\bullet$};
    \draw (0.5,-0.3) -- (P)node[pos = 0.5, sloped]{\tiny $<$};
    \draw (-0.5,-1.5) -- (P)node[pos = 0.5, sloped]{\tiny $<$};
    \end{tikzpicture} 
\end{equation*}
    \item $T$ becomes tangent to $\gamma$ at $c$: \begin{tikzpicture}[scale = 1, baseline = -5pt]
        \clip (-0.5,-0.95) rectangle (0.5,0.95);
    \foreach \y in {0,...,6}{
    \draw[gray!70] (0,0) circle(\y/3);
    }
    \node[circle, fill=black, inner sep = 1pt] at (0,2) {};
    \node[star,  star point ratio=2.25, fill=black, inner sep = 1pt] at (0,0) {};
    \draw (0,0) to[out = -135, in = -110, out distance = 8mm] node[pos = 0.5, sloped]{\small $>$} node[pos = 0.5, right]{$\alpha$} (0,2);
    \draw[thick, red!50!black] (0,0) -- (0,-2) node[pos = 0.3, sloped]{\small $>$} node[pos = 0.3, left]{$\gamma$};
    \end{tikzpicture}
    $\leadsto$ 
    \begin{tikzpicture}[scale = 1, baseline = -5pt]
        \clip (-0.5,-0.95) rectangle (0.5,0.95);
    \foreach \y in {0,...,6}{
    \draw[gray!70] (0,0) circle(\y/3);
    }
    \node[circle, fill=black, inner sep = 1pt] at (0,2) {};
    \node[star,  star point ratio=2.25, fill=black, inner sep = 1pt] at (0,0) {};
    \draw (0,0) to[out = -35, in = -80, out distance = 5mm, in distance = 6mm] (-0.4,0) to[out = 100, in = -90] node[pos = 0.2, sloped]{\small $>$} node[pos = 0.2, right]{$\alpha$} (0,2);
    \draw[thick, red!50!black] (0,0) -- (0,-2) node[pos = 0.3, sloped]{\small $>$} node[pos = 0.3, left]{$\gamma$};
    \end{tikzpicture}. Invariance under this move follows from the twist relation \eqref{eq:otherTwist}:
    
\begin{equation}\label{eq:wrappingalphaaround}
    \begin{tikzpicture}[scale = 0.64, baseline = 0pt]
    \foreach \y in {0,...,6}{
    \draw[gray!70] (0,0) circle(\y/3);
    }
    \draw[sloped,black,thick] (0,0) to[out=90,in=154] (45:{0.4}) ;
    \draw[sloped,black,thick] (135:{1.4}) to[out=45,in=-90] node[pos=0]{\footnotesize $>$} (0,2) node[above]{$\alpha$};
    \node[star,  star point ratio=2.25, fill=black, inner sep = 1pt] at (0,0) {};
    \draw[black, thick] plot[domain=0:270, variable=\r, samples=200] (45-\r:{\r/270 + 0.4});
    \end{tikzpicture}
    \ \leadsto \
    \begin{tikzpicture}[scale = 0.64, baseline = 0pt]
    \foreach \y in {0,...,6}{
    \draw[gray!70] (0,0) circle(\y/3);
    }
    \draw[sloped,black,thick] (0,0) to[out=90,in=-180] node[pos=0.5]{\footnotesize $>$} (70:{1.4}) node {\footnotesize $\bullet$} to[out=-90,in=30] (0:{0.3}) node {$\bullet$} to[out=-30,in=20] (-90:{0.7}) to[out=200,in=-80] (180:{1}) to[out=100,in=-90] (0,2);
    \node[star,  star point ratio=2.25, fill=black, inner sep = 1pt] at (0,0) {};
    \end{tikzpicture}
    \ \leadsto \
    \begin{tikzpicture}[scale = 0.64, baseline = 0pt]
    \foreach \y in {0,...,6}{
    \draw[gray!70] (0,0) circle(\y/3);
    }
    \draw[sloped,black,thick] (0,0) to[out=90,in=-180] node[pos=0.8]{\footnotesize $>$} node[pos = 0.35, rectangle, draw, fill=white, inner sep = 3pt, rotate = 10]{} (70:{1.4}) node {\footnotesize $\bullet$} to[out=-60,in=110] (180:{0.4}) node {\footnotesize $\bullet$}  to[out=150,in=-90] (0,2);
    \node[star,  star point ratio=2.25, fill=black, inner sep = 1pt] at (0,0) {};
    \end{tikzpicture}
    \ \sim \
    \begin{tikzpicture}[scale = 0.64, baseline = 0pt]
    \foreach \y in {0,...,6}{
    \draw[gray!70] (0,0) circle(\y/3);
    }
    \draw[black,thick] (0,0) to[out=90,in=-90] node[pos=0.5,sloped]{\footnotesize $>$}node[pos=0.5, right]{$\alpha$} (0,2);
    \node[star,  star point ratio=2.25, fill=black, inner sep = 1pt] at (0,0) {};
    \end{tikzpicture}
    \end{equation}
\end{itemize}
Finally, if $T$ and $T'$ differ by a sliding relation \eqref{eq:skein-slide-0}, then $T$ has one more intersection point with $\gamma$, that we eliminate as above using the same sliding operation, so our procedure is invariant under this move.
\end{proof}

\begin{proposition}\label{prop:SNonD1}
For a foliated disk $\mathbb D_1$ with a single singularity of index 1, and for any boundary label $x \in \II$ as displayed below, there is an isomorphism of vector spaces
    \begin{equation*}
    \SN_\II^\alpha\left(
        \begin{tikzpicture}[baseline = 0pt]
        \begin{scope}
        \clip (0,0) circle(1.2);
    \draw[gray!70] (1,1)--(-1,-1);
    \draw[gray!70] (1,-1)--(-1,1);
    \def\sadlines{5}
    \foreach \y in {3,...,5}{
    \draw[gray!70] ({-\y/(\sadlines+1)},1) .. controls (0,{1.6-1.6*\y/(\sadlines+1)}) .. ({\y/(\sadlines+1)},1);
    \draw[gray!70] ({-\y/(\sadlines+1)},-1) .. controls (0,{-1.6+1.6*\y/(\sadlines+1)}) .. ({\y/(\sadlines+1)},-1);
    \draw[gray!70] (-1,{-\y/(\sadlines+1)}) .. controls ({-1.6+1.6*\y/(\sadlines+1)},0) .. (-1,{\y/(\sadlines+1)});
    \draw[gray!70] (1,{-\y/(\sadlines+1)}) .. controls ({1.6-1.6*\y/(\sadlines+1)},0) .. (1,{\y/(\sadlines+1)});
    }   \end{scope}
    \draw[gray, ->](-0.9,-1)--++(-0.3,0.3) node[midway, below left = -2pt]{\small $\vec{y}$};
    \draw[gray, ->](0.9,-1)--++(0.3,0.3) node[midway, below right = -2pt]{\small $\vec{y}$};
    \node[star,  star point ratio=2.25, fill=black, inner sep = 1pt] at (0,0) {};
    \node[circle, fill=black, inner sep = 1pt] at (0,-0.85) {};
    \node at (0,-1.1) {$x$};
\end{tikzpicture}
\right)
\simeq \Hom_\BB(x, \alpha)
    \quad \text{given by }\
        \begin{tikzpicture}[baseline = 0pt]
        \begin{scope}
        \clip (0,0) circle(1.2);
    \draw[gray!70] (1,1)--(-1,-1);
    \draw[gray!70] (1,-1)--(-1,1);
    \def\sadlines{5}
    \foreach \y in {3,...,5}{
    \draw[gray!70] ({-\y/(\sadlines+1)},1) .. controls (0,{1.6-1.6*\y/(\sadlines+1)}) .. ({\y/(\sadlines+1)},1);
    \draw[gray!70] ({-\y/(\sadlines+1)},-1) .. controls (0,{-1.6+1.6*\y/(\sadlines+1)}) .. ({\y/(\sadlines+1)},-1);
    \draw[gray!70] (-1,{-\y/(\sadlines+1)}) .. controls ({-1.6+1.6*\y/(\sadlines+1)},0) .. (-1,{\y/(\sadlines+1)});
    \draw[gray!70] (1,{-\y/(\sadlines+1)}) .. controls ({1.6-1.6*\y/(\sadlines+1)},0) .. (1,{\y/(\sadlines+1)});
    }   \end{scope}
    \draw[gray, ->](-0.9,-1)--++(-0.3,0.3) node[midway, below left = -2pt]{\small $\vec{y}$};
    \draw[gray, ->](0.9,-1)--++(0.3,0.3) node[midway, below right = -2pt]{\small $\vec{y}$};
    \node[star,  star point ratio=2.25, fill=black, inner sep = 1pt] at (0,0) {};
    \node[circle, fill=black, inner sep = 1pt] at (0,-0.85) {};
    \node at (0,-1.1) {$x$};
    \draw[very thick] (0,-0.85) -- (0,0)  node[pos = 0.1, sloped]{\small $>$}  node[pos = 0.8, sloped]{\small $>$} node[pos = 0.85, right]{$\alpha$};
    \node[rectangle, draw, thick, fill=white, minimum width = 0.9cm] at (0,-0.5) {\small $m$};
\end{tikzpicture}
\mapsfrom m \ .
    \end{equation*}
\end{proposition}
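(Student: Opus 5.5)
Fix a progressive arc $\gamma$ from the saddle point $c$ to the boundary point carrying $x$, running along the incoming separatrix: the lower branch of the cross in the picture, whose transverse direction points toward $c$. The plan follows the proof of Proposition \ref{prop:SNonD0}. First, since $x\in\II$, admissibility is automatic and $\SN^\alpha_\II(\D_1;x)=\SN^\alpha_\BB(\D_1;x)$. Next, observe that $\D_1$ cut along the two incoming separatrices, or more conveniently along a thin neighbourhood of the vertical segment from the boundary point to $c$, is a standard foliated square. A string net supported there, with its $\alpha$-strand entering $c$ from below, evaluates to a morphism $m\in\Hom_\BB(x,\alpha)$. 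This gives the map $\Hom_\BB(x,\alpha)\to\SN$, $m\mapsto$ the pictured net. The inverse is built by showing that every string net is equivalent to one in this normal form, in a well-defined way.

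\textbf{Normal form for the end at $c$.} A general string net meets $c$ with a single strand, which may approach $c$ from the bottom or the top incoming quadrant. I would use the saddle relation \eqref{eq:saddle-rel} to move that end to the bottom. Reading \eqref{eq:saddle-rel} backwards, an end arriving from the top is rewritten, via a cap or cup on $\alpha$ (with the conventions fixing ${}^*\alpha=\alpha^{-1}$) and an outgoing $\alpha^{-1}$ strand, as an end arriving from the bottom. The result is a string net whose strand at $c$ comes in from below.

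\textbf{Clearing the outgoing region.} Strands lying in the left and right outgoing regions, or crossing the outgoing separatrices, are pushed across $c$ using the index-1 sliding relation \eqref{eq:skein-slide-1}. As in the index-0 case, a snake relation first inserts the coevaluation the relation requires. I would process the intersection points of $T$ with the upper half of the singular cross one at a time, starting with the one closest to $c$. Each slide moves the strand below $c$ and inserts a $\apiv$ coupon on the $\alpha$-strand. Once every intersection with $\gamma'$ (the union of the two outgoing separatrices together with the top) has been removed, the net lives in the complement of $\gamma'$, and progressive skein relations evaluate it to an element of $\Hom_\BB(x,\alpha)$.

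\textbf{Well-definedness.} This is the main obstacle, and I would structure it exactly as in Proposition \ref{prop:SNonD0}. I check invariance under:
\begin{enumerate}
\item cancellation of two intersection points with the cut, using \eqref{eq:pivNatEval} and snake relations;
\item vertices crossing the cut, handled by decomposing into 2-valent vertices (naturality \eqref{eq:pivNatGraphically}) and identity fusion vertices (monoidality \eqref{eq:pivMonoidalGraphically});
\item the end at $c$ switching between the top and bottom quadrants, or becoming tangent to a separatrix.
\end{enumerate}
Item 3 is where I expect the difficulty. Going around $c$ fully should produce a loop that must be removed by the twist relation, in the form \eqref{eq:otherTwist} analogous to \eqref{eq:wrappingalphaaround}. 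I also need to check that the two saddle relations (top and bottom versions) are compatible with this reduction. Concretely, that means verifying that applying \eqref{eq:saddle-rel} and then sliding agrees with sliding and then applying \eqref{eq:saddle-rel}; this should reduce to the second equivalent form in \eqref{eq:otherTwist}, together with the eye and merge relations \eqref{eq:alpha-inv-rels}. Finally, invariance under the defining relations \eqref{eq:skein-slide-1} and \eqref{eq:saddle-rel} is built in, since the procedure itself uses them. Progressive skein relations can be isotoped away from the cut, so the inverse map is well defined and the two maps are mutually inverse.
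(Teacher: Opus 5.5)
Your overall strategy is the right one: adapt Proposition~\ref{prop:SNonD0}, normalize the end at $c$ with \eqref{eq:saddle-rel}, move strands across $c$ with a snake and \eqref{eq:skein-slide-1}, and absorb the wrapping at $c$ with \eqref{eq:otherTwist}. But the cut you build it on is wrong, and this leaves the core step undefined.

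The arc you first fix, from $c$ down to $x$, is where the normal form lives, not what must be avoided. A net disjoint from it can still pass over $c$ through the upper incoming region, so it plays no role in your inverse. You then work with $\gamma'$, which contains the two outgoing rays, and claim that its intersections with $T$ are removed one at a time by slides. This fails on the outgoing rays.
\begin{itemize}
\item The relation \eqref{eq:skein-slide-1} only trades a strand passing over $c$ (crossing the upper incoming ray) for one passing under $c$, with a $\apiv$ coupon where it meets the $\alpha$-strand. It has no version acting on a single crossing of $T$ with an outgoing ray.
\item No local isotopy removes such a crossing either. A progressive strand that enters the left region from the bottom region and crosses the left outgoing ray upward can be pushed neither through $c$ nor off $\partial\D_1$.
\item Such a crossing disappears only after a global change: retracting the vertex the strand ends at. That is possible only once nothing enters that vertex from above $c$.
\end{itemize}
So your step-by-step procedure is not defined on $\gamma'$. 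The invariance checks you list concern transverse crossings with a single progressive arc ending at $c$, so they do not cover it.

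The missing observation is that only the incoming ray on the side opposite to $x$ obstructs the normal form. The paper takes $\gamma$ to be that ray, from $c$ to the top boundary, and runs the argument of Proposition~\ref{prop:SNonD0} essentially verbatim for it. Crossings are removed closest to $c$ first, by a snake followed by \eqref{eq:skein-slide-1}. The tangency move at $c$ is absorbed by \eqref{eq:saddle-rel}, \eqref{eq:skein-slide-1} and \eqref{eq:otherTwist}. This gives $\SN^\alpha_\II(\D_1,\gamma;x)\simeq\SN^\alpha_\II(\D_1;x)$.

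As a separate step, a net disjoint from $\gamma$ has its end at $c$ moved to the bottom by \eqref{eq:saddle-rel}. It is then retracted by a progressive isotopy into the bottom region, where it evaluates to some $m\in\Hom_\BB(x,\alpha)$. That isotopy disposes of everything in the outgoing regions, including crossings with the outgoing rays, without any slides. It is what shows that $\Hom_\BB(x,\alpha)\to\SN^\alpha_\II(\D_1,\gamma;x)$ is an isomorphism.
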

\begin{proof}
As above, we consider a progressive arc $\gamma$ joining the critical point to the boundary, displayed below, and make any string net $T \subseteq \D_1$ disjoint from $\gamma$. 
    \begin{equation*}
        \begin{tikzpicture}[baseline = 0pt]
        \begin{scope}
        \clip (0,0) circle(1.2);
    \draw[gray!70] (1,1)--(-1,-1);
    \draw[gray!70] (1,-1)--(-1,1);
    \def\sadlines{5}
    \foreach \y in {3,...,5}{
    \draw[gray!70] ({-\y/(\sadlines+1)},1) .. controls (0,{1.6-1.6*\y/(\sadlines+1)}) .. ({\y/(\sadlines+1)},1);
    \draw[gray!70] ({-\y/(\sadlines+1)},-1) .. controls (0,{-1.6+1.6*\y/(\sadlines+1)}) .. ({\y/(\sadlines+1)},-1);
    \draw[gray!70] (-1,{-\y/(\sadlines+1)}) .. controls ({-1.6+1.6*\y/(\sadlines+1)},0) .. (-1,{\y/(\sadlines+1)});
    \draw[gray!70] (1,{-\y/(\sadlines+1)}) .. controls ({1.6-1.6*\y/(\sadlines+1)},0) .. (1,{\y/(\sadlines+1)});
    }   \end{scope}
    \draw[gray, ->](-0.9,-1)--++(-0.3,0.3) node[midway, below left = -2pt]{\small $\vec{y}$};
    \draw[gray, ->](0.9,-1)--++(0.3,0.3) node[midway, below right = -2pt]{\small $\vec{y}$};
    \node[star,  star point ratio=2.25, fill=black, inner sep = 1pt] at (0,0) {};
    \node[circle, fill=black, inner sep = 1pt] at (0,-0.85) {};
    \node at (0,-1.1) {$x$};
    \draw[thick, red!50!black] (0,0) -- (0,0.85) node[pos = 0.5, sloped]{\small $<$} node[pos = 0.5, left]{$\gamma$};
\end{tikzpicture}
    \end{equation*}
    Again, let us denote $\SN_\II^\alpha(\D_1,\gamma;x)$ the space of string nets which are disjoint from $\gamma$ except at the critical point, where it is not tangent to $\gamma$, modulo isotopies preserving this property and skein relations away from $\gamma$. We need to prove both that the inclusion
    \begin{equation}
        \SN_\II^\alpha(\D_0,\gamma;x) \to \SN_\II^\alpha(\D_0;x)
    \end{equation}
    is an isomorphism, and that the provided map 
    $$\Hom_\BB(x,\alpha) \to \SN_\II^\alpha(\D_0,\gamma;x)$$ is an isomorphism.

    The first point is very similar to the proof above. The analogue of \eqref{eq:wrappingalphaaround} is
    \begin{equation}  
\begin{tikzpicture}[rotate = 180, scale = 0.5, baseline = 0cm]
    \begin{scope}[scale = 2]
    \clip (0,0) circle(1.2);
    \draw[gray!70] (1,1)--(-1,-1);
    \draw[gray!70] (1,-1)--(-1,1);
    \def\sadlines{5}
    \foreach \y in {3,...,5}{
    \draw[gray!70] ({-\y/(\sadlines+1)},1) .. controls (0,{1.6-1.6*\y/(\sadlines+1)}) .. ({\y/(\sadlines+1)},1);
    \draw[gray!70] ({-\y/(\sadlines+1)},-1) .. controls (0,{-1.6+1.6*\y/(\sadlines+1)}) .. ({\y/(\sadlines+1)},-1);
    \draw[gray!70] (-1,{-\y/(\sadlines+1)}) .. controls ({-1.6+1.6*\y/(\sadlines+1)},0) .. (-1,{\y/(\sadlines+1)});
    \draw[gray!70] (1,{-\y/(\sadlines+1)}) .. controls ({1.6-1.6*\y/(\sadlines+1)},0) .. (1,{\y/(\sadlines+1)});
    }   \end{scope}
    \node[star,  star point ratio=2.25, fill=black, inner sep = 1pt] at (0,0) {};
    \draw[thick] (0,-1.7) to[out=90,in=-75, looseness = 2] node[pos=0.5, sloped]{\footnotesize $<$} node[pos = 0.5, right]{$\alpha$} (0,0);
\end{tikzpicture}
\overset{\eqref{eq:saddle-rel}}\sim
\begin{tikzpicture}[rotate = 180, scale = 0.5, baseline = 0cm]
    \begin{scope}[scale = 2]
    \clip (0,0) circle(1.2);
    \draw[gray!70] (1,1)--(-1,-1);
    \draw[gray!70] (1,-1)--(-1,1);
    \def\sadlines{5}
    \foreach \y in {3,...,5}{
    \draw[gray!70] ({-\y/(\sadlines+1)},1) .. controls (0,{1.6-1.6*\y/(\sadlines+1)}) .. ({\y/(\sadlines+1)},1);
    \draw[gray!70] ({-\y/(\sadlines+1)},-1) .. controls (0,{-1.6+1.6*\y/(\sadlines+1)}) .. ({\y/(\sadlines+1)},-1);
    \draw[gray!70] (-1,{-\y/(\sadlines+1)}) .. controls ({-1.6+1.6*\y/(\sadlines+1)},0) .. (-1,{\y/(\sadlines+1)});
    \draw[gray!70] (1,{-\y/(\sadlines+1)}) .. controls ({1.6-1.6*\y/(\sadlines+1)},0) .. (1,{\y/(\sadlines+1)});
    }   \end{scope}
    \node[star,  star point ratio=2.25, fill=black, inner sep = 1pt] at (0,0) {};
    \draw[thick] (0,-1.7)to[out=90,in=-145] node[pos=0.5, sloped]{\footnotesize $>$} node[pos = 0.5, left]{$\alpha$} (1.3,0) node{\small $\bullet$} to[out = 160, in = -35]node[pos=0.5, sloped]{\footnotesize $>$}node[pos = 0.5,below left]{$\alpha^{-1}$} (0,1)node{\small $\bullet$} to[out = -145, in = 20]node[pos=0.5, sloped]{\footnotesize $<$} (-1.4,0)node{\small $\bullet$} to[out = -45, in = 145] node[pos=0.5, sloped]{\footnotesize $<$}(-0.3,-1.3)node{\small $\bullet$}  to[out = 80, in = -80] node[pos=0.5, sloped]{\footnotesize $>$}(0,0);
\end{tikzpicture}
\overset{\eqref{eq:skein-slide-1}}\sim
\begin{tikzpicture}[rotate = 180, scale = 0.5, baseline = 0cm]
    \begin{scope}[scale = 2]
    \clip (0,0) circle(1.2);
    \draw[gray!70] (1,1)--(-1,-1);
    \draw[gray!70] (1,-1)--(-1,1);
    \def\sadlines{5}
    \foreach \y in {3,...,5}{
    \draw[gray!70] ({-\y/(\sadlines+1)},1) .. controls (0,{1.6-1.6*\y/(\sadlines+1)}) .. ({\y/(\sadlines+1)},1);
    \draw[gray!70] ({-\y/(\sadlines+1)},-1) .. controls (0,{-1.6+1.6*\y/(\sadlines+1)}) .. ({\y/(\sadlines+1)},-1);
    \draw[gray!70] (-1,{-\y/(\sadlines+1)}) .. controls ({-1.6+1.6*\y/(\sadlines+1)},0) .. (-1,{\y/(\sadlines+1)});
    \draw[gray!70] (1,{-\y/(\sadlines+1)}) .. controls ({1.6-1.6*\y/(\sadlines+1)},0) .. (1,{\y/(\sadlines+1)});
    }   \end{scope}
    \node[star,  star point ratio=2.25, fill=black, inner sep = 1pt] at (0,0) {};
    \draw[thick] (0,-1.7) to[out=90,in=-145] node[pos=0.5, sloped]{\footnotesize $>$} node[pos = 0.5, left]{$\alpha$} (1.3,0) node{\small $\bullet$} to[out = 180, in = 25]node[pos = 0.78,rectangle, draw, fill=white, inner sep = 2.5pt, rotate = 12]{} (-0.5,-0.8)node{\small $\bullet$} to[out = 125, in = 10]node[pos=0.5, sloped]{\footnotesize $<$} (-1.4,0)node{\small $\bullet$} to[out = -45, in = 145] node[pos=0.5, sloped]{\footnotesize $<$}(-0.3,-1.3)node{\small $\bullet$}  to[out = 80, in = -80](0,0);
\end{tikzpicture}
\overset{\eqref{eq:otherTwist}}\sim
\begin{tikzpicture}[rotate = 180, scale = 0.5, baseline = 0cm]
    \begin{scope}[scale = 2]
    \clip (0,0) circle(1.2);
    \draw[gray!70] (1,1)--(-1,-1);
    \draw[gray!70] (1,-1)--(-1,1);
    \def\sadlines{5}
    \foreach \y in {3,...,5}{
    \draw[gray!70] ({-\y/(\sadlines+1)},1) .. controls (0,{1.6-1.6*\y/(\sadlines+1)}) .. ({\y/(\sadlines+1)},1);
    \draw[gray!70] ({-\y/(\sadlines+1)},-1) .. controls (0,{-1.6+1.6*\y/(\sadlines+1)}) .. ({\y/(\sadlines+1)},-1);
    \draw[gray!70] (-1,{-\y/(\sadlines+1)}) .. controls ({-1.6+1.6*\y/(\sadlines+1)},0) .. (-1,{\y/(\sadlines+1)});
    \draw[gray!70] (1,{-\y/(\sadlines+1)}) .. controls ({1.6-1.6*\y/(\sadlines+1)},0) .. (1,{\y/(\sadlines+1)});
    }   \end{scope}
    \node[star,  star point ratio=2.25, fill=black, inner sep = 1pt] at (0,0) {};
    \draw[thick] (0,-1.7)to[out=90,in=-110] node[pos=0.5, sloped]{\footnotesize $<$} node[pos = 0.5, right]{$\alpha$} (0,0);
\end{tikzpicture}
    \end{equation}

    For the second, we need to put any skein disjoint from $\gamma$ in standard position. Using \eqref{eq:saddle-rel}, we can make the $\alpha$ strand at the critical point come from the bottom as desired. Then, a skein disjoint from $\gamma$ can be retraced in the bottom region by a progressive isotopy as below:
    \begin{equation*}
        \begin{tikzpicture}[baseline = 0pt]
        \begin{scope}
        \clip (0,0) circle(1.2);
    \draw[gray!70] (1,1)--(-1,-1);
    \draw[gray!70] (1,-1)--(-1,1);
    \def\sadlines{5}
    \foreach \y in {3,...,5}{
    \draw[gray!70] ({-\y/(\sadlines+1)},1) .. controls (0,{1.6-1.6*\y/(\sadlines+1)}) .. ({\y/(\sadlines+1)},1);
    \draw[gray!70] ({-\y/(\sadlines+1)},-1) .. controls (0,{-1.6+1.6*\y/(\sadlines+1)}) .. ({\y/(\sadlines+1)},-1);
    \draw[gray!70] (-1,{-\y/(\sadlines+1)}) .. controls ({-1.6+1.6*\y/(\sadlines+1)},0) .. (-1,{\y/(\sadlines+1)});
    \draw[gray!70] (1,{-\y/(\sadlines+1)}) .. controls ({1.6-1.6*\y/(\sadlines+1)},0) .. (1,{\y/(\sadlines+1)});
    }   \end{scope}
    \draw[gray, ->](-0.9,-1)--++(-0.3,0.3) node[midway, below left = -2pt]{\small $\vec{y}$};
    \draw[gray, ->](0.9,-1)--++(0.3,0.3) node[midway, below right = -2pt]{\small $\vec{y}$};
    \node[star,  star point ratio=2.25, fill=black, inner sep = 1pt] at (0,0) {};
    \node[circle, fill=black, inner sep = 1pt] at (0,-0.85) {};
    \node at (0,-1.1) {$x$};
    \draw[thick, red!50!black] (0,0) -- (0,0.85) node[pos = 0.5, sloped]{\footnotesize $<$};
    \draw[very thick] (0,-0.85) -- (0,0) node[pos = 0.8, sloped]{\footnotesize $>$};
    \draw[very thick] (-0.27,0.65) to [out = -110, in = 15] node[pos = 0, sloped, rectangle, draw, thick, fill=white]{} node[pos = 0.5, sloped]{\footnotesize $<$}  (-0.67,0.05) node[rectangle, draw, thick, fill=white, minimum height = 0.5cm]{} to[out = -15, in = 90]node[pos = 0.5, sloped]{\footnotesize $<$} (-0.3,-0.5);
    \draw[very thick] (0.27,0.65) to [out = -70, in = 165] node[pos = 0, sloped, rectangle, draw, thick, fill=white]{} node[pos = 0.5, sloped]{\footnotesize $>$}  (0.67,0.05) node[rectangle, draw, thick, fill=white, minimum height = 0.5cm]{} to[out = -165, in = 90]node[pos = 0.5, sloped]{\footnotesize $>$} (0.3,-0.5);
    \node[rectangle, draw, thick, fill=white, minimum width = 0.9cm] at (0,-0.57) {};
\end{tikzpicture}
\ \sim \
\begin{tikzpicture}[baseline = 0pt]
        \begin{scope}
        \clip (0,0) circle(1.2);
    \draw[gray!70] (1,1)--(-1,-1);
    \draw[gray!70] (1,-1)--(-1,1);
    \def\sadlines{5}
    \foreach \y in {3,...,5}{
    \draw[gray!70] ({-\y/(\sadlines+1)},1) .. controls (0,{1.6-1.6*\y/(\sadlines+1)}) .. ({\y/(\sadlines+1)},1);
    \draw[gray!70] ({-\y/(\sadlines+1)},-1) .. controls (0,{-1.6+1.6*\y/(\sadlines+1)}) .. ({\y/(\sadlines+1)},-1);
    \draw[gray!70] (-1,{-\y/(\sadlines+1)}) .. controls ({-1.6+1.6*\y/(\sadlines+1)},0) .. (-1,{\y/(\sadlines+1)});
    \draw[gray!70] (1,{-\y/(\sadlines+1)}) .. controls ({1.6-1.6*\y/(\sadlines+1)},0) .. (1,{\y/(\sadlines+1)});
    }   \end{scope}
    \draw[gray, ->](-0.9,-1)--++(-0.3,0.3) node[midway, below left = -2pt]{\small $\vec{y}$};
    \draw[gray, ->](0.9,-1)--++(0.3,0.3) node[midway, below right = -2pt]{\small $\vec{y}$};
    \node[star,  star point ratio=2.25, fill=black, inner sep = 1pt] at (0,0) {};
    \node[circle, fill=black, inner sep = 1pt] at (0,-0.85) {};
    \node at (0,-1.1) {$x$};
    \draw[very thick] (0,-0.85) -- (0,0)  node[pos = 0.1, sloped]{\footnotesize $>$}  node[pos = 0.8, sloped]{\footnotesize $>$} node[pos = 0.85, right]{$\alpha$};
    \node[rectangle, draw, thick, fill=white, minimum width = 0.9cm] at (0,-0.5) {\small $m$};
\end{tikzpicture} \quad \text{where} \quad m = 
\begin{tikzpicture}[baseline = 0pt]
    \node[circle, fill=black, inner sep = 1pt] at (0,-0.85) {};
    \node at (0,-1.1) {$x$};
    \draw[very thick] (0,-0.85) -- (0,0.85)  node[pos = 0.1, sloped]{\footnotesize $>$}  node[pos = 0.9, sloped]{\footnotesize $>$} node[pos = 0.95, right]{$\alpha$};
    \draw[very thick] (-0.2,0) to [out = 90, in = -70] node[pos = 0, sloped, rectangle, draw, thick, fill=white]{} node[pos = 0.5, sloped]{\footnotesize $<$}  (-0.4,0.45) node[rectangle, draw, thick, fill=white, minimum width = 0.5cm]{} to[out = -130, in = 100]node[pos = 0.5, sloped]{\footnotesize $<$} (-0.4,-0.4);
    \draw[very thick] (0.2,0) to [out = 90, in = -110] node[pos = 0, sloped, rectangle, draw, thick, fill=white]{} node[pos = 0.5, sloped]{\footnotesize $>$}  (0.4,0.45) node[rectangle, draw, thick, fill=white, minimum width = 0.5cm]{} to[out = -50, in = 80]node[pos = 0.5, sloped]{\footnotesize $>$} (0.4,-0.4);
    \node[rectangle, draw, thick, fill=white, minimum width = 1.1cm] at (0,-0.4) {};
\end{tikzpicture} \ .
    \end{equation*}
\end{proof}
\begin{proposition}\label{prop:SNonD2}
For a foliated disk $\mathbb D_2$ with a single singularity of index 2, and for any boundary label $x \in \II$, there is an isomorphism of vector spaces
    \begin{equation*}
\SN_\II^\alpha\left( \mathbb D_2 :=
    \begin{tikzpicture}[scale = 0.5, baseline = 0pt]
    \foreach \y in {0,...,6}{
    \draw[gray!70] (0,0) circle(\y/3);
    }
    \draw[gray, <-] (45:2.2) -- (45:2.8) node[midway,below right = -2pt]{\small $\vec{y}$};
    \node[circle, fill=black, inner sep = 1pt] at (0,-2) {};
    \node at (0,-2.3) {$x$};
    \node[star,  star point ratio=2.25, fill=black, inner sep = 1pt] at (0,0) {};
    \end{tikzpicture} \right) \simeq \Hom_\BB(x, \alpha^{-1})
    \quad\text{    given by    }\quad
    \begin{tikzpicture}[scale = 0.5, baseline = 0pt]
    \foreach \y in {0,...,6}{
    \draw[gray!70] (0,0) circle(\y/3);
    }
    \node[star,  star point ratio=2.25, fill=black, inner sep = 1pt] at (0,0) {};
    \node[circle, fill=black, inner sep = 1pt] (X) at (0,-2) {};
    \node at (0,-2.3) {$x$};
    \node[rectangle, draw=black, fill=white, minimum width = 20pt, inner sep = 3pt] (F) at (0,-1) {$m$};
    \draw (0,0)--(F) node[midway, sloped]{\tiny $<$} node[pos = 0.3, left]{\small$\alpha^{-1}$} --(X)node[midway, sloped]{\tiny $<$};
    \end{tikzpicture}  \mapsfrom  m \ .
    \end{equation*}
\end{proposition}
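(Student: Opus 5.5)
The plan is to mirror the proof of Proposition \ref{prop:SNonD0}, with orientations reversed. Since $x\in\II$ is a boundary label, admissibility is automatic and $\SN^\alpha_\II(\D_2;x)=\SN^\alpha_\BB(\D_2;x)$.

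First I choose a progressive arc $\gamma$ from the boundary to the critical point $c$. In $\D_2$ the foliation points inwards, so $\gamma$ is a radial segment oriented towards $c$; concretely I take it on the opposite side from the boundary point $x$, say from the top of the disk down to $c$. The complement $\D_2\smallsetminus\gamma$ is a disk with the standard foliation. By the rule for string nets with singularities, the strand at an index-2 singularity is incoming and colored $\alpha^{-1}$. A string net disjoint from $\gamma$ (and not tangent to it at $c$) therefore evaluates, via the planar calculus, to a morphism $x\to\alpha^{-1}$. The morphism $m$ in the statement maps back to it, and these two maps are mutually inverse on the subspace $\SN^\alpha_\II(\D_2,\gamma;x)$ of string nets avoiding $\gamma$, taken modulo isotopies and skein relations away from $\gamma$.

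The main step is to show that the inclusion $\SN^\alpha_\II(\D_2,\gamma;x)\to\SN^\alpha_\II(\D_2;x)$ is an isomorphism.
\begin{enumerate}
\item Given a generic representative $T$ transverse to $\gamma$, I remove the intersection point closest to $c$. I first insert a snake (a cup/cap pair of duals) next to the crossing, then apply the index-2 sliding relation \eqref{eq:skein-slide-2}, which trades the crossing for a $\apiv$-box on the $\alpha^{-1}$ strand. There are two cases according to the sign of the intersection: one uses $x^{*}/x^{**}$ and the other ${}^*x/x^*$, exactly as in the index-0 computation. Iterating this removes all intersections.
\item Well-definedness is checked along a generic isotopy. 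Cancellation of two intersection points follows from \eqref{eq:pivNatEval} and the snake relations. A vertex crossing $\gamma$ is decomposed as before: naturality \eqref{eq:pivNatGraphically} handles $2$-valent vertices, monoidality \eqref{eq:pivMonoidalGraphically} handles fusion vertices, and \eqref{eq:pivNatEval} handles the general case. Progressive skein relations can be pushed off $\gamma$. A relation \eqref{eq:skein-slide-2} simply adds one intersection point, which is removed by the same slide.
\end{enumerate}

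The expected obstacle is the remaining move, where the $\alpha^{-1}$ strand at $c$ winds once around the singularity (becoming tangent to $\gamma$ at $c$). Its invariance is the analogue of \eqref{eq:wrappingalphaaround}. The strategy is to unwind the loop by a snake, then apply \eqref{eq:skein-slide-2} to create a single $\apiv$-box on an $\alpha^{-1}$ strand closed with an $\alpha$ cap and cup. This should reduce to the middle or third form of the twist relation in \eqref{eq:otherTwist}, now for the mate of $\apiv$ with $\alpha^{-1}$ strands, combined with the eye and merge relations \eqref{eq:alpha-inv-rels}. Here I expect to track carefully which of the equivalent twist conditions appears and whether an inverse of $\apiv$ shows up, depending on the winding direction. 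If it does, I would use that the relation holds for $\apiv$ if and only if it holds for $\apiv^{-1}$, obtained by composing with the inverse isomorphism.
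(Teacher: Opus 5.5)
Your proposal is correct and matches the paper's proof, which simply reruns the argument of Proposition \ref{prop:SNonD0} with the mate of $\apiv$ in place of $\apiv$ (the box created by \eqref{eq:skein-slide-2} is this mate). Note that the naturality, monoidality and evaluation-sliding relations you invoke in step 2 are needed in their mate versions, which follow from \eqref{eq:pivNatGraphically}--\eqref{eq:pivNatEval} by snake relations; your treatment of the winding move via a mate form of \eqref{eq:otherTwist} spells out a step the paper leaves implicit.
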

\begin{proof}
    The proof is similar to Proposition \ref{prop:SNonD0} noticing that the mate of the twisted pivotal structure satisfies analogues of (\ref{eq:pivNatGraphically} -- \ref{eq:pivNatEval}).
\end{proof}
\begin{remark}
We expressed the results above for a single boundary label $x$ to avoid heavy formulas. The general result can be obtained by stacking with a string net in a collar of the boundary relating any set of boundary labels to a single point.
\end{remark}

\subsection{Excision}
To compute twisted string nets on more complex surfaces we will use the following gluing result.
\begin{proposition}\label{prop:excision}
    Let $\Sigma = \Sigma_1 \underset{\Gamma}{\cup} \Sigma_2$ be a decomposition of a foliated surface $\Sigma$ along a leaf $\Gamma$ disjoint from the singular locus. Let $\II\subseteq \BB$ be a tensor ideal in a twisted-pivotal category. Then for any boundary labels $x_1 \subseteq \partial\Sigma\cap \Sigma_1 ,\ x_2 \subseteq \partial\Sigma\cap \Sigma_2$, gluing string nets along $\Gamma$ induces an isomorphism of vector spaces
    \begin{equation}
\int^{x \in \SN_\II(\Gamma)} \SN_\II^\alpha(\Sigma_1; x_1,x) \otimes \SN_\II^\alpha(\Sigma_2; x, x_2) 
\ \simeq\
\SN_\II^\alpha(\Sigma;x_1,x_2) 
    \end{equation}
\end{proposition}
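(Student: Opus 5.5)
We prove this the standard way for excision of string net modules: construct the gluing map, show it is surjective by cutting, and show it is injective by lifting all relations. Since $\Gamma$ is a leaf disjoint from the singular locus, a small collar $\Gamma\times[-\varepsilon,\varepsilon]$ of $\Gamma$ in $\Sigma$ has cylindrical foliation. The twisted relations \eqref{eq:skein-slide-0}, \eqref{eq:skein-slide-1}, \eqref{eq:skein-slide-2} and \eqref{eq:saddle-rel} are supported in small disks around the singularities. We may therefore always isotope them away from this collar, and near $\Gamma$ we are in the untwisted framed setting of \cite{KST24}, where excision is known.

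\textbf{Step 1 (the map).} Gluing a string net on $\Sigma_1$ with outgoing boundary $x$ to one on $\Sigma_2$ with incoming boundary $x$ gives a string net on $\Sigma$. Progressivity is preserved because the foliations match along $\Gamma$. Admissibility is preserved because $x$ is admissible, so its $\II$-colored strand lies in every glued component. This map is dinatural in $x\in\SN_\II(\Gamma)$, since a morphism in $\SN_\II(\Gamma)$ is a string net on $\Gamma\times I$ and can be absorbed on either side. It therefore factors through the coend.

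\textbf{Step 2 (surjectivity).} Let $T$ be any admissible string net on $\Sigma$. By a generic isotopy, $T$ is transverse to $\Gamma$ and has no vertices on $\Gamma$. The intersection $T\cap\Gamma$ is then a $\BB$-labeling $x$, and cutting along $\Gamma$ expresses $T$ as a glued pair. The labeling $x$ may fail to be admissible. To fix this, use an $\II$-colored edge $e$ of $T$ in the relevant component, and pull a finger of $e$ across each component of $\Gamma$. Concretely, insert an identity on $e$ factored through $e\otimes e^*\otimes e$ via snake relations; this is done away from singularities, by progressive isotopies and ordinary skein relations. The result is an equivalent net meeting every component of $\Gamma$ in an $\II$-label. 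If a component of $\Gamma$ lies in a component of $\Sigma$ with no $\II$-edge, both sides are zero or empty by definition, and the statement is trivial there.

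\textbf{Step 3 (injectivity).} We build an inverse from the space of generators: send a generic $T$ to the class of its cut in the coend. We must show this class is unchanged under every relation on $\Sigma$. \emph{Relations away from $\Gamma$.} Progressive skein relations in a square, and all twisted relations near singularities, can first be isotoped off $\Gamma$, which is possible since singularities are not on $\Gamma$. They are then relations in $\Sigma_1$ or $\Sigma_2$. \emph{Isotopies crossing $\Gamma$.} Generic isotopies change the cut by three kinds of events: a vertex crossing $\Gamma$, a pair of intersection points being created or cancelled (a tangency with the leaf, which progressivity forbids except at vertices), and arcs sliding along $\Gamma$. Each changes the cut by a morphism of $\SN_\II(\Gamma)$ moved from one side to the other, and this is exactly the coend identification. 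Arcs going all the way around a circle component of $\Gamma$ are handled by the second generator in \eqref{eq:genMorphSNS1}, which is a morphism of $\SN_\II(\Gamma)$.

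\textbf{Main obstacle.} The hardest part will be admissibility in Step 3. An admissible skein relation on $\Sigma$ only requires one $\II$-edge outside the relation square per component. After cutting, the pieces may need an $\II$-strand at $\Gamma$, and it must be checked that the finger moves of Step 2 are well defined up to coend relations. To handle this, we first use the relation's outside $\II$-edge to drag an $\II$-strand across $\Gamma$ before applying the relation, as in the proofs of \cite{CGPAdmissbleskein}. We then check that two different finger moves differ by a morphism in $\SN_\II(\Gamma)$; it is here that the assumption that $\II$ is a tensor ideal is used.
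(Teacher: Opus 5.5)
Your proposal is correct and is essentially the paper's argument. You cut a transverse representative along $\Gamma$, push skein and slide relations off $\Gamma$, and restore admissibility by dragging an $\II$-strand along a path using snake relations at its tangencies with the foliation (together with every strand crossing the path), with independence of choices handled by a second path as in \cite[Lemma 2.11]{BHskcat}; the paper merely organizes this by factoring the gluing map through the module $\SN_\II^{\alpha,\Gamma}(\Sigma)$ of nets meeting $\Gamma$ admissibly, so your ``main obstacle'' becomes the separate statement that $\SN_\II^{\alpha,\Gamma}(\Sigma)\to\SN_\II^{\alpha}(\Sigma)$ is an isomorphism.
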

Explicitly, the coend of the LHS above is the quotient of the vector space $$\bigoplus_{x\in\SN_\II(\Gamma)}\SN_\II^\alpha(\Sigma_1; x_1,x) \otimes \SN_\II^\alpha(\Sigma_2; x, x_2) $$
by the relation generated by 
$$(T,R\cdot S) \sim (R\cdot T, S)\, ,\quad T\in\SN_\II^\alpha(\Sigma_1; x_1,x),\ S\in \SN_\II^\alpha(\Sigma_2; y, x_2),\ R\in \SN_\II(\Gamma\times I;x,y)$$
and $-\cdot -$ is the action of $\SN_\II(\Gamma)$ on the string net modules from \eqref{eq:twistedSNfunctor}.

    The proof is similar to \cite{WalkerNotes, BHskcat, RunkelSchweigertThamExciAdmSkeins}. We include it for completeness and because the presence of foliations modify some arguments. The main difference is in surjectivity of $\iota$ below.
\begin{proof}
The gluing along $\Gamma$ factors as
    $$\int^{x \in \SN_\II(\Gamma)} \SN_\II^\alpha(\Sigma_1; x_1,x) \otimes \SN_\II^\alpha(\Sigma_2; x, x_2) 
\overset {gl_\Gamma}\to
\SN_\II^{\alpha,\Gamma}(\Sigma;x_1,x_2) \overset\iota\to\SN_\II^{\alpha}(\Sigma;x_1,x_2) $$
where $\SN_\II^{\alpha,\Gamma}(\Sigma,x_1,x_2)$ is the space of twisted string nets in $\Sigma$ which intersect every connected component of $\Gamma$ along at least one strand colored by an object in $\II$, modulo isotopies and skein relations preserving this property, as in \cite[Lemma 2.11]{BHskcat}. In other words, the intersection with $\Gamma$ is admissible. We prove that both of these maps are isomorphisms.

\textbf{Surjectivity of $gl_\Gamma$:}
A string net $T \in \SN_\II^{\alpha,\Gamma}(\Sigma,x_1,x_2)$ must be transverse to the foliation, hence intersects the leaf $\Gamma$ transversely but for the fact that vertices of $T$ must be made disjoint from $\Gamma$, which is always possible via a small isotopy. By assumption, $T$ intersects $\Gamma$ along an admissible labelling $x \in \SN_\II(\Gamma)$, hence $T = gl_\Gamma((T\cap\Sigma_1)\otimes (T\cap\Sigma_2))$.

\textbf{Injectivity of $gl_\Gamma$:}
An isotopy of string nets decompose as isotopies happening in small disks. The coend relation realizes one specific isotopy pushing a collar of $\Gamma$ from $\Sigma_1$ to $\Sigma_2$. Up to conjugation by this isotopy, any isotopy in a sufficiently small disk can be made supported away from $\Gamma$. Skein relations can be pushed away from $\Gamma$ up to isotopy, and sliding relations happen away from $\Gamma$. 

\textbf{Surjectivity of $\iota$:}
Let $T\in \SN_\II^{\alpha}(\Sigma,x_1,x_2)$. We need to show that we can make $T$ intersect $\Gamma$ along an $\II$-colored edge. Because $T$ is admissible, it contains at least one $\II$-colored edge in the connected component of $\Gamma$. Pick a generic path $\gamma$ going from $\Gamma$ to this edge. We want to pull the $\II$-colored strand (along with every other strand intersecting $\gamma$) along $\gamma$ to make it intersect $\Gamma$ as in \cite[Lemma 2.11]{BHskcat}. This is unfortunately not an isotopy among progressive string nets if $\gamma$ is not progressive. However, it can be solved by introducing an appropriate number of evaluation and coevaluation coupons. A generic $\gamma$ is (positively or negatively) progressive except at finitely many points where it is tangent to the foliation:
\begin{equation*}
    \begin{tikzpicture}[scale = 1.2, baseline = 0pt]
        \clip (-1.1,0.7) rectangle (-0.1,-0.7);
    \foreach \y in {0,...,10}{
    \draw[gray!70] (0,0) circle(\y/5);
    }
    \draw[thick, red!50!black] (-0.6,1) --++ (0,-2) node[pos = 0.7, sloped]{\small $>$} node[pos = 0.7, left]{$\gamma$};
    \end{tikzpicture} 
\end{equation*}
Dragging an edge of $T$ along $\gamma$ is then given by:
\begin{equation*}
    \begin{tikzpicture}[scale = 1.2, baseline = -5pt]
        \clip (-1.1,0.7) rectangle (-0.1,-1);
    \foreach \y in {0,...,10}{
    \draw[gray!70] (0,0) circle(\y/5);
    }
    \draw[thick, red!50!black, opacity = 0.5] (-0.6,1) --++ (0,-1.8) node[pos = 0.7, sloped]{\small $>$} node[pos = 0.7, left]{$\gamma$};
    \draw[thick] (-1.1,-0.8) --++ (1,0) node[pos = 0.7, sloped]{\small $>$} node[pos = 0.7, above]{$x$};
    \end{tikzpicture} 
\quad = \quad
    \begin{tikzpicture}[scale = 1.2, baseline = -5pt]
        \clip (-1.1,0.7) rectangle (-0.1,-1);
    \foreach \y in {0,...,10}{
    \draw[gray!70] (0,0) circle(\y/5);
    }
    \draw[thick, red!50!black, opacity = 0.5] (-0.6,1) --++ (0,-1.8) node[pos = 0.7, sloped]{\small $>$} node[pos = 0.7, left]{$\gamma$};
    \draw[thick] (-1.1,-0.8) to[out = 0, in = -110] (-0.5, -0.4) node{\small $\bullet$} to [out = -90, in = 65] (-0.6, -0.9) node{\small $\bullet$} to[out = 10, in = 180]node[pos = 0.7, sloped]{\small $>$} (-0.1,-0.8) ;
    \end{tikzpicture} 
\quad = \quad
    \begin{tikzpicture}[scale = 1.2, baseline = -5pt]
        \clip (-1.1,0.7) rectangle (-0.1,-1);
    \foreach \y in {0,...,10}{
    \draw[gray!70] (0,0) circle(\y/5);
    }
    \draw[thick, red!50!black, opacity = 0.5] (-0.6,1) --++ (0,-1.8) node[pos = 0.7, sloped]{\small $>$} node[pos = 0.7, left]{$\gamma$};
    \draw[thick] (-1.1,-0.8) to[out = 0, in = -130] (-0.55, -0.1) node{\small $\bullet$} to [out = 130, in = -90] (-0.7,0.6)node{\small $\bullet$} to [out = -60, in = 130] (-0.4,-0.1)node{\small $\bullet$} to [out = -120, in = 65] (-0.6, -0.9) node{\small $\bullet$} to[out = 10, in = 180]node[pos = 0.7, sloped]{\small $>$} (-0.1,-0.8) ;
    \end{tikzpicture} 
\end{equation*}
    Applying this procedure for every edge of $T$ intersecting $\gamma$, we obtain an equivalent string net $T'$ modified in a neighborhood of $\gamma$ which does intersect $\Gamma$ admissibly.
    
\textbf{Injectivity of $\iota$:} This follows from the arguments of \cite[Lemma 2.11]{BHskcat}, the main idea being that we can find another path $\gamma'$ disjoint from $\gamma$ satisfying the same requirements and use either $\gamma$ or $\gamma'$ to make $T$ intersect $\Gamma$ admissibly.
\end{proof}


\subsection{Twisted string nets as a foliated categorified TQFT}\label{sec:foliatedTQFT}
So far, we have introduced an assignment of algebraic data---the twisted string net categories and modules---to oriented 1-manifolds and to smooth foliated surfaces. We will now explain in what sense they form a TQFT, albeit one that depends on a foliation. 

\label{sec:or_tqfts}
We begin with a brief reminder of the usual notions in the oriented setting. 
\begin{definition}[Oriented cobordisms]
Given two smooth, closed, oriented $1$-manifolds $\Gamma_{in}$ and $\Gamma_{out}$, an \emph{oriented cobordism} $\Sigma\colon \Gamma_{in}\to \Gamma_{out}$ from $\Gamma_{in}$ to $\Gamma_{out}$ is a compact oriented surface with boundary $\Sigma$ together with orientation-preserving boundary identifications $-\Gamma_{in}\sqcup \Gamma_{out} \xrightarrow{\cong}  \partial \Sigma$
where $\partial\Sigma$ is equipped with the induced {outward-normal-first} orientation.
\end{definition}

\begin{definition}
    The \emph{$2$-dimensional oriented bordism $(2,1)$-category} $\on{Bord}_{12\sim}^{\on{or}}$ is defined as follows:
    \begin{description}
        \item[Objects] closed oriented $1$-manifolds $\Gamma$
        \item[1-morphisms] oriented surface bordisms $\Sigma:\Gamma_{in}\to \Gamma_{out}$ (with boundary collars)
        \item[2-morphisms] diffeomorphisms preserving the boundary collars, up to isotopy of such.
    \end{description}
    It has the usual composition, associators and unitors, and it is symmetric monoidal under disjoint union, see e.g. \cite{SPPhD, Filippos, HaiounHandle}.
\end{definition}
\begin{definition}
    An \emph{oriented categorified $2$-TQFT}\footnote{These also appear in the literature as $(1+1+\epsilon)$-TQFTs or topological modular functors.} is a symmetric monoidal functor
    \[
        \mathcal{Z}:\on{Bord}_{12\sim}^{\on{or}} \to \mathcal{C},
    \]
    for some target symmetric monoidal $(2,1)$-category $\mathcal{C}$.
\end{definition}
Showing that a given assignment defines an oriented categorified TQFT can, in general, be hard work---especially when the assignment depends \emph{a priori} on extra geometric data on bordisms.
This is the case for twisted string net modules, which depend on a choice of Morse singular foliation.
We undertake the task of removing this dependence and constructing an oriented TQFT in full detail in Section~5.


We now refine the oriented bordism $(2,1)$-category by equipping each surface bordism with Morse-singular foliation data and restricting $2$-morphisms accordingly.
\begin{definition}\label{def:Bord^Morse_fol}
    The \emph{$2$-dimensional Morse-foliated bordism $(2,1)$-category} $\Bord_{12\sim}^{\on{Morse\ fol}}$ is defined as follows:
    \begin{description}
        \item[Objects] closed oriented 1-manifolds $\Gamma$
        \item[1-morphisms] oriented surface bordisms $\Sigma:\Gamma_{in}\to \Gamma_{out}$ equipped with:
        \begin{itemize}
            \item a Morse singular foliation such that $\Gamma_{in}$ and $\Gamma_{out}$ are unions of leaves, and the positive transverse direction is incoming along $\Gamma_{in}$ and outgoing along $\Gamma_{out}$;
            \item progressive boundary collars $\Gamma_{in}\times [0,1) \hookrightarrow \Sigma$ and $\Gamma_{in}\times (-1,0] \hookrightarrow \Sigma$
        \end{itemize}
        \item[2-morphisms] progressive diffeomorphisms preserving the boundary collars, up to isotopy of such.
    \end{description}
It has the usual composition, associators and unitors, and it is symmetric monoidal under disjoint union.

\end{definition}
\begin{definition}
    A \emph{Morse-foliated categorified $2$-TQFT} is a symmetric monoidal functor
    \[
        \mathcal{Z}:\Bord_{12\sim}^{\on{Morse\ fol}}\to \mathcal{C},
    \]
    for some target symmetric monoidal $(2,1)$-category $\mathcal{C}$.
\end{definition}

We finally turn to showing that twisted string net modules form a Morse-foliated 2d TQFT. As we will see, excision is the main ingredient here. But first, we introduce the target symmetric monoidal bicategory.

\begin{definition}
    Let $\Bimod$ be the bicategory with:
    \begin{description}
        \item[Objects] small $\Bbbk$-linear categories $\CC$
        \item[1-morphisms] bimodules $F:\CC\otimes\mathcal{D}^{op}\to \Vect$, with composition of 1-morphisms given by the coend $$(G\circ F)(C,E) := \int^{D\in\mathcal{D}} F(C,D)\otimes G(D,E)\ .$$
        \item[2-morphisms] natural transformations.
    \end{description}
    We write $\Bimod^{hop}$ the opposite bicategory in the direction of 1-morphisms.
\end{definition}

\begin{theorem}\label{thm:foliated_tqft}
    Let $\II\subseteq\BB$ be a tensor ideal in a twisted-pivotal category. Then twisted string net modules define a Morse-foliated categorified 2-TQFT
    \begin{equation}
        \SN_\II^\alpha:\Bord_{12\sim}^{\on{Morse\ fol}}\to \Bimod^{hop}
    \end{equation}
\end{theorem}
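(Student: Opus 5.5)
We define the functor on each layer of $\Bord_{12\sim}^{\on{Morse\ fol}}$ and verify the coherence data, with Proposition \ref{prop:excision} doing the main work.

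\textbf{Objects.} To a closed oriented 1-manifold $\Gamma$ we assign the string net category $\SN_\II(\Gamma)$ of Definition \ref{def:SN_on_1-mflds}. The cylinder $\Gamma\times I$ carries the cylindrical foliation and has no singularities, so $\SN^\alpha_\II$ and $\SN_\II$ agree on it. Disjoint unions of 1-manifolds give tensor products of categories, because admissibility is imposed componentwise.

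\textbf{1-morphisms.} To a Morse-foliated bordism $\Sigma:\Gamma_{in}\to\Gamma_{out}$ we assign the bimodule $\SN^\alpha_\II(\Sigma;-)$ of \eqref{eq:twistedSNfunctor}. The progressive collars let us identify a collar with $\Gamma\times I$, and gluing string nets in the collar gives the actions of $\SN_\II(\Gamma_{in})$ and $\SN_\II(\Gamma_{out})$. Functoriality of these actions follows from $(\Gamma\times I)\cup_\Gamma(\Gamma\times I)\simeq \Gamma\times I$.

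\textbf{Compositors.} Let $\Sigma_1:\Gamma_0\to\Gamma_1$ and $\Sigma_2:\Gamma_1\to\Gamma_2$. The glued surface $\Sigma_2\circ\Sigma_1$ contains $\Gamma_1$ as a leaf disjoint from the singularities, since the collars are progressive. Proposition \ref{prop:excision} then gives an isomorphism
$$\int^{x\in\SN_\II(\Gamma_1)}\SN^\alpha_\II(\Sigma_1;-,x)\otimes\SN^\alpha_\II(\Sigma_2;x,-)\simeq \SN^\alpha_\II(\Sigma_2\circ\Sigma_1;-,-).$$
This is the coend composition in $\Bimod$, taken in the direction that explains the $hop$. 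The isomorphism is natural in the outer boundary labels because gluing commutes with stacking in the outer collars.

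\textbf{Unitors.} The identity bordism is the cylinder $\Gamma\times I$. Its bimodule is the Hom bimodule of $\SN_\II(\Gamma)$ by definition, so the unitor is the co-Yoneda isomorphism. This isomorphism is compatible with excision.

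\textbf{Associativity and monoidality.} Both sides of the associativity constraint are given by gluing string nets along two disjoint leaves, so they agree. For the symmetric monoidal structure, string nets on $\Sigma\sqcup\Sigma'$ are pairs of string nets, because all relations and the admissibility condition are local to components. This gives $\SN^\alpha_\II(\Sigma\sqcup\Sigma')\simeq\SN^\alpha_\II(\Sigma)\otimes\SN^\alpha_\II(\Sigma')$. It is compatible with the compositors and the symmetry, since everything is induced by the identity map on string nets.

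\textbf{2-morphisms.} A progressive diffeomorphism $\phi:\Sigma\to\Sigma'$ preserving the collars sends singularities to singularities of the same index. It therefore transports progressive string nets, and it preserves the coloring rule near singular points. It also preserves every relation: isotopy, progressive skein relations, the sliding relations (\ref{eq:skein-slide-0}, \ref{eq:skein-slide-1}, \ref{eq:skein-slide-2}), and \eqref{eq:saddle-rel}. So $\phi$ induces a natural isomorphism of bimodules. Isotopic diffeomorphisms induce the same map: along the isotopy, a string net is moved through a path of progressive string nets, which gives a progressive isotopy. Compatibility with vertical composition is immediate. Compatibility with horizontal composition holds because gluing commutes with transporting along diffeomorphisms.

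\textbf{Main obstacle.} The one substantive point is the applicability of Proposition \ref{prop:excision} at every gluing. It requires the gluing curve to be a leaf away from the singular locus, and the intersection of string nets with that curve to be admissible. The first is guaranteed by the collar conventions in Definition \ref{def:Bord^Morse_fol}. The second is handled inside the surjectivity of $\iota$ in that proposition. I expect the remaining work to be bookkeeping that these identifications are coherent, which we check on representatives.
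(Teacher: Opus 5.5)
Your proposal is correct and follows the same route as the paper: the bimodules come from \eqref{eq:twistedSNfunctor}, composition is preserved by Proposition \ref{prop:excision}, 2-morphisms act by transporting string nets along progressive diffeomorphisms, and the symmetric monoidal structure comes from disjoint unions of string nets. You additionally spell out bookkeeping the paper leaves implicit, namely the unitors via co-Yoneda, isotopy invariance, and the preservation of the singular-point relations under progressive diffeomorphisms.
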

\begin{proof}
    We have seen in \eqref{eq:SNcobordismBimodule} that a foliated cobordism $\Sigma:\Gamma_{in}\to\Gamma_{out}$ defines a bimodule from $\SN_\II^\alpha(\Gamma_{out})$ to $\SN_\II^\alpha(\Gamma_{in})$, and it is clear that transporting string nets under a progressive diffeomorphism preserves this bimodule structure. By Proposition \ref{prop:excision}, this assignment preserves composition. 

    Taking disjoint union of graphs and labellings provide canonical isomorphisms 
    $$\SN_\II^\alpha(\Sigma;X)\otimes \SN_\II^\alpha(\Sigma';X') \simeq \SN_\II^\alpha(\Sigma\sqcup\Sigma';X\sqcup X')$$
    which induces a symmetric monoidal structure on $\SN_\II^\alpha$.
\end{proof}

\begin{remark}
As explained in Section \ref{sec:rigidGraphicalCalculus} the admissible string net modules for foliated surfaces without singularity are defined for any tensor ideal in a monoidal category, without requiring a twisted pivotal structure. The reader should expect that there is an analogous (2,1)-category of foliated surfaces without singularity and that these string net modules form a foliated categorified 2-TQFT. This is indeed true if $\BB$ is rigid, or if $\II=\BB$. However, the proof of excision does not hold if neither of these assumption is made. A counter-example, in the context of \cite{BHskcat}, was given to us by Theo Johnson-Freyd.
\end{remark}


\section{Twisted string nets as an oriented categorified TQFT} \label{sec:TQFT}
\sloppy

In the previous section, we constructed a Morse-foliated categorified $2$-dimensional TQFT 
$\SN_{\II}^{\alpha}$ from the data of a tensor ideal $\II \subset \BB$ in a twisted-pivotal 
category $\BB$. The goal of this section is to show that the same data also determine an 
\emph{oriented} categorified $2$-dimensional TQFT. To do so, it will be convenient to work 
with a $(2,1)$-category of oriented bordisms that is equivalent to $\on{Bord}^{\on{or}}_{12\sim}$ 
but whose $1$-morphisms come equipped with extra structure inducing a Morse singular foliation. 
A natural candidate for this role is the $(2,1)$-category $\on{Bord}^{\on{Cerf,\ or}}_{12\sim}$ 
introduced in \cite{Filippos}, which we now recall schematically:
\begin{description}
    \item[Objects] closed oriented $1$-dimensional manifolds;
    \item[$1$-morphisms] oriented $2$-dimensional bordisms equipped with excellent Morse 
    functions, where a Morse function is called \emph{excellent} if no two of its critical 
    points share the same critical value;
    \item[$2$-morphisms] equivalence classes of mapping cylinders equipped with generic paths 
    between Morse functions. Here, a \emph{generic path} between two Morse functions $f_0$ 
    and $f_1$ on a bordism $\Sigma$ is a smooth $1$-parameter family $\{f_t\}_{t \in [0,1]}$ 
    which is Morse except at finitely many parameter values, where it undergoes an elementary 
    Cerf move --- either a birth or death of a pair of critical points, or a crossing of two 
    critical values. Two such generic paths are declared equivalent if they are connected by a generic
    homotopy rel endpoints (which is always the case).
\end{description}
In \cite{Filippos}, it is shown that the forgetful functor
\[
    F^{\on{Cerf}}: \on{Bord}^{\on{Cerf,\ or}}_{12\sim} \rightarrow \on{Bord}^{\on{or}}_{12\sim}
\]
is an equivalence and, moreover, that $\on{Bord}^{\on{Cerf,\ or}}_{12\sim}$ admits an explicit 
presentation by generators and relations, which we denote by $\on{Sur}_{12\sim}^{\on{or}}$. 
We will not recall the definition of $\on{Bord}^{\on{Cerf,\ or}}_{12\sim}$ in any more detail here, referring 
the interested reader to Chapter~3 of \cite{Filippos} instead; what we will carefully recall later
in this section is precisely its presentation 
$\on{Sur}_{12\sim}^{\on{or}}$. Having such a presentation at hand greatly helps with the 
construction of symmetric monoidal functors out of $\on{Bord}^{\on{Cerf,\ or}}_{12\sim}$ and, 
thus, by extension, oriented categorified $2$-dimensional TQFTs. We can now state the main 
theorem of this section:

\begin{theorem}\label{thm:main_thm}
    Let $\II$ be a tensor ideal in an $\alpha$-twisted-pivotal category $\BB$. There exists 
    an oriented categorified $2$-dimensional TQFT
    \[
        \orSN_{\II}^{\alpha}\colon \Bord_{12\sim}^{\on{or}} \to \Bimod^{hop}
    \]
    such that the composite 
    \[
        \orSN_{\II}^{\alpha} \circ F^{\on{Cerf}}\colon \Bord_{12\sim}^{\on{Cerf,\ or}} \to \Bimod^{hop}
    \]
    associates to a closed oriented $1$-dimensional manifold $\Gamma$ the twisted string net 
    category $\SN_{\II}^{\alpha}(\Gamma)$, and to a $2$-dimensional bordism equipped with a 
    Morse function $(\Sigma, f)\colon \Gamma_{in} \rightarrow \Gamma_{out}$ the twisted string 
    net module $\SN_{\II}^{\alpha}(\Sigma; \Gamma_{in} \cup \Gamma_{out})$ where $\Sigma$ has foliation induced by $f$.
\end{theorem}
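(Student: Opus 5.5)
The plan is to construct $\orSN_\II^\alpha$ first on the presentation $\on{Sur}_{12\sim}^{\on{or}}$ of $\Bord_{12\sim}^{\on{Cerf,\ or}}$ from \cite{Filippos}. We then transport it along the equivalence $F^{\on{Cerf}}$ by choosing an inverse equivalence. Since a symmetric monoidal functor out of a presented $(2,1)$-category is determined by values on generators satisfying the relations, the work splits into three parts: assigning data to the generating objects, 1-morphisms and 2-morphisms, and then checking the relations.

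\textbf{Objects and 1-morphisms.} To a circle we assign $\SN_\II^\alpha(S^1)$, the category of Definition \ref{def:SN_on_1-mflds}. The generating 1-morphisms are elementary bordisms carrying a Morse function with at most one critical point: cylinders, cups, caps, and pairs of pants or copants. To each we assign the twisted string net bimodule of the induced Morse singular foliation.

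By Theorem \ref{thm:foliated_tqft}, composition of 1-morphisms with excellent Morse functions is sent to the coend, so every $(\Sigma,f)$ is sent to $\SN_\II^\alpha(\Sigma)$ with the foliation of $f$, as the theorem requires. Propositions \ref{prop:SNonD0}, \ref{prop:SNonD1} and \ref{prop:SNonD2}, together with excision, give explicit descriptions of the elementary bimodules. For example, the cup is $\Hom_\BB(\alpha,-)$ restricted to $\II$, and the saddle bimodules are $\Hom_\BB(x\otimes\alpha\otimes\dots,\dots)$. These descriptions make the remaining verifications purely algebraic.

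\textbf{2-morphisms.} Generating 2-morphisms that are isotopies of Morse functions without Cerf singularities induce progressive diffeomorphisms, and these act on string nets by transport. This includes diffeomorphisms acting on bordisms such as Dehn twists of cylinders and the braiding and symmetry generators. For the three Cerf moves we will write explicit isomorphisms:
\begin{itemize}
\item \emph{Birth and death of a cancelling (0,1) or (1,2) pair.} The disk containing both critical points is foliated like a cylinder or disk with no singularities. We map a string net with two singular endpoints, colored by $\alpha$ and its incoming $\alpha$ at the saddle, to the same net with the two endpoints joined by a cap or cup using the eye relation \eqref{eq:alpha-inv-rels}. The inverse cuts an $\alpha$-strand and uses the merge relation. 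We must check that both maps respect the sliding relations \eqref{eq:skein-slide-0}--\eqref{eq:saddle-rel}; this follows from \eqref{eq:pivNatEval} and \eqref{eq:otherTwist}.
\item \emph{Crossing of two critical values.} Here the underlying foliated surface only changes by a progressive isotopy away from the critical points, which gives the map directly. The exception is when the two critical points lie in the same component of a level set; in that case we use excision to express both sides as the same coend and identify them.
\end{itemize}

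\textbf{Relations.} The relations of $\on{Sur}_{12\sim}^{\on{or}}$ include: naturality of the transports; invertibility of birth and death; swallowtail-type and cusp relations (the \verb|\cusp| picture suggests a cusp-inversion relation); commutation of distant moves; and the crucial relations involving a full $2\pi$ rotation or Dehn twist on the cylinder. Most follow because all maps are local and natural. The ones where the twist relation \eqref{eq:twistrelation} is genuinely needed are those where an $\alpha$-strand winds once around a singular point, exactly as in \eqref{eq:wrappingalphaaround}.

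I expect this last step to be the main obstacle: computing the composite of a birth, a sequence of crossings and a death around a cusp or loop in Cerf space, and showing it equals the identity. I would reduce each such relation, using excision, to a computation on a disk or sphere with at most two critical points, where Propositions \ref{prop:SNonD0}--\ref{prop:SNonD2} identify everything with Hom spaces. The relation then becomes an equation among $\apiv$, evaluations and coevaluations. I would try to derive each such equation from monoidality \eqref{eq:pivMonoidalGraphically}, naturality, and the twist relation in the form \eqref{eq:otherTwist}.
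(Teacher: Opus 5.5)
Your overall architecture matches the paper's: assign values to the generators of the presentation $\on{Sur}^{\on{or}}_{12\sim}$, check its relations, and transport along $F^{\on{Cerf}}$. The construction of the non-progressive generators, however, has a genuine gap.

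\textbf{Crossings.} As realized by $\BRF$, when both critical points are saddles the diffeomorphism $\Phi=\BRF(\XX(\sigma,\rho,\sigma',\rho'))$ is progressive only outside small disks $V$ around the saddles themselves. So transport alone does not define the map, even in your ``non-exceptional'' case.

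Your fallback for interacting saddles also fails. Cutting between the two critical values uses the middle level $B$ in the source and $B'$ in the target. These are different $1$-manifolds, so Proposition~\ref{prop:excision} produces coends over the different categories $\SN_\II(B)$ and $\SN_\II(B')$. Cutting above and below both saddles only localizes the problem to a region that still contains both. ``Identify them'' is the whole content of the step, and you give no candidate isomorphism.

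What is missing is a local normal form. Use \eqref{eq:saddle-rel} to make the $\alpha$-strand at each saddle arrive from below, and use isotopies plus snake moves to clear everything else out of $V$ (``crossing position''). Then send $T_{\smallsetminus V}\cup\Lambda$ to $\Phi(T_{\smallsetminus V})\cup\Lambda'$. The result is independent of choices because \eqref{eq:skein-slide-1} and \eqref{eq:saddle-rel} can be applied on both sides.

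\textbf{Cusps.} The same normal-form step is missing here: you only specify the maps on special configurations. You also scramble the roles. In the paper, the birth inserts an $\alpha$-strand $\Theta$ from the new minimum to the new saddle. The death replaces the two $\alpha$-strands $\Xi$ of a net in canceling position by a single through-strand. The merge and eye relations \eqref{eq:alpha-inv-rels} then show that both composites are identities.

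The substantive point is that every string net in $W$ can be put into canceling position canonically. Take the progressive arc $\gamma$ joining the two critical points, embed a copy of $\D_1$ around the saddle containing $T\cap\gamma$, and apply Proposition~\ref{prop:SNonD1} to make $T$ disjoint from $\gamma$. Slides over the minimum must be checked separately. This is where the twist relation is actually used, through the analogue of \eqref{eq:wrappingalphaaround} in Proposition~\ref{prop:SNonD1}, rather than in the relation check.

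\textbf{Relations.} Once the generators are defined by such local normal forms, the relations you single out as the main obstacle are the easy part.
\begin{itemize}
\item The $\mathcal F^0$-relations hold automatically, because foliation-preservingly isotopic diffeomorphisms induce the same transport on isotopy classes of string nets.
\item The $\mathcal F^1$-relations follow from locality, together with $\Cb(\sigma,\tau)$ and $\Cd(\sigma,\tau)$, and likewise the two crossings, being mutually inverse. The frame-change and isotopy relations only pre- or post-compose with progressive diffeomorphisms preserving double neighborhoods.
\item The $\mathcal F^2$-relations (swallowtail, beak, Reidemeister~III) follow by tracing the local pictures. These involve three critical points, so your proposed reduction to disks or spheres with at most two critical points and Hom-space computations would not cover them.
\end{itemize}
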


\begin{remark}
Recall that, by Definition~\ref{def:twsitSNfunctor}, in order to define the twisted string net modules of an oriented surface $\Sigma$, we first need to equip $\Sigma$ with a Morse singular foliation. The bordisms appearing in $\on{Bord}^{\on{Cerf,\ or}}_{12\sim}$ are equipped with Morse functions, and it is their induced Morse singular foliations that we work with; however, not every Morse singular foliation arises in this way. What the theorem above establishes is that the twisted string net modules associated to an oriented surface $\Sigma$ equipped with a Morse singular foliation induced by a Morse function is independent, up to unique isomorphism, of the choice of such foliation. It does not establish the stronger claim that the twisted string net module is independent of an arbitrary choice of Morse singular foliation on $\Sigma$. Whether this stronger statement holds is, at present, unknown to the authors, but it is not necessary for the construction of our TQFT.
\end{remark}

We will recall the generators of $\on{Sur}_{12\sim}^{\on{or}}$, together with all the necessary Morse and Cerf theory needed to understand them, in Section~\ref{subsec:Morse_data+sur}. We will then describe what $\orSN_{\II}^{\alpha}$ assigns to the generating objects, $1$- and $2$-morphisms in Section~\ref{subsec:generators}. Finally, in Section~\ref{subsec:relations+result}, we will recall the relations of the presentation $\on{Sur}_{12\sim}^{\on{or}}$ and verify that the proposed assignment $\orSN_{\II}^{\alpha}$ satisfies them, thus completing the proof of Theorem~\ref{thm:main_thm}.


\subsection{The surgery presentation }\label{subsec:Morse_data+sur}


The purpose of this section is to introduce the surgery presentation $\on{Sur}_{12\sim}^{\on{or}}$. This presentation generates a $(2,1)$-category $\on{FSur}_{12\sim}^{\on{or}}$, which admits a symmetric monoidal functor
\[
    \BRF\colon \on{FSur}_{12\sim}^{\on{or}} \rightarrow \on{Bord}^{\on{or}}_{12\sim}
\]
that is an equivalence of symmetric monoidal bicategories. We will begin by recalling what a presentation of a symmetric monoidal bicategory is; we will then give a description of $\on{Sur}_{12\sim}^{\on{or}}$ and of some aspects of the functor $\BRF$. We emphasize that this section contains no new results; it is purely expository, recalling the definitions and theorems needed for our construction of $\orSN_{\II}^{\alpha}$ and referring the reader to~\cite{Filippos} and~\cite{SPPhD} for proofs and further details.

\subsubsection{Presentations of symmetric monoidal bicategories}

Constructing TQFTs often proceeds in analogy with constructing representations of a group $G$: one identifies a presentation of $G$ by generators and relations, and then describes representations by assigning each generator of $G$ to a morphism in $\on{Vect}_k$ subject to the relations. Extending this analogy, a natural route to constructing TQFTs is to work with a generators-and-relations presentation of the relevant bordism category. In our case, this is $\on{Bord}_{12\sim}^{\on{or}}$, which is a symmetric monoidal $(2,1)$-category, and we would like a presentation compatible with this extra structure. 

Before giving the precise definition, let us first describe what we would like a presentation of a symmetric monoidal bicategory to be. In analogy with presentations of groups, a presentation $P$ should consist of generating objects $G_0$, generating $1$-morphisms $G_1$, generating $2$-morphisms $G_2$, and generating relations $\mathcal{R}$, and should generate a symmetric monoidal bicategory $FP$. We say that $P$ is a presentation of a 
symmetric monoidal bicategory $\mathcal{M}$ if there exists a symmetric monoidal equivalence 
$FP \xrightarrow{\sim} \mathcal{M}$. Moreover, given a presentation $P = (G_0, G_1, G_2; 
\mathcal{R})$ of $\mathcal{M}$ and a symmetric monoidal bicategory $\mathcal{C}$, an 
assignment
\[
    G_0 \rightarrow \on{Ob}(\mathcal{C}), \quad 
    G_1 \rightarrow 1\text{-}\on{Mor}(\mathcal{C}), \quad 
    G_2 \rightarrow 2\text{-}\on{Mor}(\mathcal{C})
\]
satisfying the relations $\mathcal{R}$ 
should uniquely determine a symmetric monoidal functor $\mathcal{M} \rightarrow \mathcal{C}$ 
up to natural isomorphism.

Making this precise is 
nontrivial --- even the definition of a symmetric monoidal bicategory is already a formidable undertaking. Fortunately, both of these issues have been addressed in detail by Schommer-Pries in~\cite{SPPhD}, and the definition of $\on{Sur}_{12\sim}^{\on{or}}$ follows 
the technical framework developed there. We provide an informal summary of this framework below, following Chapter~2 of~\cite{SPPhD}. In the language of Section~2.12.3 of~\cite{SPPhD}, $\on{Sur}_{12\sim}^{\on{or}}$ is an \emph{unbiased semistrict symmetric monoidal 
$3$-computad}, which consists of the 
following data:
\begin{description}
    \item[Generating objects] A set $G_0$. The objects of the generated category are the words $W(G_0)$ in $G_0$, that is, formal tensor products of elements of $G_0$.
    \item[Generating $1$-morphisms] A set $G_1$ together with source and target maps $s_1, t_1\colon G_1 \rightarrow W(G_0)$. The $1$-morphisms of the generated category are the sentences $S(G_1, s_1, t_1)$, that is, all $1$-morphisms that can be formally built from the elements of $G_1$ using composition, the monoidal structure and the symmetric braiding. 
    \item[Generating $2$-morphisms] A set $G_2$ together with source and target maps $s_2, t_2\colon G_2 \rightarrow S(G_1, s_1, t_1)$, required to satisfy the globularity condition
    \[
    s_1(s_2(e)) = s_1(t_2(e)) \quad \text{and} \quad t_1(s_2(e)) = t_1(t_2(e))
    \]
    for all $e\in G_2$. The $2$-morphisms of the generated category are the paragraphs $P(G_2, s_2, t_2)$, that is, all $2$-morphisms that can be formally built from the elements of $G_2$ using horizontal and vertical composition, and all the structure of a symmetric monoidal bicategory.
    \item[Generating relations] A set $\mathcal{R}$ of pairs $(\alpha, \beta)$ in $P(G_2, s_2, t_2)$ such that $\alpha$ and $\beta$ have the same source and target. This determines an equivalence relation—also denoted $\mathcal{R}$—on $P(G_2, s_2, t_2)$, defined as the finest equivalence relation containing $R$ and respecting the symmetric monoidal structure.
\end{description}

Since this is the only notion of presentation that appears here, we will simply refer to unbiased semistrict symmetric monoidal $3$-computads as \emph{presentations}. A presentation  $P= (G_0, G_1, G_2; \mathcal{R})$ generates a symmetric monoidal bicategory $FP$,\footnote{More precisely, $FP$ is an \emph{unbiased semistrict symmetric monoidal $2$-category}, which is a stricter version of symmetric monoidal bicategory, but we are not using the precise structure of $FP$ in this paper.} as per Definition~2.75 in~\cite{SPPhD}. Informally, it is the category generated by the prescribed objects, 1 and 2-morphisms together with their formal tensor products and compositions modulo the prescribed relations and all relations necessary to define a symmetric monoidal bicategory. A presentation $P$ also provides us with a notion of $P$-shaped data as follows:
\begin{definition}
    Given a presentation $P= (G_0, G_1, G_2; \mathcal{R})$ and a symmetric monoidal bicategory $\mathcal{C}$, a $P$-shaped datum in $\mathcal{C}$ is an assignment 
    \[
    G_0 \rightarrow \on{Ob}(\mathcal{C}), \quad 
    G_1 \rightarrow 1\text{-}\on{Mor}(\mathcal{C}), \quad 
    G_2 \rightarrow 2\text{-}\on{Mor}(\mathcal{C})
    \]
    that is compatible with the source and target maps of $P$ and satisfy the relations $\mathcal{R}$.
\end{definition}
$P$-shaped data correspond to symmetric monoidal functors out of $F(P)$. In particular, the main technical point we need about presentations is the following:
\begin{proposition}
    Given a presentation $P = (G_0, G_1, G_2; \mathcal{R})$ and a symmetric monoidal bicategory $\mathcal{C}$, a $P$-shaped datum in $\mathcal{C}$ uniquely determines a symmetric monoidal functor $FP\rightarrow \mathcal{C}$ up to natural isomorphism. 
\end{proposition}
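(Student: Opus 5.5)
This is a strictification-coherence statement. The plan is to derive it from Schommer-Pries' machinery in \cite{SPPhD} rather than to build the functor by hand. The first step is to identify $FP$ as the free unbiased semistrict symmetric monoidal 2-category on the computad $P$, as in Definition~2.75 of \cite{SPPhD}. The universal property of a free object then says the following. Strict (unbiased semistrict) symmetric monoidal 2-functors $FP\to\mathcal{C}'$, into an unbiased semistrict symmetric monoidal 2-category $\mathcal{C}'$, correspond bijectively to $P$-shaped data in $\mathcal{C}'$.

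Concretely, objects of $FP$ are words in $G_0$, and they are sent to the corresponding formal tensor products. Sentences are sent to the corresponding composites, tensor products and braidings of the images of the generators. Paragraphs are sent likewise. Well-definedness on paragraphs follows from two facts. First, the relations in $FP$ are generated by $\mathcal{R}$ together with the structural relations of a semistrict symmetric monoidal 2-category. Second, the datum satisfies $\mathcal{R}$ by assumption, while the structural relations hold automatically in $\mathcal{C}'$.

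The second step handles a general symmetric monoidal bicategory $\mathcal{C}$, such as $\Bimod^{hop}$, which is not semistrict. Here I invoke the strictification theorem of \cite{SPPhD}: every symmetric monoidal bicategory is symmetric monoidally equivalent to an unbiased semistrict one, say $\mathcal{C}\simeq \mathcal{C}'$. I transport the $P$-shaped datum along this equivalence. A $P$-shaped datum in $\mathcal{C}$ gives one in $\mathcal{C}'$ after choosing images and inserting coherence isomorphisms, and the relations are preserved because equivalences are faithful on 2-morphisms. The first step then yields a strict functor $FP\to\mathcal{C}'$, and composing with $\mathcal{C}'\to\mathcal{C}$ gives the desired symmetric monoidal functor $FP\to\mathcal{C}$. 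Its values on generators agree with the given datum up to the coherence isomorphisms of the equivalence.

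For uniqueness up to natural isomorphism, I would take two symmetric monoidal functors $\Phi,\Psi\colon FP\to\mathcal{C}$ restricting (up to coherent isomorphism) to the same datum. I build a symmetric monoidal pseudo-natural isomorphism $\Phi\Rightarrow\Psi$ by induction on word length and sentence complexity. On generating objects and 1-morphisms it uses the given identifications. On tensor products and composites it uses the monoidal and compositional structure 2-cells of $\Phi$ and $\Psi$. Naturality with respect to generating 2-morphisms holds by hypothesis. Naturality with respect to composite paragraphs then follows because both sides are built from the same operations.

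The main obstacle is this coherence bookkeeping. One has to check that the inductively defined components are well-defined, that is, independent of how a sentence is written, and that they satisfy the axioms of a symmetric monoidal transformation. I would first try to reduce this to the corresponding statement for strict functors into $\mathcal{C}'$. There, the freeness in the first step makes uniqueness essentially tautological. The remaining work is to check that strictification is an equivalence of the relevant functor bicategories, which is the content of the coherence results in Chapter~2 of \cite{SPPhD}.
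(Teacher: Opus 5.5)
Your proposal is correct and takes essentially the same route as the paper, whose proof just cites Propositions~2.74 and~2.77 of \cite{SPPhD} in the unbiased semistrict setting of Sections~2.12.3 and~2.13. Your three steps (the free universal property of $FP$ for strict functors, strictification of the target $\mathcal{C}$, and comparison of strict and weak symmetric monoidal functors out of the computad-free $FP$) are an unpacking of that citation, and you correctly see that uniqueness rests on the last step, a cofibrancy property of $FP$, rather than on strictification of $\mathcal{C}$ alone.
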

\begin{proof}
    This is a corollary of Proposition~2.77 and Proposition~2.74 in~\cite{SPPhD}, applied in the context of unbiased semistrict symmetric monoidal $3$-computads, as discussed in Section~2.12.3 and Section~2.13 in~\cite{SPPhD}.
\end{proof}

\subsubsection{The presentation $\on{Sur}_{12\sim}^{\on{or}}$}
We will now describe the generators of the surgery presentation $\on{Sur}_{12\sim}^{\on{or}}$. 
Each of the generating $1$-morphisms represents a certain bordism and each of the generating $2$-morphisms represents a certain isotopy class of diffeomorphisms. We will make this correspondence more explicit when we describe the bordism realization functor in Section~\ref{subsubsec:bord_realization}.

\begin{definition}
    The surgery presentation $\on{Sur}^{\on{or}}_{12\sim}$ is defined by:
    \begin{description}
        \item[Generating objects] Closed, connected, oriented $1$-dimensional manifolds. Words 
        in the set of generating objects can therefore be canonically identified with closed, 
        oriented $1$-dimensional manifolds (with an order on their $\pi_0$) and will typically be denoted by capital Greek letters 
        ($A, B, \Gamma, \ldots$).
        
        \item[Generating $1$-morphisms] There are two types of generating $1$-morphisms:
        
        \begin{table}[H]
          \centering
          \setlength\extrarowheight{2pt} 
          \begin{tabular}{|c|c|}
            \hline
            \textbf{\textcolor{blue}{Diffeomorphism type}} &
            \textbf{\textcolor{red}{Surgery type}} \\
            \hline
        
            \begin{tikzpicture}
              \node at (0, 0.75) {};
              \begin{scope}[yshift = -1cm] 
        
                \begin{scope}[
                    baseline = {([yshift=-0.5ex]current bounding box.center)},
                    scale     = 0.5  
                  ]
                  \draw[ultra thick] (0,0) -- (0,3); 
                  \fill[blue] (0,1.5) circle (1.5mm);  
                  \node[left, blue] at (-0.2, 1.5) {$\phi$};  
                  \node[above left] at (-0.1, 0)   {$A$};     
                  \node[below left] at (-0.1, 3)   {$B$};     
                \end{scope}
        
                \begin{scope}[xshift = 2.75cm, yshift = 1cm]
                  \node at (0,    0) {where $\phi: A \rightarrow B$};
                  \node at (0, -0.5) {is a diffeomorphism};
                \end{scope}
        
              \end{scope}
            \end{tikzpicture}
        
            &
        
            \begin{tikzpicture}
              \node at (0, 0.75) {};
              \begin{scope}[yshift = -1cm] 
        
                \begin{scope}[
                    baseline = {([yshift=-0.5ex]current bounding box.center)},
                    scale     = 0.5  
                  ]
                  \draw[ultra thick] (0,0) -- (0,3); 
                  \fill[red] (0,1.5) circle (1.5mm);   
                  \node[left, red] at (-0.2, 1.5) {$\sigma = (\sigma_A, \sigma_B, \phi_\sigma)$}; 
                  \node[above left] at (-0.1, 0)  {$A$}; 
                  \node[below left] at (-0.1, 3)  {$B$}; 
                \end{scope}
        
                \begin{scope}[xshift = 2.5cm, yshift = 1.35cm]
                  \node at (0,    0) {where $\sigma$ is a};
                  \node at (0, -0.5) {surgery triple};
                  \node at (0, -1.0) {from $A$ to $B$};
                \end{scope}
        
              \end{scope}
            \end{tikzpicture}
        
            \\
            \hline
          \end{tabular}
        \end{table}

        Let us clarify the notation and terminology used above:
        \begin{enumerate}
            \item We read diagrams representing generating $1$-morphisms from bottom to 
            top, so in both cases the source is $A$ and the target is $B$.
            \item A surgery triple is the data of orientation-preserving embeddings
            \[
                \sigma_A: S^{k-1} \times D^{2-k} \hookrightarrow A, \quad 
                \sigma_B: D^{k} \times S^{1-k} \hookrightarrow B,
            \]
            together with an orientation-preserving diffeomorphism
            \[
                \phi_{\sigma}: A \setminus \sigma_A(S^{k-1} \times \{0\}) 
                \overset{\cong}{\longrightarrow} 
                B \setminus \sigma_B(\{0\} \times S^{1-k})
            \]
            whose restriction on the image of the framed attaching spheres is given by 
            the diffeomorphism
            \[
                \begin{aligned}
                    S^{k-1} \times (D^{2-k} \setminus \{0\}) &\rightarrow 
                    (D^{k} \setminus \{0\}) \times S^{1-k} \\
                    (u, \lambda v) &\mapsto (v, \lambda u), \quad 0 < \lambda < 1.
                \end{aligned}
            \]

            \item Composition of diagrams is given by vertical concatenation.

        \end{enumerate}

        \item[Generating $2$-morphisms] There are two types of generating $2$-morphisms:

        \begin{table}[H]
        \centering
        \setlength\extrarowheight{4pt}
        \renewcommand{\arraystretch}{1.2}
        
        \begin{tabularx}{\textwidth}{
          |>{\centering\arraybackslash}X
          |>{\centering\arraybackslash}X
          !{\vrule width 2pt}>{\centering\arraybackslash}X|
        }
        \hline
        
        \multicolumn{2}{|c!{\vrule width 2pt}}{\textbf{\textcolor{blue}{Foliation-preserving}}} &
        \multicolumn{1}{c|}{\textbf{\textcolor{red}{Non-foliation-preserving}}} \\
        \hline
        
        
        \vspace{2pt}
        
        \textcolor{blue}{\underline{Diffeo isotopies}}
        
        \vspace{4pt}
        \begin{tikzpicture}[baseline={([yshift=-0.5ex]current bounding box.center)}, scale=0.5]
          \draw[ultra thick] (0,0) -- (0,4);
          \draw[ultra thick] (4,0) -- (4,4);
          \draw[color=blue, ultra thick] (0,2) -- (4,2);
          \fill[color=blue] (2,2) circle (1.5mm);
          \node[above left]  at (-0.1,0) {$A$};
          \node[below left]  at (-0.1,4) {$B$};
          \node[left, blue]  at (-0.1,2) {$\phi$};
          \node[above right] at (4.1,0)  {$A$};
          \node[below right] at (4.1,4)  {$B$};
          \node[right, blue] at (4.1,2)  {$\psi$};
          \node[above, blue] at (2,2.2)  {$\theta$};
        \end{tikzpicture}
        
        \vspace{2pt}
        {\small $\theta$ is an isotopy from\newline $\phi$ to $\psi$}
        
        \vspace{6pt}
        &
        
        \vspace{2pt}

        \textcolor{blue}{\underline{Surgery isotopies}}
        
        \begin{tikzpicture}[baseline={([yshift=-0.5ex]current bounding box.center)}, scale=0.5]
          \draw[ultra thick] (0,0) -- (0,4);
          \draw[ultra thick] (4,0) -- (4,4);
          \draw[ultra thick, red] (0,2) -- (4,2);
          \fill[red] (2,2) circle (1.5mm);
          \node[left, red, scale=0.8]  at (-0.2,2) {$\sigma$};
          \node[above left]  at (-0.1,0) {$A$};
          \node[below left]  at (-0.1,4) {$B$};
          \node[right, red, scale=0.8] at (4.2,2) {$\sigma'$};
          \node[above right] at (4.1,0)  {$A$};
          \node[below right] at (4.1,4)  {$B$};
          \node[above, red]  at (2,2.2) {$\theta^{s}$};
        \end{tikzpicture}
        
        \vspace{2pt}
        {\small $\theta^{s}$ is an isotopy of $\sigma_A, \sigma_B, \chi_{\sigma}$}
        
        \vspace{6pt}
        &

        \vspace{2pt}
        
        \textcolor{red}{\underline{Cusp-birth $\Cb(\sigma,\tau)$}}
        
        \vspace{4pt}
        \begin{tikzpicture}[xscale = -1, baseline={([yshift=-0.5ex]current bounding box.center)}, scale=0.5]
          \draw[ultra thick] (0,0) -- (0,4);
          \draw[ultra thick] (4,0) -- (4,4);
          \draw[red, ultra thick] (1.5,2) parabola (0.7,1.2);
          \draw[red, ultra thick] (1.5,2) parabola (0.7,2.8);
          \draw[red, ultra thick] (0,3) .. controls (0.5,3) and (0.6,2.9) .. (0.7,2.8);
          \draw[red, ultra thick] (0,1) .. controls (0.5,1) and (0.6,1.1) .. (0.7,1.2);
          \draw[ultra thick, blue] (1.5,2) -- (4,2);
          \node[right, scale=0.8, red] at (-0.2,1) {$\sigma$};
          \node[right, scale=0.8, red] at (-0.2,3) {$\tau$};
          \node[above right]  at (-0.1,0) {$A$};
          \node[right]        at (-0.1,2) {$B$};
          \node[below right]  at (-0.1,4) {$\Gamma$};
          \node[left, scale=0.8, blue] at (4.2,2) {$\phi$};
          \node[above left] at (4.1,0)  {$A$};
          \node[below left] at (4.1,4)  {$\Gamma$};
        \end{tikzpicture}
        
        \vspace{2pt}
        {\small $\sigma, \tau$ and $\phi$ satisfy the cancellation condition}
        
        \vspace{6pt}
        \\
        \hline
        
        
        \vspace{2pt}
        
        \hspace{-16pt}\textcolor{blue}{\underline{Compositors}}
        
        \vspace{4pt}
        
        \begin{tikzpicture}[baseline={([yshift=-0.5ex]current bounding box.center)}, scale=0.5]
          \draw[ultra thick] (0,0) -- (0,4);
          \draw[ultra thick] (4,0) -- (4,4);
          \draw[color=blue, ultra thick] (2,2) -- (0,1);
          \draw[color=blue, ultra thick] (2,2) -- (0,3);
          \draw[color=blue, ultra thick] (2,2) -- (4,2);
          \node[left, blue]  at (-0.1,1.2) {$\phi$};
          \node[left, blue]  at (-0.1,2.8) {$\psi$};
          \node[above left]  at (-0.1,0) {$A$};
          \node[left]        at (-0.1,2) {$B$};
          \node[below left]  at (-0.1,4) {$\Gamma$};
          \node[above right] at (4.1,0)  {$A$};
          \node[below right] at (4.1,4)  {$\Gamma$};
          \node[right, blue] at (4.1,2)  {$\psi\circ\phi$};
        \end{tikzpicture}

        \vspace{2pt}
        {\small and their inverse; $\phi$ and $\psi$ are composable diffeomorphisms}
        
        \vspace{6pt}
        &
        
        \vspace{2pt}
        
        \textcolor{blue}{\underline{Diffeo-surgery $2$-morphisms}}

        \vspace{4pt}
        
        \begin{tikzpicture}[baseline={([yshift=-0.5ex]current bounding box.center)}, scale=0.33]
          \draw[ultra thick] (0,0) -- (0,6);
          \draw[ultra thick] (6,0) -- (6,6);
          \draw[ultra thick, red]  (0,3) -- (6,3);
          \draw[ultra thick, blue] (0,1) -- (1,3);
          \draw[ultra thick, blue] (6,5) -- (5,3);
          \node[left, scale=0.8, blue] at (-0.2,1.1) {$\phi$};
          \node[right, scale=0.8, blue] at (6.2,4.9) {$\psi$};
          \node[left, scale=0.8, red]  at (-0.2,3) {$\sigma$};
          \node[above left]  at (-0.1,-0.7) {$A$};
          \node[left]        at (-0.1,2) {$A'$};
          \node[below left]  at (-0.1,6.7) {$B$};
          \node[right, scale=0.8, red] at (6.2,3) {$\tau$};
          \node[above right] at (6.1,-0.7)  {$A$};
          \node[right] at (6.1,4)  {$B'$};
          \node[below right]  at (6.1,6.7) {$B$};

          \node[above, red] at (3,3) {\footnotesize $\phi_*\sigma = \psi^*\tau$};
        \end{tikzpicture}

        \vspace{2pt}
        {\small and their inverse;}
        
        \vspace{6pt}
        &
        
        \vspace{2pt}
        
        \textcolor{red}{\underline{Cusp-death $\Cd(\sigma,\tau)$}}
        
        \vspace{4pt}
        \begin{tikzpicture}[baseline={([yshift=-0.5ex]current bounding box.center)}, scale=0.5]
          \draw[ultra thick] (0,0) -- (0,4);
          \draw[ultra thick] (4,0) -- (4,4);
          \draw[red, ultra thick] (1.5,2) parabola (0.7,1.2);
          \draw[red, ultra thick] (1.5,2) parabola (0.7,2.8);
          \draw[red, ultra thick] (0,3) .. controls (0.5,3) and (0.6,2.9) .. (0.7,2.8);
          \draw[red, ultra thick] (0,1) .. controls (0.5,1) and (0.6,1.1) .. (0.7,1.2);
          \draw[ultra thick, blue] (1.5,2) -- (4,2);
          \node[left, scale=0.8, red] at (-0.2,1) {$\sigma$};
          \node[left, scale=0.8, red] at (-0.2,3) {$\tau$};
          \node[above left]  at (-0.1,0) {$A$};
          \node[left]        at (-0.1,2) {$B$};
          \node[below left]  at (-0.1,4) {$\Gamma$};
          \node[right, scale=0.8, blue] at (4.2,2) {$\phi$};
          \node[above right] at (4.1,0)  {$A$};
          \node[below right] at (4.1,4)  {$\Gamma$};
        \end{tikzpicture}
        
        \vspace{2pt}
        {\small $\sigma, \tau$ and $\phi$ satisfy the cancellation condition}
        
        \vspace{6pt}
        \\
        \hline
        
        
        \vspace{2pt}
        
        \textcolor{blue}{\underline{Permutations}}
        
        \vspace{4pt}
        \begin{tikzpicture}[baseline={([yshift=-0.5ex]current bounding box.center)}, scale=0.5]
          \draw[ultra thick] (0,0) -- (0,4);
          \draw[ultra thick] (4,0) -- (4,4);
          \draw[color=green, ultra thick] (0,2) -- (2,2);
          \draw[color=blue, ultra thick]  (2,2) -- (4,2);
          \fill[teal] (2,2) circle (1.5mm);
          \node[above left]  at (-0.1,0) {$A$};
          \node[below left]  at (-0.1,4) {$B$};
          \node[left, green] at (-0.1,2) {$\omega$};
          \node[above right] at (4.1,0)  {$A$};
          \node[below right] at (4.1,4)  {$B$};
          \node[right, blue] at (4.1,2)  {$\omega$};
        \end{tikzpicture}
        
        \vspace{2pt}
        {\small and their inverse; $\omega$ is a permutation of the connected components of $A$}
        
        \vspace{6pt}
        &
        
        \vspace{2pt}

        \textcolor{blue}{\underline{Frame-changes}}
        
        \begin{tikzpicture}[baseline={([yshift=-0.5ex]current bounding box.center)}, scale=0.5]
          \draw[ultra thick] (0,0) -- (0,4);
          \draw[ultra thick] (4,0) -- (4,4);
          \draw[ultra thick, red] (0,2) -- (4,2);
          \fill[black] (2,2) circle (1.5mm);
          \node[left, red, scale=0.8]  at (-0.2,2) {$\sigma$};
          \node[above left]  at (-0.1,0) {$A$};
          \node[below left]  at (-0.1,4) {$B$};
          \node[right, red, scale=0.8] at (4.2,2) {$Q\cdot\sigma$};
          \node[above right] at (4.1,0)  {$A$};
          \node[below right] at (4.1,4)  {$B$};
          \node[above]        at (2,2.2)  {$Q$};
        \end{tikzpicture}
        
        \vspace{2pt}
        {\small $\sigma$ is of index $k$, and $Q\in(\on{O}(k)\times\on{O}(2-k))\cap\on{SO}(2)$}
        
        \vspace{6pt}
        &
        
        \vspace{2pt}
        
        \textcolor{red}{\underline{Crossing $\XX(\sigma,\rho,\sigma',\rho')$}}
        
        \vspace{4pt}
        \begin{tikzpicture}[baseline={([yshift=-0.5ex]current bounding box.center)}, scale=0.5]
          \draw[ultra thick] (0,0) -- (0,4);
          \draw[ultra thick] (4,0) -- (4,4);
          \draw[ultra thick, red] (0,1) -- (4,3);
          \draw[ultra thick, red] (0,3) -- (4,1);
          \node[left, scale=0.8, red]  at (-0.2,1.1) {$\sigma$};
          \node[left, scale=0.8, red]  at (-0.2,2.9) {$\rho$};
          \node[above left]  at (-0.1,0) {$A$};
          \node[left]        at (-0.1,2) {$B$};
          \node[below left]  at (-0.1,4) {$\Gamma$};
          \node[right, scale=0.8, red] at (4.2,1.1) {$\rho'$};
          \node[right, scale=0.8, red] at (4.2,2.9) {$\sigma'$};
          \node[above right] at (4.1,0)  {$A$};
          \node[right]        at (4.1,2)  {$B'$};
          \node[below right] at (4.1,4)  {$\Gamma$};
        \end{tikzpicture}
        
        \vspace{2pt}
        {\small $\sigma,\rho, \sigma'$ and $\rho'$ satisfy the crossing condition}
        
        \vspace{6pt}
        \\
        \hline
        
        \end{tabularx}
        \end{table}

        Let us clarify the notation and terminology used above:
        \begin{description}
            \item[Permutations:] 
            Here $A = A_1 \sqcup A_2 \sqcup \ldots \sqcup A_n$ and $\omega \in \Sigma_n$ permutes the connected components of $A$. On one hand, $\omega$ gives rise to a coherence $1$-morphism (drawn in green) in the $2$-category generated by the presentation, since that category is unbiased semistrict symmetric monoidal. On the other hand, $\omega$ is a diffeomorphism and thus a generating $1$-morphism (drawn in blue). The symmetry $2$-morphism is an invertible generating $2$-morphism identifying the two.

            \item[Diffeo-surgery $2$-morphisms:]
             For $\sigma = (\sigma_{A'}, \sigma_B, \chi_{\sigma})$ we define
            \( 
            \phi_*\sigma = (\phi^{-1}\circ \sigma_{A'}, \sigma_B, \chi_{\sigma} \circ \phi)
            \)
            and $\psi^*\tau$ is defined analogously.

            \item[Frame-change $2$-morphisms:]
            The group $(\on{O}(k)\times\on{O}(2-k))\cap\on{SO}(2)$ acts on the space of embeddings of $S^{k-1}\times D^{2-k}$ in $A$ and $D^k \times S^{1-k}$ in $B$ by pre-composition; $Q\cdot \sigma$ denotes this action. 

            \item[Cusp-births and cusp-deaths:]
            We say $\sigma$, $\tau$, and $\phi$ satisfy the cancellation condition if:
            \begin{itemize}
                \item $\sigma_B(\{0\}\times S^{1-k})$ and $\tau_B(S^{k}\times \{0\})$ intersect transversely in $B$ at a single point. We moreover require that the framings also intersect in a standard way (which can always be realized by an isotopy). See \cite[Def. 1.6.2]{Filippos}. 
                \item $\phi$ is the canonical canceling diffeomorphism associated to $(\sigma, \tau)$, as constructed in Section~1.6 of~\cite{Filippos} and reviewed in Section~\ref{subsubsec:bord_realization} below.
            \end{itemize}
            

        \item[Crossings:] We say that $\sigma$, $\rho$, $\sigma'$, and $\rho'$ satisfy the crossing condition if:
        \begin{itemize}
            \item $\on{Im}(\sigma_B) \cap \on{Im}(\rho_B) = \emptyset$ and $\on{Im}(\rho'_{B'}) \cap \on{Im}(\sigma'_{B'}) = \emptyset$.
            \item $\chi_{\rho} \circ \chi_{\sigma} = \chi_{\sigma} \circ \chi_{\rho}$ wherever both sides are defined.
            \item $\chi_{\sigma}\circ \rho'_A = \rho_B$, $\chi_{\rho'}\circ \sigma_A = \sigma'_{B'}$, $\chi_{\rho}\circ \sigma_B = \sigma'_{\Gamma}$, and $\chi_{\sigma'}\circ \rho'_{B'} = \rho_{\Gamma}$.
        \end{itemize}
            
        \end{description}

        \item[Relations.] All relations in Section~3.3 of~\cite{Filippos}. These are reviewed in Section~\ref{subsec:relations+result} below, where we simultaneously verify that our proposed TQFT satisfies them.
    \end{description} 
\end{definition}

\subsubsection{The bordism realization functor}\label{subsubsec:bord_realization}

Having introduced the generators of the surgery presentation $\on{Sur}^{\on{or}}_{12\sim}$, we now describe the bordism realization functor 
$$\BRF: \on{FSur}^{\on{or}}_{12\sim} \to \on{Bord}^{\on{or}}_{12\sim}$$
emphasizing the aspects needed later in the construction of our TQFT. For a more detailed treatment of $\BRF$ and a proof that it is an equivalence, we refer the reader to Chapter~3 of~\cite{Filippos}.

To define $\BRF$, it suffices to describe its assignment on the generating data of $\on{Sur}^{\on{or}}_{12\sim}$ and then check that this assignment satisfies the relations. We now describe this assignment, somewhat informally, as follows:
\begin{description}
    \item[Assignment on generating $1$-morphisms:] For generating $1$-morphisms of 
    diffeomorphism type, $\BRF$ sends the diagram labeled by the diffeomorphism $\phi$ 
    to the mapping cylinder of $\phi$, which comes canonically equipped with a Morse 
    function.

    For generating $1$-morphisms of surgery type, $\BRF$ sends the diagram labeled by 
    the surgery triple $\sigma = (\sigma_A, \sigma_B, \chi_{\sigma})$ to the handle 
    attachment defined by $\sigma_A$, together with the boundary identifications 
    determined by $\sigma_B$ and $\chi_{\sigma}$. We denote this bordism by 
    $W(A, B, \sigma)$, as above; it also comes canonically equipped with a Morse 
    function with a single critical point, of index $k$ if $\sigma_A$ is an embedding 
    of $S^{k-1} \times D^{n-k}$ in $A$. Moreover, using Milnor's trace of the surgery 
    construction~\cite{Milnor_hcobordism}, $W(A, B, \sigma)$ comes canonically equipped 
    with a standard Morse model embedding and a compatible gradient-like vector field, 
    which we collectively refer to as \emph{Morse data}. Informally, the standard Morse 
    model is a handle $D^k \times D^{n-k}$ with its corners smoothed out in a prescribed 
    way; an explicit description can be found in~\cite{Milnor_hcobordism} 
    or~\cite{Filippos}. This assignment is essentially bijective: every surgery diagram 
    (i.e., every formal composite of these generating $1$-morphisms) determines a bordism 
    with Morse data, and conversely, every bordism equipped with Morse data determines a 
    surgery diagram together with an explicit diffeomorphism to its bordism realization.

    \item[Assignment on generating $2$-morphisms:] \ \vskip0pt
    \begin{itemize}
        \item Foliation-preserving generators:
        As noted above, the assignment of $\BRF$ on $1$-morphisms comes canonically equipped with a Morse function, which induces a Morse singular foliation. $\BRF$ maps the foliation-preserving generating $2$-morphisms to diffeomorphisms preserving the Morse singular foliations on the source and target bordisms. This property suffices for the construction of our TQFT; for the precise definition of $\BRF$ on these generators, see Chapter~3 of~\cite{Filippos}.

        \item Cusps:
        We focus on the cusp-birth; the cusp-death is its inverse. Here, we have 
        $A = (S^1)^{\sqcup n}$, $\sigma: A \to B \simeq A \sqcup S^1$ an index $0$ surgery, 
        and $\tau: B \to \Gamma \simeq A$ an index $1$ surgery. The bordism realization functor 
        sends the target of the cusp-birth to the bordism
        \[
            W := W(A, B, \sigma) \cup W(B, \Gamma, \tau),
        \]
        equipped with a Morse function with two critical points $p_{\sigma}$ and $p_{\tau}$ 
        of index $0$ and $1$, respectively (Figure~\ref{fig:cusp-death}), standard Morse model 
        embeddings $\iota_{\sigma}: M_0 \rightarrow W$ and $\iota_{\tau}: M_1 \rightarrow W$ 
        adapted to $p_{\sigma}$ and $p_{\tau}$ (Figure~\ref{fig:cusp-death-morse-data}), and 
        a gradient-like vector field.
    
        \begin{figure}[htbp]
            \centering
            \includegraphics[width=0.5\linewidth]{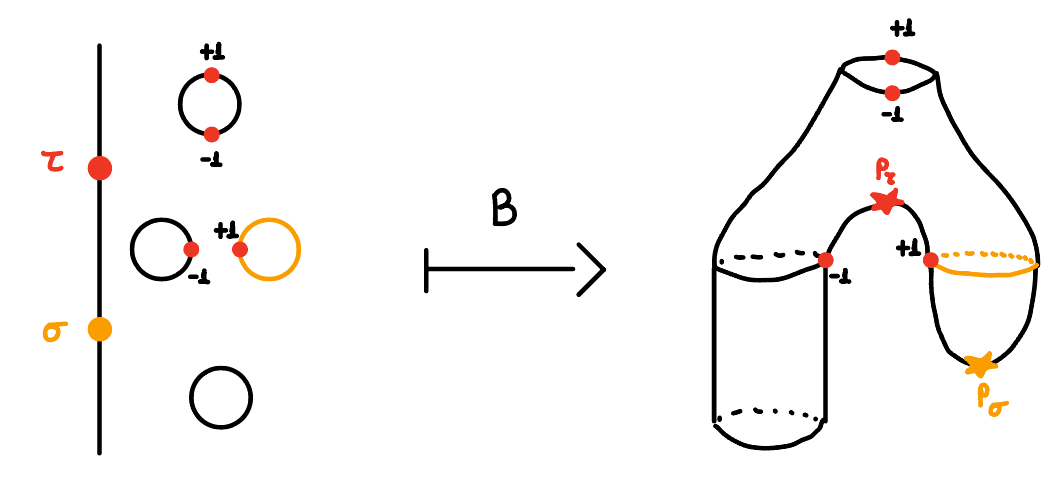}
            \caption{The bordism $W = W(A,B,\sigma) \cup W(B,\Gamma,\tau)$ with its two critical 
            points $p_{\sigma}$ and $p_{\tau}$ of index $0$ and $1$, respectively.}
            \label{fig:cusp-death}
        \end{figure}

        \begin{figure}[htbp]
            \centering
            \begin{subfigure}[b]{0.48\textwidth}
                \centering
                \includegraphics[width=0.6\linewidth]{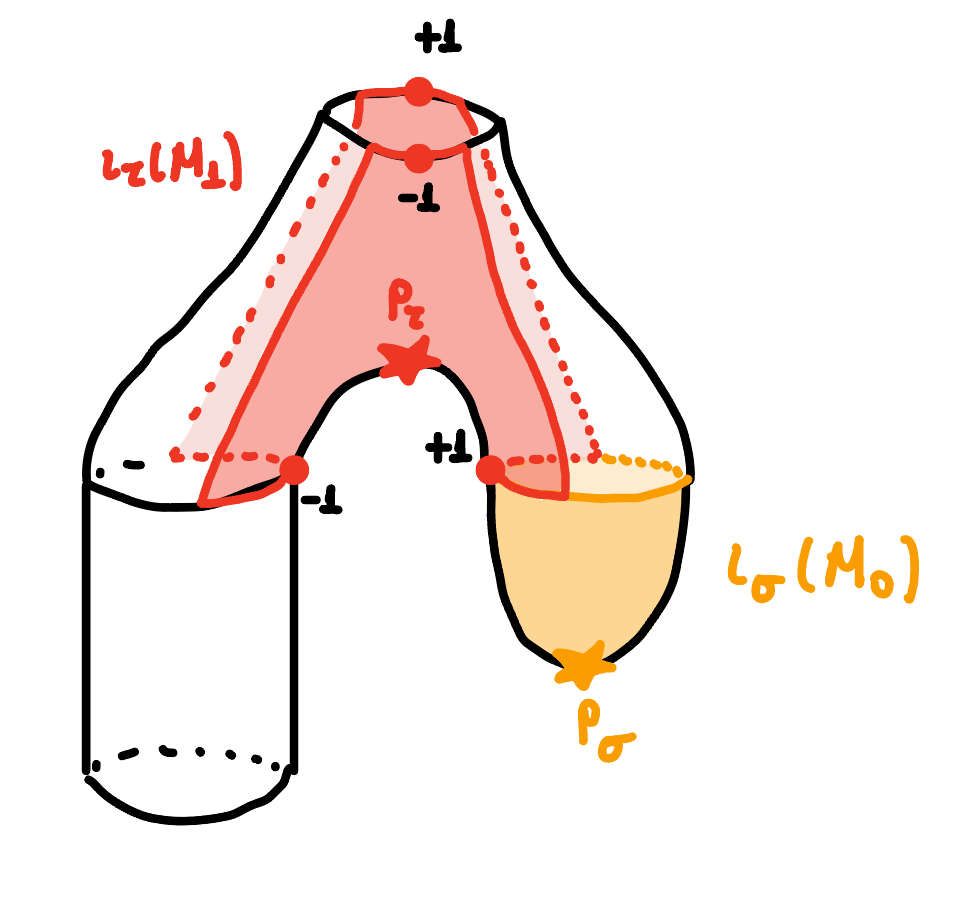}
                \caption{Standard Morse model embeddings in cancelling position.}
                \label{fig:cusp-death-morse-data}
            \end{subfigure}
            \hfill
            \begin{subfigure}[b]{0.48\textwidth}
                \centering
                \includegraphics[width=0.6\linewidth]{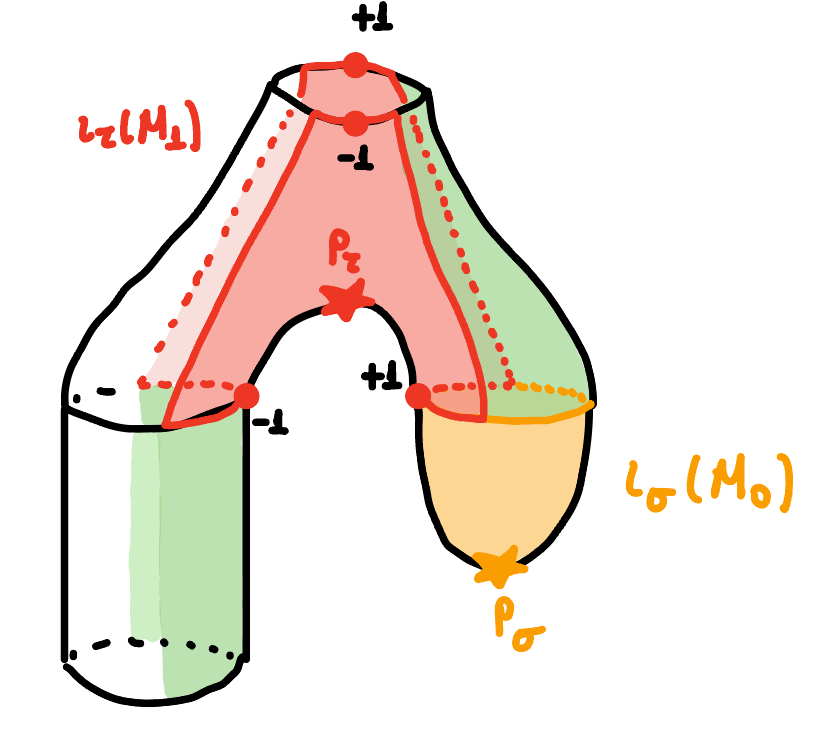}
                \caption{The saturated double neighborhood for a pair of standard Morse models 
            in canceling position.}                             \label{fig:extended_double_neighborhood}
            \end{subfigure}
            \caption{(Saturated) double neighborhoods.}
            \label{fig:both}
        \end{figure}

        The \emph{double neighborhood} of $p_\sigma$ and $p_\tau$ is 
        $\iota_{\sigma}(M_0) \cup \iota_{\tau}(M_1)$. Using the gradient-like vector field, 
        it can be extended to a \emph{saturated double neighborhood} $U$, stretching from the 
        lowest level set $l_0$ to the highest level set $l_1$ of $W$; see 
        Figure~\ref{fig:extended_double_neighborhood} for a visual representation.

        The diffeomorphism induced by the cusp-birth $$\Phi:=\BRF(\Cb(\sigma,\tau)): \BRF(\phi) \to W$$ is foliation preserving outside ${U}$, and is depicted inside $U$ in Equation \eqref{eq:defUandVcusp}.

        More precisely, there is an \emph{elementary birth path} $\{b_\lambda\}_{0 \leq \lambda \leq 1}$ 
        supported inside $U$, agreeing with the original Morse function on $W \setminus U$, 
        such that $b_0$ has no critical points, $b_{1/2}$ has a single cusp critical point, 
        and $b_1$ is the original Morse function on $W$. The saturated double neighborhood 
        carries a canonical Riemannian metric, which together with the original gradient-like 
        vector field on $W \setminus U$ yields a gradient-like vector field for 
        $b_0$ on all of $W$. Integrating this vector field induces the diffeomorphism 
        $\phi: A \xrightarrow{\sim} \Gamma$ and a diffeomorphism $\Phi$ from the mapping 
        cylinder of $\phi$ to $W$. This is the diffeomorphism the bordism realization functor 
        assigns to the cusp-birth $\Cb(\sigma, \tau)$. We refer the reader to Chapter~1.6 
        of~\cite{Filippos} for further details.


        \item Crossings:
        The bordism realization functor sends the source of the crossing to the bordism 
        \[  W := W(A, B, \sigma) \cup W(B, \Gamma, \rho)\]
        equipped with a Morse function with two critical points $p_{\sigma}$ and $p_{\rho}$, standard Morse model embeddings $\iota_{\sigma}: M_0 \rightarrow W$ and $\iota_{\rho}: M_1 \rightarrow W$ 
        adapted to $p_{\sigma}$ and $p_{\rho}$, and 
        a gradient-like vector field. It sends the target to 
        \[W' := W(A, B', 
        \tilde{\rho}) \cup W(B', \Gamma, \tilde{\sigma}),\]
        equipped with the same kind of extra structure. Similarly to the treatment of the cusps, the \emph{double neighborhood} of $p_\sigma$ and $p_\rho$ is 
        $\iota_{\sigma}(M_0) \cup \iota_{\rho}(M_1)$. Using the gradient-like vector field, 
        it can be extended to a \emph{saturated double neighborhood} $U$, stretching from the 
        lowest level set $l_0$ to the highest level set $l_1$ of $W$. In contrast to the cusp, the saturated double neighborhood $U$ consists of the disjoint union of two standard Morse model embeddings. 

        The diffeomorphism induced by the crossing
        \[\Phi:=\BRF(\XX(\sigma,\rho,\sigma',\rho')):W \to W' \]
        is foliation-preserving outside of $U$. If either $\sigma$ or $\rho$ has index $0$ or $2$, then the crossing can be 
        realized by a foliation-preserving diffeomorphism everywhere; if both have index $1$, the diffeomorphism $\Phi$ inside $U$ is depicted in 
        Equation~\eqref{eq:defUandVcrossing}.
        
        More precisely, there is an \emph{elementary 
        crossing path} $\{c_\lambda\}_{\lambda 
        \in [0,1]}$ on $W$ supported inside $U$, agreeing with the original Morse function on $W\setminus U$ that interchanges the critical levels of the two critical points in $U$. The saturated double neighborhood 
        carries a canonical Riemannian metric, which together with the original gradient-like 
        vector field on $W \setminus U$ yields a gradient-like vector field for 
        $c_1$ on all of $W$, and hence new Morse data for $W$. This new Morse data determines the surgery diagram depicted in the target of the crossing $2$-morphism and the diffeomorphism $\Phi$ from $W$ to $W'$. We refer the reader to Chapter~1.6 
        of~\cite{Filippos} for further details.   
    \end{itemize}
   
\end{description}

\subsection{Assignment on generators}\label{subsec:generators}
\subsubsection{Foliated generators}
\begin{itemize}
    \item \textbf{Generating objects}\newline
     The functor $\SN_{\II}^{\alpha}$ assigns to a connected oriented 1-manifold $\Gamma$ the string net category $\SN_\II(\Gamma)$ of Definition~\ref{def:SN_on_1-mflds}.

    \item \textbf{Generating $1$-morphisms}\newline
    For both kinds of generating $1$-cells, the bordism realization functor $\BRF: \on{FSur}_{12\sim}^{\on{or}} \rightarrow \Bord_{12\sim}^{\on{or}}$ sends them to bordisms equipped with a Morse function and hence with an induced Morse foliation. 

    We set $$\orSN_\II^\alpha(\phi) = \SN_\II^\alpha(\BRF(\phi)) \text{ and } \orSN_\II^\alpha(\sigma) = \SN_\II^\alpha(\BRF(\sigma))\ .$$
    Note that the source and target of generating 1-morphisms are words in generating objects, i.e. formal disjoint union of connected 1-manifolds, and map under our assignment $\orSN_\II^\alpha$ to the formal tensor product of the string net categories.
    If $A = \Gamma_1 \sqcup \cdots \sqcup \Gamma_n$ is not connected, we implicitly use the isomorphism of Theorem \ref{thm:foliated_tqft}:
    $$\orSN_\II^\alpha(A) := \SN_\II^\alpha(\Gamma_1) \otimes \cdots\otimes \SN_\II^\alpha(\Gamma_1) \simeq  \SN_\II^\alpha( \Gamma_1 \sqcup \cdots \sqcup \Gamma_n)\ .$$

\item \textbf{Foliation-preserving generating 2-morphisms}\newline
All these generators induce under $\BRF$ a diffeomorphism preserving the Morse functions, and hence the Morse singular foliations. 
We set $$\orSN_\II^\alpha(D) = \SN_\II^\alpha(\BRF(D))\ .$$
Again, the source and target of $D$ are formal sentences in generating 1-morphisms, and we identify their image under our assignment $\orSN_\II^\alpha$ with the string net module of the total surface using the isomorphisms of Theorem \ref{thm:foliated_tqft}.
\end{itemize}


\subsubsection{Cusps}
We now turn to the assignment of $\SN_{\II}^{\alpha}$ on the cusp generating $2$-morphisms. Let us treat in detail the cancellation of $0$- and $1$-handles; the cancellation 
of $1$- and $2$-handles is very similar and can be obtained by reversing the orientation and trading $\alpha$ for $\alpha^{-1}$. 

Let $\sigma$ and $\tau$ be a canceling pair of 0 and 1 handles, and $\Phi:=\BRF(\Cb(\sigma,\tau)):\BRF(\phi){\to} W$ the diffeomorphism induced by the cusp. We use the notations of the cusp paragraph in Section \ref{subsubsec:bord_realization}, in particular $U$ denotes the saturated double neighborhood.

The diffeomorphism $\Phi$ 
is foliation-preserving outside ${U}$, and even outside a slightly smaller neighborhood $V$, so we may focus our attention there. We denote $V' = \Phi^{-1}(V) \subseteq  U' = \Phi^{-1}(U) \subseteq \BRF(\phi)$. The Morse singular foliation on ${U}$ and $U'$ are depicted as follows:
\begin{equation}\label{eq:defUandVcusp}
    \begin{tikzpicture}[xscale = 0.6, yscale = -0.6, baseline = 0.5cm]
    \foreach \y in {0,...,12}{
    \draw[gray!70] (-3,-4.4+0.51*\y) -- ++(6,0);
    }
    \draw[gray!70] (-3,2.3) --++(6,0) node[right]{$U'$};
    \draw[red!70!black] (-2,-3.6) rectangle (2,1.5) node[right]{$V'$}; 
    \draw[gray, ->] (3.2,-2.5) --++(0,-0.5) node[midway,right = -2pt]{\small $\vec{y}$};
    \end{tikzpicture}
\overset{\Phi}{\simeq}
\quad
    \begin{tikzpicture}[xscale = 0.6, yscale = -0.6, baseline = 0.5cm]
    \tikzmath{\gap=1;}
    \def\sadlines{2}
    \draw[smooth, gray!70] plot coordinates {(-2,1) (-1.5,0.88) (0,0) (1,-\gap) (0,-\gap-1) (-1,-\gap) (0,0)(1.5,0.88)(2,1)};
    \draw[ gray!70] (2,1) --++(1,0);
    \draw[ gray!70] (-3,1) --++(1,0);
    \foreach \y in {0,...,\sadlines}{
    \tikzmath{\step=\y/(\sadlines+1);}
    \draw[ gray!70] (-3,1.5-\step/2) --++(1,0);
    \draw[smooth, gray!70] plot coordinates {(-2,1.5-\step/2) (-1.5,1.5-0.65*\step) (0,1.5-1.5*\step) (1.5,1.5-0.65*\step) (2,1.5-\step/2)} -- ++(1,0); 
    }

    \draw[gray!70] (-3,-2.6-\gap) -- (3,-2.6-\gap);
    \draw[smooth, gray!70] plot coordinates {(-2,-1.8-\gap) (-1.5, -2.9) (0,-2.4-\gap) (1.5, -2.9)(2,-1.8-\gap)} --++(1,0);
     \draw[smooth, gray!70] plot coordinates {(-2,-1-\gap)(-1.5, -2.14) (0,-2.2-\gap)(1.5, -2.14) (2,-1-\gap)} --++(1,0);
    \draw[smooth, gray!70] plot coordinates {(-2,-0.3-\gap)(-1.5, -1.52)  (0,-2-\gap)(1.5, -1.52) (2,-0.3-\gap)}--++(1,0);
    \draw[ gray!70] (-3,-1.8-\gap) --++(1,0);
    \draw[ gray!70] (-3,-1-\gap) --++(1,0);
    \draw[ gray!70] (-3,-0.3-\gap) --++(1,0);

    \foreach \y in {1,...,\sadlines}{
    \tikzmath{\step=\gap/(\sadlines+1);}
    \tikzmath{\x=2*\y/(\sadlines+1);}
    \draw[ gray!70] (-3,1-\y*\step-0.5*\step) --++(1,0);
    \draw[smooth, gray!70] plot coordinates {(-2,1-\y*\step-0.5*\step)(-1.5,1-0.95*\y*\step-0.85*\step) (-\x,-0.15*\gap*\x*\x) (-1-0.45*\x,-\gap) (0,-\gap-1 - 0.5*\x) (1+0.45*\x,-\gap) (\x,-0.15*\gap*\x*\x) (1.5, 1-0.95*\y*\step-0.85*\step) (2,1-\y*\step-0.5*\step)} --++(1,0); 
    }
   
    \foreach \y in {1,...,\sadlines}{
    \tikzmath{\step=\y/(\sadlines+1);}
    \draw[gray!70] plot[smooth cycle] coordinates {(-1+\step,-\gap) (0,-\gap*\step) (1-\step,-\gap) (0,-\gap-1+\step)};
    }
    
    \node[star,  star point ratio=2.25, fill=black, inner sep = 1pt] at (0,0) {};
    \node[star,  star point ratio=2.25, fill=black, inner sep = 1pt] at (0,-\gap) {};

    \draw[gray!70] (-3,1.9) --++(6,0);
    \draw[gray!70] (-3,2.3) --++(6,0) node[right]{$U$};
    \draw[gray!70] (-3,-4) --++(6,0);
    \draw[gray!70] (-3,-4.4) --++(6,0);

    \draw[red!70!black] (-2,-3.6) rectangle (2,1.5) node[right]{$V$};
    \end{tikzpicture}
\end{equation}

\begin{proposition}
    There exists a unique natural transformation
    $$\orSN_\II^\alpha(\Cb(\sigma,\tau)) : \SN_\II^\alpha(\BRF(\phi)) \to \SN_\II^\alpha(W)$$
    which maps a string net $T \subseteq \BRF(\phi)$ which is disjoint from $V'$ to $\Phi(T)\sqcup \Theta$ where $\Theta$ is the string net inside $V$ given by:
    $$\Theta:= \begin{tikzpicture}[xscale = 0.6, yscale = -0.6, baseline = 0.5cm]
        \cusp
        \draw[thick] (0,-\gap) -- (0,0)  node[pos=0.5,sloped]{\footnotesize $<$} node[right,pos=0.5]{$\alpha$};
    \end{tikzpicture} $$
\end{proposition}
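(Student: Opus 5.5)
Uniqueness and existence are handled separately; the main work is well-definedness.

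\textbf{Uniqueness.} Every string net in $\BRF(\phi)$ is equivalent to one disjoint from $V'$. The region $V'$ is a disk with the standard non-singular foliation, sitting inside $U'$. So for any string net $T$ we apply a progressive isotopy (the analogue of the dragging argument used for surjectivity of $\iota$ in Proposition \ref{prop:excision}) that pushes every strand and vertex out of $V'$. Where needed we insert evaluation/coevaluation coupons so the strands can be rerouted around $V'$ through $U'\smallsetminus V'$. Admissibility is unaffected, because no $\II$-colored edge is deleted. Hence the subspace spanned by string nets disjoint from $V'$ is everything, and a linear map is determined by its values there.

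\textbf{Existence.} I would use the excision Proposition \ref{prop:excision}, or more concretely a relative version of the ``disjoint from an arc'' arguments of Propositions \ref{prop:SNonD0} and \ref{prop:SNonD1}. Let $\SN^\alpha_\II(\BRF(\phi),V')$ be the space of string nets disjoint from $V'$, modulo isotopies and relations supported away from $V'$. Define the map on this space by $T\mapsto \Phi(T)\sqcup\Theta$. This is well defined there, because $\Phi$ is foliation-preserving outside $V'$ and $\Theta$ is a legitimate twisted string net: its single $\alpha$ strand runs from the index $0$ point (outgoing, $\alpha$) to the index $1$ point (incoming, $\alpha$), and it is progressive.

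It then remains to show that the inclusion $\SN^\alpha_\II(\BRF(\phi),V')\to \SN^\alpha_\II(\BRF(\phi))$ is an isomorphism. Since there are no singularities in $V'$, this amounts to checking that two pushings-off differ by moves supported near $\partial V'$. A generic isotopy crossing $V'$ decomposes into the following elementary events:
\begin{itemize}
\item a strand sweeping across $V'$;
\item a vertex passing through $V'$;
\item a local skein relation happening inside $V'$.
\end{itemize}

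\textbf{Main step.} For each elementary event, I must show that the two results agree after adding $\Theta$ in $W$. This is the key computation, a "sweeping a strand across the cusp region" lemma. In $W$, the strand after the move crosses the region containing the index $0$ and index $1$ singularities joined by $\Theta$. I would first prove the lemma: a strand passing just below the pair, together with $\Theta$, is equivalent to the same strand passing just above it. To do so:
\begin{enumerate}
\item Slide the strand over the index $0$ singularity using \eqref{eq:skein-slide-0}, which produces a $\apiv$ coupon on the $\alpha$ strand of $\Theta$.
\item Slide it over the saddle using \eqref{eq:skein-slide-1}, which produces the mate coupon.
\item Cancel the two coupons using naturality \eqref{eq:pivNatGraphically} and the snake relations, or \eqref{eq:otherTwist} if a full turn appears.
\end{enumerate}
This shows that pushing a strand through $V'$ on either side gives the same image.

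Vertices are handled like the vertex-crossing move in Proposition \ref{prop:SNonD0}, via \eqref{eq:pivMonoidalGraphically} and \eqref{eq:pivNatEval}. Skein relations commute with the construction by locality.

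Finally, naturality in the boundary labels is immediate, since the bimodule actions are supported in collars disjoint from $V$. I expect the main obstacle to be the strand-sweep lemma: keeping track of the orientations and of which side of the saddle the strand exits requires the saddle relation \eqref{eq:saddle-rel} to reposition the $\alpha$ strand before sliding.
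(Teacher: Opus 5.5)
Your strand-sweep lemma is the computation the paper itself uses. The paper fixes a point $p\in V'$ and pushes any string net avoiding $p$ canonically out of $V'$. It then shows that the two ways of pushing a strand off $p$ agree; in $W$ these correspond to the strand passing on one side or the other of the two critical points joined by $\Theta$. The argument slides the strand over the index $0$ point with \eqref{eq:skein-slide-0} and then over the saddle with \eqref{eq:skein-slide-1}. The coupon created by the first slide is consumed by the second.

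The reduction you state before this step is false, however. The inclusion $\SN^\alpha_\II(\BRF(\phi),V')\to\SN^\alpha_\II(\BRF(\phi))$ is surjective but in general not injective. Its source is essentially the string net module of $\BRF(\phi)\smallsetminus V'$, a surface with one more boundary circle. There, a strand passing to the left of $V'$ and the same strand passing to its right are in general not related by isotopies and relations supported away from $V'$. For the same reason, two push-offs do not differ by moves supported near $\partial V'$. If the inclusion were an isomorphism, $T\mapsto\Phi(T)\sqcup\Theta$ would descend with no computation in $W$ at all, which already signals the problem.

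What is true, and what your main step actually verifies, has two parts. First, the kernel of this surjection is spanned by the differences produced by elementary moves across $V'$. Second, $T\mapsto\Phi(T)\sqcup\Theta$ takes the same value on both sides of each such move, so it factors through $\SN^\alpha_\II(\BRF(\phi))$. With that correction your argument coincides with the paper's.

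Replacing $V'$ by a point $p$, as the paper does, also makes your vertex case unnecessary. A vertex generically never passes through a fixed point, and skein relations can be isotoped off $p$. So the only move to check is a single edge crossing $p$.
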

\begin{proof}
Choose any point $p\in V'$ and let $T$ be any skein in $\BRF(\phi)$. If $T$ does not intersect $p$, then it can be isotoped into a string net $T'$ disjoint from $V'$ in a canonical way, and we set $\orSN_\II^\alpha(\Cb(\sigma,\tau))(T) := \Phi(T')\sqcup \Theta$. Remember that $\Phi$ preserves the foliation except inside $V'$, so indeed $\Phi(T')\sqcup \Theta$ is a progressive string net in $W$ with the same boundary labels as $T$. 

If $T$ does intersect $p$, then we can isotope it slightly to be disjoint from $p$, and apply the procedure above. There are two choices for how to isotope $T$ away from $p$, which give respectively: 
\begin{equation*}
    \begin{tikzpicture}[xscale = 0.6, yscale = -0.6, baseline = 0.5cm]
    \cusp
    \draw[thick] (0,-\gap) -- (0,0)  node[pos=0.5,sloped]{\footnotesize $<$} node[right,pos=0.5]{$\alpha$};
    \draw[black,thick,sloped] (0,1.5) to[out=-90,in=90] (1.8,-0.8*\gap) to[out=-90,in=90,looseness=0.7] node[pos=0,sloped]{\footnotesize $<$}  (0,-\gap-2.6);
    \end{tikzpicture}
    \ \sim \
    \begin{tikzpicture}[xscale = 0.6, yscale = -0.6, baseline = 0.5cm]
    \cusp
    \draw[thick] (0,-\gap) -- (0,0)  node[pos=0.5,sloped]{\footnotesize $<$} node[right,pos=0.5]{$\alpha$};
    \draw[black,thick] (0,1.5) to[out=-90,in=120] node[pos=1]{\footnotesize $\bullet$} node[pos=0.5, rotate = 45]{\footnotesize $>$}  (1.2,-0.2*\gap) to[out=-150,in=30] node[pos=0.5, rotate = -60]{\footnotesize $>$} node[pos=1,sloped]{$\bullet$} (0.5,-\gap) to[out=-60,in=90] node[pos=0.5, rotate = 120]{\footnotesize $>$}  (0,-\gap-2.6);
    \end{tikzpicture}
    \ \sim \
    \begin{tikzpicture}[xscale = 0.6, yscale = -0.6, baseline = 0.5cm]
    \cusp
    \draw[thick] (0,-\gap) -- (0,0)  node[pos=0.3,sloped]{\footnotesize $<$} node[right,pos=0.3]{$\alpha$};
    \draw[black,thick] (0,1.5) to[out=-90,in=120] node[pos=1]{\footnotesize $\bullet$} node[pos=0.5, rotate = 50]{\footnotesize $>$}  (1.2,-0.2*\gap) to[out=-180,in=80] node[pos=0.6,draw, rectangle, fill=white, inner sep = 3pt, rotate = -10]{} node[pos=1,sloped]{$\bullet$} (-0.4,-1.2) to[out=-150,in=90] node[pos=0.5,rotate = 70]{\footnotesize $>$}  (0,-\gap-2.6);
    \end{tikzpicture}
    \ \sim \
    \begin{tikzpicture}[xscale = 0.6, yscale = -0.6, baseline = 0.5cm]
    \cusp
    \draw[thick] (0,-\gap) -- (0,0)  node[pos=0.5,sloped]{\footnotesize $<$} node[right,pos=0.5]{$\alpha$};
    \draw[black,thick] (0,1.5) to[out=-90,in=60] node[pos=1]{\footnotesize $\bullet$} node[pos=0.5, rotate = 135]{\footnotesize $>$}  (-1.2,-0.2*\gap) to[out=-30,in=120] node[pos=0.5,rotate = 55]{\footnotesize $<$} node[pos=1,sloped]{$\bullet$} (-0.4,-\gap) to[out=-150,in=90] node[pos=0.5,rotate = 70]{\footnotesize $>$}  (0,-\gap-2.6);
    \end{tikzpicture}
    \ \sim \
     \begin{tikzpicture}[xscale = 0.6, yscale = -0.6, baseline = 0.5cm]
     \cusp
    \draw[thick] (0,-\gap) -- (0,0)  node[pos=0.5,sloped]{\footnotesize $<$} node[right,pos=0.5]{$\alpha$};
    \draw[black,thick,sloped] (0,1.5) to[out=-90,in=90] (-1.8,-0.8*\gap) to[out=-90,in=90,looseness=0.7] node[pos=0,sloped]{$<$}  (0,-\gap-2.6);
    \end{tikzpicture}
\end{equation*}
hence the map is well-defined. Naturality is clear as this procedure happens away from the boundary.
\end{proof}

\begin{definition}
    A string net $T \subseteq W$ is said to be \textit{in canceling position} if $T\cap V$ is equal to $\Xi \subseteq V$ where
    $$\Xi := \begin{tikzpicture}[xscale = 0.6, yscale = -0.6, baseline = 0.5cm]
    \cusp
    \draw[thick] (0,1.5) -- (0,0)  node[pos=0.5,sloped]{\footnotesize $<$} node[right,pos=0.5]{$\alpha$};
    \draw[thick] (0,-\gap) -- (0,-\gap-2.6)  node[pos=0.5,sloped]{\footnotesize $<$} node[right,pos=0.5]{$\alpha$};
    \end{tikzpicture}$$
    In this case, we denote $T_{\smallsetminus V} = T\cap (W\smallsetminus V)$, so that $T = T_{\smallsetminus V}\cup \Xi$.
\end{definition}

\begin{proposition}
    There exists a unique natural transformation
    $$\orSN_\II^\alpha(\Cd(\sigma,\tau)) :\SN_\II^\alpha(W) \to \SN_\II^\alpha(\BRF(\phi))$$
    which maps a string net $T \subseteq W$ in canceling position to the string net $\Phi^{-1}(T_{\smallsetminus V}) \cup L  \subseteq \BRF(\phi)$ where $L\subseteq V'$ is the $\alpha$-colored line shown below:
    $$T_{\smallsetminus V}\cup\begin{tikzpicture}[xscale = 0.6, yscale = -0.6, baseline = 0.5cm]
    \cusp
    \draw[thick] (0,1.5) -- (0,0)  node[pos=0.5,sloped]{\footnotesize $<$} node[right,pos=0.5]{$\alpha$};
    \draw[thick] (0,-\gap) -- (0,-\gap-2.6)  node[pos=0.5,sloped]{\footnotesize $<$} node[right,pos=0.5]{$\alpha$};
    \end{tikzpicture}
    \longmapsto \Phi^{-1}(T_{\smallsetminus V})\cup     \begin{tikzpicture}[xscale = 0.6, yscale = -0.6, baseline = 0.5cm]    
    \tikzmath{\gap=1.4;}
    \draw[thick] (0,1.5) -- (0,-\gap-2.6)  node[pos=0.5,sloped]{\footnotesize $<$} node[right,pos=0.5]{$\alpha$};
    \foreach \y in {0,...,10}{
    \tikzmath{\h=(1.5+2.6+\gap)*\y/10;}
    \draw[gray!70] (-2,1.5 - \h) -- (2,1.5 - \h);
    }
    \end{tikzpicture}$$
\end{proposition}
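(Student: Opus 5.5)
The plan is to prove two things: every string net in $W$ is equivalent to one in canceling position, and the proposed formula is invariant under the relations among canceling-position representatives. Uniqueness is then automatic, since the canceling-position string nets span $\SN_\II^\alpha(W)$. Naturality in the boundary labels is clear because everything happens inside $V$, away from $\partial W$.

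\textbf{Existence of canceling position.} Let $T\subseteq W$ be a string net. First, use the saddle relation \eqref{eq:saddle-rel} to choose the direction from which the $\alpha$-strand enters the index~1 point $p_\tau$; we take it to come from below, as in $\Xi$. Next, take the two arcs of $\Xi$: the one from $p_\sigma$ upward, and the one from $p_\tau$ down to the bottom of $V$. Clear every other strand off these arcs and out of the disk bounded by the cusp leaves, exactly as in the proofs of Propositions~\ref{prop:SNonD0} and~\ref{prop:SNonD1}. Each intersection point is removed with the sliding relations \eqref{eq:skein-slide-0} and \eqref{eq:skein-slide-1}, after inserting snakes. Any non-progressive dragging is handled with the (co)evaluation trick from the surjectivity part of Proposition~\ref{prop:excision}. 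What remains inside $V$ is a string net in $V$ minus the two arcs, which is a foliated disk without singularities. By progressive isotopy it retracts into a collar of $\partial V$, away from the middle. After this step $T\cap V$ consists of the two $\alpha$-strands of $\Xi$, possibly joined by a coupon $m\in\End(\alpha)$ sitting outside. That coupon can be absorbed into $T_{\smallsetminus V}$.

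\textbf{Well-definedness.} The formula takes $T$ to $\Phi^{-1}(T_{\smallsetminus V})\cup L$. Two canceling-position representatives of the same class differ by a sequence of isotopies, skein relations and sliding relations. By the argument of Proposition~\ref{prop:SNonD0}, we reduce to moves that either happen outside $V$, where the formula clearly commutes with them, or where a strand crosses one of the arcs of $\Xi$. I expect the main obstacle to be this second kind of move, in particular the basic move where a strand passes around the cusp region from one side to the other. Across the two critical points this passage is realized by one index-0 slide \eqref{eq:skein-slide-0} and one index-1 slide \eqref{eq:skein-slide-1}. I will check that the two resulting $\apiv$-coupons cancel, using naturality \eqref{eq:pivNatGraphically}, the evaluation-passing identity \eqref{eq:pivNatEval}, and the eye and merge relations \eqref{eq:alpha-inv-rels}. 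After this cancellation the result equals a strand simply crossing the straight line $L$ in $\BRF(\phi)$. This matches the picture computation carried out for the cusp-birth map, read in reverse.

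The remaining moves are the analogues of \eqref{eq:wrappingalphaaround}, namely a change of the incoming direction at $p_\tau$ and a winding of an $\alpha$-strand around a critical point. These are exactly the twist-relation identities \eqref{eq:otherTwist} already used in Propositions~\ref{prop:SNonD0} and~\ref{prop:SNonD1}. On both sides of the formula they reduce to the identity.
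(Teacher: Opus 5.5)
Your skeleton matches the paper's. Canceling-position nets span, so uniqueness and naturality are free, and everything rests on a normalization procedure and its invariance. However, you build the normalization around the wrong arcs, and this undermines both halves.

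\textbf{The normalization.} The locus to clear is the progressive arc $\gamma$ from $p_\sigma$ to $p_\tau$ inside the loop, not the two arcs of $\Xi$. This $\gamma$ is at once the arc opposite the outgoing $\alpha$-strand at $p_\sigma$ (its role in Proposition~\ref{prop:SNonD0}) and the arc opposite the incoming $\alpha$-strand at $p_\tau$ (its role in Proposition~\ref{prop:SNonD1}).
\begin{itemize}
\item The slides \eqref{eq:skein-slide-0} and \eqref{eq:skein-slide-1} only trade a crossing of $\gamma$ for a $\apiv$-coupon on the upper, resp.\ lower, $\alpha$-strand, or back. So no sequence of slides clears both the $\Xi$-arcs and the loop disk. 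Coupons on the $\alpha$-strands have to be isotoped along those strands out of $V$.
\item Your retraction claim is false as stated. $\Xi\cup\gamma$ is a single arc from the bottom of $V$ to its top. Any piece of $T$ in $V\smallsetminus\Xi$ joining the two sides must therefore cross $\gamma$ and cannot be pushed into a collar of $\partial V$. An example is a cup with vertex between $p_\tau$ and $p_\sigma$ whose arms leave $V$ to the left and right.
\item For the same reason, the moves to control are changes of $T\cap\gamma$, not ``a strand crossing an arc of $\Xi$'', which no isotopy can do.
\end{itemize}
The paper makes the normalization canonical by sliding only over $p_\tau$. After the saddle relation, it embeds a copy of $\D_1$ containing $p_\tau$ and all of $T\cap\gamma$ and applies Proposition~\ref{prop:SNonD1}. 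This handles at once the slides over $p_\tau$ and all isotopies changing $T\cap\gamma$.

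\textbf{The key check.} What remains is invariance under one slide over $p_\sigma$, i.e.\ comparing the two resolutions of a single $\gamma$-crossing. Its outcome is not what you predict.
\begin{itemize}
\item Resolved over $p_\sigma$, the strand carries one $\apiv$-coupon on the upper $\alpha$-strand. Resolved over $p_\tau$, it carries one $\apiv$-coupon on the lower $\alpha$-strand.
\item Push that coupon out of $V$ and apply $\Phi^{-1}$. Both give the strand crossing $L$ once through a $\apiv$-coupon, at different heights, and these agree by a progressive isotopy in $\BRF(\phi)$.
\end{itemize}
Nothing cancels. Neither \eqref{eq:pivNatEval} nor the eye and merge relations \eqref{eq:alpha-inv-rels} are needed here; the latter are what make $\orSN_\II^\alpha(\Cd(\sigma,\tau))$ and $\orSN_\II^\alpha(\Cb(\sigma,\tau))$ inverse in Proposition~\ref{prop:cuspbirth_and_cupsdeath_are_inverses}.

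Your analogy with the cusp-birth computation read in reverse is misleading. There the $\alpha$-strand $\Theta$ lies along $\gamma$, so carrying a strand across the cusp creates a coupon on $\Theta$ with one slide and removes it with the other. In canceling position, $\gamma$ carries no strand.

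The paper also obtains the tangency of the $\alpha$-strand with $\gamma$ at $p_\sigma$ as the case $x=\alpha$ of this same picture.
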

\begin{proof}
Let $T$ be a string net in $W$, which we do not consider up to isotopy, skein and sliding relations for now. We need to put $T$ in canceling position. We consider the progressive arc $\gamma$ joining both critical points as below. A string net disjoint from $\gamma$ can be put in canceling position in a canonical way using the saddle relations \eqref{eq:saddle-rel} and isotopies during which the string net does not intersect $\gamma$. It therefore suffices to make $T$ disjoint from $\gamma$.

In a small-enough neighborhood of the index-0 critical point $p_\sigma$, $T$ consists of a single $\alpha$-colored strand. If this strand is not tangent to $\gamma$ at $p_\sigma$, we can isotope it to be vertical in a canonical way. If it is tangent to $\gamma$ at $p_{\sigma}$, first apply a small isotopy so that it is no longer tangent. Now we can embed a disk $\D_1$ containing the other critical point and all of $T \cap \gamma$ as follows:
\begin{equation*}
    \begin{tikzpicture}[xscale = 0.6, yscale = -0.6, baseline = 0.5cm]
    \fill[gray!30] (-1,1.5) -- (1,1.5) to[out=-90, in = 120] (1.5, 0) to[out = -170, in = 120, looseness = 2] (1.45,-1.1) to [out = -160, in = 10] (0.2,-1.4) arc(0:180:0.2 and 0.3) to [out = 170, in = -20] (-1.45,-1.1) to[out = 60, in = -10, looseness = 2](-1.5, 0)to[out=60, in = -90](-1,1.5);
    \cusp
    \draw[red!70!black] (0,0) --(0,-\gap) node[pos=0.5,sloped]{\footnotesize $>$} node[right,pos=0.5]{$\gamma$} ;
    \draw[very thick] (-1,1.5) -- (1,1.5) to[out=-90, in = 120] (1.5, 0) to[out = -170, in = 120, looseness = 2] (1.45,-1.1) to [out = -160, in = 10] (0.2,-1.4) arc(0:180:0.2 and 0.3) to [out = 170, in = -20] (-1.45,-1.1) to[out = 60, in = -10, looseness = 2](-1.5, 0)to[out=60, in = -90](-1,1.5);
    \end{tikzpicture}
\end{equation*}
Applying Proposition \ref{prop:SNonD1} in this embedded $\D_1$, we can canonically put $T$ in canceling position as desired. Recall that this involves sliding strands intersecting $\gamma$ across the saddle critical point $p_{\tau}$ using the slide relation for a saddle \eqref{eq:skein-slide-1}.

We now have to check that this procedure does not depend on $T$ up to isotopy and slide relations. Sliding over $p_\tau$ is already taken care of in Proposition \ref{prop:SNonD1}, so we need to consider sliding over $p_\sigma$. The string nets before and after such a sliding give the same result by:
\begin{equation*}
    \hskip-3pt\begin{tikzpicture}[xscale = 0.6, yscale = -0.6, baseline = 0.5cm]    
    \tikzmath{\gap=1.4;}
    \draw[thick] (0,1.5) -- (0,-\gap-2.6)  node[pos=0.5,sloped]{\footnotesize $<$} node[right,pos=0.5]{$\alpha$};
    \foreach \y in {0,...,10}{
    \tikzmath{\h=(1.5+2.6+\gap)*\y/10;}
    \draw[gray!70] (-2,1.5 - \h) -- (2,1.5 - \h);
    }
    \draw[thick] (-2,-2.4) to[out = 20, in = -125, in distance = 0.9cm] node[pos=0.1, rotate = -10]{\footnotesize $<$} node[above,pos=0.1]{$x$} (-0.5,-1.6) node{\small $\bullet$} to[out = -70, in = 160] node[pos = 0.3, rectangle, draw, fill=white, thick, inner sep = 3pt]{} (2,-2.4);
    \end{tikzpicture}
    \mapsfrom
    \begin{tikzpicture}[xscale = 0.6, yscale = -0.6, baseline = 0.5cm]
    \cusp
    \draw[thick] (0,1.5) -- (0,0)  node[pos=0.5,sloped]{\footnotesize $<$} node[right,pos=0.5]{$\alpha$};
    \draw[thick] (0,-\gap) -- (0,-\gap-2.6)  node[pos=0.5,sloped]{\footnotesize $<$} node[right,pos=0.5]{$\alpha$};
    \draw[thick] (-2,-2.4) to[out = 20, in = -125, in distance = 0.9cm] node[pos=0.1, rotate = -10]{\footnotesize $<$} node[above,pos=0.1]{$x$} node[pos=0.1, rotate = -10]{\footnotesize $<$} node[above,pos=0.1]{$x$} (-0.5,-1.6) node{\small $\bullet$} to[out = -70, in = 160] node[pos = 0.3, rectangle, draw, fill=white, thick, inner sep = 3pt]{} (2,-2.4);
    \end{tikzpicture}
    \overset{\eqref{eq:skein-slide-0}}{\sim}
    \begin{tikzpicture}[xscale = 0.6, yscale = -0.6, baseline = 0.5cm]
    \cusp
    \draw[thick] (0,1.5) -- (0,0)  node[pos=0.5,sloped]{\footnotesize $<$} node[right,pos=0.5]{$\alpha$};
    \draw[thick] (0,-\gap) -- (0,-\gap-2.6)  node[pos=0.5,sloped]{\footnotesize $<$} node[right,pos=0.5]{$\alpha$};
    \draw[thick] (-2,-2.4) to[out = 20, in = 125] node[pos=0.1, rotate = -40]{\footnotesize $<$} node[above,pos=0.1]{$x$} (0,-1) node{\small $\bullet$} to[out = 55, in = 160] (2,-2.4);
    \end{tikzpicture}
    \overset{\eqref{eq:skein-slide-1}}{\sim}\hskip-5pt
    \begin{tikzpicture}[xscale = 0.6, yscale = -0.6, baseline = 0.5cm]
    \cusp
    \draw[thick] (0,1.5) -- (0,0)  node[pos=0.2,sloped]{\footnotesize $<$} node[right,pos=0.2]{$\alpha$};
    \draw[thick] (0,-\gap) -- (0,-\gap-2.6)  node[pos=0.5,sloped]{\footnotesize $<$} node[right,pos=0.5]{$\alpha$};
    \draw[thick] (-2,-2.4) to[out = 20, in = -115, out distance = 0.2cm] node[pos=0.2, rotate = -65]{\footnotesize $<$} node[right,pos=0.2]{$x$}  (-0.5,1) node{\small $\bullet$} to[out = -35, in = 160, in distance  = 0.2cm, looseness = 2] node[pos = 0.064, rectangle, draw, fill=white, thick, inner sep = 3pt, rotate = 10]{} (2,-2.4);
    \end{tikzpicture}
    \mapsto \hskip-3pt\hskip-3pt
    \begin{tikzpicture}[xscale = 0.6, yscale = -0.6, baseline = 0.5cm]    
    \tikzmath{\gap=1.4;}
    \draw[thick] (0,1.5) -- (0,-\gap-2.6)  node[pos=0.5,sloped]{\footnotesize $<$} node[right,pos=0.5]{$\alpha$};
    \foreach \y in {0,...,10}{
    \tikzmath{\h=(1.5+2.6+\gap)*\y/10;}
    \draw[gray!70] (-2,1.5 - \h) -- (2,1.5 - \h);
    }
    \draw[thick] (-2,-2.4) to[out = 20, in = -115, out distance = 0.2cm] node[pos=0.2, rotate = -65]{\footnotesize $<$} node[right,pos=0.2]{$x$}  (-0.5,1) node{\small $\bullet$} to[out = -35, in = 160, in distance  = 0.2cm, looseness = 2] node[pos = 0.064, rectangle, draw, fill=white, thick, inner sep = 3pt, rotate = 10]{} (2,-2.4);
    \end{tikzpicture}
\end{equation*}
Finally, isotopies that change intersections with $\gamma$ are taken care of in Proposition \ref{prop:SNonD1}, so it remains to check an isotopy along which the $\alpha$-strand at $p_\sigma$ passes through a point at which it is tangent to $\gamma$. This does not affect the result either by a special case of the computation above with $x = \alpha$.
\end{proof}

\begin{proposition}\label{prop:cuspbirth_and_cupsdeath_are_inverses}
    The natural transformations $\orSN_\II^\alpha(\Cd(\sigma,\tau))$ and $\orSN_\II^\alpha(\Cb(\sigma,\tau))$ are inverses to each other.
\end{proposition}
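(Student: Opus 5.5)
Both composites are checked on convenient representatives, which is enough because both maps are well-defined natural transformations by the two preceding propositions.

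\textbf{The composite $\Cd\circ\Cb$.} Take a string net $T\subseteq\BRF(\phi)$. Up to isotopy we may assume $T$ is disjoint from $V'$, since every string net can be pushed off $V'$ (as in the proof of the cusp-birth proposition). By definition, $\orSN_\II^\alpha(\Cb(\sigma,\tau))(T)=\Phi(T)\sqcup\Theta$. To apply the cusp-death map we must put $\Phi(T)\sqcup\Theta$ in canceling position. Inside $V$ the net consists only of the single $\alpha$-strand of $\Theta$ joining $p_\sigma$ to $p_\tau$. This strand meets the arc $\gamma$ only along itself, so the first step is to perturb it off $\gamma$ near $p_\sigma$ and run the procedure of the cusp-death proof.

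Concretely, at $p_\tau$ we use the saddle relation \eqref{eq:saddle-rel}. It rewrites the incoming $\alpha$-strand from below as an $\alpha^{-1}$-strand leaving sideways, capped against an $\alpha$-strand arriving from above. After the isotopy, the strand from $p_\sigma$ is then an $\alpha$-line running out the bottom of $V$, and the strand at $p_\tau$ is an $\alpha$-line entering from the top. The two are joined by a cap/cup pair passing around the side, so $\Phi(T)\sqcup\Theta$ equals $T_{\smallsetminus V}\cup\Xi$ glued with an $\alpha$-colored arc in $W\smallsetminus V$. Outside $V$ the map $\Phi$ is foliation-preserving.

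Applying the cusp-death map replaces $\Xi$ by the vertical line $L$. The result is $T$ together with a closed $\alpha$-strand built from $L$ and the returning arc, i.e.\ a cap followed by a cup on $\alpha$, with possibly one twisted pivotal coupon from the slide. By the eye relation \eqref{eq:alpha-inv-rels} and the twist relation \eqref{eq:otherTwist}, this component evaluates to the empty diagram inside a progressive square. Hence $\Cd\circ\Cb(T)=T$.

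\textbf{The composite $\Cb\circ\Cd$.} Take $T\subseteq W$ in canceling position, $T=T_{\smallsetminus V}\cup\Xi$; every string net is equivalent to one of this form by the cusp-death proposition. Then $\Cd(T)=\Phi^{-1}(T_{\smallsetminus V})\cup L$. This net meets $V'$ in the line $L$, so before applying $\Cb$ we isotope $L$ sideways out of $V'$. This produces an $\alpha$-strand that bends around $V'$, and applying $\Cb$ gives
\[
T_{\smallsetminus V}\cup(\text{detoured }\alpha\text{-strand})\cup\Theta .
\]
It remains to show this equals $T_{\smallsetminus V}\cup\Xi$ in $\SN_\II^\alpha(W)$. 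Near $p_\tau$ we apply the merge relation \eqref{eq:alpha-inv-rels} to the detoured strand and the $\Theta$ strand, then the saddle relation \eqref{eq:saddle-rel}. This reconnects the picture into the top and bottom vertical $\alpha$-segments of $\Xi$, up to a closed $\alpha$-loop around $p_\sigma$ or $p_\tau$, which is removed exactly as in the twist computation \eqref{eq:wrappingalphaaround} and its saddle analogue in the proof of Proposition \ref{prop:SNonD1}.

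\textbf{Main obstacle.} I expect the hardest step to be the local bookkeeping in this second composite. One has to check that the detour direction (left or right of $V'$) gives the same answer; this is already guaranteed by the well-definedness of the $\Cb$ map. One also has to track carefully that the number of pivotal coupons picked up by the slides matches what the twist relation cancels. I would fix the detour on the side where no slide over $p_\sigma$ is needed, so that only \eqref{eq:saddle-rel} and the eye/merge relations appear.
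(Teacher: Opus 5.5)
Your strategy is the paper's: check both composites on representatives. You treat $\Cd\circ\Cb$ by rerouting $\Theta$ into canceling position with \eqref{eq:saddle-rel} and deleting the resulting circle by the eye relation, and $\Cb\circ\Cd$ by the saddle and merge relations.

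The first half is fine up to bookkeeping. The arc joining the two segments of $\Xi$ is $\alpha^{-1}$-colored (running between a cup and a cap), not $\alpha$-colored. No slide over a critical point occurs. The eye relation alone removes the circle, so no pivotal coupon and no use of \eqref{eq:otherTwist} is needed.

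In the second half, the step does not work in the order you state it. The merge relation in \eqref{eq:alpha-inv-rels} rewrites a parallel pair of an $\alpha^{-1}$-strand and an $\alpha$-strand as a cap followed by a cup. But the detoured strand and $\Theta$ are both $\alpha$-colored, so at that point there is nothing to merge.

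The saddle relation must come first. Use \eqref{eq:saddle-rel} to make $\Theta$ enter $p_\tau$ from the lower sector facing the incoming boundary of $V$, instead of from the sector containing $p_\sigma$. This turns $\Theta$ into three pieces:
\begin{itemize}
\item an $\alpha$-strand from $p_\sigma$ to a cap,
\item an $\alpha^{-1}$-strand running up the same side as your detour,
\item a cup feeding $\alpha$ into $p_\tau$.
\end{itemize}
Now the $\alpha^{-1}$-strand is parallel to the detoured $\alpha$-strand, so the merge relation applies. It leaves two zigzags, which straighten by snake relations into exactly the two segments of $\Xi$.

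No closed loop around $p_\sigma$ or $p_\tau$ arises, so you should drop the appeal to \eqref{eq:wrappingalphaaround} and the twist relation. With that reordering, this is precisely the computation carried out in the paper.
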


\begin{proof}
Consider first the composition $\orSN_\II^\alpha(\Cb(\sigma,\tau)) \circ \orSN_\II^\alpha(\Cd(\sigma,\tau)):\orSN_{\II}^{\alpha}(W) \to \orSN_{\II}^{\alpha}(W) $. For a string net $T\subset W$ in canceling position, this map is supported inside $V$, where it is given by
\begin{equation*}
    \begin{tikzpicture}[xscale = 0.6, yscale = -0.6, baseline = 0.5cm]
    \cusp
    \draw[thick] (0,1.5) -- (0,0)  node[pos=0.5,sloped]{\footnotesize $<$} node[right,pos=0.5]{$\alpha$};
    \draw[thick] (0,-\gap) -- (0,-\gap-2.6)  node[pos=0.5,sloped]{\footnotesize $<$} node[right,pos=0.5]{$\alpha$};
    \end{tikzpicture}\mapsto \begin{tikzpicture}[xscale=0.6,yscale=-0.6,baseline=0.5cm] 
    \cusp
    \draw[thick] (0,0) -- (0,-\gap)  node[pos=0.5,sloped]{\footnotesize $>$} node[right,pos=0.5]{$\alpha$};
    \draw[black,thick] (0,1.5) to[out=-90,in=90] (-1.8,-0.8*\gap) to[out=-90,in=90,looseness=0.7] node[pos=0,sloped]{$<$} node[pos=0,right]{$\alpha$} (0,-\gap-2.6);
    \end{tikzpicture}
\end{equation*}
Here, we move the $\alpha$-string arising from the cusp-death to the left before applying the cusp-birth. The merge relation \eqref{eq:alpha-inv-rels} for the invertibility of $\alpha$ gives the key step in the following simplifications:
\begin{equation*}
    \begin{tikzpicture}[xscale=0.6,yscale=-0.6,baseline=0.5cm] 
    \cusp
    \draw[thick] (0,0) -- (0,-\gap)  node[pos=0.5,sloped]{\footnotesize $>$} node[right,pos=0.5]{$\alpha$};
    \draw[black,thick] (0,1.5) to[out=-90,in=90] (-1.8,-0.8*\gap) to[out=-90,in=90,looseness=0.7] node[pos=0,sloped]{$<$} node[pos=0,right]{$\alpha$} (0,-\gap-2.6);
        \end{tikzpicture}
        \overset{\eqref{eq:saddle-rel}}{\sim}
       \begin{tikzpicture}[xscale = 0.6, yscale = 0.6, baseline = 0.5cm]
    \begin{scope}[yscale=-1]
       \cusp  
    \end{scope}
    \tikzmath{\gap=1.4;}
     \draw[thick] (0,0) -- (0,-0.8)  node[pos=0.5,sloped]{\footnotesize $<$} node[right,pos=0.5]{$\alpha$} node[pos=1] {\footnotesize $\bullet$};
    \draw[thick,sloped] (0,-0.8) to[out=135,in=-80] (-1.45,\gap) to[out=100,in=-120] node[pos=0]{\footnotesize $<$} node[right,pos=0,sloped=false]{$\alpha^{-1}$} node[pos=1] {\footnotesize $\bullet$} (-1.1,1+\gap) to[out=-45,in=90] node[right,pos=0.5,sloped=false]{$\alpha$} node[pos=0.5]{\footnotesize $<$} (0,\gap) ;

    \draw[black,thick] (0,-1.5) to[out=90,in=-80,looseness=0.8] (-1.8,0.8) to[out=100,in=-135] node[pos=0.5,sloped]{$>$} node[pos=0.5,left]{$\alpha$}(-1.2,\gap+1.4) to[out=45,in=-90,looseness=0.8]  (0,\gap+2.6);

    \end{tikzpicture}
    \overset{\eqref{eq:alpha-inv-rels}}{\sim}
    \begin{tikzpicture}[xscale = 0.6, yscale = 0.6, baseline = 0.5cm]
    \begin{scope}[yscale=-1]
       \cusp  
    \end{scope}
    \tikzmath{\gap=1.4;}
    \draw[thick] (0,0) to[out=-90,in=90] node[pos=1] {\footnotesize $\bullet$} node[pos=0.5,sloped]{\footnotesize $<$} node[right,pos=0.5]{$\alpha$}  (0,-0.8)  to[out=135,in=-20] node[pos=1] {\footnotesize $\bullet$}  node[pos=0.5,sloped]{\footnotesize $<$} (-0.6,-0.5)   to[out=-80,in=90] node[pos=0.5,sloped]{\footnotesize $<$} node[left,pos=0.5]{$\alpha$} (0,-1.5) ;
    \draw[thick]  (-0.85,\gap) to[out=100,in=-70] node[pos=0] {\footnotesize $\bullet$} node[pos=1] {\footnotesize $\bullet$} node[pos=0.5,sloped]{\footnotesize $>$} (-0.9,0.8+\gap) to[out=0,in=90] node[right,pos=0.5]{$\alpha$} node[pos=0.5,sloped]{\footnotesize $<$} (0,\gap) ;
    \draw[black,thick]  (-0.85,\gap) to[out=150,in=-135] node[pos=0.5,sloped]{$<$} node[pos=0.5,left]{$\alpha$}(-1.2,\gap+1.4) to[out=45,in=-90,looseness=0.8]  (0,\gap+2.6);
    \end{tikzpicture}
    \sim
    \begin{tikzpicture}[xscale = 0.6, yscale = -0.6, baseline = 0.5cm]
    \cusp
    \draw[thick] (0,1.5) -- (0,0)  node[pos=0.5,sloped]{\footnotesize $<$} node[right,pos=0.5]{$\alpha$};
    \draw[thick] (0,-\gap) -- (0,-\gap-2.6)  node[pos=0.5,sloped]{\footnotesize $<$} node[right,pos=0.5]{$\alpha$};
    \end{tikzpicture}
\end{equation*}

Consider now the composition $\orSN_\II^\alpha(\Cd(\sigma,\tau)) \circ \orSN_\II^\alpha(\Cb(\sigma,\tau)): \orSN_{\II}^{\alpha}(\BB(\phi)) \to  \orSN_{\II}^{\alpha}(\BB(\phi))$. First note that the string net $\Theta$ arising from the cusp-birth can be put into canceling position as follows:
\begin{equation*}
    \begin{tikzpicture}[xscale = 0.6, yscale = -0.6, baseline = 0.5cm]
        \cusp 
        \draw[thick] (0,-\gap) -- (0,0)  node[pos=0.5,sloped]{\footnotesize $<$} node[right,pos=0.5]{$\alpha$};
    \end{tikzpicture} \sim
    \begin{tikzpicture}[xscale=0.6,yscale=-0.6,baseline=0.5cm]
    \cusp
    \begin{scope}[yscale=-1]
    \tikzmath{\gap=1.4;}
    \draw[thick] (0,0) -- (0,-0.8)  node[pos=0.5,sloped]{\footnotesize $<$} node[left,pos=0.5]{$\alpha$} node[pos=1] {\footnotesize $\bullet$};
    \draw[thick,sloped] (0,-0.8) to[out=45,in=-90] node[pos=0.5]{\footnotesize $>$} node[right,pos=0.5,sloped=false]{$\alpha^{-1}$} node[pos=1] {\footnotesize $\bullet$} (1.7,\gap+1)  ;
    \draw[thick,sloped] (1.7,1+\gap) to[out=-135,in=90] node[pos=0.5]{\footnotesize $>$} node[below,pos=0.5,sloped=false]{$\alpha$} (0,\gap)   ;
    \end{scope}
    \end{tikzpicture}
\end{equation*}
After applying $\orSN_\II^\alpha(\Cd(\sigma,\tau))$ this is seen to be equivalent to the identity by applying the eye relation \eqref{eq:alpha-inv-rels} for the invertibility of $\alpha$.
\end{proof}

\subsubsection{Crossings}\label{subsubsec:crossings}
Let $\sigma,\rho,\sigma',\rho'$ be two pairs of saddles satisfying the crossing condition. We use the notations of the crossing paragraph in Section \ref{subsubsec:bord_realization}. The diffeomorphism $\Phi:=\BRF(\XX(\sigma,\rho,\sigma',\rho')):W \to W' $ induced by the crossing of two saddles 
is foliation-preserving outside a neighborhood $U$ of the two critical points, and even outside a slightly smaller neighborhood $V$, where is is given by:
\begin{equation}\label{eq:defUandVcrossing}
    \begin{tikzpicture}[baseline = 0pt, scale = 1.34]
        \begin{scope}
        \clip (0,0) circle(1.2);
    \draw[gray!70] (1,1)--(-1,-1);
    \draw[gray!70] (1,-1)--(-1,1);
    \def\sadlines{8}
    \foreach \y in {5,...,\sadlines}{
    \draw[gray!70] ({-\y/(\sadlines+1)},1) .. controls (0,{1.6-1.6*\y/(\sadlines+1)}) .. ({\y/(\sadlines+1)},1);
    \draw[gray!70] ({-\y/(\sadlines+1)},-1) .. controls (0,{-1.6+1.6*\y/(\sadlines+1)}) .. ({\y/(\sadlines+1)},-1);
    \draw[gray!70] (-1,{-\y/(\sadlines+1)}) .. controls ({-1.6+1.6*\y/(\sadlines+1)},0) .. (-1,{\y/(\sadlines+1)});
    \draw[gray!70] (1,{-\y/(\sadlines+1)}) .. controls ({1.6-1.6*\y/(\sadlines+1)},0) .. (1,{\y/(\sadlines+1)});
    }
    \fill[white] (0,0) circle(0.62);
    \draw[gray!70] (21:0.62) to[out = -120, in = 120, looseness = 1] (-21:0.62);
    \draw[gray!70] (201:0.62) to[out = 60, in = -60, looseness = 1] (-201:0.62);
    \draw[gray!70] (33:0.62) to[out = -128, in = 52, looseness = 0.85] (-147:0.62);
    \draw[gray!70] (-33:0.62) to[out = 128, in = -52, looseness = 0.85] (147:0.62);
    \draw[gray!70] (45:0.62) to[out = -135, in = -45] (135:0.62);
    \draw[gray!70] (-45:0.62) to[out = 135, in = 45] (-135:0.62);
    \def\sadlines{8}
    \foreach \y in {7,...,\sadlines}{
    \draw[gray!70] ({-\y/(\sadlines+1)},1) .. controls (0,{1.6-1.6*\y/(\sadlines+1)}) .. ({\y/(\sadlines+1)},1);
    \draw[gray!70] ({-\y/(\sadlines+1)},-1) .. controls (0,{-1.6+1.6*\y/(\sadlines+1)}) .. ({\y/(\sadlines+1)},-1);
    } 
    \end{scope}
    \draw[gray, ->](-0.8,-1)--++(-0.2,0.2) node[midway, below left = -2pt]{\small $\vec{y}$};
    \draw[gray, ->](0.8,-1)--++(0.2,0.2) node[midway, below right = -2pt]{\small $\vec{y}$};
    \node[star,  star point ratio=2.25, fill=black, inner sep = 1pt] at (0,0) {};
    \draw[red!70!black] (0.62,0) arc(0:360:0.62);
\end{tikzpicture}
\hskip-5pt \sqcup \hskip-5pt
    \begin{tikzpicture}[baseline = 0pt, scale = 1.34]
        \begin{scope}[rotate = 90]
        \clip (0,0) circle(1.2);
    \draw[gray!70] (1,1)--(-1,-1);
    \draw[gray!70] (1,-1)--(-1,1);
    \def\sadlines{8}
    \foreach \y in {5,...,\sadlines}{
    \draw[gray!70] ({-\y/(\sadlines+1)},1) .. controls (0,{1.6-1.6*\y/(\sadlines+1)}) .. ({\y/(\sadlines+1)},1);
    \draw[gray!70] ({-\y/(\sadlines+1)},-1) .. controls (0,{-1.6+1.6*\y/(\sadlines+1)}) .. ({\y/(\sadlines+1)},-1);
    \draw[gray!70] (-1,{-\y/(\sadlines+1)}) .. controls ({-1.6+1.6*\y/(\sadlines+1)},0) .. (-1,{\y/(\sadlines+1)});
    \draw[gray!70] (1,{-\y/(\sadlines+1)}) .. controls ({1.6-1.6*\y/(\sadlines+1)},0) .. (1,{\y/(\sadlines+1)});
    }
    \fill[white] (0,0) circle(0.62);
    \draw[gray!70] (21:0.62) to[out = -120, in = 120, looseness = 1] (-21:0.62);
    \draw[gray!70] (201:0.62) to[out = 60, in = -60, looseness = 1] (-201:0.62);
    \draw[gray!70] (33:0.62) to[out = -128, in = 52, looseness = 0.85] (-147:0.62);
    \draw[gray!70] (-33:0.62) to[out = 128, in = -52, looseness = 0.85] (147:0.62);
    \draw[gray!70] (45:0.62) to[out = -135, in = -45] (135:0.62);
    \draw[gray!70] (-45:0.62) to[out = 135, in = 45] (-135:0.62);
    \def\sadlines{8}
    \foreach \y in {7,...,\sadlines}{
    \draw[gray!70] ({-\y/(\sadlines+1)},1) .. controls (0,{1.6-1.6*\y/(\sadlines+1)}) .. ({\y/(\sadlines+1)},1);
    \draw[gray!70] ({-\y/(\sadlines+1)},-1) .. controls (0,{-1.6+1.6*\y/(\sadlines+1)}) .. ({\y/(\sadlines+1)},-1);
    } 
    \end{scope}
    \draw[gray, ->](-0.8,-1)--++(-0.2,0.2) node[midway, below left = -2pt]{\small $\vec{y}$};
    \draw[gray, ->](0.8,-1)--++(0.2,0.2) node[midway, below right = -2pt]{\small $\vec{y}$};
    \node[star,  star point ratio=2.25, fill=black, inner sep = 1pt] at (0,0) {};
    \draw[red!70!black] (0.62,0) arc(0:360:0.62) node[below right]{$V$};
    \node[gray] at (1,0.9) {$U$};
\end{tikzpicture}
\overset{\Phi}{\longrightarrow}
    \begin{tikzpicture}[baseline = 0pt, scale = 1.34]
        \begin{scope}[rotate = 90]
        \clip (0,0) circle(1.2);
    \draw[gray!70] (1,1)--(-1,-1);
    \draw[gray!70] (1,-1)--(-1,1);
    \def\sadlines{8}
    \foreach \y in {5,...,\sadlines}{
    \draw[gray!70] ({-\y/(\sadlines+1)},1) .. controls (0,{1.6-1.6*\y/(\sadlines+1)}) .. ({\y/(\sadlines+1)},1);
    \draw[gray!70] ({-\y/(\sadlines+1)},-1) .. controls (0,{-1.6+1.6*\y/(\sadlines+1)}) .. ({\y/(\sadlines+1)},-1);
    \draw[gray!70] (-1,{-\y/(\sadlines+1)}) .. controls ({-1.6+1.6*\y/(\sadlines+1)},0) .. (-1,{\y/(\sadlines+1)});
    \draw[gray!70] (1,{-\y/(\sadlines+1)}) .. controls ({1.6-1.6*\y/(\sadlines+1)},0) .. (1,{\y/(\sadlines+1)});
    }
    \fill[white] (0,0) circle(0.62);
    \draw[gray!70] (21:0.62) to[out = -120, in = 120, looseness = 1] (-21:0.62);
    \draw[gray!70] (201:0.62) to[out = 60, in = -60, looseness = 1] (-201:0.62);
    \draw[gray!70] (33:0.62) to[out = -128, in = 52, looseness = 0.85] (-147:0.62);
    \draw[gray!70] (-33:0.62) to[out = 128, in = -52, looseness = 0.85] (147:0.62);
    \draw[gray!70] (45:0.62) to[out = -135, in = -45] (135:0.62);
    \draw[gray!70] (-45:0.62) to[out = 135, in = 45] (-135:0.62);
    \def\sadlines{8}
    \foreach \y in {7,...,\sadlines}{
    \draw[gray!70] ({-\y/(\sadlines+1)},1) .. controls (0,{1.6-1.6*\y/(\sadlines+1)}) .. ({\y/(\sadlines+1)},1);
    \draw[gray!70] ({-\y/(\sadlines+1)},-1) .. controls (0,{-1.6+1.6*\y/(\sadlines+1)}) .. ({\y/(\sadlines+1)},-1);
    } 
    \end{scope}
    \draw[gray, ->](-0.8,-1)--++(-0.2,0.2) node[midway, below left = -2pt]{\small $\vec{y}$};
    \draw[gray, ->](0.8,-1)--++(0.2,0.2) node[midway, below right = -2pt]{\small $\vec{y}$};
    \node[star,  star point ratio=2.25, fill=black, inner sep = 1pt] at (0,0) {};
    \draw[red!70!black] (0.62,0) arc(0:360:0.62);
\end{tikzpicture}
\hskip-5pt \sqcup \hskip-5pt
\begin{tikzpicture}[baseline = 0pt, scale = 1.34]
        \begin{scope}
        \clip (0,0) circle(1.2);
    \draw[gray!70] (1,1)--(-1,-1);
    \draw[gray!70] (1,-1)--(-1,1);
    \def\sadlines{8}
    \foreach \y in {5,...,\sadlines}{
    \draw[gray!70] ({-\y/(\sadlines+1)},1) .. controls (0,{1.6-1.6*\y/(\sadlines+1)}) .. ({\y/(\sadlines+1)},1);
    \draw[gray!70] ({-\y/(\sadlines+1)},-1) .. controls (0,{-1.6+1.6*\y/(\sadlines+1)}) .. ({\y/(\sadlines+1)},-1);
    \draw[gray!70] (-1,{-\y/(\sadlines+1)}) .. controls ({-1.6+1.6*\y/(\sadlines+1)},0) .. (-1,{\y/(\sadlines+1)});
    \draw[gray!70] (1,{-\y/(\sadlines+1)}) .. controls ({1.6-1.6*\y/(\sadlines+1)},0) .. (1,{\y/(\sadlines+1)});
    }
    \fill[white] (0,0) circle(0.62);
    \draw[gray!70] (21:0.62) to[out = -120, in = 120, looseness = 1] (-21:0.62);
    \draw[gray!70] (201:0.62) to[out = 60, in = -60, looseness = 1] (-201:0.62);
    \draw[gray!70] (33:0.62) to[out = -128, in = 52, looseness = 0.85] (-147:0.62);
    \draw[gray!70] (-33:0.62) to[out = 128, in = -52, looseness = 0.85] (147:0.62);
    \draw[gray!70] (45:0.62) to[out = -135, in = -45] (135:0.62);
    \draw[gray!70] (-45:0.62) to[out = 135, in = 45] (-135:0.62);
    \def\sadlines{8}
    \foreach \y in {7,...,\sadlines}{
    \draw[gray!70] ({-\y/(\sadlines+1)},1) .. controls (0,{1.6-1.6*\y/(\sadlines+1)}) .. ({\y/(\sadlines+1)},1);
    \draw[gray!70] ({-\y/(\sadlines+1)},-1) .. controls (0,{-1.6+1.6*\y/(\sadlines+1)}) .. ({\y/(\sadlines+1)},-1);
    } 
    \end{scope}
    \draw[gray, ->](-0.8,-1)--++(-0.2,0.2) node[midway, below left = -2pt]{\small $\vec{y}$};
    \draw[gray, ->](0.8,-1)--++(0.2,0.2) node[midway, below right = -2pt]{\small $\vec{y}$};
    \node[star,  star point ratio=2.25, fill=black, inner sep = 1pt] at (0,0) {};
    \draw[red!70!black] (0.62,0) arc(0:360:0.62) node[below right]{$V'$};
    \node[gray] at (1,0.9) {$U'$};
\end{tikzpicture}
\end{equation}

\begin{definition}
    A string net $T \subseteq W$ is said to be \textit{in crossing position} if $T\cap V$ is equal to $\Lambda \subseteq V$ where
    $$\Lambda :=     \begin{tikzpicture}[baseline = -5pt, scale = 1.5]
        \clip (0,0) circle(0.62);
    \draw[gray!70] (21:0.62) to[out = -120, in = 120, looseness = 1] (-21:0.62);
    \draw[gray!70] (201:0.62) to[out = 60, in = -60, looseness = 1] (-201:0.62);
    \draw[gray!70] (33:0.62) to[out = -128, in = 52, looseness = 0.85] (-147:0.62);
    \draw[gray!70] (-33:0.62) to[out = 128, in = -52, looseness = 0.85] (147:0.62);
    \draw[gray!70] (45:0.62) to[out = -135, in = -45] (135:0.62);
    \draw[gray!70] (-45:0.62) to[out = 135, in = 45] (-135:0.62);
    \def\sadlines{8}
    \foreach \y in {7,...,\sadlines}{
    \draw[gray!70] ({-\y/(\sadlines+1)},1) .. controls (0,{1.6-1.6*\y/(\sadlines+1)}) .. ({\y/(\sadlines+1)},1);
    \draw[gray!70] ({-\y/(\sadlines+1)},-1) .. controls (0,{-1.6+1.6*\y/(\sadlines+1)}) .. ({\y/(\sadlines+1)},-1);
    }
    \node[star,  star point ratio=2.25, fill=black, inner sep = 1pt] at (0,0) {};
    \draw[very thick] (0,-0.62) -- (0,0) node[midway, sloped]{\footnotesize $>$} node[midway, right]{$\alpha$};
\end{tikzpicture}
\sqcup
\begin{tikzpicture}[baseline = -5pt, scale = 1.5]
    \begin{scope}[rotate = 90]
        \clip (0,0) circle(0.62);
    \draw[gray!70] (21:0.62) to[out = -120, in = 120, looseness = 1] (-21:0.62);
    \draw[gray!70] (201:0.62) to[out = 60, in = -60, looseness = 1] (-201:0.62);
    \draw[gray!70] (33:0.62) to[out = -128, in = 52, looseness = 0.85] (-147:0.62);
    \draw[gray!70] (-33:0.62) to[out = 128, in = -52, looseness = 0.85] (147:0.62);
    \draw[gray!70] (45:0.62) to[out = -135, in = -45] (135:0.62);
    \draw[gray!70] (-45:0.62) to[out = 135, in = 45] (-135:0.62);
    \def\sadlines{8}
    \foreach \y in {7,...,\sadlines}{
    \draw[gray!70] ({-\y/(\sadlines+1)},1) .. controls (0,{1.6-1.6*\y/(\sadlines+1)}) .. ({\y/(\sadlines+1)},1);
    \draw[gray!70] ({-\y/(\sadlines+1)},-1) .. controls (0,{-1.6+1.6*\y/(\sadlines+1)}) .. ({\y/(\sadlines+1)},-1);
    }        
    \end{scope}
    \node[star,  star point ratio=2.25, fill=black, inner sep = 1pt] at (0,0) {};
    \draw[very thick] (0,-0.62) -- (0,0) node[midway, sloped]{\footnotesize $>$} node[midway, right]{$\alpha$};
\end{tikzpicture}$$
    In this case, we denote $T_{\smallsetminus V} = T\cap (W\smallsetminus V)$, so that $T = T_{\smallsetminus V}\cup \Lambda$.
\end{definition}
\begin{proposition}
    There exists a unique natural transformation
    $$\orSN_\II^\alpha(\XX(\sigma,\tau,\sigma',\tau')) :\SN_\II^\alpha(W) \to \SN_\II^\alpha(W')$$
    which maps a string net $T \subseteq W$ in crossing position to the string net $\Phi(T_{\smallsetminus V}) \cup \Lambda'  \subseteq W'$ where $\Lambda'\subseteq V'$ is the $\alpha$-colored lines shown below:
    $$T_{\smallsetminus V}\cup \left(\begin{tikzpicture}[baseline = -5pt, scale = 1.5]
        \clip (0,0) circle(0.62);
    \draw[gray!70] (21:0.62) to[out = -120, in = 120, looseness = 1] (-21:0.62);
    \draw[gray!70] (201:0.62) to[out = 60, in = -60, looseness = 1] (-201:0.62);
    \draw[gray!70] (33:0.62) to[out = -128, in = 52, looseness = 0.85] (-147:0.62);
    \draw[gray!70] (-33:0.62) to[out = 128, in = -52, looseness = 0.85] (147:0.62);
    \draw[gray!70] (45:0.62) to[out = -135, in = -45] (135:0.62);
    \draw[gray!70] (-45:0.62) to[out = 135, in = 45] (-135:0.62);
    \def\sadlines{8}
    \foreach \y in {7,...,\sadlines}{
    \draw[gray!70] ({-\y/(\sadlines+1)},1) .. controls (0,{1.6-1.6*\y/(\sadlines+1)}) .. ({\y/(\sadlines+1)},1);
    \draw[gray!70] ({-\y/(\sadlines+1)},-1) .. controls (0,{-1.6+1.6*\y/(\sadlines+1)}) .. ({\y/(\sadlines+1)},-1);
    }
    \node[star,  star point ratio=2.25, fill=black, inner sep = 1pt] at (0,0) {};
    \draw[very thick] (0,-0.62) -- (0,0) node[midway, sloped]{\footnotesize $>$} node[midway, right]{$\alpha$};
\end{tikzpicture}
\sqcup
\begin{tikzpicture}[baseline = -5pt, scale = 1.5]
    \begin{scope}[rotate = 90]
        \clip (0,0) circle(0.62);
    \draw[gray!70] (21:0.62) to[out = -120, in = 120, looseness = 1] (-21:0.62);
    \draw[gray!70] (201:0.62) to[out = 60, in = -60, looseness = 1] (-201:0.62);
    \draw[gray!70] (33:0.62) to[out = -128, in = 52, looseness = 0.85] (-147:0.62);
    \draw[gray!70] (-33:0.62) to[out = 128, in = -52, looseness = 0.85] (147:0.62);
    \draw[gray!70] (45:0.62) to[out = -135, in = -45] (135:0.62);
    \draw[gray!70] (-45:0.62) to[out = 135, in = 45] (-135:0.62);
    \def\sadlines{8}
    \foreach \y in {7,...,\sadlines}{
    \draw[gray!70] ({-\y/(\sadlines+1)},1) .. controls (0,{1.6-1.6*\y/(\sadlines+1)}) .. ({\y/(\sadlines+1)},1);
    \draw[gray!70] ({-\y/(\sadlines+1)},-1) .. controls (0,{-1.6+1.6*\y/(\sadlines+1)}) .. ({\y/(\sadlines+1)},-1);
    }        
    \end{scope}
    \node[star,  star point ratio=2.25, fill=black, inner sep = 1pt] at (0,0) {};
    \draw[very thick] (0,-0.62) -- (0,0) node[midway, sloped]{\footnotesize $>$} node[midway, right]{$\alpha$};
\end{tikzpicture}\right)
    \longmapsto \Phi(T_{\smallsetminus V})\cup \left( \begin{tikzpicture}[baseline = -5pt, scale = 1.5]
    \begin{scope}[rotate = 90]
        \clip (0,0) circle(0.62);
    \draw[gray!70] (21:0.62) to[out = -120, in = 120, looseness = 1] (-21:0.62);
    \draw[gray!70] (201:0.62) to[out = 60, in = -60, looseness = 1] (-201:0.62);
    \draw[gray!70] (33:0.62) to[out = -128, in = 52, looseness = 0.85] (-147:0.62);
    \draw[gray!70] (-33:0.62) to[out = 128, in = -52, looseness = 0.85] (147:0.62);
    \draw[gray!70] (45:0.62) to[out = -135, in = -45] (135:0.62);
    \draw[gray!70] (-45:0.62) to[out = 135, in = 45] (-135:0.62);
    \def\sadlines{8}
    \foreach \y in {7,...,\sadlines}{
    \draw[gray!70] ({-\y/(\sadlines+1)},1) .. controls (0,{1.6-1.6*\y/(\sadlines+1)}) .. ({\y/(\sadlines+1)},1);
    \draw[gray!70] ({-\y/(\sadlines+1)},-1) .. controls (0,{-1.6+1.6*\y/(\sadlines+1)}) .. ({\y/(\sadlines+1)},-1);
    }        
    \end{scope}
    \node[star,  star point ratio=2.25, fill=black, inner sep = 1pt] at (0,0) {};
    \draw[very thick] (0,-0.62) -- (0,0) node[midway, sloped]{\footnotesize $>$} node[midway, right]{$\alpha$};
\end{tikzpicture}
\sqcup
\begin{tikzpicture}[baseline = -5pt, scale = 1.5]
        \clip (0,0) circle(0.62);
    \draw[gray!70] (21:0.62) to[out = -120, in = 120, looseness = 1] (-21:0.62);
    \draw[gray!70] (201:0.62) to[out = 60, in = -60, looseness = 1] (-201:0.62);
    \draw[gray!70] (33:0.62) to[out = -128, in = 52, looseness = 0.85] (-147:0.62);
    \draw[gray!70] (-33:0.62) to[out = 128, in = -52, looseness = 0.85] (147:0.62);
    \draw[gray!70] (45:0.62) to[out = -135, in = -45] (135:0.62);
    \draw[gray!70] (-45:0.62) to[out = 135, in = 45] (-135:0.62);
    \def\sadlines{8}
    \foreach \y in {7,...,\sadlines}{
    \draw[gray!70] ({-\y/(\sadlines+1)},1) .. controls (0,{1.6-1.6*\y/(\sadlines+1)}) .. ({\y/(\sadlines+1)},1);
    \draw[gray!70] ({-\y/(\sadlines+1)},-1) .. controls (0,{-1.6+1.6*\y/(\sadlines+1)}) .. ({\y/(\sadlines+1)},-1);
    }
    \node[star,  star point ratio=2.25, fill=black, inner sep = 1pt] at (0,0) {};
    \draw[very thick] (0,-0.62) -- (0,0) node[midway, sloped]{\footnotesize $>$} node[midway, right]{$\alpha$};
\end{tikzpicture}\right)$$
\end{proposition}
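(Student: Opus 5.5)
Uniqueness is immediate once we show that every string net in $W$ is equivalent to one in crossing position. The formula then fixes the value on a spanning set. So the substance of the proof is existence: a canonical procedure putting an arbitrary string net in crossing position, together with a check that the resulting assignment $T\mapsto \Phi(T_{\smallsetminus V})\cup\Lambda'$ respects isotopy, skein and sliding relations. I will follow the cusp-death proposition closely. The key simplification is that $V$ is a disjoint union of two disks $V_\sigma\sqcup V_\rho$, each containing a single saddle. So each can be treated independently with Proposition \ref{prop:SNonD1}.

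\textbf{Putting $T$ in crossing position.} For each saddle $p\in\{p_\sigma,p_\rho\}$, I choose the progressive arc $\gamma_p\subseteq V_p$ from $p$ to $\partial V_p$ used in the proof of Proposition \ref{prop:SNonD1}, namely the arc going out the top of the saddle. Given a generic representative $T$, I make it disjoint from $\gamma_p$ by sliding strands over $p$ with \eqref{eq:skein-slide-1}, as in that proof. I then use \eqref{eq:saddle-rel} to make the $\alpha$-strand at $p$ enter from the bottom. Finally I progressively isotope everything else in $V_p$ out of $V_p$, except that single $\alpha$-strand, retracting it into the bottom region and through $\partial V_p$. The retraction works because a string net in $V_p\smallsetminus\gamma_p$ lives in a standard foliated disk. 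The boundary of $V_p$ is transverse to the foliation except at its tangency points, so to push strands out I may need to add evaluation/coevaluation pairs, exactly as in the surjectivity part of Proposition \ref{prop:excision}. After doing this for both saddles, $T\cap V=\Lambda$.

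\textbf{Well-definedness.} By Proposition \ref{prop:SNonD1}, $\SN^\alpha_\II(V_p,\gamma_p;-)\to\SN^\alpha_\II(V_p;-)$ is an isomorphism. This already handles changes of representative inside each $V_p$: isotopies crossing $\gamma_p$, slides over $p$, and tangency of the $\alpha$-strand to $\gamma_p$. Skein relations and isotopies can be pushed away from $V$, or into a single $V_p$. Outside $V$, $\Phi$ is progressive, so the output $\Phi(T_{\smallsetminus V})$ changes by the corresponding isotopy or skein relation in $W'$.

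\textbf{Main obstacle.} The remaining point is that "crossing position" must be an honest normal form. Two string nets in crossing position that are equivalent in $W$ must have the same image. For this I will use the excision isomorphism of Proposition \ref{prop:excision} in a slightly different form: cut $W$ along $\partial V$ rather than along a leaf. Then $\SN^\alpha(W)$ is a coend over boundary labellings of $\partial V$ of $\SN(W\smallsetminus V)\otimes\SN^\alpha(V)$, and $\SN^\alpha(V_p)\cong\Hom(x,\alpha)$ by Proposition \ref{prop:SNonD1}. By Yoneda, the coend collapses to $\SN(W\smallsetminus V;\ldots,\alpha,\alpha)$, with the $\alpha$ labels placed at the bottoms of $\partial V_\sigma,\partial V_\rho$. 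The same holds for $W'$. Since $\Phi$ is foliation-preserving on $W\smallsetminus V$ and matches the two marked points, it induces the map.

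I expect this step to be the genuine difficulty, since $\partial V$ is not a leaf. I would first handle it by thickening $\partial V$ to a collar, foliated cylindrically after a progressive reparametrization, with the tangency points treated via caps and cups as in Figure \ref{fig:makeCapProgressive}.

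Naturality in the boundary labels is then clear, since everything happens away from $\partial W$.
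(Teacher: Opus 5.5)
\textbf{There is a genuine gap in your ``main obstacle'' step, which is where the real content of the statement lies.}

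You claim that crossing position is an honest normal form, via the collapse $\SN^\alpha_\II(W)\cong\SN_\II(W\smallsetminus V;\dots,\alpha,\alpha)$. This is false in general.

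\emph{Why the Yoneda step fails.} The coend is taken over the string-net category of the circle $\partial V_p$. So co-Yoneda needs $\SN^\alpha_\II(V_p;-)$ to be represented there by the $\alpha$-point $\alpha_b$. Concretely, $\SN^\alpha_\II(V_p;X)$ would have to be the space of string nets in an annular collar of $\partial V_p$ from $X$ to $\alpha_b$. Proposition~\ref{prop:SNonD1} gives $\Hom_\BB(x,\alpha)$ instead, and this is only a \emph{quotient} of that annular space. The collar also contains strands encircling the hole, drawn progressively with ev/coev vertices at the four tangency points of $\partial V_p$. In $V_p$ these are identified with other string nets by sliding over $p$ with \eqref{eq:skein-slide-1}; in the collar nothing identifies them.

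\emph{A concrete counterexample.} Take $\BB=\Vect_{\Zz/2}$ and $\alpha=\unit$. A $\delta_1$-loop around $V_\sigma$ equals the empty string net in $\SN^\alpha_\II(W)$: slide it over $p_\sigma$ and evaluate. It does not equal the empty string net in $\SN_\II(W\smallsetminus V)$, where it encircles a hole.

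\emph{The move your argument misses.} This is exactly the case your well-definedness paragraph does not cover. Consider a strand routed, with a cap and a cup, over the top of $V_p$ outside $V$. Your procedure leaves it unchanged. If it is first pushed into $V_p$ across $\partial V_p$ and straightened, your procedure instead slides it below $p$ through a $\apiv$-coupon on the $\alpha$-strand. The two outputs in $W'$ are related only by a slide at the corresponding saddle of $W'$, which your argument never uses.

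\textbf{What is missing is the observation the paper's proof relies on.} The crossing-position representative of $T$ is determined only up to slides over $p_\sigma,p_\rho$ and the choices made in \eqref{eq:saddle-rel}, viewed from outside $V$. Now $\Phi$ is foliation-preserving near $\partial V$. Each component of $V'$ is again a saddle disk with the $\alpha$-strand entering from below. Hence $\Phi$ carries each such relation to the same relation at the corresponding saddle of $W'$, and the assignment $T\mapsto\Phi(T_{\smallsetminus V})\cup\Lambda'$ descends.

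If you prefer to keep the excision formulation, replace the Yoneda collapse by a natural isomorphism $\SN^\alpha_\II(V_p;-)\cong\SN^\alpha_\II(V'_p;\Phi(-))$ of functors on the circle category, sending $\Lambda$ to $\Lambda'$, where $V'_p$ is the corresponding component of $V'$. Proving this is the same check.

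Separately, excision along $\partial V$ is not available in the paper, and your proposed collar fix cannot work. $\partial V_p$ is transverse to the foliation except at four tangency points, so its induced 2-framing is not cylindrical. No progressive reparametrisation of a collar changes that. You would need the 2-framed circle categories of \cite{KST24} together with a new excision theorem.

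Your uniqueness argument and your procedure for reaching crossing position are fine. The paper reaches crossing position with \eqref{eq:saddle-rel}, isotopy and snake relations alone, without first clearing $\gamma_p$ by slides, but either works.
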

\begin{proof}
Consider a string net $T \subseteq W$. Using \eqref{eq:saddle-rel} we can ensure that the $\alpha$-strands at the critical points come from below. Up to to isotopy and snake relations, we can ensure that $T$ is otherwise disjoint from $V$, hence is in crossing position. 

Note that snake relations might indeed be needed, for example the strand below cannot be isotoped away from $V$ without a snake relation, as it always needs to stay ``above" the foliation line from which it starts.
\begin{equation*}
    \begin{tikzpicture}[baseline = 0pt, scale = 1]
        \begin{scope}
        \clip (0,0) circle(1.2);
    \draw[gray!70] (1,1)--(-1,-1);
    \draw[gray!70] (1,-1)--(-1,1);
    \def\sadlines{8}
    \foreach \y in {5,...,\sadlines}{
    \draw[gray!70] ({-\y/(\sadlines+1)},1) .. controls (0,{1.6-1.6*\y/(\sadlines+1)}) .. ({\y/(\sadlines+1)},1);
    \draw[gray!70] ({-\y/(\sadlines+1)},-1) .. controls (0,{-1.6+1.6*\y/(\sadlines+1)}) .. ({\y/(\sadlines+1)},-1);
    \draw[gray!70] (-1,{-\y/(\sadlines+1)}) .. controls ({-1.6+1.6*\y/(\sadlines+1)},0) .. (-1,{\y/(\sadlines+1)});
    \draw[gray!70] (1,{-\y/(\sadlines+1)}) .. controls ({1.6-1.6*\y/(\sadlines+1)},0) .. (1,{\y/(\sadlines+1)});
    }
    \fill[white] (0,0) circle(0.62);
    \draw[gray!70] (21:0.62) to[out = -120, in = 120, looseness = 1] (-21:0.62);
    \draw[gray!70] (201:0.62) to[out = 60, in = -60, looseness = 1] (-201:0.62);
    \draw[gray!70] (33:0.62) to[out = -128, in = 52, looseness = 0.85] (-147:0.62);
    \draw[gray!70] (-33:0.62) to[out = 128, in = -52, looseness = 0.85] (147:0.62);
    \draw[gray!70] (45:0.62) to[out = -135, in = -45] (135:0.62);
    \draw[gray!70] (-45:0.62) to[out = 135, in = 45] (-135:0.62);
    \def\sadlines{8}
    \foreach \y in {7,...,\sadlines}{
    \draw[gray!70] ({-\y/(\sadlines+1)},1) .. controls (0,{1.6-1.6*\y/(\sadlines+1)}) .. ({\y/(\sadlines+1)},1);
    \draw[gray!70] ({-\y/(\sadlines+1)},-1) .. controls (0,{-1.6+1.6*\y/(\sadlines+1)}) .. ({\y/(\sadlines+1)},-1);
    } 
    \end{scope}
    \draw[gray, ->](-0.8,-1)--++(-0.2,0.2) node[midway, below left = -2pt]{\small $\vec{y}$};
    \draw[gray, ->](0.8,-1)--++(0.2,0.2) node[midway, below right = -2pt]{\small $\vec{y}$};
    \node[star,  star point ratio=2.25, fill=black, inner sep = 1pt] at (0,0) {};
    \draw[red!70!black] (0.62,0) arc(0:360:0.62);
    \draw[thick] (-132:1.2) to[out = 47, in = 180, out distance = 1.2cm]node[pos = 0.8, sloped]{\footnotesize $>$} (-20:1);
\end{tikzpicture}
\sim
    \begin{tikzpicture}[baseline = 0pt, scale = 1]
        \begin{scope}
        \clip (0,0) circle(1.2);
    \draw[gray!70] (1,1)--(-1,-1);
    \draw[gray!70] (1,-1)--(-1,1);
    \def\sadlines{8}
    \foreach \y in {5,...,\sadlines}{
    \draw[gray!70] ({-\y/(\sadlines+1)},1) .. controls (0,{1.6-1.6*\y/(\sadlines+1)}) .. ({\y/(\sadlines+1)},1);
    \draw[gray!70] ({-\y/(\sadlines+1)},-1) .. controls (0,{-1.6+1.6*\y/(\sadlines+1)}) .. ({\y/(\sadlines+1)},-1);
    \draw[gray!70] (-1,{-\y/(\sadlines+1)}) .. controls ({-1.6+1.6*\y/(\sadlines+1)},0) .. (-1,{\y/(\sadlines+1)});
    \draw[gray!70] (1,{-\y/(\sadlines+1)}) .. controls ({1.6-1.6*\y/(\sadlines+1)},0) .. (1,{\y/(\sadlines+1)});
    }
    \fill[white] (0,0) circle(0.62);
    \draw[gray!70] (21:0.62) to[out = -120, in = 120, looseness = 1] (-21:0.62);
    \draw[gray!70] (201:0.62) to[out = 60, in = -60, looseness = 1] (-201:0.62);
    \draw[gray!70] (33:0.62) to[out = -128, in = 52, looseness = 0.85] (-147:0.62);
    \draw[gray!70] (-33:0.62) to[out = 128, in = -52, looseness = 0.85] (147:0.62);
    \draw[gray!70] (45:0.62) to[out = -135, in = -45] (135:0.62);
    \draw[gray!70] (-45:0.62) to[out = 135, in = 45] (-135:0.62);
    \def\sadlines{8}
    \foreach \y in {7,...,\sadlines}{
    \draw[gray!70] ({-\y/(\sadlines+1)},1) .. controls (0,{1.6-1.6*\y/(\sadlines+1)}) .. ({\y/(\sadlines+1)},1);
    \draw[gray!70] ({-\y/(\sadlines+1)},-1) .. controls (0,{-1.6+1.6*\y/(\sadlines+1)}) .. ({\y/(\sadlines+1)},-1);
    } 
    \end{scope}
    \draw[gray, ->](-0.8,-1)--++(-0.2,0.2) node[midway, below left = -2pt]{\small $\vec{y}$};
    \draw[gray, ->](0.8,-1)--++(0.2,0.2) node[midway, below right = -2pt]{\small $\vec{y}$};
    \node[star,  star point ratio=2.25, fill=black, inner sep = 1pt] at (0,0) {};
    \draw[red!70!black] (0.62,0) arc(0:360:0.62);
    \draw[thick] (-132:1.2) to[out = 47, in = -136] (-135:0.7)  node{\small $\bullet$} to[out = -50, in = 140] (0,-0.73) node{\small $\bullet$} to[out = 20, in = 180]   node[pos = 0.8, sloped]{\footnotesize $>$} (-20:1);
\end{tikzpicture}
\end{equation*}

The result only depends on $T$ up to \eqref{eq:skein-slide-1} and does not depend on how we applied \eqref{eq:saddle-rel} because these can be applied on the right hand side too.
\end{proof}

\subsection{Relations and the main result}\label{subsec:relations+result}
We now prove that the assignment $\orSN_{\II}^{\alpha}$ on the generating $2$-morphisms 
of $\on{Sur}_{12\sim}^{\on{or}}$ given in Section~\ref{subsec:generators} determines a 
symmetric monoidal functor
\begin{align*}
    \on{FSur}_{12\sim}^{\on{or}} \to \Bimod^{hop}.
\end{align*}
This amounts to showing that $\orSN_{\II}^{\alpha}$ satisfies the relations of 
$\on{Sur}_{12\sim}^{\on{or}}$ given in Sections~3.2.1 and~3.3 of~\cite{Filippos}. 
There are three kinds of relations in $\on{Sur}_{12\sim}^{\on{or}}$, called the 
$\mathcal{F}^0$-, $\mathcal{F}^1$-, and $\mathcal{F}^2$-relations. 
We check all three below, recalling them when necessary, and thus complete the proof 
of Theorem~\ref{thm:main_thm}.

\subsubsection{$\mathcal{F}^0$-relations}
The $\mathcal{F}^0$-relations, appearing in Sections~3.2.1 and~3.3.1 
of~\cite{Filippos}, are induced by Morse-foliation-preserving isotopies between 
Morse-foliation-preserving diffeomorphisms, and hence are automatically satisfied 
by our assignments.

\subsubsection{$\mathcal{F}^1$-relations}
The $\mathcal{F}^1$-relations are those appearing in Section~3.3.2 of~\cite{Filippos}. 
Using the terminology there, these are the eye, merge-unmerge, Reidemeister~II, 
cusp-isotopy, crossing-isotopy, naturality-beak, naturality-Reidemeister~III, frame change-cusp, and frame change-crossing relations. We recall (some of) these below and verify that our proposed assignment satisfies them.
\begin{itemize}
    \item \textbf{The eye and merge-unmerge relations}

    These express the fact that the cusp-birth $\Cb(\sigma,\tau)$ and the cusp-death $\Cd(\sigma,\tau)$ are inverses of each other:
    \[
    \begin{tikzpicture}[scale=0.45]
    
    
        \begin{scope}[yshift=-10cm]
            \draw[ultra thick] (0,0) -- (0,4);
            \draw[ultra thick] (6,0) -- (6,4);
            \draw[red, ultra thick] (4.5,2) parabola (3.7,1.2);
            \draw[red, ultra thick] (4.5,2) parabola (3.7,2.8);
            \draw[red, ultra thick] (3,3) .. controls (3.5,3) and (3.6,2.9) .. (3.7,2.8);
            \draw[red, ultra thick] (3,1) .. controls (3.5,1) and (3.6,1.1) .. (3.7,1.2);
            \node[red] at (3, 3.4) {$\tau$};
            \node[red] at (3, 0.6) {$\sigma$};
            \draw[blue, ultra thick] (0,2) -- (1.5,2);
            \draw[blue, ultra thick] (4.5,2) -- (6,2);
            \node[blue] at (0.75, 2.6) {$\phi$};
            \node[blue] at (5.25, 2.6) {$\phi$};
            \draw[red, ultra thick] (1.5,2) parabola (2.3,1.2);
            \draw[red, ultra thick] (1.5,2) parabola (2.3,2.8);
            \draw[red, ultra thick] (3,3) .. controls (2.5,3) and (2.4,2.9) .. (2.3,2.8);
            \draw[red, ultra thick] (3,1) .. controls (2.5,1) and (2.4,1.1) .. (2.3,1.2);
        \end{scope}
    
        \node at (7,-8) {$=$};
    
        \begin{scope}[xshift=8cm, yshift=-10cm]
            \draw[ultra thick] (0,0) -- (0,4);
            \draw[ultra thick] (6,0) -- (6,4);
            \draw[blue, ultra thick] (0,2) -- (6,2);
            \node[blue] at (3, 2.5) {$\phi$};
        \end{scope}
    
        \node at (7, -11.5) {\small Eye};
    
    
        \begin{scope}[xshift=17cm, yshift=-10cm]
            \draw[ultra thick] (0,0) -- (0,4);
            \draw[ultra thick] (6,0) -- (6,4);
            \draw[red, ultra thick] (1.5,2) parabola (0.7,1.2);
            \draw[red, ultra thick] (1.5,2) parabola (0.7,2.8);
            \draw[red, ultra thick] (0,3) .. controls (0.5,3) and (0.6,2.9) .. (0.7,2.8);
            \draw[red, ultra thick] (0,1) .. controls (0.5,1) and (0.6,1.1) .. (0.7,1.2);
            \node[red] at (0.6, 0.6) {$\sigma$};
            \node[red] at (0.6, 3.4) {$\tau$};
            \draw[blue, ultra thick] (1.5,2) -- (4.5,2);
            \node[blue] at (3, 2.5) {$\phi$};
            \draw[red, ultra thick] (4.5,2) parabola (5.3,1.2);
            \draw[red, ultra thick] (4.5,2) parabola (5.3,2.8);
            \draw[red, ultra thick] (6,3) .. controls (5.5,3) and (5.4,2.9) .. (5.3,2.8);
            \draw[red, ultra thick] (6,1) .. controls (5.5,1) and (5.4,1.1) .. (5.3,1.2);
            \node[red] at (5.4, 0.6) {$\sigma$};
            \node[red] at (5.4, 3.4) {$\tau$};
        \end{scope}
    
        \node at (24,-8) {$=$};
    
        \begin{scope}[xshift=25cm, yshift=-10cm]
            \draw[ultra thick] (0,0) -- (0,4);
            \draw[ultra thick] (6,0) -- (6,4);
            \draw[red, ultra thick] (0,1) -- (6,1);
            \draw[red, ultra thick] (0,3) -- (6,3);
            \node[red] at (3, 0.6) {$\sigma$};
            \node[red] at (3, 3.4) {$\tau$};
        \end{scope}
    
        \node at (24, -11.5) {\small Merge-unmerge};
    
    \end{tikzpicture}
    \]
    $\orSN_{\II}^{\alpha}$ satisfies these relations because the natural 
    transformations $\orSN_\II^\alpha(\Cd(\sigma,\tau))$ and 
    $\orSN_\II^\alpha(\Cb(\sigma,\tau))$ are inverses of each other, as established 
    in Proposition~\ref{prop:cuspbirth_and_cupsdeath_are_inverses}.

    \item \textbf{The Reidemeister~II relations}

     These express the fact that the crossing $\XX(\sigma,\rho, \sigma', \rho')$ and the crossing $\XX(\rho',\sigma', \rho, \sigma)$ are inverses of each other:
    \[\begin{tikzpicture}[scale=0.5]

    \begin{scope}[yshift = -10cm]
    \draw[ultra thick] (0,0) -- (0,4);
    \draw[ultra thick] (6,0) -- (6,4);

    \draw[red, ultra thick] (0,1) -- (3,3);
    \draw[red, ultra thick] (3,3) -- (6,1);
    \draw[red, ultra thick] (0,3) -- (3,1);
    \draw[red, ultra thick] (3,1) -- (6,3);

    \node[red] at (0.6, 0.6) {$\sigma$};
    \node[red] at (0.6, 3.4) {$\rho$};

    \node[red] at (3, 0.6) {$\rho'$};
    \node[red] at (3, 3.4) {$\sigma'$};

    \node[red] at (5.4, 0.6) {$\sigma$};
    \node[red] at (5.4, 3.4) {$\rho$};

    \end{scope}

    \node at (7,-8) {$=$};

    \begin{scope}[xshift=8cm, yshift = -10cm]
    \draw[ultra thick] (0,0) -- (0,4);
    \draw[ultra thick] (6,0) -- (6,4);
    \draw[red, ultra thick] (0,1) -- (6,1);
    \draw[red, ultra thick] (0,3) -- (6,3);

    \node[red] at (3, 0.6) {$\sigma$};
    \node[red] at (3, 3.4) {$\rho$};
    
    \end{scope}

    \end{tikzpicture}\]
    $\orSN_{\II}^{\alpha}$ satisfies these relations because the natural 
    transformations $\orSN_\II^\alpha(\XX(\sigma,\rho, \sigma', \rho'))$ and 
    $\orSN_\II^\alpha(\XX(\rho',\sigma', \rho, \sigma))$ are inverses of each other, 
    which follows immediately from the construction of the assignment on crossings 
    in Section~\ref{subsubsec:crossings}.
    
    \item \textbf{The cusp-isotopy relations}

    These express the observation that isotopies keeping the surgery spheres in 
    cancelling position can be ``pushed through the cusp'':
    \[\begin{tikzpicture}[scale=0.5]

    \begin{scope}[yshift = -10cm]
    \draw[ultra thick] (0,0) -- (0,4);
    \draw[ultra thick] (6,0) -- (6,4);

     \draw[red, ultra thick] (4,2) parabola (3.2,1.2);
    \draw[red, ultra thick] (4,2) parabola (3.2,2.8);
    \draw[red, ultra thick] (2.5,3) .. controls (3,3) and (3.1,2.9) .. (3.2,2.8);
    \draw[red, ultra thick] (2.5,1) .. controls (3,1) and (3.1,1.1) .. (3.2,1.2);
    
    \node[red] at (3.1, 0.6) {$\tilde\sigma$};
    \node[red] at (3.1, 3.4) {$\tilde\tau$};

    \draw[red, ultra thick] (0, 1) -- (2.5, 1);
    \draw[red, ultra thick] (0, 3) -- (2.5, 3);

    \fill[color=red] (1.75,1) circle (1.5mm);
    \fill[color=red] (1.75,3) circle (1.5mm);

    \node[red] at (1.75, 0.4) {$\theta_{\sigma}$};
    \node[red] at (1.75, 3.6) {$\theta_{\tau}$};

    \node[red] at (0.6, 0.6) {$\sigma$};
    \node[red] at (0.6, 3.4) {$\tau$};

    \draw[blue, ultra thick] (4,2) -- (6,2);

    \node[blue] at (5, 2.5) {$\tilde\phi$};

    \node at (-0.4, 0.25) {$A$};
    \node at (-0.4, 2) {$B$};
    \node at (-0.4, 3.75) {$\Gamma$};

    \node at (6.4, 0.25) {$A$};
    \node at (6.4, 3.75) {$\Gamma$};

    \end{scope}

    \node at (7.4,-8) {$=$};

    \begin{scope}[xshift=8.8cm, yshift = -10cm]
    \draw[ultra thick] (0,0) -- (0,4);
    \draw[ultra thick] (6,0) -- (6,4);
    
     \draw[red, ultra thick] (1.5,2) parabola (0.7,1.2);
    \draw[red, ultra thick] (1.5,2) parabola (0.7,2.8);
    \draw[red, ultra thick] (0,3) .. controls (0.5,3) and (0.6,2.9) .. (0.7,2.8);
    \draw[red, ultra thick] (0,1) .. controls (0.5,1) and (0.6,1.1) .. (0.7,1.2);
    \node[red] at (0.6, 0.6) {$\sigma$};
    \node[red] at (0.6, 3.4) {$\tau$};

    \node at (-0.4, 0.25) {$A$};
    \node at (-0.4, 2) {$B$};
    \node at (-0.4, 3.75) {$\Gamma$};

    \node at (6.4, 0.25) {$A$};
    \node at (6.4, 3.75) {$\Gamma$};

    \draw[blue, ultra thick] (1.5, 2) -- (6, 2);
    \fill[color=blue] (3.75,2) circle (1.5mm);

    \node[blue] at (3.75, 2.5) {$\theta$};
    \node[blue] at (5, 2.5) {$\tilde\phi$};
    \node[blue] at (2.5, 2.5) {$\phi$};

    \end{scope}

    \end{tikzpicture}\]

    Here, $\theta_\sigma$ and $\theta_\tau$ are isotopies of surgery triples with 
    $\theta_\sigma(0) = \sigma$, $\theta_\sigma(1) = \sigma'$, $\theta_\tau(0) = \tau$, 
    and $\theta_\tau(1) = \tau'$, such that $\theta_\sigma(t)$ and $\theta_\tau(t)$ 
    remain in cancelling position for all $t$. We denote their cancelling diffeomorphism 
    at time $t$ by $\theta(t)$, so that in particular $\theta(0) = \phi$ and 
    $\theta(1) = \phi'$.

    Let $W := W(A, B, \sigma) \cup W(B, \Gamma, \tau)$ and 
    $\tilde{W} := W(A, B, \tilde{\sigma}) \cup W(B, \Gamma, \tilde{\tau})$, with 
    saturated double neighborhoods $U \subset W$ and $\tilde{U} \subset \tilde{W}$, 
    respectively. The isotopies $\theta_\sigma$ and $\theta_\tau$ induce a 
    foliation-preserving diffeomorphism $D\colon W \xrightarrow{\sim} \tilde{W}$ 
    sending $U$ to $\tilde{U}$. Denote by $\Phi^{-1}$ and $\tilde{\Phi}^{-1}$ the 
    diffeomorphisms assigned by $\BRF$ to $\Cd(\sigma, \tau)$ and 
    $\Cd(\tilde{\sigma}, \tilde{\tau})$, respectively, and by 
    $\tilde{D}\colon \BRF(\phi) \xrightarrow{\sim} \BRF(\tilde{\phi})$ the 
    diffeomorphism induced by the isotopy $\theta$. The left-hand side of the 
    relation then corresponds to $\tilde{\Phi}^{-1} \circ D$, and the right-hand 
    side to $\tilde{D} \circ \Phi^{-1}$. 
    
    We denote the maps on string nets associated to the left and right hand sides by
    $$F = \orSN^\alpha_\II(\Cd(\tilde\sigma,\tilde\tau)) \circ \orSN^\alpha_\II(D) \quad  \text{ and }\quad G = \orSN^\alpha_\II(\tilde{D}) \circ \orSN^\alpha_\II(\Cd(\sigma,\tau))\ .$$
    To verify the relation, we show that both maps transport any string 
    net $T$ on $W$ in canceling position with respect to $U$ to the same string 
    net on $\tilde{W}$. Since both $\tilde{\Phi}^{-1} \circ D$ and 
    $\tilde{D} \circ \Phi^{-1}$ send $U$ to $\tilde{U}$, it suffices to check this 
    separately on $W \setminus U$ and on $U$.

    On $W \setminus U$, both maps $F$ and $G$ are induced by applying the diffeomorphisms $\tilde{\Phi}^{-1} \circ D$ and $\tilde{D} \circ \Phi^{-1}$ which coincide.

    On $U$, since $D$ and $\tilde{D}$ are foliation-preserving and by the 
    definitions of how the cusp-death acts on string nets, the image of $T$ inside 
    $\tilde{U}$ in both cases consists of a single progressive strand from the lower 
    to the upper boundary of $\tilde{U}$, labeled by $\alpha$ or $\alpha^{-1}$ 
    depending on the type of cancellation. Since $\tilde{U}$ is a disk, this 
    uniquely determines the image of $T$ inside $\tilde{U}$ and completes the 
    verification that $\orSN_{\II}^{\alpha}$ satisfies the cusp-isotopy relations.

    \item \textbf{The remaining $\mathcal{F}^1$-relations}

    The remaining $\mathcal{F}^1$-relations --- namely the crossing-isotopy, 
    naturality-beak, naturality-Reidemeister~III, frame change-cusp, and frame 
    change-crossing relations --- are verified by an argument analogous to that of 
    the cusp-isotopy relation. In each case, the relation is obtained by pre- or 
    post-composing a cusp or crossing generating $2$-morphism with a 
    foliation-preserving diffeomorphism that sends double neighborhoods to double 
    neighborhoods. In the cusp-isotopy and crossing-isotopy cases, this diffeomorphism 
    is induced by diffeo isotopies and surgery isotopies; in the remaining 
    cases, it is induced by diffeo-surgery $2$-morphisms and frame change data. In all cases, the argument verifying that $\orSN_{\II}^{\alpha}$ satisfies the relation is essentially the same.
\end{itemize}

\subsubsection{$\mathcal{F}^2$-relations}
The $\mathcal{F}^2$-relations are those appearing in Section~3.3.3 of~\cite{Filippos}, 
namely the swallowtail, beak, and Reidemeister~III relations. We recall each relation 
by showing the diagrams it involves, omitting labels for readability. In each case, 
the verification that $\orSN^{\alpha}_{\II}$ satisfies the relation follows immediately 
from its construction; we carry out the verification in detail for the swallowtail 
relation and sketch it for the remaining two.

\begin{itemize}
    \item \textbf{The swallowtail relations}

    \[\begin{tikzpicture}[scale=0.5]

    \begin{scope}[yshift = -10cm]
    \draw[ultra thick] (0,0) -- (0,3);
    \draw[ultra thick] (6,0) -- (6,3);

    \draw[red, ultra thick] (4.5,2) parabola (3.7,1.7);
    \draw[red, ultra thick] (4.5,2) parabola (3.7,2.3);
    
    \draw[red, ultra thick] (4.5,2) parabola (3.7,2.3);
    
    \draw[red, ultra thick] (3,2.5).. controls (3.5,2.5) and (3.6,2.4)..(3.7,2.3);
    
    \draw[red, ultra thick] (3,2.5).. controls (2.5, 2.5) and (2.4, 2.4).. (2.3, 2.3);
    
    \draw[red, ultra thick] (1.5, 2) parabola (2.3, 2.3);
    \draw[red, ultra thick] (1.5, 2) parabola (2.3, 1.7);

    \draw[red, ultra thick] (2.3, 1.7).. controls (3.25, 1) and (3.5, 1).. (6,1);
    
    \draw[red, ultra thick] (3.7, 1.7).. controls (2.75, 1) and (2.5,1).. (0,1);


    \draw[blue, ultra thick] (1.5, 2) -- (0, 2);
    \draw[blue, ultra thick] (4.5, 2) -- (6, 2);

    \end{scope}

    \node at (7,-8) {$=$};

    \begin{scope}[xshift=8cm, yshift = -10cm]
    \draw[ultra thick] (0,0) -- (0,3);
    \draw[ultra thick] (6,0) -- (6,3);
    \draw[red, ultra thick] (0,1) -- (6,1);

    \draw[blue, ultra thick] (0, 2) -- (1.5, 1);
    \draw[blue, ultra thick] (6, 2) -- (4.5, 1);
    \end{scope}

    \end{tikzpicture}\]

    Each diagram above represents a family of relations, one for each valid choice 
    of labels. There are also analogous families of relations given by the vertical 
    reflections of the above diagrams.

    Consider first the swallowtail beginning with a zero-handle, introducing a one-handle and zero-handle via a cusp-birth, and then canceling the original zero-handle with the one-handle using a cusp-death. This swallowtail is supported in a region where it is given by the following composition:
    
    \begin{equation*}
        \begin{tikzpicture}[xscale = 2,yscale = 1, baseline = 1cm]
        
        \begin{scope}[yshift=0.7cm]
        \foreach \y in {0,...,10}{
        \draw[gray!70] (-0.5,1.8*\y/10) --++ (1,0);
        }
        \end{scope}
    
         \begin{scope}
         \draw[gray!70] (-0.5,-0.7) -- (0.5,-0.7);
        \clip (-0.5,-0.7) rectangle (0.5,1);
        \foreach \y in {0,...,5}{
        \draw[gray!70] (0,0) circle(\y/8);
        }
        \node[star,  star point ratio=2.25, fill=black, inner sep = 0.5pt] at (0,0) {};
        \draw (0,0) -- (0,-0.7) node[pos = 0.3, sloped]{\footnotesize $>$} node[pos = 0.3, left]{\footnotesize $\alpha$};
        \end{scope}
        \end{tikzpicture}
\hspace{20pt}\mapsto \hspace{20pt}
        \begin{tikzpicture}[xscale = 2,yscale = 1, baseline = 1cm]
    
        \begin{scope}[yshift=1.8cm]
         \draw[gray!70] (-0.5,0.7) -- (0.5,0.7);
            \clip (-0.5,-1) rectangle (0.5,1);
        \foreach \y in {0,...,5}{
        \draw[gray!70] (0,0) circle(\y/8);
        }
        \node[circle, fill=black, inner sep = 1pt] at (0,2) {};
        \node[star,  star point ratio=2.25, fill=black, inner sep = 0.5pt] at (0,0) {};
        \end{scope}

        \draw (0,1.8) -- (0,0.9) node[pos = 0.5, sloped]{\footnotesize $>$} node[pos = 0.5, left]{\footnotesize $\alpha$};

        \begin{scope}[yshift=0.5cm]
        \draw[gray!70] (-0.5,0.8) -- (0.5,0);
        \draw[gray!70] (-0.5,0) -- (0.5,0.8);
        \clip (-0.5,0) rectangle (0.5,0.8);
        \foreach \y in {2,...,3}{
        \draw[gray!70] (-0.7,0.4) circle(\y/7 and \y/10);
        \draw[gray!70] (0.7,0.4) circle(\y/7 and \y/10);
        }   
        \node[star,  star point ratio=2.25, fill=black, inner sep = 0.5pt] at (0,0.4) {};
        \end{scope}
    
         \begin{scope}
         \draw[gray!70] (-0.5,-0.7) -- (0.5,-0.7);
        \clip (-0.5,-0.7) rectangle (0.5,1);
        \foreach \y in {0,...,5}{
        \draw[gray!70] (0,0) circle(\y/8);
        }
        \node[star,  star point ratio=2.25, fill=black, inner sep = 0.5pt] at (0,0) {};
        \draw (0,0) -- (0,-0.7) node[pos = 0.3, sloped]{\footnotesize $>$} node[pos = 0.3, left]{\footnotesize $\alpha$};
        \end{scope}
        \end{tikzpicture}
\hspace{20pt}\mapsto \hspace{20pt}
        \begin{tikzpicture}[xscale = 2,yscale = 1, baseline = 1cm]
        
        \begin{scope}[yshift=-0.7cm]
        \foreach \y in {0,...,10}{
        \draw[gray!70] (-0.5,1.8*\y/10) --++ (1,0);
        }
        \end{scope}
    
         \begin{scope}[yshift=1.8cm]
         \draw[gray!70] (-0.5,0.7) -- (0.5,0.7);
        \clip (-0.5,-0.7) rectangle (0.5,1);
        \foreach \y in {0,...,5}{
        \draw[gray!70] (0,0) circle(\y/8);
        }
        \node[star,  star point ratio=2.25, fill=black, inner sep = 0.5pt] at (0,0) {};
        \end{scope}
        \draw (0,1.8) -- (0,-0.7) node[pos = 0.3, sloped]{\footnotesize $>$} node[pos = 0.3, left]{\footnotesize $\alpha$};
        \end{tikzpicture}
    \end{equation*}
    
    The skeins before and after the swallowtail isomorphism are clearly equivalent upon identifying the two foliations. Consider next the swallowtail beginning with a one-handle, introducing a zero-handle and one-handle via a cusp-birth, and then canceling the original one-handle with the zero-handle using a cusp-death. It is supported in a region where it is given by the following composition: 
    \begin{equation*}
        \begin{tikzpicture}[xscale = 2,yscale = 1, baseline = 0cm]
        \begin{scope}[yshift=0cm]
        \foreach \y in {0,...,9}{
        \begin{scope}
            \clip (-0.5,-0.7 + 1.8*\y/10) rectangle (0.5, 1.8*\y/10);
             \draw[gray!70] (0,1.8*\y/10) circle(5/8);
        \end{scope}
        }
        \clip (-0.5,-0.7 + 1.3) rectangle (0.5, 1.25);
        \draw[gray!70] (0,1.8) circle(5/8);
        \end{scope}
        \begin{scope}[yshift=-1.3cm]
        \draw[gray!70] (-0.5,0.8) -- (0.5,0);
        \draw[gray!70] (-0.5,0) -- (0.5,0.8);
        \clip (-0.5,0) rectangle (0.5,0.8);
        \foreach \y in {2,...,3}{
        \draw[gray!70] (-0.7,0.4) circle(\y/7 and \y/10);
        \draw[gray!70] (0.7,0.4) circle(\y/7 and \y/10);
        }   
        \node[star,  star point ratio=2.25, fill=black, inner sep = 0.5pt] at (0,0.4) {};
        \draw[gray!70] (0,-0.4) circle(0.55);
        \draw[gray!70] (0,-0.4) circle(0.45);
        \end{scope}
        \draw (0,-1.3) -- (0,-0.9) node[pos = 0.3, sloped]{\footnotesize $>$} node[pos = 0.5, left]{\footnotesize $\alpha$};
        \end{tikzpicture}
\hspace{15pt}\mapsto \hspace{15pt}
        \begin{tikzpicture}[xscale = 2,yscale = 1, baseline = 0cm]
        \begin{scope}[yshift=0.5cm]
        \clip (-0.5,-1.2) rectangle (0.5,0.8);
        \draw[gray!70] (-0.5,0.8) -- (0,0.4) ..controls (1,-0.4) and (0.22,-1.15).. (0,-1.15);
        \draw[gray!70] (0.5,0.8) -- (0,0.4) ..controls (-1,-0.4) and (-0.22,-1.15).. (0,-1.15);
        \foreach \y in {2,...,3}{
        \draw[gray!70] (-0.7,0.35) circle(\y/8);
        \draw[gray!70] (0.7,0.35) circle(\y/8);
        }   
        \node[star,  star point ratio=2.25, fill=black, inner sep = 0.5pt] at (0,0.4) {};
        \draw[gray!70] (0,1.2) circle(0.55);
        \draw[gray!70] (0,1.2) circle(0.45);
        \end{scope}
        \draw (0,0.9) -- (0,0) node[pos = 0.3, sloped]{\footnotesize $<$} node[pos = 0.3, left]{\footnotesize $\alpha$};
         \begin{scope}
        \clip (-0.5,-0.7) rectangle (0.5,1);
        \foreach \y in {0,...,4}{
        \draw[gray!70] (0,\y/60) circle(\y/10 and \y/7);
        }
        \node[star,  star point ratio=2.25, fill=black, inner sep = 0.5pt] at (0,0) {};
        \end{scope}
        \begin{scope}[yshift=-1.3cm]
        \draw[gray!70] (-0.5,0.8) -- (0.5,0);
        \draw[gray!70] (-0.5,0) -- (0.5,0.8);
        \clip (-0.5,0) rectangle (0.5,0.8);
        \foreach \y in {2,...,3}{
        \draw[gray!70] (-0.7,0.4) circle(\y/7 and \y/10);
        \draw[gray!70] (0.7,0.4) circle(\y/7 and \y/10);
        }   
        \node[star,  star point ratio=2.25, fill=black, inner sep = 0.5pt] at (0,0.4) {};
        \draw[gray!70] (0,-0.4) circle(0.55);
        \draw[gray!70] (0,-0.4) circle(0.45);
        \end{scope}
        \draw (0,-1.3) -- (0,-0.9) node[pos = 0.3, sloped]{\footnotesize $>$} node[pos = 0.5, left]{\footnotesize $\alpha$};
        \end{tikzpicture}
\hspace{15pt}\mapsto \hspace{15pt}
\begin{tikzpicture}[xscale = 2,yscale = 1, baseline = 0cm]
        \begin{scope}[yshift=-0.5cm, yscale = -1]
        \clip (-0.5,-1.2) rectangle (0.5,0.8);
        \draw[gray!70] (-0.5,0.8) -- (0,0.4) ..controls (1,-0.4) and (0.22,-1.15).. (0,-1.15);
        \draw[gray!70] (0.5,0.8) -- (0,0.4) ..controls (-1,-0.4) and (-0.22,-1.15).. (0,-1.15);
        \foreach \y in {2,...,3}{
        \draw[gray!70] (-0.7,0.35) circle(\y/8);
        \draw[gray!70] (0.7,0.35) circle(\y/8);
        }   
        \node[star,  star point ratio=2.25, fill=black, inner sep = 0.5pt] at (0,0.4) {};
        \draw[gray!70] (0,1.2) circle(0.55);
        \draw[gray!70] (0,1.2) circle(0.45);
        \end{scope}
        \draw (0,0.9) -- (0,0) node[pos = 0.3, sloped]{\footnotesize $<$} node[pos = 0.3, left]{\footnotesize $\alpha$};
         \begin{scope}[yscale = -1]
        \clip (-0.5,-0.7) rectangle (0.5,1);
        \foreach \y in {0,...,4}{
        \draw[gray!70] (0,\y/60) circle(\y/10 and \y/7);
        }
        \node[star,  star point ratio=2.25, fill=black, inner sep = 0.5pt] at (0,0) {};
        \end{scope}
        \begin{scope}[yshift=1.3cm, yscale = -1]
        \draw[gray!70] (-0.5,0.8) -- (0.5,0);
        \draw[gray!70] (-0.5,0) -- (0.5,0.8);
        \clip (-0.5,0) rectangle (0.5,0.8);
        \foreach \y in {2,...,3}{
        \draw[gray!70] (-0.7,0.4) circle(\y/7 and \y/10);
        \draw[gray!70] (0.7,0.4) circle(\y/7 and \y/10);
        }   
        \node[star,  star point ratio=2.25, fill=black, inner sep = 0.5pt] at (0,0.4) {};
        \draw[gray!70] (0,-0.4) circle(0.55);
        \draw[gray!70] (0,-0.4) circle(0.45);
        \end{scope}
        \draw (0,-1.3) -- (0,-0.9) node[pos = 0.3, sloped]{\footnotesize $>$} node[pos = 0.5, left]{\footnotesize $\alpha$};
\end{tikzpicture}
\hspace{15pt}\mapsto \hspace{15pt}
        \begin{tikzpicture}[xscale = 2,yscale = 1, baseline = 0cm]
        \begin{scope}[yshift=-1.8cm]
        \foreach \y in {1,...,10}{
        \begin{scope}
            \clip (-0.5, 1.8*\y/10) rectangle (0.5, 0.7 + 1.8*\y/10);
             \draw[gray!70] (0,1.8*\y/10) circle(5/8);
        \end{scope}
        }
        \clip (-0.5,0.55) rectangle (0.5, 1);
        \draw[gray!70] (0,0) circle(5/8);
        \end{scope}
    
        \begin{scope}[yshift=0.5cm]
        \draw[gray!70] (-0.5,0.8) -- (0.5,0);
        \draw[gray!70] (-0.5,0) -- (0.5,0.8);
        \clip (-0.5,0) rectangle (0.5,0.8);
        \foreach \y in {2,...,3}{
        \draw[gray!70] (-0.7,0.4) circle(\y/7 and \y/10);
        \draw[gray!70] (0.7,0.4) circle(\y/7 and \y/10);
        }    
        \node[star,  star point ratio=2.25, fill=black, inner sep = 0.5pt] at (0,0.4) {};
        \draw[gray!70] (0,1.2) circle(0.55);
        \draw[gray!70] (0,1.2) circle(0.45);
        \end{scope}
    
        \draw (0,-1.25) -- (0,0.9) node[pos = 0.3, sloped]{\footnotesize $>$} node[pos = 0.5, left]{\footnotesize $\alpha$};
        \end{tikzpicture}
    \end{equation*}
    
    The swallowtails involving the two-handle produce mirrored versions of the diagrams for the swallowtails involving zero-handles, with $\alpha$-strands replaced by $\alpha^{-1}$-strands.

    \item \textbf{The beak relations}

    \[\begin{tikzpicture}[xscale=0.5, yscale = 0.4]

    \begin{scope}[yshift = -10cm]    
    \draw[ultra thick] (0,0) -- (0,5);
    \draw[ultra thick] (5,0) -- (5,5);

    \draw[red, ultra thick] (3,2) parabola (2.2,1.2);
    \draw[red, ultra thick] (3,2) parabola (2.2,2.8);
    \draw[red, ultra thick] (0,3)-- (1.5,3);
    \draw[red, ultra thick] (1.5,3).. controls (2,3) and (2.1,2.9)..(2.2,2.8);
    \draw[red, ultra thick] (0,1)--(1.5,1);
    \draw[red, ultra thick] (1.5,1).. controls (2,1) and (2.1, 1.1)..(2.2,1.2);
    
    \draw[red, ultra thick] (0,4)to[out = 0, in = 180, out distance = 2cm, in distance = 5cm] (5,1);

    \draw[blue, ultra thick] (3,2) to[out = 0, in = 180](5, 2);

    \end{scope}

    \node at (6,-8) {$=$};

    \begin{scope}[xshift=8cm, yshift = -10cm]
    \draw[ultra thick] (0,0) -- (0,5);
    \draw[ultra thick] (5,0) -- (5,5);

    \draw[red, ultra thick] (2,2) parabola (1.2,1.2);
    \draw[red, ultra thick] (2,2) parabola (1.2,2.8);
    \draw[red, ultra thick] (0,3)-- (0.5,3);
    \draw[red, ultra thick] (0.5,3).. controls (1,3) and (1.1,2.9)..(1.2,2.8);
    \draw[red, ultra thick] (0,1)--(0.5,1);
    \draw[red, ultra thick] (0.5,1).. controls (1,1) and (1.1, 1.1)..(1.2,1.2);
    
    \draw[red, ultra thick] (0,4)to[out = 0, in = 180, out distance = 5cm] (5,1);

    \draw[blue, ultra thick] (2, 2)--(3.25, 2);
    \draw[blue, ultra thick] (3.25,2)to[out = 0, in = 180](5, 2);

    \end{scope}

    \end{tikzpicture}\]

    Each diagram above represents a family of relations, one for each valid choice of 
    labels, with analogous families given by the horizontal and vertical reflections 
    of the above diagrams. These relations arise from a path of functions starting with 
    three critical points, two of which cancel via a cusp while the third moves 
    independently, its critical value dropping below the critical level of the cusp 
    either before or after the cusp occurs --- this is precisely what distinguishes the 
    left- and right-hand sides. Starting with a string net in both cancelling and 
    crossing position and tracing through the definitions, one verifies that both sides 
    induce the same final string net.

    \item \textbf{The Reidemeister~III relations}

    \[\begin{tikzpicture}[scale=0.5]

    \begin{scope}[yshift = -10cm]
    \draw[ultra thick] (0,0) -- (0,4);
    \draw[ultra thick] (6,0) -- (6,4);

    \draw[red, ultra thick] (0,1) -- (6,3);
    \draw[red, ultra thick] (0,3) -- (6,1);

    \draw[red, ultra thick] (0,2).. controls (1.5,3) and (2.25,3).. (3,3);
    
    \draw[red, ultra thick] (3,3).. controls (3.75,3) and (4.5,3).. (6,2);
    \end{scope}

    \node at (7,-8) {$=$};

    \begin{scope}[xshift=8cm, yshift = -10cm]
    \draw[ultra thick] (0,0) -- (0,4);
    \draw[ultra thick] (6,0) -- (6,4);
    \draw[red, ultra thick] (0,1) -- (6,3);
    \draw[red, ultra thick] (0,3) -- (6,1);

    \draw[red, ultra thick] (0,2).. controls (1.5,1) and (2.25,1).. (3,1);
    
    \draw[red, ultra thick] (3,1).. controls (3.75,1) and (4.5,1).. (6,2);
    \end{scope}

    \end{tikzpicture}\]

    Each diagram above represents a family of relations, one for each valid choice of 
    labels. The verification that $\orSN^{\alpha}_{\II}$ satisfies 
    these relations is entirely analogous to that of the beak relations, with the 
    word ``cusp'' replaced by ``crossing'' throughout.

\end{itemize}


\section{Some properties of the twisted string net TQFT}\label{sec:Properties_S2}

\subsection{The 2-sphere and the distinguished invertible object}
We compute the value of our TQFT on the simplest closed surface, the 2-sphere. As we have shown, the vector space assigned to $S^2$ only depends on the orientation of $S^2$. However, in order to compute it, we must first find a Morse foliation on $S^2$.
\begin{definition}
    Let $S^2 \subseteq \mathbb R^3$ be the standard 2-sphere. The projection on the first coordinate is a Morse function with two critical points of index 0 and 2. We will still denote $S^2$ the foliated surface $S^2$ with foliation induced by this Morse function.

    Let $S^2\smallsetminus\ast$ be the foliated surface obtained from $S^2$ by removing (a small neighborhood of) a non-critical point. It is homeomorphic to a disk but its foliation has two critical points. It can be obtained as the gluing $S^2\smallsetminus\ast \simeq \D_0\underset{I}{\cup}\D_2$. 
\end{definition}
\begin{proposition}\label{prop:coendFormulaForS2}
Let $\II\subseteq \BB$ be a tensor ideal in a twisted-pivotal category. 
    The twisted string net module of the punctured 2-sphere (without boundary labels) is obtained as 
    $$\SN_\II^\alpha(S^2\smallsetminus \ast) \simeq \int^{x\in\II} \Hom_\BB(\alpha, x) \otimes \Hom_\BB(x,\alpha^{-1})$$
with isomorphism given by 
\begin{equation*}
    \begin{tikzpicture}[scale = 0.9, baseline = 0pt]
    \clip (0,0) circle(2);
    \def\lines{8}
    \foreach \y in {1,...,\lines}{
    \draw[gray!70] (0,\y/\lines) arc(-90:270:{20/(\y*\y)});
    }
    \draw[gray!70] (-2,0) --(2,0);
    \foreach \y in {-1,...,-\lines}{
    \draw[gray!70] (0,\y/\lines) arc(90:540:{20/(\y*\y)});
    }
    \node[star, star point ratio=2.25, fill=black, inner sep = 1pt] at (0,1.3) {};
    \node[star, star point ratio=2.25, fill=black, inner sep = 1pt] at (0,-1.3) {};
    \draw[very thick] (0,-1.3) -- (0,1.3) node[pos = 0.1, right]{$\alpha$} node[pos = 0.1, sloped]{$>$} node[pos = 0.3, rectangle, draw, fill=white]{$f$}node[pos = 0.5, right]{$x$} node[pos = 0.5, sloped]{$>$} node[pos = 0.7, rectangle, draw, fill=white]{$g$}node[pos = 0.9, right]{$\alpha^{-1}$} node[pos = 0.9, sloped]{$>$};
    \end{tikzpicture}
    \quad\mapsfrom \quad
    f \otimes g
\end{equation*}
    The twisted string net module of the 2-sphere is obtained as
    $$\SN_\II^\alpha(S^2) \simeq \int^{x\in\II} \Hom_\BB(\alpha, x) \otimes \Hom_\BB(x,\alpha^{-1}) \bigg/ \sim$$
    where $\sim$ is generated by
    \begin{equation}\label{eq:extraRelationUnpuncturedS2}
\begin{tikzpicture}[xscale = 1.5,yscale = 2, baseline = 0.9cm]
    \draw[very thick] (0,0) -- ++(0,0.5) node[pos = 0.3, sloped]{\small $>$}node[pos = 0.3, left]{\small $\alpha$} ;
    \draw[very thick] (-0.2, 1) -- ++(0,-0.5) node[pos = 0.3, sloped]{\small $<$}node[pos = 0.3, left]{\small $x$} ;
    \draw[very thick] (0.2, 1) -- ++(0,-0.5) node[pos = 0.3, sloped]{\small $<$}node[pos = 0.3, left]{\small $y$} ;
    \node[rectangle, draw, very thick, fill=white, minimum width = 0.9cm, inner sep = 2pt] at (0,0.5) {\small $f$};
\end{tikzpicture}
\ \otimes \ 
\begin{tikzpicture}[xscale = 1.5,yscale = 2, baseline = 0.9cm]
    \draw[very thick] (-0.2, 0) to[out=90, in=-90] node[pos = 0.3, sloped]{\small $>$}node[pos = 0.3, left]{\small $x$} (-0.2,0.5) ;
    \draw[very thick] (0.2, 0) to[out=90, in=-90] node[pos = 0.3, sloped]{\small $>$}node[pos = 0.3, left]{\small $y$} (0.2,0.5) ;
    \draw[very thick] (0,0.5) to[out = 90, in = -90] node[pos = 0.7, sloped]{\small $>$} node[pos = 0.7, right]{\small $\alpha^{-1}$}  (0,1);
    \node[rectangle, draw, very thick, fill=white, minimum width = 0.9cm, inner sep = 2pt] at (0,0.5) {\small $g$};
\end{tikzpicture}
\quad \sim \quad
\begin{tikzpicture}[xscale = 1.5,yscale = 2, baseline = 0.9cm]
    \draw[very thick] (0.2,0.5) to[out = 90, in = -135] node[pos = 0.6, sloped]{\small $>$}node[pos = 0.6, left]{\small $y$} (0.3, 0.8) node{\small $\bullet$} to[out = -45, in = 13, in distance  = 25pt] (-0.2, 0.18) node{\small $\bullet$} to[out = 155, in=-100] node[pos = 0.7, sloped]{\small $>$}node[pos = 0.7, left]{\small $y$} (-0.3, 1);
    \draw[very thick] (0.1, 1) to[out = -90, in=90] node[pos = 0.3, sloped]{\small $>$}node[pos = 0.3, left]{\small $x$} (-0.2,0.5);
    \draw[very thick] (0,0) -- ++(0,0.5) node[pos = 0.1, sloped]{\small $>$}node[pos = 0.1, right]{\small $\alpha$} node[pos = 0.45, rectangle, draw, fill=white, rotate = 15]{};
    \node[rectangle, draw, very thick, fill=white, minimum width = 0.9cm, inner sep = 2pt] at (0,0.5) {\small $f$};
\end{tikzpicture}
\ \otimes \ 
\begin{tikzpicture}[xscale = 1.5,yscale = -2, baseline = -1.1cm]
    \draw[very thick] (0.2,0.5) to[out = 90, in = -135] node[pos = 0.6, rotate = 100]{\small $>$}node[pos = 0.6, left]{\small $y$} (0.3, 0.8) node{\small $\bullet$} to[out = -45, in = 13, in distance  = 25pt] (-0.2, 0.18) node{\small $\bullet$} to[out = 155, in=-100] node[pos = 0.7, rotate = 100]{\small $>$}node[pos = 0.7, left]{\small $y$} (-0.3, 1);
    \draw[very thick] (0.1, 1) to[out = -90, in=90] node[pos = 0.3, rotate = 125]{\small $>$}node[pos = 0.3, left]{\small $x$} (-0.2,0.5);
    \draw[very thick] (0,0) -- ++(0,0.5) node[pos = 0.1, rotate = 90]{\small $>$}node[pos = 0.1, right]{\small $\alpha^{-1}$} node[pos = 0.45, rectangle, draw, fill=white, rotate = -15]{};
    \node[rectangle, draw, very thick, fill=white, minimum width = 0.9cm, inner sep = 2pt] at (0,0.5) {\small $g$};
\end{tikzpicture}
    \end{equation}
    for $x,y\in\II$, $f\in\Hom_\BB(\alpha, x\otimes y)$ and $g\in\Hom_\BB(x\otimes y, \alpha^{-1})$
\end{proposition}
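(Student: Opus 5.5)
For the punctured sphere, I will apply excision (Proposition \ref{prop:excision}) to the decomposition $S^2\smallsetminus\ast \simeq \D_0\cup_I\D_2$ along the regular leaf $I$ separating the two critical points. This leaf is an interval with endpoints on the boundary circle around the puncture. Excision was stated for closed leaves, but its proof only uses that the leaf is a union of leaves disjoint from the singular locus, so it applies verbatim to the interval. It gives
\[\SN_\II^\alpha(S^2\smallsetminus\ast)\simeq\int^{x\in\SN_\II(I)}\SN_\II^\alpha(\D_0;x)\otimes\SN_\II^\alpha(\D_2;x).\]
I then identify $\SN_\II(I)\simeq\II$ via the evaluation functor. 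Propositions \ref{prop:SNonD0} and \ref{prop:SNonD2} identify the factors with $\Hom_\BB(\alpha,x)$ and $\Hom_\BB(x,\alpha^{-1})$ naturally in $x$. The label sets on the remaining boundary arcs are empty, and the remark after Proposition \ref{prop:SNonD2} handles several points on $I$. Naturality holds because the action of $\SN_\II(I)$ is stacking, which matches pre/post-composition of $f$ and $g$. Tracing through the gluing shows that $f\otimes g$ maps to the pictured vertical string net.

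For the closed sphere, I glue back the disk $D$ around $\ast$. The foliation on $D$ is a standard foliated square, so it contributes no critical points. The plan is to compare $\SN_\II^\alpha(S^2)$ with $\SN_\II^\alpha(S^2\smallsetminus\ast)$ through the map $\iota$ induced by the inclusion. Every string net on $S^2$ can be isotoped off a generic point $\ast$, so $\iota$ is surjective. Its kernel is generated by two kinds of moves: isotopies that sweep an edge or vertex across $\ast$, and skein relations supported in disks containing $\ast$. The skein relations can be pushed off $\ast$ by isotopy, so only the sweeps matter. A vertex sweep reduces to edge sweeps by isotoping the vertex along an edge. The remaining move is therefore a strand passing across $\ast$, i.e. between the two sides of the leaf-direction region near $\ast$.

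Next I put both sides of a strand-crossing move into the normal form of the first part. Before the move, a strand crossing $I$ to the left of the $\alpha$-line gives an element $f\otimes g$ with $x\otimes y$ on the leaf. After the move, the strand passes on the other side of $\ast$ and wraps around the other way. To bring it back to normal form, I would slide it over the index-0 critical point using \eqref{eq:skein-slide-0} and over the index-2 critical point using \eqref{eq:skein-slide-2}, exactly as in the proof of Proposition \ref{prop:SNonD0}. Each slide introduces a twisted pivotal coupon together with a cup/cap, which produces the right-hand side of \eqref{eq:extraRelationUnpuncturedS2}. Via $\apiv$ and its mate, the $y$-strand is transported from one side of $x$ to the other.

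I expect the main obstacle to be bookkeeping: showing that each generic crossing move yields precisely one instance of the relation, with the correct orientation and labelling, and that these instances generate everything. I would first reduce to a single strand $y$ by fusing the parallel strands with identity coupons, using \eqref{eq:pivMonoidalGraphically}. I would then do the computation once for each orientation of $y$ relative to the foliation. The opposite orientation should follow from the first via snake relations and \eqref{eq:pivNatEval}, so it should give the same relation rather than a new one.
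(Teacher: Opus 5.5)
Your treatment of the punctured sphere matches the paper's: excision along the interval leaf $I$, the identification $\SN_\II(I)\simeq\II$, and Propositions \ref{prop:SNonD0} and \ref{prop:SNonD2}. For the closed sphere, however, you use a different decomposition.

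The paper never fills in the puncture. It applies Proposition \ref{prop:excision} a second time, along the closed equatorial leaf, $S^2=\D_0\cup_{S^1}\D_2$. It then uses the description of $\SN_\II(S^1)$ from Section \ref{sec:functoriality}: up to isomorphism, objects are single $\II$-coloured points, and morphisms are generated by coupons and the invertible wraps $T_{x,y}$ of \eqref{eq:genMorphSNS1}. The extra relations therefore appear automatically as the coend relations $(f,g)\sim(T_{x,y}\cdot f,\,T_{x,y}^{-1}\cdot g)$. The only remaining work is the computation you also describe: transporting a wrap through the normal forms of Propositions \ref{prop:SNonD0} and \ref{prop:SNonD2}, using one slide over each critical point. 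Your sweep of a strand across $\ast$ is exactly such a wrap, so the two routes meet at this point.

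The difference lies in where completeness of the extra relations comes from. The paper inherits it from excision together with the generator description of the circle category. You must instead prove directly that the kernel of $\SN_\II^\alpha(S^2\smallsetminus\ast)\to\SN_\II^\alpha(S^2)$ is generated by single-strand sweeps. This needs a general-position argument of the kind used in the proofs of Propositions \ref{prop:SNonD0} and \ref{prop:excision}; slide and saddle relations are local at critical points, so they move off $\ast$ trivially. In exchange, your route does not depend on \eqref{eq:genMorphSNS1}, and it makes visible that strands crossing the puncture are the only new phenomenon.

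Two small remarks. First, the case split by the orientation of $y$ is unnecessary. Edges of a progressive string net are always positively transverse to the leaves, so the two directions of a sweep simply give the relation and its inverse. Second, a swept strand can carry any colour, so you directly obtain the relation for more general $x,y$ than the $x,y\in\II$ displayed in the statement. To match the statement as written, you need to check that these relations follow from the $\II$-coloured ones; the paper's proof is silent on this point too.
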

\begin{proof}
For the punctured sphere, we apply Proposition \ref{prop:excision} on the decomposition $S^2\smallsetminus\ast = \D_0\underset{I}{\cup}\D_2$ to express its string net module as a coend
$$\SN_\II^\alpha(S^2\smallsetminus\ast) \simeq \int^{X \in \SN_\II^\alpha(I)} \SN_\II^\alpha(\D_0;X) \otimes\SN_\II^\alpha(\D_2;X)\ .$$
Now we identify $\SN_\II^\alpha(I) \simeq \II$ where $x\in\II$ can be seen as a single point in $I$ colored by $x$, and by Propositions \ref{prop:SNonD0} and \ref{prop:SNonD2} we have $\SN_\II^\alpha(\D_0,x)\simeq \Hom_\BB(\alpha, x)$ and $ \SN_\II^\alpha(\D_2,x)  \simeq \Hom_\BB(x,\alpha^{-1})$. It is easy to see in that these isomorphisms preserve the action of $\II$, which gives the desired formula. 

For the 2-sphere, we use excision on $S^2 = \D_0\underset{S^1}{\cup}\D_2$ to express $\SN_\II^\alpha(S^2)$ as the coend
$$\SN_\II^\alpha(S^2) \simeq \int^{X \in \SN_\II^\alpha(S^1)} \SN_\II^\alpha(\D_0;X) \otimes\SN_\II^\alpha(\D_2;X)\ .$$
As discussed in Section \ref{sec:functoriality}, the category $\SN_\II^\alpha(S^1)$ is spanned by objects of $\II$, and morphisms are spanned by morphisms of $\II$ together with the invertible morphisms $T_{x,y}$ of \eqref{eq:genMorphSNS1}. Therefore, $\SN_\II^\alpha(S^2)$ has the same generators as $\SN_\II^\alpha(S^2\smallsetminus \ast)$ but there are more relations, coming from these additional morphisms. Since they are invertible, the coend relations can be expressed as
$$(f,g) \sim (T_{x,y}\cdot f, T_{x,y}^{-1}\cdot g)\ .$$
It only remains to identify the action of $T_{x,y}$ on $\Hom_\BB(\alpha, x)$ and $\Hom_\BB(x,\alpha^{-1})$ by tracking down the isomorphism described in Propositions \ref{prop:SNonD0} and \ref{prop:SNonD2}, which gives the claimed depictions. 
\end{proof}
\begin{remark}\label{rmk:S2formulabothalphasincoming}
Using duality isomorphisms, we can also obtain 
$$\SN_\II^\alpha(S^2\smallsetminus \ast) \simeq \int^{x\in\II} \Hom_\BB(\alpha^{\otimes 2}, x) \otimes \Hom_\BB(x,\unit)$$
and $\SN_\II^\alpha(S^2)$ as a suitable quotient.
\end{remark}

Let us now suppose that $\BB$ is a tensor category with enough projectives in the sense of \cite{EGNO} and $\II$ is the tensor ideal $\Proj$ of projective objects. Every simple object $S$ has a unique projective cover $P_S\to S$ and injective hull $S\to I_S$. Injective and projective objects coincide using rigidity, and a projective cover $P_S$ is the injective hull of a unique simple $S'$ called the socle of $P_S$. 

We denote $D$ the socle of the projective cover of the unit, the unique simple that fits in 
$$D \hookrightarrow P_\unit \twoheadrightarrow \unit$$
If $\BB$ is moreover finite, then $D$ is the \textit{distinguished invertible object} of $\BB$ defined in \cite{ENOdistinguishedinvertible}. It comes equipped with the Radford trivialization of the quadruple dual $x^{****}\otimes D\overset{\Radford}{\simeq} D\otimes x$.

\begin{corollary}\label{cor:dimSNS2}
    Let $\BB$ be an $\alpha$-twisted-pivotal tensor category and $\II=\Proj$. Then:
    \begin{enumerate}
        \item The twisted string net module of the punctured two sphere is non-trivial if and only if $\alpha$ squares to $D$, more precisely:
    $$\operatorname{dim}\,\SN_\II^\alpha( S^2\smallsetminus\ast)=\left\{ \begin{array}{ll}
        1 & \text{if } \alpha^{\otimes 2} \simeq D \\
        0 & \text{if } \alpha^{\otimes 2} \not\simeq D
    \end{array}\right. \ .$$
    \item If $\BB$ is moreover finite, the twisted string net module of the two sphere is non-trivial if and only if $\alpha$ squares to the distinguished invertible and the trivializations of the quadruple duals agree, more precisely:
    $$\operatorname{dim}\,\SN_\II^\alpha( S^2)=\left\{ \begin{array}{ll}
        1 & \text{if } \alpha^{\otimes 2} \simeq D \text{ and } \apiv^2 = \Radford \\
        0 & \text{else}
    \end{array}\right.$$
    where the square of $\alpha$ is equipped with a trivialization of the quadruple dual 
    $$\apiv^2_x: x^{****}\otimes \alpha^{\otimes 2} \overset{\apiv_{x^{**}}}\simeq \alpha\otimes x^{**}\otimes \alpha  \overset{\apiv_{x}}\simeq \alpha^{\otimes 2} \otimes x\ .$$
    \item If $\alpha$ does not square to $D$ with its canonical trivialization of the quadruple dual, then the oriented categorified 2-TQFT $\orSN_\II^\alpha$ is 0 on $S^2$ and cannot be extended to a non-compact 3-TQFT. 
\end{enumerate}
\end{corollary}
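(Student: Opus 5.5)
Starting from Proposition \ref{prop:coendFormulaForS2} and Remark \ref{rmk:S2formulabothalphasincoming}, we write
$$\SN_\II^\alpha(S^2\smallsetminus\ast)\simeq \int^{x\in\Proj}\Hom_\BB(\alpha^{\otimes 2},x)\otimes\Hom_\BB(x,\unit).$$
For (1) we compute this coend using projectivity. The functor $\Hom_\BB(-,\unit)$ restricted to $\Proj$ is represented, in the Yoneda sense on projectives, by the projective cover: every map $x\to\unit$ from a projective $x$ factors through $P_\unit\twoheadrightarrow\unit$. The plan is to show that the coend $\int^{x\in\Proj}\Hom(a,x)\otimes\Hom(x,\unit)$ is isomorphic to the space of ``$\Proj$-factorizable'' morphisms $a\to\unit$ modulo nothing; more precisely, by the co-Yoneda lemma applied to the projective generator, it equals $\Hom(a,P_\unit)\otimes_{\End(P_\unit)}\Hom(P_\unit,\unit)$. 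Since $\Hom(P_\unit,\unit)=\Bbbk$ (as a quotient of the local ring $\End(P_\unit)$ by its radical), this is $\Hom(a,P_\unit)/\operatorname{rad}\End(P_\unit)\cdot\Hom(a,P_\unit)$. For $a=\alpha^{\otimes2}$ invertible, hence simple, $\Hom(a,P_\unit)$ is nonzero only if $a\simeq D=\operatorname{soc}(P_\unit)$. In that case it is one-dimensional and is killed by the radical, because any radical endomorphism of $P_\unit$ kills the socle. This gives dimension $1$ if $\alpha^{\otimes2}\simeq D$ and $0$ otherwise. The main technical point is justifying the reduction of the coend over all of $\Proj$ to $P_\unit$. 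To do this, we write each projective as a summand of sums of covers $P_S$ and note that for $S\neq\unit$ we have $\Hom(P_S,\unit)=0$.

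For (2) we use the second part of Proposition \ref{prop:coendFormulaForS2}: $\SN_\II^\alpha(S^2)$ is the quotient of the one-dimensional space from (1) by relation \eqref{eq:extraRelationUnpuncturedS2}. We take the generator $f\otimes g$ with $f:\alpha^{\otimes2}\simeq D\hookrightarrow P_\unit$ and $g:P_\unit\twoheadrightarrow\unit$, transported back to the $\Hom(\alpha,x)\otimes\Hom(x,\alpha^{-1})$ form. The relation then says that rotating a tensor factor $y$ around, using the pivotal boxes at both critical points, acts by a scalar $\lambda$. The space survives if and only if $\lambda=1$ for all $x,y$. Computing $\lambda$ amounts to comparing two trivializations of the quadruple dual on $D$. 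Passing $y$ around the index-$0$ point uses $\apiv$, and passing it around the index-$2$ point uses its mate. Together they produce $\apiv^2_y$ composed with $y^{****}\otimes D\to D\otimes y$. On the other side, moving $y$ across $P_\unit$ and its socle yields exactly the Radford isomorphism $\Radford$ in the description of \cite{ENOdistinguishedinvertible}, which characterizes $\Radford$ via the two-sided cointegral/socle inclusion $D\to P_\unit$. Hence $\lambda$ is the ratio $\Radford^{-1}\circ\apiv^2$, a scalar natural automorphism. Since it is monoidal, it lies in a character group, and the relation forces it to be the identity. This identification of the rotation scalar with $\Radford$ is the main obstacle. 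I would first verify it for $y$ projective and $x=\unit$, using the characterization of $\Radford$ as the unique natural isomorphism making $D\hookrightarrow P_\unit$ compatible with the canonical isomorphisms $P_\unit\otimes y\simeq \ldots$, and then extend to general $y$ by naturality and monoidality.

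(3) follows directly. If $\SN(S^2)=0$, then any non-compact 3-TQFT extending $\orSN_\II^\alpha$ must send $B^3$ and any closed 3-manifold with an embedded $S^2$ cut through zero, which contradicts the required non-triviality on the allowed bordisms, for instance on cylinders $\Sigma\times I$ decomposed along a separating sphere.
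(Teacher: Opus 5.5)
\textbf{Gap in (2).} The gap is exactly at the step you flag as the main obstacle: nothing in your sketch connects the extra relation \eqref{eq:extraRelationUnpuncturedS2} to $\Radford$. Your characterization of $\Radford$ (``the unique natural isomorphism making $D\hookrightarrow P_\unit$ compatible with $P_\unit\otimes y\simeq\ldots$'') is left unspecified, so there is nothing yet to verify. Moreover, the relation must be tested for arbitrary $x,y\in\Proj$ and arbitrary $f,g$. For that you need an explicit linear coordinate on the one-dimensional space, which (1) does not provide.

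The paper gets both ingredients from the Nakayama functor:
\begin{itemize}
\item The isomorphism $\Nakayama(\unit)=\int^{x}\Hom_\BB(\unit,x)^*\otimes x\simeq D^{-1}$ gives a natural isomorphism $\Hom_\BB(x,D^{-1})\simeq\Hom_\BB(\unit,x)^*$ for $x\in\Proj$. Equivalently, it gives a non-degenerate pairing $\tau_x\colon\Hom_\BB(\unit,x)\otimes\Hom_\BB(x,D^{-1})\to\Bbbk$. After fixing $\alpha^{\otimes2}\simeq D$, this descends to an isomorphism $\tau\colon\SN_\II^\alpha(S^2\smallsetminus\ast)\to\Bbbk$.
\item Following the proof of \cite[Thm 3.18]{FSSRadfordS4} and \cite[Remark 4.11]{ShimizuRibbonStrOnDrinfeld}, the mate $\overline{\Radford}$ is the \emph{unique} isomorphism for which $\tau$ is invariant under the move carrying a tensor factor around the sphere.
\end{itemize}
The relation then survives in the quotient iff $\tau$ is invariant under the same move built from $\apiv^2$, iff $\apiv^2=\Radford$.

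Without this input your closing step is circular. The ratio $\Radford^{-1}\circ\apiv^2$ is a monoidal automorphism of $\Id_\BB$, not a scalar in general. The claim that ``the relation forces it to be the identity'' is precisely the invariance-plus-non-degeneracy statement that needs proof.

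\textbf{Parts (1) and (3).} These are essentially the paper's arguments. For (1): reduce to $P_\unit$, and note that radical endomorphisms of $P_\unit$ kill the socle, as in \cite[Lemma 4.3]{GKPmtrace}. For (3): remove a ball so that every 3-cobordism factors through $S^2$. Two corrections to (1):
\begin{itemize}
\item Delete the opening claim. The coend is \emph{not} the space of morphisms $\alpha^{\otimes2}\to\unit$ factoring through projectives, and co-Yoneda does not apply because $\alpha^{\otimes2}$ is not projective. Indeed, when $\BB$ is non-semisimple the composite $D\hookrightarrow P_\unit\twoheadrightarrow\unit$ is zero. So that space vanishes while the coend is one-dimensional.
\item For the reduction to $P_\unit$ you need $\Hom_\BB(D,P_S)=0$ for $S\neq\unit$, which holds because distinct indecomposable projectives have distinct socles. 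The fact $\Hom_\BB(P_S,\unit)=0$ alone is not enough, since otherwise non-split maps $P_S\to P_\unit$ could impose relations killing the generator.
\end{itemize}
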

\begin{proof} We begin with point 1. 
    The coend 
    $$\SN_\II^\alpha(S^2\smallsetminus \ast) \simeq \int^{x\in\II} \Hom_\BB(\alpha^{\otimes 2}, x) \otimes \Hom_\BB(x,\unit)$$
reduces to a coend where $x$ runs over isomorphism classes of indecomposable projectives.
The vector space $\Hom_\BB(x,\unit)$ is one-dimensional if $x = P_\unit$ is the projective cover of the unit, generated by $\varepsilon:P_\unit \to \unit$, and zero otherwise. 
The vector space $\Hom_\BB(\alpha^{\otimes 2},x)$ is one-dimensional if the socle of $x$ is $\alpha^{\otimes 2}$, generated by $\eta: \alpha^{\otimes 2} \to x$, and zero otherwise. 

If $\alpha^{\otimes 2} \not\simeq D$ is not the socle of $P_\unit$, we have written $\SN_\II^\alpha(S^2\smallsetminus \ast)$ as a quotient of the zero vector space, so it is zero.

If $\alpha^{\otimes 2} \simeq D$, we have written it as a quotient of a one-dimensional vector space, generated by $\varepsilon\otimes \eta$, and we are left to identify the coend relations. We follow closely \cite[Sections 2.3, 4.1 and 5.3]{GKPmtrace}. The endomorphisms of an indecomposable projective $\End(P_\unit) = \Id \oplus \text{Nilpotents}$ split as identity and nilpotent endomorphisms. The identity acts as the identity on both Hom spaces above, whereas nilpotent elements act as zero on both \cite[Lemma 4.3]{GKPmtrace}. Hence the coend relations are trivial, and $\SN_\II^\alpha(S^2\smallsetminus \ast)$ is one-dimensional as claimed.

Let us now turn to point 2. The twisted string net module of the 2-sphere is a quotient of the module of the punctured 2-sphere by the relation \eqref{eq:extraRelationUnpuncturedS2}. We need to show that this relation is trivial if and only if $\alpha^{\otimes 2} \simeq D \text{ and } \apiv^2 = \Radford$. The Radford isomorphism $\Radford$ can be described as follows, see \cite{FSSRadfordS4, ShimizuUnimodularFTC}.

First, the distinguished invertible object $D$ is the inverse of value of the right Nakayama functor on the unit \cite[Sec. 4.3]{FSSRadfordS4} and \cite[Lemma 5.1]{ShimizuUnimodularFTC}:
$$\Nakayama(\unit):= \int^{x \in \BB}\Hom_\BB(\unit,x)^*\otimes x \simeq D^{-1}$$
By this coend description, $D^{-1}$ comes with a canonical isomorphism
$$N_x: \Hom_\BB(x,D^{-1}) \tilde\to \Hom_\BB(\unit, x)^* \ , \quad \text{for } x\in\Proj$$
or in other words, it comes with a canonical non-degenerate pairing 
$$\tau_x: \Hom_\BB(\unit, x) \otimes \Hom_\BB(x,D^{-1}) \to \Bbbk\ $$
natural in $x \in \Proj$. Given an isomorphism $\alpha^2 \simeq D$, this induces a non-zero a linear map 
$$\tau: \SN_\II^\alpha(S^2\smallsetminus\ast) \to \Bbbk$$
which is an isomorphism by the discussion above.

Second, the Radford isomorphism $\delta$ is induced by the module structure of $\Nakayama$ by \cite[Remark 4.11]{ShimizuRibbonStrOnDrinfeld}. Following the proof of \cite[Thm 3.18]{FSSRadfordS4}, we see that the mate $$\overline{\Radford}_x: D^{-1}\otimes x^{****} \simeq  x\otimes D^{-1}$$ of $\Radford$ is the unique isomorphism inducing the canonical isomorphism on $\Hom$-spaces given by 
\begin{multline*}
    \Hom(y, D^{-1}\otimes x^{****}) \simeq \Hom(y\otimes x^{***},D^{-1}) \overset{N_{y\otimes x^{***}}}\simeq \Hom(\unit, y\otimes x^{***})^* \simeq \Hom(x^{**},y)^*\\
    \simeq \Hom(\unit, x^*\otimes y)^* \overset{N_{x^*\otimes y}^{-1}}{\simeq} \Hom(x^*\otimes y,D^{-1}) \simeq \Hom(y, x \otimes D^{-1})
\end{multline*}
for $y \in \Proj$, where unlabeled isomorphisms are induced by dualities.
Rewriting this in terms of the pairing $\tau$, and graphical calculus for dualities we obtain that $\overline\Radford$ is the unique isomorphism such that $\tau$ is invariant under the transformation
\begin{equation*}
\begin{tikzpicture}[xscale = 1.5,yscale = 2, baseline = 0.9cm]
    \draw[very thick] (-0.2, 1) -- ++(0,-0.5) node[pos = 0.3, sloped]{\small $<$}node[pos = 0.3, left]{\small $y$} ;
    \draw[very thick] (0.2, 1) -- ++(0,-0.5) node[pos = 0.3, sloped]{\small $<$}node[pos = 0.3, right]{\small $x^{***}$} ;
    \node[rectangle, draw, very thick, fill=white, minimum width = 0.9cm, inner sep = 2pt] at (0,0.5) {\small $f$};
\end{tikzpicture}
\ \otimes \ 
\begin{tikzpicture}[xscale = 1.5,yscale = 2, baseline = 0.9cm]
    \draw[very thick] (-0.2, 0) to[out=90, in=-90] node[pos = 0.3, sloped]{\small $>$}node[pos = 0.3, left]{\small $y$} (-0.2,0.5) ;
    \draw[very thick] (0.2, 0) to[out=90, in=-90] node[pos = 0.3, sloped]{\small $>$}node[pos = 0.3, right]{\small $x^{***}$} (0.2,0.5) ;
    \draw[very thick] (0,0.5) to[out = 90, in = -90] node[pos = 0.7, sloped]{\small $>$} node[pos = 0.7, right]{\small $D^{-1}$}  (0,1);
    \node[rectangle, draw, very thick, fill=white, minimum width = 0.9cm, inner sep = 2pt] at (0,0.5) {\small $g$};
\end{tikzpicture}
\quad \leadsto \quad
\begin{tikzpicture}[xscale = 1.5,yscale = 2, baseline = 0.9cm]
    \draw[very thick] (0.2,0.5) to[out = 90, in = -135] node[pos = 0.6, sloped]{\small $>$}node[pos = 0.6, left]{} (0.3, 0.8) node{\small $\bullet$} to[out = -45, in = 13, in distance  = 25pt] (-0.2, 0.18) node{\small $\bullet$} to[out = 155, in=-100] node[pos = 0.7, sloped]{\small $>$}node[pos = 0.7, left]{\small $x^*$} (-0.3, 1);
    \draw[very thick] (0.1, 1) to[out = -90, in=90] node[pos = 0.3, sloped]{\small $>$}node[pos = 0.3, left]{\small $y$} (-0.2,0.5);
    \node[rectangle, draw, very thick, fill=white, minimum width = 0.9cm, inner sep = 2pt] at (0,0.5) {\small $f$};
\end{tikzpicture}
\ \otimes \ 
\begin{tikzpicture}[xscale = 1.5,yscale = -2, baseline = -1.1cm]
    \draw[very thick] (0.2,0.5) to[out = 90, in = -135] node[pos = 0.6, rotate = 100]{\small $>$} (0.3, 0.88) node{\small $\bullet$} to[out = -65, in = 13, in distance  = 25pt] node[pos = 0.3, rotate = 90]{\small $>$}node[pos = 0.3, right]{$x^{****}$} (-0.35, 0.15) node{\small $\bullet$} to[out = 155, in=-100] node[pos = 0.7, rotate = 100]{\small $>$}node[pos = 0.7, left]{\small $x^*$} (-0.3, 1);
    \draw[very thick] (0.1, 1) to[out = -90, in=90] node[pos = 0.3, rotate = 125]{\small $>$}node[pos = 0.3, left]{\small $y$} (-0.2,0.5);
    \draw[very thick] (0,0) -- ++(0,0.5) node[pos = 0.1, rotate = 90]{\small $>$}node[pos = 0.05, right]{\small $D^{-1}$} node[pos = 0.55, rectangle, draw, fill=white, rotate = 15, minimum width = 0.6cm, inner sep = 2pt]{\footnotesize $\overline{\Radford}$};
    \node[rectangle, draw, very thick, fill=white, minimum width = 0.9cm, inner sep = 2pt] at (0,0.6) {\small $g$};
\end{tikzpicture}
    \end{equation*}
In other words, using that $\tau$ is an isomorphism, the extra relations \eqref{eq:extraRelationUnpuncturedS2} are trivial if and only if $\apiv^2 = \Radford$ as claimed.

Finally, for point 3, any non-compact 3-cobordim $M:\Sigma\to \Sigma'$ factors as $$\Sigma \overset{M\smallsetminus B^3}{\longrightarrow} \Sigma'\sqcup S^2 \overset{\Id\sqcup B^3}{\longrightarrow} \Sigma'$$ 
so its image under a TQFT extending $\orSN_\II^\alpha$ factors through $\orSN_\II^\alpha(\Sigma'\sqcup S^2)$ which is zero.
\end{proof}
\begin{remark}
    The condition $\apiv^2=\Radford$ for the non-vanishing of $\SN^\alpha_\II(S^2)$ is exactly the condition exhibited in \cite{ShimizuRibbonStrOnDrinfeld} for $(\alpha,\apiv)$ to induce a ribbon structure on $\ZZ(\BB)$. It is also equivalent to what we call sphericality of the twisted pivotal structure below.
\end{remark}

\subsection{The 2-sphere and twisted modified traces}
The following definition closely follows the notion of right $(\alpha, \alpha^{-1})$-modified trace of \cite{GKPmtrace}. However, we really insist on not choosing a pivotal structure on $\BB$, and hence amend the notion as necessary. We change the name to avoid confusion and match more consistently our conventions. Our notion is equivalent to $\big(\alpha^{-2}\otimes(-)^{**}\big)$-twisted modified traces compatible with the regular $\BB$-module structure of \cite[Def. 3.1]{ShibataShimizuModifiedTracesandNakayama}. In the spherical case, they agree with the twisted traces of \cite[Def. 2.4]{SchweigertWoikeModifiedTracesandNakayama}. 
\begin{definition} \label{def:modifiedtrace} Let $\II$ be a tensor ideal in an $\alpha$-twisted pivotal category $\BB$. 
    An \textit{$\alpha$-twisted modified trace} $\mt$ is a collection of linear forms 
    $$\mt_x: \Hom_\BB(\alpha\otimes x,\alpha^{-1}\otimes x^{**}) \to \Bbbk$$
    for $x \in \II$ satisfying cyclicity and the right partial trace property:
    \begin{equation}
        \mt_x \left(
\begin{tikzpicture}[xscale = 1.5,yscale = 2, baseline = 0.9cm]
    \draw[very thick] (-0.4,0) -- ++(0,1) node[pos = 0.3, sloped]{\small $>$}node[pos = 0.3, left]{\small $\alpha$}node[pos = 0.9, sloped]{\small $>$}node[pos = 0.9, left]{\small $\alpha^{-1}$};
    \draw[very thick] (0,0) -- (0,1)node[pos = 0.3, rectangle, draw, fill=white]{\small $g$} node[pos = 0.1, sloped]{\small $>$}node[pos = 0.1, right]{\small $x$} node[pos = 0.5, sloped]{\small $>$}node[pos = 0.5, right]{\small $y$} node[pos = 0.9, sloped]{\small $>$}node[pos = 0.9, right]{\small $x^{**}$} ;
    \node[rectangle, draw, very thick, fill=white, minimum width = 0.9cm, inner sep = 2pt] at (-0.2,0.7) {\small $f$};
\end{tikzpicture}
        \right)
        \ =\ 
        \mt_y \left(
\begin{tikzpicture}[xscale = 1.5,yscale = 2, baseline = 0.9cm]
    \draw[very thick] (-0.4,0) -- ++(0,1) node[pos = 0.1, sloped]{\small $>$}node[pos = 0.1, left]{\small $\alpha$}  node[pos = 0.7, sloped]{\small $>$}node[pos = 0.7, left]{\small $\alpha^{-1}$} ;
    \draw[very thick] (0,0) -- (0,1)node[pos = 0.7, rectangle, draw, fill=white]{\small $g^{**}$} node[pos = 0.1, sloped]{\small $>$}node[pos = 0.1, right]{\small $y$} node[pos = 0.5, sloped]{\small $>$}node[pos = 0.5, right]{\small $x^{**}$} node[pos = 0.95, sloped]{\small $>$}node[pos = 0.95, right]{\small $y^{**}$} ;
    \node[rectangle, draw, very thick, fill=white, minimum width = 0.9cm, inner sep = 2pt] at (-0.2,0.3) {\small $f$};
\end{tikzpicture}
        \right)
    \end{equation}
    and
    \begin{equation}
        \mt_{x \otimes y} \left(
\begin{tikzpicture}[xscale = 1.5,yscale = 2, baseline = 0.9cm]
    \draw[very thick] (-0.4,0) -- ++(0,1) node[pos = 0.2, sloped]{\small $>$}node[pos = 0.2, left]{\small $\alpha$}node[pos = 0.8, sloped]{\small $>$}node[pos = 0.8, left]{\small $\alpha^{-1}$};
    \draw[very thick] (-0.1,0) -- ++(0,1) node[pos = 0.2, sloped]{\small $>$}node[pos = 0.2, left]{\small $x$} node[pos = 0.9, sloped]{\small $>$}node[pos = 1, left=-8pt]{\small $x^{**}$} ;
    \draw[very thick] (0.1,0) -- ++(0,1) node[pos = 0.2, sloped]{\small $>$}node[pos = 0.2, right]{\small $y$} node[pos = 0.9, sloped]{\small $>$}node[pos = 0.9, right]{\small $y^{**}$} ;
    \node[rectangle, draw, very thick, fill=white, minimum width = 1.2cm, inner sep = 2pt] at (-0.1,0.5) {\small $f$};
\end{tikzpicture}
        \right)
        \ =\ 
        \mt_y \left(
\begin{tikzpicture}[xscale = 1.5,yscale = 2, baseline = 0.9cm]
    \draw[very thick] (-0.4,0) -- ++(0,1) node[pos = 0.2, sloped]{\small $>$}node[pos = 0.2, left]{\small $\alpha$}node[pos = 0.8, sloped]{\small $>$}node[pos = 0.8, left]{\small $\alpha^{-1}$};
    \draw[very thick] (-0.1,0) -- ++(0,1) node[pos = 0.2, sloped]{\small $>$}node[pos = 0.2, left]{\small $x$} node[pos = 0.9, sloped]{\small $>$}node[pos = 1, left=-8pt]{\small $x^{**}$} ;
    \draw[very thick] (0.3,0.2) to[out = 135, in = -135] node[pos = 0.2, sloped]{\small $<$}node[pos = 0.2, below left=-2pt]{\small $y$} node[pos = 0.8, sloped]{\small $>$}node[pos = 0.8, above left=-4pt]{\small $y^{**}$} (0.3,0.8)node{\small $\bullet$} to[out = -45, in = 45] (0.3,0.2) node{\small $\bullet$};
    \node[rectangle, draw, very thick, fill=white, minimum width = 1.2cm, inner sep = 2pt] at (-0.1,0.5) {\small $f$};
\end{tikzpicture}
        \right)
    \end{equation}
We denote $\mathcal{T_\alpha}$ the vector space of $\alpha$-twisted modified traces.

    An $\alpha$-twisted modified trace $\mt$ is called \textit{spherical} if for every $f\in \Hom_\BB(\alpha\otimes x\otimes y,\alpha^{-1}\otimes x^{**}\otimes y^{**})$, $x, y\in\II$, we have:
\begin{equation}
\mt_{x \otimes y} \left(
\begin{tikzpicture}[xscale = 1.5,yscale = 2, baseline = 0.9cm]
    \draw[very thick] (-0.4,0) -- ++(0,1) node[pos = 0.2, sloped]{\small $>$}node[pos = 0.2, left]{\small $\alpha$}node[pos = 0.8, sloped]{\small $>$}node[pos = 0.8, left]{\small $\alpha^{-1}$};
    \draw[very thick] (-0.1,0) -- ++(0,1) node[pos = 0.2, sloped]{\small $>$}node[pos = 0.2, left]{\small $x$} node[pos = 0.9, sloped]{\small $>$}node[pos = 1, left=-8pt]{\small $x^{**}$} ;
    \draw[very thick] (0.1,0) -- ++(0,1) node[pos = 0.2, sloped]{\small $>$}node[pos = 0.2, right]{\small $y$} node[pos = 0.9, sloped]{\small $>$}node[pos = 0.9, right]{\small $y^{**}$} ;
    \node[rectangle, draw, very thick, fill=white, minimum width = 1.2cm, inner sep = 2pt] at (-0.1,0.5) {\small $f$};
\end{tikzpicture}
        \right)
        \quad=\quad
        \mt_y \left(
\begin{tikzpicture}[xscale =1.5,yscale = 2, baseline = 0.9cm]
    \draw[very thick] (0.3,0) -- ++(0,1.1) node[pos = 0.2, sloped]{\small $>$}node[pos = 0.2, right]{\small $y$} node[pos = 0.9, sloped]{\small $>$}node[pos = 1, right=-2pt]{\small $y^{**}$} ;
    \draw[very thick] (0,0)to[out = 110, in= -90] node[pos = 0.1, sloped]{\small $<$}node[pos = 0.1, right]{\small $\alpha$} (-0.4, 0.5) to[out = 90, in=-90] node[pos = 0.8, sloped]{\small $>$}node[pos = 1, left=-11pt]{\small $\alpha^{-1}$} (0,1.1);
    \draw[very thick] (-0.6,0.1) to[out = 30, in = -90] node[pos = 0.48,rectangle, draw, fill=white, sloped, rotate = 10]{} node[pos = 0.2, sloped]{\small $>$}node[pos = 0.15, above = -2pt]{\small $x^{**}$} (0,0.5) to[out = 90, in = -25] node[pos = 0.9, sloped]{\small $<$}node[pos = 0.9, right]{\small $x$}node[pos = 0.48,rectangle, draw, fill=white, sloped, rotate = -10]{} (-0.6,1) node{\small $\bullet$} to[out = -155, in = 155] (-0.6,0.1) node{\small $\bullet$};    
    \node[rectangle, draw, very thick, fill=white, minimum width = 1.5cm, inner sep = 2pt] at (-0.1,0.5) {\small $f$};
\end{tikzpicture}
        \right)
    \end{equation}
    We denote $\SS_\alpha\subseteq \mathcal{T_\alpha}$ the space of spherical $\alpha$-twisted modified traces.

A \textit{twisted spherical category} is a twisted pivotal category $\BB$ equipped with a non-zero spherical twisted modified trace on $\II$.
\end{definition}

The following result is well-known for non-twisted admissible string net modules. It was first announced by Reutter--Walker, but see also \cite{CGPAdmissbleskein, RunkelSchweigertThamExciAdmSkeins}.
\begin{proposition}
    The $\alpha$-twisted string net module of the punctured 2-sphere is dual to the space of $\alpha$-twisted modified traces.
    $$\SN_\II^\alpha( S^2\smallsetminus\ast)^* \simeq \mathcal{T}_\alpha \ .$$ 

    The $\alpha$-twisted string net module of the 2-sphere is dual to the subspace of spherical $\alpha$-twisted modified traces
    $$\SN_\II^\alpha( S^2)^* \simeq \SS_\alpha  \ . $$
\end{proposition}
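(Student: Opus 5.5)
We start from Proposition \ref{prop:coendFormulaForS2}, which presents $\SN_\II^\alpha(S^2\smallsetminus\ast)$ as the coend $\int^{x\in\II}\Hom_\BB(\alpha,x)\otimes\Hom_\BB(x,\alpha^{-1})$. Dualising, $\SN_\II^\alpha(S^2\smallsetminus\ast)^*$ is the end $\int_{x\in\II}\Hom_\Bbbk\big(\Hom_\BB(\alpha,x)\otimes\Hom_\BB(x,\alpha^{-1}),\Bbbk\big)$. Equivalently, it is the space of families of pairings
\[
\langle-,-\rangle_x : \Hom_\BB(\alpha,x)\otimes\Hom_\BB(x,\alpha^{-1})\to\Bbbk
\]
that are dinatural in $x\in\II$, meaning $\langle h\circ f,g\rangle_y=\langle f,g\circ h\rangle_x$ for $h:x\to y$.

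The plan is to build mutually inverse linear maps between such dinatural pairings and $\alpha$-twisted modified traces. From a trace $\mt$ we define the pairing by
\[
\langle f,g\rangle_x := \mt_{x}\big((\Id_{\alpha^{-1}}\otimes \text{(mate of }\apiv\text{)})\circ(\text{rearrangement of } f\otimes g)\big).
\]
Concretely, we bend $f$ and $g$ using the duality morphisms and the twisted pivotal structure $\apiv$ into an endomorphism-like morphism $\alpha\otimes x\to\alpha^{-1}\otimes x^{**}$. It is most convenient to realise this through the picture of the punctured sphere, cutting it along an arc and reading off the morphism with source $\alpha\otimes x$ and target $\alpha^{-1}\otimes x^{**}$.

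Conversely, given a dinatural pairing, we define $\mt_x(h)$ for $h:\alpha\otimes x\to\alpha^{-1}\otimes x^{**}$ by first rewriting $h$ via duality and $\apiv$ as a morphism $\alpha\otimes(\text{stuff})\to\alpha^{-1}$ through the object $\alpha\otimes x\otimes x^{*}$ or similar. We then factor it as $g\circ f$ with $f=\Id$ on a suitable object of $\II$; the ideal property guarantees $x\otimes x^*\in\II$ when $x\in\II$. Finally we set $\mt_x(h)=\langle f,g\rangle$.

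Next we check the axioms. Dinaturality of the pairing should translate into cyclicity of $\mt$, with naturality \eqref{eq:pivNatGraphically} of $\apiv$ producing the $g^{**}$ appearing in the cyclicity axiom. The right partial trace property should follow from the fact that the construction only uses the underlying morphism through $\alpha$. Going the other way, partial trace plus cyclicity give dinaturality. Both composites are identities by snake relations and the twist relation \eqref{eq:otherTwist}.

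For the unpunctured sphere, Proposition \ref{prop:coendFormulaForS2} says $\SN_\II^\alpha(S^2)$ is the quotient by relation \eqref{eq:extraRelationUnpuncturedS2}. Its dual is therefore the subspace of pairings invariant under that move. We must show this move corresponds exactly, under the above bijection, to the sphericality condition: moving the $y$-strand around the other side corresponds to taking the partial trace on the left rather than the right, with $\apiv$ boxes appearing where the strand passes $\alpha$. The identification then reduces to a graphical computation using \eqref{eq:pivMonoidalGraphically} and \eqref{eq:pivNatEval}.

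The main obstacle is making the dictionary between the two sides precise with the twisted pivotal boxes in the right places. The twist relation must be used so that no spurious $\apiv^2$ appears, in particular when checking that the two constructions are mutually inverse and that the sphere relation matches sphericality rather than some twisted variant. We would first try verifying everything on generators $f\otimes g$ with $x$ factored through $x\otimes y$ as in \eqref{eq:extraRelationUnpuncturedS2}, reading the sphericality picture as the image of that relation.
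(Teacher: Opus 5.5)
Your architecture matches the paper's: dualise the coend of Proposition \ref{prop:coendFormulaForS2}, write down mutually inverse maps, then identify the extra relation \eqref{eq:extraRelationUnpuncturedS2} with sphericality. It works, but your trace-to-pairing map is genuinely different from the paper's. I read your formula as $\langle f,g\rangle_x=\mt_x\big(\overline{\apiv}_x^{\,-1}\circ(f\otimes g)\big)$, with $\overline{\apiv}_x:\alpha^{-1}\otimes x^{**}\to x\otimes\alpha^{-1}$ the mate. With this formula, dinaturality is immediate from cyclicity plus naturality of $\apiv$, and you never leave the object $x$. The paper instead sets $\varphi_{\mt}(f\otimes g)=\mt_{{}^*x}\big((g\otimes\Id_{x^*})\circ\rcoev_x\circ\lev_x\circ(f\otimes\Id_{{}^*x})\big)$, which uses dualities only. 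Since neither $\mathcal{T}_\alpha$ nor the coend involves $\apiv$, this makes the punctured-sphere isomorphism independent of $\apiv$. Its inverseness comes from snakes, cyclicity and partial trace alone, and $\apiv$ enters only through sphericality. In your version, the composite pairing$\to$trace$\to$pairing sends $\langle f,g\rangle_x$ to $\langle f,\vartheta\circ g\rangle_x$. Here $\vartheta\in\End(\alpha^{-1})$ is the curl of the $\alpha^{-1}$-strand through $\overline{\apiv}^{\,-1}_{\alpha^{-1}}$. So the twist relation \eqref{eq:otherTwist} is really used at this step: it is this curl that must be the identity, not a spurious $\apiv^2$. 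For the pairing-to-trace direction no $\apiv$ is needed, and your ``$f=\Id$'' should be corrected to $z=\alpha\otimes x\otimes x^*$, $f=\Id_\alpha\otimes\rcoev_x$, $g=(\Id_{\alpha^{-1}}\otimes\rev_{x^*})\circ(h\otimes\Id_{x^*})$. This is exactly the paper's $\mt^\varphi$. The right partial trace property then follows from dinaturality along $\Id_{\alpha\otimes x}\otimes\rcoev_y\otimes\Id_{x^*}$ together with $\rcoev_{x\otimes y}=(\Id_x\otimes\rcoev_y\otimes\Id_{x^*})\circ\rcoev_x$, not from the construction ``only using the morphism through $\alpha$''. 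Your sphere argument is at the paper's level of detail. In this closure the outermost loop is the left tensor factor. Pushing it over the puncture and sliding it across both critical points produces the left closure with $\apiv$-boxes from Definition \ref{def:modifiedtrace}.
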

\begin{proof}
    Let $\varphi\in \SN_\II^\alpha( S^2\smallsetminus\ast)^*$ be a linear form, we define the associated modified trace $\mt^\varphi$ by setting 
\begin{equation*}
    \mt^\varphi_x(f):= \varphi \left(
    \begin{tikzpicture}[scale = 0.7, baseline = 0pt]
    \clip (0,0) circle(2);
    \def\lines{8}
    \foreach \y in {1,...,\lines}{
    \draw[gray!70] (0,\y/\lines) arc(-90:270:{20/(\y*\y)});
    }
    \draw[gray!70] (-2,0) --(2,0);
    \foreach \y in {-1,...,-\lines}{
    \draw[gray!70] (0,\y/\lines) arc(90:540:{20/(\y*\y)});
    }
    \node[star, star point ratio=2.25, fill=black, inner sep = 1pt] at (0,1.3) {};
    \node[star, star point ratio=2.25, fill=black, inner sep = 1pt] at (0,-1.3) {};
    \draw[very thick] (0.9,-0.9) to[out = 135, in = -135]node[pos = 0.1, left]{$x$} node[pos = 0.15, sloped]{$<$}node[pos = 0.9, above]{$x^{**}$} node[pos = 0.9, sloped]{$>$}(0.9,0.9)node{\small $\bullet$} to[out = -45, in = 45] (0.9,-0.9)node{\small $\bullet$};
    \draw[very thick] (0,-1.3) -- (0,1.3) node[pos = 0.2, left]{$\alpha$} node[pos = 0.2, sloped]{$>$} node[pos = 0.5, rectangle, draw, fill=white, minimum width = 0.9cm, right = -6pt]{$f$} node[pos = 0.8, left]{$\alpha^{-1}$} node[pos = 0.8, sloped]{$>$};
    \end{tikzpicture}
    \right)\ , \quad f\in \Hom_\BB(\alpha\otimes x, \alpha^{-1}\otimes x^{**})
\end{equation*}
It satisfies the cyclicity and partial trace properties as they hold at the level of string nets.

Now let $\mt$ be an $\alpha$-twisted modified trace. We define the linear form $\varphi_t\in \SN_\II^\alpha( S^2\smallsetminus\ast)^*$ as follows. We use Proposition \ref{prop:coendFormulaForS2} to present $\SN_\II^\alpha( S^2\smallsetminus\ast)$ as a quotient of the direct sum of $\Hom_\BB(\alpha, x) \otimes \Hom_\BB(x,\alpha^{-1})$ over every $x\in\II$, and we set 
\begin{equation*}
    \varphi_{\mt}(f \otimes g) := \mt_{{}^*x}
    \left(
\begin{tikzpicture}[xscale = 2.5,yscale = 2.5, baseline = 1.1cm]
    \draw[very thick] (0,0) to[out = 90, in = -135] node[pos = 0.5, rectangle, draw, fill=white, inner sep = 1pt, minimum width = 0.5cm]{\small $f$} node[pos = 0.1, sloped]{\small $>$}node[pos = 0.1, left]{\small $\alpha$}  node[pos = 0.9, sloped]{\small $>$}node[pos = 0.9, left]{\small $x$} (0.2,0.4) node{\small $\bullet$} to[out = -45, in = 90] node[pos = 0.5, sloped]{\small $<$} node[midway, right]{${}^*x$} (0.4,0) ;
    \draw[very thick] (0,1) to[out = -90, in = 135] node[pos = 0.5, rectangle, draw, fill=white, minimum width = 0.5cm]{\small $g$} node[pos = 0.1, sloped]{\small $<$}node[pos = 0.1, left]{\small $\alpha^{-1}$}  node[pos = 0.85, sloped]{\small $<$}node[pos = 0.9, left]{\small $x$} (0.2,0.6) node{\small $\bullet$} to[out = 45, in = -90]node[pos = 0.5, sloped]{\small $>$} node[midway, right]{$x^*$} (0.4,1) ;
\end{tikzpicture}
        \right) 
        \ , \quad f\in \Hom_\BB(\alpha, x) \ ,\ g \in \Hom_\BB(x,\alpha^{-1})
\end{equation*}
It satisfies the coend relations, i.e. is a well-defined linear form on $\SN_\II^\alpha( S^2\smallsetminus\ast)$, by cyclicity of modified traces.

It is straightforward to check that these two maps are inverse to each other using snake relations for one composition and cyclicity and partial trace relations for the other. 

One can check that under this isomorphism, the additional coend relations on $\SN_\II^\alpha( S^2)$ from Proposition \ref{prop:coendFormulaForS2} correspond to the sphericality condition of Definition \ref{def:modifiedtrace}. 
\end{proof}


\subsection{Understanding the Mapping Class Group action}
As a consequence of Theorem \ref{thm:main_thm}, we obtain representations of mapping class groups of surfaces. Let us spell out here what these representations are and how to compute them. We will investigate more explicit formulas and computations in future work.

\begin{definition}
    Let $\Sigma$ be a compact oriented surface and $\operatorname{MCG}(\Sigma)$ be the group of isotopy classes of diffeomorphisms of $\Sigma$ which are the identity on the boundary. Choose an arbitrary excellent Morse function $f$ on $\Sigma$ such that the boundary components are level sets, so we may view $\Sigma$ as a foliated surface.
    For any labeling $X \in \SN_\II(\partial \Sigma)$, we have an representation $$\rho_{\Sigma,X}:\operatorname{MCG}(\Sigma)\to \End(\SN_\II^\alpha(\Sigma;X))$$
    mapping a diffeomorphism $\Phi$ to the $X$-component of the natural transformation $\orSN_\II^\alpha(\Phi)$.
    Different choices for $f$ will induce conjugate representations.
\end{definition}
    Given a diffeomoprhism $\Phi \in \operatorname{MCG}(\Sigma)$, the action is computed as follows. First $\Phi$ induces a foliation-preserving diffeomoprhism $\Phi: (\Sigma,f) \to (\Sigma, f\circ \Phi)$ between $\Sigma$ with its chosen foliation and $\Sigma$ with foliation transported under $\Phi$. This diffeomorphism acts on twisted string nets in a natural way. Then, one can choose a generic path of functions between $f$ and $f\circ \Phi$, which will only have cusp and crossing singularities. At each of these singularities, one needs to apply one of the maps defined in Section \ref{subsec:generators}. 


\section{Examples}\label{sec:Examples}
We will discuss examples coming from Hopf algebras below, which allow for explicit computations and on which the literature is more abundant.

We give examples where there are no pivotal structures but there are twisted ones, showing that our construction allows to study a strictly larger class of examples. 

We also give examples of non-unimodular categories (for which pivotal structure may exist but which have no hope of extending to 3d because the string net module of the 2-sphere is trivial) which admit twisted pivotal structures for which the twisted string net modules of the 2-sphere is non-zero. We expect that for such examples, the twisted string net skein theory can be extended to a non-compact 3-TQFT.

\subsection{Pivotal tensor categories}
When the category $\BB$ actually comes equipped with a (non-twisted) pivotal structures, we have an alternative description of twisted pivotal structures in terms of the Drinfeld center.
\begin{lemma}
Let $\BB$ be a pivotal tensor category. Then $\alpha$-twisted pivotal structures on $\BB$ are equivalent to invertible elements of $\ZZ(\BB)$ with trivial twist.
\end{lemma}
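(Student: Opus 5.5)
Let $\mathsf{q}:(-)^{**}\simeq(-)$ denote the given pivotal structure. The plan is to convert $\apiv$ into a half-braiding on $\alpha$ by composing with $\mathsf{q}^{-1}$, so that the monoidal compatibility of $\apiv$ becomes the hexagon axiom and the twist relation becomes triviality of the twist.

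\textbf{Step 1: the bijection.} Given an $\alpha$-twisted quasi-pivotal structure $\apiv$, define
$$c_x:=\apiv_x\circ(\mathsf{q}_x^{-1}\otimes\Id_\alpha): x\otimes\alpha\to\alpha\otimes x .$$
Since $\mathsf{q}$ is a monoidal natural isomorphism, the monoidality condition of Definition \ref{def:alphaTwistedQuasiPivStr} for $\apiv$ translates exactly into the hexagon axiom for $c$. The unit condition $\apiv_\unit=can$ translates into $c_\unit=\Id$. Conversely, a half-braiding $c$ on an invertible $\alpha$ gives $\apiv_x:=c_x\circ(\mathsf{q}_x\otimes\Id_\alpha)$. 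These assignments are inverse to each other and natural in $x$. So $\alpha$-twisted quasi-pivotal structures correspond to lifts of $\alpha$ to an object $(\alpha,c)\in\ZZ(\BB)$; this is the observation of \cite{ShimizuPivotalOnDrinfeld} specialized to the pivotal case. Invertibility of $(\alpha,c)$ in $\ZZ(\BB)$ is automatic, since the dual of $\alpha$ inherits the dual half-braiding.

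\textbf{Step 2: the twist relation.} $\ZZ(\BB)$ inherits the pivotal structure $\mathsf{q}$ and is braided, so it has a canonical twist $\theta$. For an invertible object, $\theta_{(\alpha,c)}$ is a scalar multiple of $\Id_\alpha$. It can be computed as the partial trace, with respect to $\mathsf{q}$, of the self-braiding $c_\alpha:\alpha\otimes\alpha\to\alpha\otimes\alpha$. I will rewrite the third equivalent form of the twist relation in \eqref{eq:otherTwist}, namely $\apiv_\alpha=\Id_{\alpha\otimes\alpha}$ after identifying $\alpha^{**}\simeq\alpha$ via the canonical identification used in the Notation. Under Step 1 this reads $c_\alpha=\mathsf{q}_\alpha^{-1}\otimes\Id$ composed with that canonical identification. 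Closing up one strand with $\mathsf{q}$-twisted (co)evaluations then shows this is equivalent to $\theta_{(\alpha,c)}=\Id$, using that $\alpha\otimes-$ is an equivalence to cancel the remaining strand.

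\textbf{Main obstacle.} The delicate point is Step 2. The identification ${}^*\alpha=\alpha^*$ via $\rcoev_\alpha^{-1},\rev_\alpha^{-1}$ need not agree with the identification induced by $\mathsf{q}$, and their ratio is a scalar, namely the quantum dimension of $\alpha$ with respect to $\mathsf{q}$. I must check that this scalar cancels, that is, that the twist relation (rather than merely $c_\alpha=\pm\Id$ up to $\dim\alpha$) exactly matches $\theta=1$. I would first verify this on the graphical form \eqref{eq:twistrelation} directly, inserting $\mathsf{q}$ and using the snake relations and \eqref{eq:alpha-inv-rels}. I would sanity-check the computation in the case $\BB=\Vect_G^\omega$ before writing the general argument.
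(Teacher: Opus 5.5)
Your Step 1 is exactly the paper's bijection: the paper writes $\gamma^{\apiv}_x=\apiv_x\circ(j_x\otimes\Id_\alpha)$ with $j=\mathsf q^{-1}$, and the overall route is the same. The gap is Step 2, which you leave as an unverified ``main obstacle''. Your claim that closing up one strand of $c_\alpha$ turns the twist relation into $\theta_{(\alpha,c)}=\Id$ holds for only one of the two possible closures, and you never say which one defines $\theta$. In a non-spherical pivotal category, the left and right partial traces of the self-braiding differ on invertible objects. So the scalar you worry about cancels for one closure and not for the other.

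Concretely, write $\phi:\alpha\to\alpha^{**}$ for the canonical identification, $c_\alpha=\lambda\,\Id_{\alpha\otimes\alpha}$, and $\mathsf q_\alpha^{-1}=d\,\phi$.

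\emph{The twist relation.} Your own rewriting of the third form of \eqref{eq:otherTwist} says the twist relation holds if and only if $\lambda=d$.

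\emph{The two dimensions.} We have $\rev_{\alpha^*}\circ(\phi\otimes\Id_{\alpha^*})=\rcoev_\alpha^{-1}$ and $(\Id_{\alpha^*}\otimes\phi^{-1})\circ\rcoev_{\alpha^*}=\rev_\alpha^{-1}$. Hence $\rev_{\alpha^*}\circ(\mathsf q_\alpha^{-1}\otimes\Id)\circ\rcoev_\alpha=d$ and $\rev_\alpha\circ(\Id\otimes\mathsf q_\alpha)\circ\rcoev_{\alpha^*}=d^{-1}$.

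\emph{Left closure.} This is precisely the diagram in the paper's proof: $\rcoev_{\alpha^*}$, then $\mathsf q_\alpha=j_\alpha^{-1}$, then $c_\alpha$, then $\rev_\alpha$. It gives $\lambda d^{-1}\,\Id_\alpha$. This is the identity if and only if $\lambda=d$, i.e.\ if and only if the twist relation holds.

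\emph{Right closure.} It gives $\lambda d\,\Id_\alpha$, which under the twist relation equals $d^2\,\Id_\alpha$. This need not be the identity. Take $\Vect_{\Zz/3}$ with the pivotal structure given by a character sending the generator $z$ to a primitive cube root of unity $\omega$. Take $\alpha=z$ with the twisted pivotal structure given by the canonical identifications $g^{**}\otimes z=z\otimes g$. The twist relation holds, but the right-closed twist is $\omega^2\neq1$.

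So you must fix the left-closure convention for the twist of $\ZZ(\BB)$. The two closures agree only when $d^2=1$, for example when $\BB$ is spherical. With that choice made, your computation completes the argument, and in fact supplies the verification that the paper only asserts through its diagram.
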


\begin{proof}
Let $\BB$ be pivotal with pivotal isomorphism $j_{x}:x\to x^{\ast\ast}$. Then $\alpha$-twisted quasi-pivotal structures on $\BB$ are equivalent to invertible elements of $\ZZ(\BB)$ by the map $(\alpha,\apiv) \mapsto (\alpha, \gamma^{\apiv})$ where 
\begin{align*}
    \gamma^{\apiv}_{x} = \apiv_{x} \circ (j_{x} \otimes \Id_{\alpha}).
\end{align*}
Under this bijection, the twist relation for an $\alpha$-twisted pivotal structure becomes
\begin{equation}
    \begin{tikzpicture}[baseline=1cm]
        \draw (0,-0.5) -- (0,2.5) node[pos = 0.5, sloped]{$>$} node[pos = 0.5, left]{$\alpha^{\ast}$};
        \draw (1,-0.5) -- (1,0.5) node[pos = 0.5, sloped]{$>$} node[pos = 0.5, right]{$\alpha^{\ast\ast}$};
        \draw (1,0.5) -- (1,1.5) node[pos = 0.5, sloped]{$>$} node[pos = 0.5, right]{$\alpha$};
        \draw (1,1.5) -- (1,2.5) node[pos = 0.5, sloped]{$>$} node[pos = 0.5, right]{$\alpha$};
        \draw (2,-1) -- (2,1.5) node[pos = 0.5, sloped]{$>$} node[pos = 0.5, right]{$\alpha$};
        \draw (2,1.5) -- (2,3) node[pos = 0.5, sloped]{$>$} node[pos = 0.5, right]{$\alpha$};
        \node[rectangle, draw, fill=white, minimum width=1.5cm] at (0.5,-0.5) {$\rcoev$};
        \node[rectangle, draw, fill=white, minimum width=1.5cm] at (0.5,2.5) {$\rev$};
        \node[rectangle, draw, fill=white, minimum width=1.5cm] at (1.5,1.5) {$\gamma^{\apiv}_{\alpha}$};
        \node[rectangle, draw, fill=white, minimum width=0.5cm] at (1,0.5) {$j_{\alpha}^{-1}$};
    \end{tikzpicture}
    \qquad = \qquad
    \begin{tikzpicture}[baseline=1cm]
        \draw (0,-1) -- (0,3) node[pos = 0.5, sloped]{$>$} node[pos = 0.5, right]{$\alpha$};
    \end{tikzpicture}
\end{equation}
which is the condition that $(\alpha,\gamma^{\apiv}) \in \ZZ(\BB)$ has trivial twist.
\end{proof}

This allows us to classify twisted pivotal structures whenever the Drinfeld center in well-understood.

\begin{example}
Let $G$ be a finite group and let $\Vect_G$ denote the fusion category of
finite-dimensional $G$-graded $\Bbbk$-vector-spaces
\[
V = \bigoplus_{g \in G} V_g\ , \quad\text{ with tensor product } \quad (V \otimes W)_g
=
\bigoplus_{h\cdot k = g} V_h \otimes W_k.
\]
with strict associativity.

The Drinfeld center of $\Vect_{G}$ admits a concrete description as the category of $G$-graded vector spaces
together with a $G$-action compatible with conjugation. An object of $\ZZ(\Vect_G)$ is then a $G$-graded vector space
equipped with an action of $G$ such that
\[
h \cdot V_g \subseteq V_{h g h^{-1}}
\qquad \text{for all } g,h \in G.
\]

The simple objects of $\ZZ(\Vect_G)$ are classified by pairs $(C,\rho)$,
where $C$ is a conjugacy class in $G$ and $\rho$ is an irreducible
representation of the centralizer $Z_G(g)$ for any $g \in C$. The invertible objects of $\ZZ(\Vect_G)$ are classified by a pair $(z,\chi)$ where $z\in Z(G)$ is a central element in $G$ and $\chi:G \to k^{\times}$ is a character for $G$. The twist of $(z,\chi)$ is given by $\chi(z)\Id$, which is trivial if and only if $\chi(z) = 1$.

We can see that $\SN_{\Vect_G}^{(z,\chi)}(S^2 \smallsetminus \ast)$ is non-zero if and only if $z^2 = 1$ and $\SN_{\Vect_G}^{(z,\chi)}(S^2)$ is non-zero if and only if we also have $\chi^2 = 1$. 


\end{example}

\subsection{Pairs in involution}
The following definition can be found for example in \cite{HalbigGenTaft} (with appropriate inversions coming from different conventions for pivotal structures).
\begin{definition}
    Let $H$ be a finite-dimensional Hopf algebra. A \textit{pair in involution} in $H$ is a pair $(g,\alpha)$ of a group-like element $g\in H$ and an invertible module $\alpha:H\to \Bbbk$ satisfying 
    \begin{equation}\label{eq:pairininvolution}
        S^2h = \alpha(h_{(1)}) \alpha^{-1}(h_{(3)})\, g^{-1}\, h_{(2)}\, g
    \end{equation}
    for every $h\in H$. It is called \textit{modular} if $\alpha(g)=1$.
\end{definition}
Let $\BB = H{-}mod$ be the rigid monoidal category of finite-dimensional left $H$-modules.
Recall that duality in $\BB$ is given by duals as vector spaces with action twisted by $S$ for $V^*$ and by $S^{-1}$ for ${}^*V$. Evaluation and coevaluation are given by those in vector spaces. 
Below, we identify $V$ and $V^{**}$ as vector spaces and we write $v^{**} \in V^{**}$ corresponding to $v\in V$.
The following can essentially be extracted from \cite{HalbigGenTaft}.
\begin{proposition}\label{prop:piiistwistedpivotal}
    Twisted quasi-pivotal structures on $\BB$ are in correspondence with pairs in involutions in $H$ by 
    $$(g, \alpha) \longleftrightarrow \left[ \apiv^{g, \alpha}_V: \begin{array}{rcl}
        V^{**} \otimes \alpha & \to & \alpha\otimes V  \\
       v^{**} \otimes 1  & \mapsto & 1\otimes g\cdot v
    \end{array} \right]$$
    Moreover, $\apiv^{g,\alpha}$ is twisted pivotal if and only if $(g, \alpha)$ is modular. 
\end{proposition}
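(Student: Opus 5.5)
We first classify invertible objects and natural transformations concretely. Invertible objects of $\BB=H\text{-}mod$ are one-dimensional modules, i.e.\ algebra maps $\alpha:H\to\Bbbk$, which we denote by the same letter. A natural isomorphism $\apiv:(-)^{**}\otimes\alpha\simeq\alpha\otimes(-)$ is in particular a natural isomorphism of underlying functors to $\Vect$. Identifying $V^{**}\otimes\alpha$ and $\alpha\otimes V$ with $V$ as vector spaces, it is therefore given by action of an invertible element $g\in H$. This uses that $H$ is the endomorphism algebra of the forgetful functor, so a natural family of linear maps $V\to V$ is left multiplication by a unique $u\in H$, evaluated on $H$ itself at $1$.

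The $H$-linearity of $v^{**}\otimes1\mapsto1\otimes g v$ is the next condition to compute. On the source, $h$ acts by $\alpha(h_{(2)})\,S^2(h_{(1)})$; on the target it acts by $\alpha(h_{(1)})\,h_{(2)}$. Linearity for all $V$ reads $g\,\alpha(h_{(2)})S^2(h_{(1)})=\alpha(h_{(1)})h_{(2)}g$. Convolving with $\alpha^{-1}=\alpha\circ S$ and using that $\alpha$ is an algebra map, we rewrite this as \eqref{eq:pairininvolution}, up to the convention $g$ versus $g^{-1}$ and the placement of $\alpha$ versus $\alpha^{-1}$. We will fix these conventions so that they match the stated formula.

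Monoidality of $\apiv$ then amounts to two conditions. First, $\apiv_\unit$ is the canonical map, which forces $\varepsilon(g)=1$. Second, $\apiv_{V\otimes W}$ equals the composite in Definition \ref{def:alphaTwistedQuasiPivStr}. Under the identification $(V\otimes W)^{**}\simeq V^{**}\otimes W^{**}$, the composite acts by $(g\otimes g)$ on $v\otimes w$, with $\alpha$ passed through trivially as a one-dimensional vector space. The single map instead acts by $\Delta(g)$, so the condition is $\Delta(g)=g\otimes g$, meaning $g$ is group-like. Conversely, a pair in involution defines such a $\apiv$, and the two assignments are clearly inverse.

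For the twist relation, the plan is to evaluate \eqref{eq:twistrelation} directly on $\alpha$. All objects involved are one-dimensional, so $\apiv_{{}^*\alpha}$ is multiplication by the scalar by which $g$ acts on ${}^*\alpha=\alpha^{-1}$, namely $\alpha^{-1}(g)=\alpha(g)^{-1}$. The coevaluation and evaluation are the vector space ones, so their composite around the loop contributes $1$. The relation thus becomes $\alpha(g)^{\pm1}=1$, i.e.\ $\alpha(g)=1$, which is modularity.

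The main obstacle is bookkeeping. We must get the conventions for $V^*$ (via $S$) and for $V^{**}$ (via $S^2$) right, since these determine on which side $g$ and $g^{-1}$ appear in \eqref{eq:pairininvolution}. Otherwise the argument is a routine Sweedler-notation check.
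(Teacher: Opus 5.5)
Your proposal is correct and follows essentially the same route as the paper. Naturality pins $\apiv$ down as the action of a single element $g$; your appeal to $H$ being the endomorphism algebra of the forgetful functor is exactly the paper's argument with the regular module and the maps $r_h$. Then $H$-linearity is equivalent to \eqref{eq:pairininvolution}, monoidality makes $g$ group-like, and the twist evaluates to the scalar $\alpha^{-1}(g)$.

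Your hedge about conventions is unnecessary. Convolving your identity $g\,\alpha(h_{(2)})S^2(h_{(1)})=\alpha(h_{(1)})h_{(2)}g$ with $\alpha^{-1}$ yields \eqref{eq:pairininvolution} exactly, with no adjustment of $g$ versus $g^{-1}$ or $\alpha$ versus $\alpha^{-1}$.
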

\begin{proof}
First, given a pair in involution $(g,\alpha)$, let us check that the definition of $\apiv^{g,\alpha}$ above is a twisted quasi-pivotal structure. It is a map of $H$-modules by \eqref{eq:pairininvolution}. It is natural because a map of $H$-modules satisfies $f(g\cdot h) = g\cdot f(h)$. It is compatible with the monoidal structure because $g$ is group-like. The twist is easy to compute as every evaluation and coevaluation is the standard one in vector spaces, and is indeed given by the scalar $\alpha^{-1}(g)$ times the identity. As $g$ is group-like, $\alpha^{-1}(g) = (\alpha(g))^{-1}$ is equal to 1 if and only if $\alpha(g)=1$.

    Now, let $\apiv$ be an $\alpha$-twisted quasi-pivotal structure. We evaluate $\apiv$ on the regular module $H$ and define $g$ as the image of the unit $1\in H$, identifying $\alpha\simeq\Bbbk$ and $H^{**}\simeq H$ as vector spaces:
    $$\begin{array}{rcl}
        \apiv_H: H^{**} \otimes \alpha & \to & \alpha\otimes H  \\
       1^{**} \otimes 1  & \mapsto & 1\otimes g
    \end{array} $$
For every $h\in H$, there is a unique $H$-module endomorphism $r_h:H\to H$ mapping $1_H$ to $h$. Naturality of $\apiv$ implies that 
$$\apiv_H(h^{**}\otimes 1) = \apiv_H \circ (r_h^{**}\otimes \Id_\alpha)(1^{**}\otimes 1) = (\Id_\alpha\otimes r_h)\circ \apiv_H(1^{**}\otimes 1)=1 \otimes gh \ .$$
Hence $\apiv_H$ is of the form claimed in the statement. The fact that this is a map of $H$-comodules implies precisely that $(g, \alpha)$ satisfies \eqref{eq:pairininvolution}. This is easier to see using the duality to turn $\apiv_H$ into a map $H^{**} \to \alpha \otimes H \otimes \alpha^{-1}$.
For any module $V$ and $v\in V$, there is also a unique map $H\to V$ sending 1 to $v$ and we conclude that $\apiv_V$ must be of the form claimed. Compatibility of $\apiv$ with the monoidal structure implies that $g$ is group-like. Hence $(g, \alpha)$ is a pair in involution.
\end{proof}

\subsection{The Taft algebra}
\begin{definition}
    Let $N\geq 1$ be an integer and $q \in \mathbb C^\times$ a primitive $N$-th root of unity. The Taft algebra is
    $$H_N = \left\langle g,x \vert g^N=1, x^N=0, gx=qxg \right\rangle$$
    with Hopf algebra structure
$$\Delta g = g\otimes g \ , \quad \Delta x = 1\otimes x + x \otimes g\ , \quad Sg = g^{-1} \ , \quad Sx = -xg^{-1}\ .$$
We denote $\BB = H_N{-}mod$ its category of finite-dimensional modules which is rigid monoidal and $\II \subseteq \BB$ the tensor ideal of projective objects.
\end{definition}
\noindent
Its simple modules are the invertible modules of weight $k  \in \Zz_{/N}$:
$$T_k = \mathbb C\langle t_k \rangle, \ g\cdot t_k = q^k t_k, \ x\cdot t_k = 0$$
Its group-like elements are the powers of $g$, $g^k$ for $k \in \Zz_{/N}$. 
Its indecomposable modules are the modules of highest weight $k \in \Zz_{/N}$ and order of $x$ being $\ell \leq N$. It has dimension $\ell$.
$$V_{k,\ell} = \mathbb C\langle v_k, v_{k+1},\dots,v_{k+\ell-1}\rangle,\ g\cdot v_i = q^i v_i, \ x\cdot v_i = v_{i+1}, \ x\cdot v_{k+\ell-1} = 0\ .$$
Its indecomposable projective modules are the highest-dimensional $P_k = V_{k,N}$.
The distinguished invertible object is the socle of $P_0$, namely 
$$D = T_{N-1}$$
The double dual $V_{k,\ell}^{**}$ is canonically isomorphic to $V_{k,\ell}$ as a vector space, but the action is now twisted by the square of the antipode, namely $g\cdot v_i^{**} = q^iv_i^{**}$ and $x\cdot v_i^{**} = qv_{i+1}^{**}$.
\begin{proposition}
    Let $c \in \Zz_{/N}$ and $\alpha = T_c$, then there exists a unique $T_c$-twisted quasi-pivotal structure on $\BB$ given by 
    $$\apiv_c:\begin{array}{rcl}
     V_{k,\ell}^{**} \otimes T_c &\to& T_c \otimes V_{k,\ell} \\
       v_i^{**} \otimes t_c &\mapsto  & q^{-(c+1)i} t_c\otimes v_i 
    \end{array}$$
    It is a $T_c$-twisted pivotal structure (i.e. satisfies the twist relation) if and only if $$c(c+1) \equiv 0 \ \ (\text{mod }N) \ .$$
\end{proposition}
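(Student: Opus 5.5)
The plan is to reduce everything to Proposition \ref{prop:piiistwistedpivotal}. That result identifies $T_c$-twisted quasi-pivotal structures on $\BB = H_N{-}mod$ with pairs in involution $(\gamma,\alpha)$, where $\alpha$ is the character of $T_c$, namely $\alpha(g)=q^c$ and $\alpha(x)=0$. The structure attached to such a pair is $v^{**}\otimes t_c\mapsto t_c\otimes \gamma\cdot v$. The group-likes of $H_N$ are exactly the powers $g^m$ with $m\in\Zz_{/N}$, so $\gamma=g^m$ and the task is to determine which $m$ satisfy \eqref{eq:pairininvolution}, and then which of these are modular.

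\textbf{Reducing to generators.} Both sides of \eqref{eq:pairininvolution} are algebra automorphisms of $H_N$ in $h$. On the left this is $S^2$. On the right it is conjugation by $g^m$ composed with the twist $h\mapsto \alpha(h_{(1)})\,h_{(2)}\,\alpha^{-1}(h_{(3)})$ by characters, which is an automorphism. It therefore suffices to check the identity on $g$ and $x$.

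\begin{itemize}
\item On $g$, both sides equal $g$, because $g$ is group-like and central in the relevant sense for this identity.
\item On $x$, we compute $S^2x = S(-xg^{-1}) = gxg^{-1} = qx$. For the right-hand side we use $\Delta^{(2)}x = 1\otimes1\otimes x + 1\otimes x\otimes g + x\otimes g\otimes g$. Since $\alpha(x)=0=\alpha^{-1}(x)$, only the middle term survives. It contributes $\alpha^{-1}(g)\, g^{-m}xg^m = q^{-c}q^{-m}x$.
\end{itemize}

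Hence \eqref{eq:pairininvolution} holds if and only if $q=q^{-c-m}$, that is $m\equiv-(c+1)$, because $q$ is a primitive $N$-th root of unity. This $m$ is unique, and the structure it produces sends $v_i^{**}\otimes t_c$ to $t_c\otimes g^{-(c+1)}v_i = q^{-(c+1)i}\,t_c\otimes v_i$, which is the claimed formula. Uniqueness of the twisted quasi-pivotal structure then follows from the bijection in Proposition \ref{prop:piiistwistedpivotal}.

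\textbf{The twist relation.} By the same proposition, the twist relation holds exactly when the pair is modular, i.e. $\alpha(g^m)=1$. Here $\alpha(g^{-(c+1)}) = q^{-c(c+1)}$. Since $q$ is primitive of order $N$, this equals $1$ if and only if $c(c+1)\equiv 0 \pmod N$.

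The only step where I expect difficulty is bookkeeping of conventions. One must check that the character in the pair is $\alpha$ and not $\alpha^{-1}$, and that $g^{-1}xg=q^{-1}x$ rather than $qx$. A sign slip in either place would change $m$ to something like $c-1$, and the modularity condition would then no longer be symmetric in the right way. I would settle this by directly verifying that the stated map $v_i^{**}\otimes t_c\mapsto q^{-(c+1)i}t_c\otimes v_i$ intertwines the $x$-action. On the left, $x$ acts on $V^{**}\otimes T_c$ via $\Delta x$: it gives $x\cdot v_i^{**}\otimes g\cdot t_c = q^{1+c}v_{i+1}^{**}\otimes t_c$. On the right, $x$ acts on $T_c\otimes V$ as $t_c\otimes x v_i$. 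Comparing scalars should recover exactly the exponent $-(c+1)$, which confirms the conventions.
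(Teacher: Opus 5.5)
Your proof is correct and follows essentially the paper's route. Both reduce via Proposition \ref{prop:piiistwistedpivotal} to a single group-like $g^d$, pin down $d\equiv-(c+1)$ using the generator $x$, and read off the twist scalar as $q^{\pm c(c+1)}$. The only cosmetic difference is that the paper finds $d$ by your final sanity check, comparing $x\cdot(v_i^{**}\otimes t_c)=q^{c+1}v_{i+1}^{**}\otimes t_c$ with $x\cdot(t_c\otimes v_i)=t_c\otimes v_{i+1}$, rather than by evaluating \eqref{eq:pairininvolution} on generators; the two computations agree.
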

\begin{proof}
By Proposition \ref{prop:piiistwistedpivotal}, any $\apiv$ must be of the form $ v_i^{**} \otimes t_c \mapsto   q^{di} t_c\otimes v_i $ for some $d \in \Zz_{/N}$. Now on the LHS, we have $$x \cdot (v_i^{**} \otimes t_c) = (x\cdot v_i^{**})\otimes (g\cdot t_c) = q^{c+1} v_{i+1}^{**} \otimes t_c$$
    whereas, on the RHS we have $x \cdot (t_c\otimes v_i) =t_c\otimes v_{i+1}$, so must have $c+1+d(i+1) \equiv di$, i.e. $d = -(c+1)$ as claimed.
The twist is given by the scalar $q^{c(c+1)}$ times the identity.
\end{proof}
\begin{proposition}
    Assume $N>1$, then for any choice of $\alpha$-twisted pivotal structure $\apiv$ on $\BB$, we have 
    $$\SN_\II^\alpha(S^2 \smallsetminus \ast) = 0 = \SN_\II^\alpha(S^2) \ .$$
\end{proposition}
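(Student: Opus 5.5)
The plan is to reduce to Corollary \ref{cor:dimSNS2}, point 1. The Taft category $\BB = H_N{-}mod$ is a finite tensor category with $\II=\Proj$. By the previous proposition, every twisted pivotal structure has $\alpha = T_c$ with $c(c+1)\equiv 0 \pmod N$, together with its unique structure $\apiv_c$. By the corollary, $\SN_\II^\alpha(S^2\smallsetminus\ast)\neq 0$ if and only if $\alpha^{\otimes 2}\simeq D = T_{N-1}$. Since $T_c\otimes T_c \simeq T_{2c}$ (weights add because $g$ is group-like), the punctured sphere is nonzero exactly when $2c\equiv -1 \pmod N$.

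So it suffices to show that the two congruences $c(c+1)\equiv 0$ and $2c+1\equiv 0 \pmod N$ are incompatible for $N>1$. From $2c\equiv -1$ we get that $2$ is invertible mod $N$, so $N$ is odd. Multiplying $c(c+1)\equiv 0$ by $4$ gives
\[
0\equiv 4c^2+4c = (2c+1)^2-1 \equiv -1 \pmod N,
\]
so $N\mid 1$, contradicting $N>1$. Hence $\SN_\II^\alpha(S^2\smallsetminus\ast)=0$.

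For the unpunctured sphere, Proposition \ref{prop:coendFormulaForS2} exhibits $\SN_\II^\alpha(S^2)$ as a quotient of $\SN_\II^\alpha(S^2\smallsetminus\ast)$, so it also vanishes. (Alternatively, point 2 of Corollary \ref{cor:dimSNS2} applies directly, since $\BB$ is finite.)

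The main thing I would double-check is the identification $D=T_{N-1}$ under the paper's conventions: the socle of $P_0=V_{0,N}$ is spanned by $v_{N-1}$, of weight $q^{N-1}$, which matches the value stated in the text. I would also confirm that $T_c^{\otimes 2}\simeq T_{2c}$ as an object. Only the isomorphism class matters here, since point 1 of the corollary only asks for $\alpha^{\otimes 2}\simeq D$. With those two facts in place, the rest is the elementary arithmetic above.
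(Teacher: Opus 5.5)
Your proposal is correct and follows the paper's argument. You use that $\alpha=T_c$ carries a unique quasi-pivotal structure, which is twisted pivotal iff $c(c+1)\equiv 0$, and Corollary~\ref{cor:dimSNS2} to reduce everything to showing that $c(c+1)\equiv 0$ and $2c\equiv -1 \pmod N$ are incompatible. The only difference is cosmetic arithmetic: the paper uses $2c\equiv -1$ to make $c$ invertible and force $c\equiv -1$, hence $-2\equiv -1$, whereas you complete the square to get $0\equiv (2c+1)^2-1\equiv -1$.
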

\begin{proof}
Let $\alpha = T_c$ be any invertible object. By the proposition above, there is only one $\alpha$-twisted quasi-pivotal structure, and for it to be twisted pivotal we must have $c(c+1)\equiv 0$. Now, by Corollary \ref{cor:dimSNS2}, $\SN_\II^\alpha(S^2 \smallsetminus \ast)$ is zero unless $2c \equiv N-1 \equiv -1$. In particular, $c$ is invertible in $\Zz_{/N}$, so the equation above implies $c \equiv -1$, hence $2c \equiv -2 \equiv -1$ which contradicts our assumption. No $\alpha$-twisted pivotal structure can have $\alpha^{\otimes 2}\simeq D$, hence $\SN_\II^\alpha(S^2 \smallsetminus \ast)$ and $\SN_\II^\alpha(S^2)$ are zero.
\end{proof}
To see whether twisted pivotal structures can solve the problem of vanishing on $S^2$, we will need to consider more general examples.

\subsection{Generalized Taft algebras of type A1$\times$A1}
The following examples appear in \cite[Thm 3.14]{HalbigGenTaft}, where the author studies in detail modular pairs in involutions in these Hopf algebras, which correspond to twisted quasi-pivotal structures. We recall the main computations.
\begin{definition}
    The generalized Taft algebra of type A1$\times$A1 at level $N\geq 2$ and with parameters $\begin{pmatrix} a_1 & a_2 \\ b_1 &b_2\end{pmatrix}$ such that $a_1b_2+b_1a_2\equiv 0$ (mod $N$) and $a_1a_2 \not\equiv 0,\, b_1b_2\not\equiv 0$, is:
    \begin{equation*}
        H^N_{a_1,a_2,b_1,b_2} = \left\langle g,x,y \vert g^N = 1, x^n =0, y^m=0, gx = q^{a_2}xg, gy = q^{b_2}yg, xy = q^{a_1b_2}yx \right\rangle
    \end{equation*}
    where $n$ and $m$ are the orders of $a_1a_2$ and $b_1b_2$ in $\Zz_{/N}$ and $q$ is a primitive $N$-th root of unity. It has dimension $N\times n \times m$. It has a Hopf algebra structure given by 
    \begin{gather*}
        \Delta g = g\otimes g \ , \quad \Delta x = 1\otimes x + x \otimes g^{a_1}\ , \quad \Delta y = 1\otimes y + y \otimes g^{b_1} \ , \\ Sg = g^{-1} \ , \quad Sx = -xg^{-a_1} \ , \quad Sy = -yg^{-b_1}\ .
    \end{gather*}
We denote $\BB = H^N_{a_1,a_2,b_1,b_2}{-}mod$ its category of finite-dimensional modules which is rigid monoidal and $\II \subseteq \BB$ the tensor ideal of projective objects.
\end{definition}
Its simple modules are the invertible modules of weight $k\in \Zz_{/N}$:
$$T_k = \mathbb C\langle t_k \rangle, \ g\cdot t_k = q^k t_k, \ x\cdot t_k = 0, \ y\cdot t_k = 0$$
Its indecomposable modules are the modules of highest weight $k \in \Zz_{/N}$ and where $x$ and $y$ have order $\ell\leq n$ and $h\leq m$. It has dimension $\ell+h$.
$$V_{k,\ell, h} = \mathbb C\langle (x^iy^jv_{k})_{\substack{0\leq i\leq \ell-1 \\ 0 \leq j \leq h-1}}\rangle,\ g\cdot x^iy^jv_{k} = q^{k+a_2i+b_2j} x^iy^jv_{k}, \ x\cdot x^iy^jv_{k} = x^{i+1}y^jv_{k}, \ y\cdot x^iy^jv_{k} = q^{-ia_1b_2} x^{i}y^{j+1}v_{k}\ .$$
Its indecomposable projective modules are the highest-dimensional $P_k = V_{k,n,m}$.
The distinguished invertible object is the socle of $P_0$, namely 
$$D = T_{a_2(n-1)+b_2(m-1)}$$
On double duals, $x$ acts as $S^2x = q^{a_1a_2}x$ and $y$ as $S^2y = q^{b_1b_2}y$.
\begin{proposition}
Let $c\in \Zz_{/N}$, then $T_c$-twisted quasi-pivotal structures are of the form
 $$\apiv^{c,d}:\begin{array}{rcl}
     V^{**} \otimes T_c &\to& T_c \otimes V \\
       v^{**} \otimes t_c &\mapsto  &  t_c\otimes g^d\cdot v
    \end{array}$$
for some $d \in \Zz_{/N}$ satisfying
\begin{equation}\label{eq:cdgivesquasipiv}
ca_2+da_1+a_1a_2\equiv 0 \quad\text{and}\quad cb_2+db_1+ b_1b_2\equiv 0 \ .
\end{equation}
It is a twisted pivotal structure if and only if
\begin{equation}\label{eq:cdgivespiv}
cd \equiv 0 \ .
\end{equation}
\end{proposition}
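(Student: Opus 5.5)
The plan is to reduce everything to Proposition \ref{prop:piiistwistedpivotal}. The objects $T_c$ exhaust the invertible modules, and twisted quasi-pivotal structures for $\alpha=T_c$ correspond to pairs in involution $(h,\alpha)$ with $h$ group-like. So I would first identify the group-likes of $H^N_{a_1,a_2,b_1,b_2}$ as the powers $g^d$, $d\in\Zz_{/N}$. This is standard for these pointed Hopf algebras: the coradical is spanned by the powers of $g$, so the group-likes are exactly the $g^d$. Every quasi-pivotal structure then has the form $v^{**}\otimes t_c\mapsto t_c\otimes g^d\cdot v$, as stated.

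The next step is to translate condition \eqref{eq:pairininvolution} into the congruences \eqref{eq:cdgivesquasipiv}. Both sides of \eqref{eq:pairininvolution} are algebra maps in $h$: the left side is $S^2$, and the right side is the composite of conjugation by $g^d$ with the twisting by the characters $\alpha^{\pm1}$. So it suffices to check the identity on the generators $g,x,y$. For $g$ it holds trivially. For $x$, with $\Delta^{(2)}x=1\otimes1\otimes x+1\otimes x\otimes g^{a_1}+x\otimes g^{a_1}\otimes g^{a_1}$, and using $\alpha(x)=0$ and $\alpha(g)=q^c$, the right side reduces to $q^{-ca_1}\,g^{-d}xg^{d}$. (The sign and inversion conventions between $g$ versus $g^{-1}$ and $\alpha$ versus $\alpha^{-1}$ need care here; I would calibrate them against the Taft computation above, which gives $d=-(c+1)$ when $a_1=a_2=1$.) Since $g^{-d}xg^{d}=q^{-da_2}x$ and $S^2x=q^{a_1a_2}x$, comparing scalars gives a linear congruence in $c,d$. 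After the calibration, and exploiting the symmetry of roles between $(a_1,a_2)$, it should match $ca_2+da_1+a_1a_2\equiv0$. The check for $y$ is identical with $b$ in place of $a$.

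For the twist relation, Proposition \ref{prop:piiistwistedpivotal} says modularity means $\alpha(g^d)=1$, that is $q^{cd}=1$, that is $cd\equiv0$. Equivalently, the twist scalar is $\alpha^{-1}(g^d)=q^{-cd}$, which one can see directly since all evaluations and coevaluations are the vector space ones.

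I expect the main obstacle to be getting the exponents exactly right in the generator check. The index placement in \eqref{eq:pairininvolution} and the exchange of $a_1$ and $a_2$ are easy to get wrong. I would verify the result against the Taft case ($N$ arbitrary, $a_1=a_2=1$, no $y$), where the congruence must reduce to $c+d+1\equiv0$.
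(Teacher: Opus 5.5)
Your route matches the paper's: reduce via Proposition~\ref{prop:piiistwistedpivotal} to pairs in involution $(g^d,T_c)$, check \eqref{eq:pairininvolution} on $g,x,y$, and read off the twist $\alpha^{-1}(g^d)=q^{-cd}$. Those parts are fine. The gap is the last step of the generator check. Your computation is literally the one displayed in the paper's proof, and it gives $q^{a_1a_2}=q^{-ca_1}q^{-da_2}$, that is
\[
ca_1+da_2+a_1a_2\equiv 0 \quad\text{and}\quad cb_1+db_2+b_1b_2\equiv 0 .
\]
No symmetry exchanging $a_1$ and $a_2$ turns this into \eqref{eq:cdgivesquasipiv}. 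The parameter $a_1$ enters only through the coproduct, producing $\alpha^{-1}(g^{a_1})=q^{-ca_1}$. The parameter $a_2$ enters only through $gx=q^{a_2}xg$, producing $g^{-d}xg^{d}=q^{-da_2}x$. Calibrating against the Taft case cannot detect the swap, since there $a_1=a_2=1$.

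The swap is not cosmetic. Take $N=3$ and $(a_1,a_2,b_1,b_2)=(1,1,1,2)$, the algebra of Example~\ref{ex:noPivotalStructure}.
\begin{itemize}
\item Because $a_1=b_1=1$, one has $S^2x=gxg^{-1}$ and $S^2y=gyg^{-1}$, so $S^2=\operatorname{Ad}_g$.
\item Hence $(c,d)=(0,2)$, i.e.\ $v^{**}\mapsto g^{-1}\cdot v$, is a genuine (even pivotal) structure.
\item It satisfies the congruences above ($0+2+1\equiv 0$ and $0+4+2\equiv 0$).
\item It violates the second congruence of \eqref{eq:cdgivesquasipiv}, since $cb_2+db_1+b_1b_2=4\not\equiv 0$.
\end{itemize}
So with the algebra as defined, the statement as printed cannot be proved. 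The mismatch is already in the paper's proof, which asserts that $q^{-ca_1}q^{-da_2}$ yields \eqref{eq:cdgivesquasipiv}.

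What your argument actually establishes is the displayed version with $ca_1+da_2$ and $cb_1+db_2$. The printed congruences hold only if the roles of $a_1,a_2$ (and of $b_1,b_2$) are exchanged in the defining relations $gx=q^{a_2}xg$ and $\Delta x=1\otimes x+x\otimes g^{a_1}$, and likewise for $y$. You should state and prove that corrected version explicitly rather than appeal to a symmetry that is not there.
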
 
\begin{proof} Let $\alpha = T_c$ which we see as a map $\alpha:H\to \Bbbk$ with $\alpha(g) = q^c$ and $\alpha(x)=\alpha(y)=0$. By Proposition \ref{prop:piiistwistedpivotal}, any $T_c$-twisted quasi-pivotal structure has to be induced by a group-like $g^d$ for some $d \in \Zz_{/N}$, hence be of the form claimed. Moreover, $g^d$ must satisfy \eqref{eq:pairininvolution} for all $h\in H$. This equation is automatic for $h = g$, and gives the two relations of \eqref{eq:cdgivesquasipiv} for $h=x$ and $h=y$ respectively:
    \begin{gather*}
       q^{a_1a_2}x = S^2x = \alpha(x_{(1)}) \alpha^{-1}(x_{(3)})\, g^{-d}\, x_{(2)}\, g^d = 1\cdot q^{-ca_1} \cdot q^{-da_2}x \\
       q^{b_1b_2}y = S^2y = \alpha(y_{(1)}) \alpha^{-1}(y_{(3)})\, g^{-d}\, y_{(2)}\, g^d = 1\cdot q^{-cb_1} \cdot q^{-db_2}y
    \end{gather*}
These three elements generate the Hopf algebra under multiplication, and both sides of \eqref{eq:cdgivesquasipiv} are multiplicative, so it holds for any $h$.
The twist is given by $\alpha^{-1}(g^d) = q^{-cd}$ which is trivial if and only if $cd \equiv 0$.
\end{proof}
\begin{proposition}
Choose $c, d \in \Zz_{/N}$ satisfying \eqref{eq:cdgivesquasipiv} and \eqref{eq:cdgivespiv} and let $\alpha = T_c$ with twisted pivotal structure $\apiv^{c,d}$ as above, then the $\alpha$-twisted string net module $\SN_\II^\alpha(S^2\smallsetminus \ast)$ is nonzero if and only if
\begin{equation}
    2c \equiv a_2(n-1)+b_2(m-1) \label{eq:NZ1}
\end{equation}
and the $\alpha$-twisted string net module $\SN_\II^\alpha(S^2)$ is nonzero if and only if we moreover have
\begin{equation}
    2d \equiv a_1(n-1)+b_1(m-1)\ .\label{eq:NZ2}
\end{equation}
\end{proposition}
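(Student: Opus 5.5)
We reduce everything to Corollary \ref{cor:dimSNS2}. For the first claim, point 1 of that corollary says $\SN_\II^\alpha(S^2\smallsetminus\ast)\neq 0$ if and only if $\alpha^{\otimes 2}\simeq D$. Since $T_c\otimes T_c\simeq T_{2c}$ (the action of $g$ is multiplicative and $x,y$ act by zero because of the coproducts), and we recalled $D = T_{a_2(n-1)+b_2(m-1)}$, this is exactly \eqref{eq:NZ1}. The only thing to double-check is the weight of the socle of $P_0=V_{0,n,m}$. It is spanned by $x^{n-1}y^{m-1}v_0$, whose $g$-weight is $a_2(n-1)+b_2(m-1)$, which matches the stated $D$.

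For the second claim, the algebra $H$ is finite-dimensional, so $\BB$ is finite and we may use point 2 of Corollary \ref{cor:dimSNS2}. Assuming \eqref{eq:NZ1}, we must decide when $\apiv^2=\Radford$ as trivializations $x^{****}\otimes D\simeq D\otimes x$. Both sides are natural isomorphisms compatible with the monoidal structure. By the same argument as in Proposition \ref{prop:piiistwistedpivotal}, evaluate on the regular module: such a trivialization is determined by a group-like element $g^e$ acting by $v^{****}\otimes t\mapsto t\otimes g^e v$. For $\apiv^2$ we compose $\apiv_{x^{**}}$ and $\apiv_x$, each acting by $g^d$, so $\apiv^2$ corresponds to $g^{2d}$. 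We therefore need the group-like element that implements the Radford isomorphism for $H$-mod with $D=T_{2c}$.

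The main step is to identify $\Radford$ in Hopf-algebraic terms. Radford's $S^4$ formula gives $S^4(h)=\mathbf{g}\,(\beta\rightharpoonup h\leftharpoonup\beta^{-1})\,\mathbf{g}^{-1}$, with $\mathbf{g}$ the distinguished group-like element and $\beta$ the modular function. On modules this yields exactly the trivialization $\delta$ of \cite{ENOdistinguishedinvertible}, realized by a power of $\mathbf{g}$ (up to the inversion conventions relating the paper's $\Nakayama$, $D$ and $\delta$). I would first compute $\mathbf{g}$ directly. Take the left integral $\Lambda=\bigl(\sum_k g^k\bigr)x^{n-1}y^{m-1}$, or the analogous right cointegral, and find the group-like controlling $\Delta$ of the right integral. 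This should give $\mathbf{g}=g^{a_1(n-1)+b_1(m-1)}$, since the coproducts of $x$ and $y$ carry $g^{a_1}$ and $g^{b_1}$. I would cross-check this against $S^4x=q^{2a_1a_2}x$, using the pair-in-involution relation \eqref{eq:pairininvolution} squared together with \eqref{eq:NZ1}.

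The obstacle is that an exponent $e$ is only pinned down modulo the ambiguity left by $S^4$. Different group-likes can implement the same conjugation, and that ambiguity is exactly what the condition $\apiv^2=\Radford$ detects. So the choice of $e$ has to come from the canonical description of $\Radford$ via the Nakayama functor, for instance through \cite{FSSRadfordS4} or \cite{ShimizuUnimodularFTC} for $H$-mod, and not from the $S^4$ formula alone. Once this gives $\Radford\leftrightarrow g^{a_1(n-1)+b_1(m-1)}$, the condition $\apiv^2=\Radford$ reads $g^{2d}=g^{a_1(n-1)+b_1(m-1)}$, which is \eqref{eq:NZ2}, and this finishes the proof.
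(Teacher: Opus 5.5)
Your proposal is essentially the paper's proof: point 1 of Corollary \ref{cor:dimSNS2}, together with $T_c^{\otimes 2}\simeq T_{2c}$ and $D=T_{a_2(n-1)+b_2(m-1)}$, gives \eqref{eq:NZ1}, and point 2 reduces to comparing $\apiv^2$, which is implemented by $g^d\cdot S^2(g^d)=g^{2d}$, with the Radford trivialization, which is implemented by the distinguished group-like $g^{a_1(n-1)+b_1(m-1)}$; the paper cites this group-like from \cite{HalbigGenTaft}, whereas you compute it (to do so, use the integral $\lambda\in H^*$ dual to $x^{n-1}y^{m-1}$ in the PBW basis, via $\lambda(h_{(1)})h_{(2)}=\lambda(h)\,\mathbf{g}$, since the left integral $\Lambda\in H$ you wrote down only yields the modular function). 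Your caveat that $S^4$ alone fixes the group-like only up to a central group-like is correct, and it is resolved exactly as the paper implicitly does, by the standard fact that the categorical $\Radford$ on $H$-mod is the action of the canonically defined distinguished group-like, so no further argument is needed.
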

\begin{proof}
    By Corollary \ref{cor:dimSNS2} the string net module $\SN_\II^\alpha(S^2\smallsetminus \ast)$ is nonzero if and only if $T_c^2 = T_{2c}$ agrees with the distinguished invertible $D = T_{a_2(n-1)+b_2(m-1)}$, from which \eqref{eq:NZ1} follows. This is equivalent to $\alpha^2$ being the distinguished group-like element of $H^*$. The canonical trivialization of the quadruple dual is given by Radford's $S^4$ formula. The trivialization of the quadruple dual determined by $\apiv^{c,d}$ is given by
    \begin{align*}
    V^{****} \otimes D &\to D \otimes V \\
    v^{****}\otimes 1 &\mapsto 1\otimes g^{d}\cdot S^2(g^d)\cdot  v = 1\otimes g^{2d}\cdot v
    \end{align*}
    which agrees with the trivialization from Radford's $S^4$ formula if and only if $g^{2d}$ is the distinguished group like element of $H$ which is shown to be $g^{a_1(n-1)+b_1(m-1)}$ in \cite{HalbigGenTaft}, giving \eqref{eq:NZ2}.
\end{proof}

    It is easy to check these conditions of some equalities in $\Zz_{/N}$ on a computer. We thank Sebastian Halbig for providing us with a very long list of examples satisfying various requirements.
\begin{example}\label{ex:noPivotalStructure}
    Let $N=3$ and $\begin{pmatrix} a_1 & a_2 \\ b_1 &b_2\end{pmatrix}=\begin{pmatrix} 1 & 1 \\ 1 & 2 \end{pmatrix}$. Then there are no pivotal structures on $\BB$. However, for $(c,d) = (2,0)$, $\apiv^{c,d}$ is a twisted pivotal structure on $\BB$.

    Indeed pivotal structures correspond to twisted-pivotal structures for $\alpha = \unit$, i.e. for $c=0$. Then \eqref{eq:cdgivesquasipiv} implies that $d+1 \equiv d+2 \equiv 0$.
\end{example}
\begin{example}\label{ex:NonunimodTwSpherical}
Let $N=9$ and $\begin{pmatrix} a_1 & a_2 \\ b_1 &b_2\end{pmatrix}=\begin{pmatrix} 1 & 3 \\ 5 & 3 \end{pmatrix}$. Take $(c,d) = (6,6)$, then $\apiv^{c,d}$ is a twisted pivotal structure on $\BB$ and satisfies 
$$\SN_\II^\alpha(S^2)\neq 0$$ 
even though $\BB$ is not unimodular. Indeed, $c$ and $d$ satisfy all the equations above, and $D = T_3 \neq \unit$.
\end{example}
By a quick computer check, we see that at $N=9$, there are 24 values for $a_1,a_2,b_1,b_2$ that satisfy these properties. At $N=8$ there are 4, and moreover they support multiple twisted spherical structures, e.g. for $\begin{pmatrix} a_1 & a_2 \\ b_1 &b_2\end{pmatrix}=\begin{pmatrix} 2 & 2 \\ 6 & 2 \end{pmatrix}$ one can take any $(c,d) \in \{(2, 0), (2, 4), (6, 0), (6, 4)\}$.

At $N = 36$, there are 46800 Hopf algebras. 16092 of them have usual pivotal structures and 26928 twisted pivotal structures. 5130 are unimodular, 78 have spherical structures and 1044 have twisted spherical structures (888 of which are non-unimodular).

\begin{figure}
    \centering
\begin{tikzpicture}[xscale = 4, yscale = 1.5, rotate = 90]
    \node (S) at (0,1) {Spherical};
    \node (P) at (-1,0) {Pivotal};
    \node (U) at (0,0) {Unimodular};
    \node (TS) at (1,0) {Twisted-spherical};
    \node (TP) at (0,-1) {Twisted-pivotal};
    \node (Q) at (0,-2) {Twisted quasi-pivotal};
    \node (R) at (-1,-2) {Rigid (no condition)};
    \draw[->, double] (S) -- (P) node[pos = 0.5, below, rotate = -15]{$\overset{N=3}{\not\Leftarrow}$};
    \draw[->, double] (S) -- (U) node[pos = 0.7, above, rotate = 0]{$\overset{N=3}{\not\Leftarrow}$};
    \draw[->, double] (S) -- (TS) node[pos = 0.5, above, rotate = 15]{$\overset{N=4}{\not\Leftarrow}$};
    \draw[->, double] (TS) -- (TP) node[pos = 0.5, above, rotate = -15]{$\overset{N=3}{\not\Leftarrow}$};
    \draw[->, double] (P) -- (TP) node[pos = 0.5, below, rotate = 15]{$\overset{N=3}{\not\Leftarrow}$};
    \draw[->, double] (TP) -- (Q) node[pos = 0.5, above, rotate = 0]{$\overset{N=5}{\not\Leftarrow}$};
    \draw[->, double] (Q) -- (R) node[pos = 0.5, above, rotate = -90]{$\overset{N=8}{\not\Leftarrow}$};
    \node[rotate = 90, above] at (0.5,0) {$\overset{N=8}{\not\Leftarrow}$};
    \node[rotate = -90, above] at (0.5,0) {$\overset{N=3}{\not\Leftarrow}$};
    \node[rotate = 90, above] at (-0.5,0) {$\overset{N=3}{\not\Leftarrow}$};
    \node[rotate = -90, above] at (-0.5,0) {$\overset{N=3}{\not\Leftarrow}$};
    \node at (0,-0.5) {$\overset{N=3}{\not\Leftarrow}$};
\end{tikzpicture}
    \caption{Implications and non-implications of various properties and existence of different structures on a rigid category $\BB$. The labels $N=\, ?$ refer to the first value of $N$ such that a counter-example of the form above can be found. We do not know whether unimodular categories always admit twisted pivotal structures.}
    \label{fig:placeholder}
\end{figure}




\small
\bibliography{mybib}{}
\bibliographystyle{alpha}
\end{document}